\DeclareFontShape{OT1}{cmr}{bx}{sc}{<-> cmbcsc10}{}
\theoremstyle{plain}
\newtheorem{lemma}{Lemma}[section]
\newtheorem{proposition}{Proposition}[section]
\newtheorem{theorem}{Theorem}
\newtheorem{corollary}{Corollary}[section]
\newtheorem{remark}{Remark}[section]
\newcommand{\bremark}{\begin{remark} \em}
	\newcommand{\eremark}{\end{remark} }
\newcommand{\ext}{\text{ext}}
\newcommand{\intb}{\int_{B_{2R}}}
\newcommand{\MMM}{\mathcal{M}}
\newcommand{\ass}{\quad\mbox{as}\quad}
\newcommand{\ddiv}{\,\text{div}}
\newcommand{\inn}{{\quad\hbox{in } }}
\newcommand{\iinn}{\text{in}}
\newcommand{\onn}{{\quad\hbox{on } }}
\newcommand{\pp}{ {\partial} }
\newcommand{\svc}{{\sharp,c,\ve}}
\newcommand{\svvc}{{\sharp,c,\frac{1}{2}+\ve}}
\newcommand{\cyt}{{\Om\times [t_0,\infty)}}
\newcommand\restr[2]{{
		\left.\kern-\nulldelimiterspace 
		#1 
		\vphantom{\big|} 
		\right|_{#2} 
}}
\newcommand{\reff}[1]{(\ref{#1})} 		
\newcommand{\Sinn}{S_{\text{in}}}
\newcommand{\Sout}{S_{\text{out}}}
\newcommand{\blue}[1]{\color{blue}{#1}}
\newcommand{\RR}{{{\mathbb{R}}}}
\newcommand{\CCC}{ {\mathcal C}}
\newcommand{\AAA}{\mathcal{A}}
\newcommand{\BBB}{\mathcal{B}}
\newcommand{\NNN}{\mathcal{N}}
\newcommand{\SSS}{\mathcal{S}}
\newcommand{\LLL}{\mathcal{L}}
\newcommand{\FFF}{\mathcal{F}}
\newcommand{\TTT}{\mathcal{T}}
\newcommand{\JJJ}{\mathcal{J}}
\newcommand{\bn}{ {\mu_{\text{BN}}}}
\newcommand{\ppsi}{\uppsi}
\newcommand{\ay}{\abs{y}}
\newcommand{\g}{\gamma}
\newcommand{\dl}{\dot \Lambda}
\newcommand{\mt}{\mu(t)}
\newcommand{\xt}{\xi(t)}
\newcommand{\Uy}{U(y)}
\newcommand{\at}{\alpha_3}
\newcommand{\Om}{\Omega}	
\newcommand{\ddy}{\,dy}					
\newcommand{\ddx}{\,dx}
\newcommand{\ddu}{\,du}
\newcommand{\dtau}{\,d\tau}	
\newcommand{\dds}{\,ds}
\newcommand{\ddz}{\,dz}
\newcommand{\ddr}{\,dr}
\newcommand{\ve}{\varepsilon}
\newcommand{\eps}{\epsilon}
\newcommand{\MONTH}{
	\ifcase\the\month
	\or January
	\or February
	\or March
	\or April
	\or May
	\or June
	\or July
	\or August
	\or September
	\or October
	\or November
	\or December
	\fi
}
\long\def\blue#1{{\color{black}#1}}
\numberwithin{equation}{section}
\DeclareMathOperator{\diam}{diam}
\DeclareMathOperator\supp{supp}
\begin{document}
	\title[Infinite time blow-up for critical heat equation]{Infinite time blow-up for the three dimensional energy critical heat equation in bounded domains}
	
	\author[G. Ageno]{Giacomo Ageno}
	\address{\noindent
		Department of Pure Mathematics and Mathematical Statistics, Cambridge, United Kingdom}
	\email{ga482@cam.ac.uk}
	
	\author[M. del Pino]{Manuel del Pino}
	\address{\noindent
		Department of Mathematical Sciences,
		University of Bath, Bath BA2 7AY, United Kingdom}
	\email{mdp59@bath.ac.uk}

	\begin{abstract}
		We consider the Dirichlet problem for the energy-critical heat equation
		\begin{align*}
			\begin{cases}
				u_t=\Delta u+u^5 & \text{in} \quad \Omega \times \mathbb{R}^+,\\
				u=0 &\text{on} \quad \partial \Omega \times \mathbb{R}^+,\\ 
				u(x,0)=u_0(x) & \text{in} \quad \Omega,
			\end{cases}
		\end{align*}
		where $\Omega$ is a bounded smooth domain in $\mathbb{R}^3$. Let $H_\gamma(x,y)$ be the regular part of the Green function of $-\Delta-\gamma$ in $\Omega$, where $\gamma \in (0,\lambda_1)$ and $\lambda_1$ is the first Dirichlet eigenvalue of $-\Delta$. Then, given a point $q\in \Omega$ such that $3\gamma(q)<\lambda_1$, where
		\begin{align*}
			\gamma(q){\coloneqq}\sup\{ \gamma>0:  H_\g(q,q)>0 		\},
		\end{align*}
		we prove the existence of a non-radial global positive and smooth solution $u(x,t)$ which blows up in infinite time with spike in $q$. The solution has the asymptotic profile
		\begin{align}
			u(x,t)\sim 3^{\frac{1}{4}} \qty(\frac{\mu(t)}{\mu(t)^2+\abs{x-\xi(t)}^2})^{\frac{1}{2}} \quad \text{as}\quad t \to \infty,
		\end{align}
		where 
		\begin{align*}
			-\ln\qty(\mu(t))= 2\gamma(q) t(1+o(1)),\quad \xi(t)=q+O\qty(\mu(t)) \quad \text{as}\quad t \to \infty.
		\end{align*}
	\end{abstract}
	
	\maketitle

	\section{Introduction and statement of the main result}
	We investigate the asymptotic structure of global in time solutions $u(x,t)$ of the energy-critical semilinear heat equation
	\begin{equation}\label{CHE}
		\left\{
		\begin{aligned}
			&u_t=\Delta u +u^5&& \text{in} \quad \Omega \times \mathbb{R}^+,\\
			&u=0 \quad && \text{on} \quad\partial \Omega \times \mathbb{R}^+,\\
			&u(x,0)=u_0(x) && \text{in} \quad \Omega,
		\end{aligned}
		\right.
	\end{equation}
	where $\Omega\subset \mathbb{R}^3$ is a smooth bounded domain and $u_0$ is a smooth initial datum. The energy associated to the solution $u(x,t)$ is 
	\begin{align*}
		E(u){\coloneqq}\frac{1}{2}\int_{\Omega} \abs{\nabla u}^2 \, dx -\frac{1}{6} \int \abs{u}^{6} \, dx.
	\end{align*}
	Since classical solutions of (\ref{CHE}) satisfy
	\begin{align*}
		\frac{d}{dt}E(u(\cdot,t))=-\int_{\Omega} \abs{u_t}^2 \,dx\leq 0,
	\end{align*}
	the energy is a Lyapunov functional for (\ref{CHE}). The stationary equation on the whole space is the Yamabe problem
	\begin{align*}
		\Delta U+U^5=0 \inn  \RR^3.
	\end{align*}
	All positive solutions to this equation are given by the Aubin-Talenti bubbles (see \cite{caff})
	\begin{align}\label{Talentibubble}
		U_{\mu,\xi}(x)=\mu^{-\frac{1}{2}}U\qty(\frac{x-\xi}{\mu}),
	\end{align}
	where $\mu >0,\xi \in \RR^3$ and 
	\begin{align*}
		U(x)=\alpha_3 \frac{1}{\qty(1+\abs{x}^2)^{\frac{1}{2}}},\quad\text{where}\quad \alpha_3{\coloneqq}3^{\frac{1}{4}}.
	\end{align*}
	Consider the Sobolev embedding $H_0^1(\Om)\hookrightarrow L^{p+1}(\Om)$, which is compact for $p\in (1,p_S)$, where $p_S=\frac{n+2}{n-2}$, and the associated constant
	\begin{align*}
		S_p(\Om){\coloneqq}\inf_{0\neq u \in H_0^1(\Om)}\frac{\norm{u}_{H_0^1(\Om)}^2}{\norm{u}_{L^{p+1}(\Om)}^2}.
	\end{align*}
	The Aubin-Talenti bubbles achieve the constant $S_{p_S}(\RR^n)$. Thus, the energy $E(U_{\mu,\xi})=S_{p_S}(\RR^n)$ is invariant with respect to $\mu,\xi$. When $\mu\to 0$ the Aubin-Talenti bubble becomes singular.  
	This is the reason for the loss of compactness in the Sobolev embedding for $p=p_S$. Indeed, Struwe proved in \cite{struwe} that every Palais-Smale sequence associated to the energy functional $E$ looks like
	\begin{equation}\label{bubbledecomp}
		u_n(x)=u_\infty(x)+\sum_{i=1}^{k}U_{\mu_n^i,\xi_n^i}(x)+o(1)	\quad \text{when}\quad n\to \infty,
	\end{equation}
	up to subsequences, for some $k\in \mathbb{N}$, where $u_\infty \in H_0^1(\Omega)$ is a critical point of $E$ and $\mu_n^i\to 0$, $\xi_n^i\in \Omega$. Thus, we say that the compactness is lost by 'bubbling'. When the domain is star-shaped, the Pohozaev identity constrains $u_\infty$ to vanish.

	For classical finite-energy solutions $u(x,t)$ the problem (\ref{CHE}) is well-posed in short time intervals. We refer to the monograph \cite{qs} by Quittner and Souplet for an extended review on this problem and more general semilinear parabolic equations.
	The aim of this paper is exhibiting classical positive finite-energy solutions $u(x,t)$ of (\ref{CHE}) which are globally defined in time and satisfy 
	\begin{align}\label{infblowup}
		\lim\limits_{t\to \infty}\norm{u(\cdot,t)}_{L^\infty(\Omega)}=\infty.
	\end{align}
	Given any smooth function $\varphi(x)\geq 0,\varphi \neq 0$, consider $\alpha>0$ and $u_\alpha(x,0){\coloneqq}\alpha \varphi(x)$ as initial datum. On one hand, if $\alpha$ is sufficiently small, then $u_\alpha(x,t)$ tends uniformly to zero as $t\to \infty$. On the other hand, using the eigenfunction method of Kaplan \cite{kaplan}, for $\alpha$ sufficiently large $u_\alpha(x,t)$ blows-up in finite time. 
	Thus, the threshold number 
	\begin{align*}
		\alpha^*{\coloneqq}\sup\left\{\alpha>0\,:\, \lim\limits_{t\to \infty}\norm{u_\alpha(\cdot,t)}_\infty=0	\right\},
	\end{align*}
	is positive. In 1984, the first rigorous proof of the existence in $L^1$-weak sense of $u_{\alpha^*}(x,t)$ was found by Ni, Sacks and Tavantzis \cite{nst}. 
	Du \cite{du} and Suzuki \cite{suzuki} proved, that, for any unbounded sequence of times $t_n$, $u_{\alpha^*}(x,t_n)$  can be decomposed as in (\ref{bubbledecomp}).
	Thus, when constructing unbounded global solutions for the critical case, it is natural to look for an asymptotic profile as (\ref{Talentibubble}). Galaktionov and V\'azquez \cite{gv} proved that, in the radial case $\Omega=B_1(0)$ with $\varphi$ radial non-increasing, $u_{\alpha^*}(x,t)$ is smooth, global and $u=u_{\alpha^*}$ satisfies (\ref{infblowup}).
	Thus, we naturally wonder what is the asymptotic behavior of global unbounded solutions. Most of the results about the dynamics of threshold solutions in literature concern the radial case. This particular setting allows the construction of specific solutions by means of matched expansions. In \cite{gk2} Galaktionov and King proved that the threshold behavior of $u_{\alpha^*}$ in the radial case is
	\begin{equation}\label{muGK}
		\ln \norm{u_{\alpha^*}(\cdot,t)}_\infty=\left\{
		\begin{aligned}
			\frac{\pi^2}{4}t(1+o(1)) && \text{if} \quad n=3,
			\\2\sqrt{t}(1+o(1)) && \text{if} \quad n=4,
		\end{aligned}
		\right.
	\end{equation}
	and
	\begin{align*}
		\norm{u_{\alpha^*}(\cdot,t)}_\infty= \qty(\gamma_n t)^{\frac{n-2}{2(n-4)}},\quad \text{if}\quad n\geq 5,
	\end{align*}
	for some explicit constants $\gamma_n$. Our main theorem is a non-radial extension in dimension $3$. The existence of positive non-radial unbounded solutions for the Dirichlet problem in dimension $n=4$ remains an open problem, which we will consider in a future work.
	The case of higher dimension $n\geq 5$ has been already extended to the non-radial case by Cort\'azar, del Pino and Musso in \cite{cdm}. They found positive multi-spike global solutions which blow-up by bubbling in infinite time. Here, the term multi-spike refers to the fact that the constructed solution is unbounded in a finite number of points in $\Om$. Sign-changing solutions which blow-up in infinite time have been discovered by del Pino, Musso, Wei and Zheng in \cite{signchangingDir5} for $n\geq 5$, proving stability in case $n=5,6$.
	
	Our solutions involve the Green function $G_\gamma$ associated to the elliptic operator $$L_\gamma=-\Delta-\gamma\quad \text{in}\quad \Omega,$$ where $\gamma \in [0,\lambda_1)$ and $\lambda_1$ is the principal Dirichlet eigenvalue. Namely, for all $y\in \Omega$, $G_\gamma$ satisfies
	\begin{align*}
		&-\Delta_x G_\gamma(x,y)-\gamma G_\gamma(x,y) =c_3 \delta(x-y) \quad \text{in} \quad \Omega,\\\nonumber
		&G_\gamma(x,y)=0 \quad \text{on} \quad \partial \Omega,
	\end{align*}
	where $\delta(x)$ is the Dirac delta, $c_3{\coloneqq} \alpha_3\omega_3$ and the constant $\omega_3=4\pi$ indicates the area of the unit sphere. The Green function can be decomposed as
	\begin{align*}
		G_\g(x,y)=\Gamma(x-y)-H_\gamma(x,y),
	\end{align*}
	where $\Gamma(x)=\alpha_3\abs{x}^{-1}$ and the regular part $H_\gamma(x,y)$ is defined as the solution, for all $y \in \Omega$, to		
	\begin{align*}
		&\Delta_x H_\g(x,y)+\gamma H_\gamma(x,y)=\gamma\frac{\alpha_3}{\abs{x-y}} \quad \text{in} \quad \Omega,\\\nonumber
		&H_\gamma(x,y)=\Gamma(x -y)\quad \text{in} \quad \partial \Omega.
	\end{align*}
	The diagonal $R_\g(x){\coloneqq}H_\g(x,x)$ is called Robin function associated to $-\Delta-\g$ in $\Omega$. It turns out (see Lemma \ref{Lemma:gammaexists}) that for any fixed $q\in \Omega$ there exists a unique number $\g(q)\in (0,\lambda_1)$ defined by
	\begin{align*}
		\gamma(q){\coloneqq}\sup \{	\gamma>0: R_\g(q)>0	\}.
	\end{align*}
	Our main theorem shows that, for any $q\in \Om$ such that $3\g(q)<\lambda_1$, there exists a global solution to the problem (\ref{CHE}) which blows-up in infinite time with spike in $x=q$.
	\begin{theorem}\label{mainteo}
		Let $\Omega \subset \RR^3$ be a bounded smooth domain. Let $q$ be a point in $\Omega$ such that
		\begin{align}\label{Assumption1}
			\g(q)<\frac{\lambda_1}{3}.
		\end{align}
		Then, there exist an initial condition $u_0(x)\in C^1(\bar \Omega)$, smooth functions $\xi(t),\mu(t)$ and $\theta(x,t)$ such that the solution $u(x,t)$ to the problem (\ref{CHE}) is a positive unbounded global solution with the asymptotic profile
		\begin{align}\label{asyprof}
			u(x,t)=\mu(t)^{-\frac{1}{2}} U\qty(\frac{x-\xi(t)}{\mu(t)})- \mu(t)^{\frac{1}{2}}\qty(H_\g(x,\xi(t))+\theta(x,t))\ass t \to \infty,
		\end{align}  
		where $\theta$ is a bounded function, and decays uniformly away from the point $q$. Moreover, the parameters $\mt,\xt$ are smooth functions of time and satisfy
		\begin{align}\label{behmuxi}
			\ln(\frac{1}{\mu(t)})= 2\gamma(q) t(1+o(1)),\quad \xi(t)-q=O\qty(\mt) \ass t\to \infty.
		\end{align}
	\end{theorem}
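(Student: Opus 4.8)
The plan is to construct the solution $u$ by the parabolic inner--outer gluing method. Fix a parameter $\gamma$ slightly below $\gamma(q)$, so that the Robin function $R_\gamma(q)>0$ and $3\gamma<\lambda_1$, and take as first approximation the projected bubble
\[
U^*_{\mu,\xi}(x,t)\ :=\ \mu^{-1/2}\,U\!\left(\frac{x-\xi}{\mu}\right)\ -\ \mu^{1/2}\,H_\gamma(x,\xi),
\]
where $\mu=\mu(t)>0$ and $\xi=\xi(t)\in\Omega$ are parameters to be determined; this function vanishes on $\partial\Omega$ up to lower order and, far from $\xi$, agrees to leading order with $\mu^{1/2}G_\gamma(x,\xi)>0$. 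I would seek $u=U^*_{\mu,\xi}+\phi$ and split the remainder into an inner and an outer part,
\[
\phi(x,t)\ =\ \eta_R(x,t)\,\mu^{-1/2}\,\tilde\phi\!\left(\frac{x-\xi}{\mu},t\right)\ +\ \psi(x,t),
\]
with $\eta_R$ a cut-off supported in $\{|x-\xi|\lesssim\mu R(t)\}$ for some $R(t)\to\infty$ slowly. Substituting and requiring the inner profile to solve its own equation in the self-similar variables $y=(x-\xi)/\mu$, $\frac{d\tau}{dt}=\mu^{-2}$, one is led to a coupled system: an \emph{inner problem}
\[
\partial_\tau\tilde\phi\ =\ L_0[\tilde\phi]\ +\ \mu^{5/2}\big(\mathcal{E}_{\mathrm{in}}+\text{coupling with }\psi\big),\qquad L_0:=\Delta_y+5U^4,
\]
and an \emph{outer problem} $\partial_t\psi=\Delta\psi+(\text{small potential})\,\psi+\mathcal{E}_{\mathrm{out}}+(\text{coupling with }\tilde\phi)$ with $\psi=0$ on $\partial\Omega$.

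First I would estimate the error $\mathcal{S}[U^*_{\mu,\xi}]=-\partial_tU^*_{\mu,\xi}+\Delta U^*_{\mu,\xi}+(U^*_{\mu,\xi})^5$ in weighted $L^\infty$ norms. Using $\Delta U_{\mu,\xi}+U_{\mu,\xi}^5=0$, the identity $\Delta_xH_\gamma+\gamma H_\gamma=\gamma\,\Gamma(\cdot-\xi)$, and the expansion of the quintic nonlinearity, the dominant contributions are the mode terms $\mu^{-3/2}\big(\dot\mu\,Z_4(y)+\dot\xi\cdot\nabla U(y)\big)$ coming from $\partial_tU_{\mu,\xi}$ (with $Z_4=\tfrac12 U+y\cdot\nabla U$), the interaction term $-5\,\mu^{-3/2}U(y)^4R_\gamma(\xi)$, a slowly decaying bulk remainder of size $\mu^{1/2}$, and next-order pieces of size $\mu^{3/2}\sim e^{-3\gamma t}$; it is precisely here that the hypothesis $3\gamma(q)<\lambda_1$ is used, to guarantee that the Dirichlet heat flow in $\Omega$ absorbs such time oscillations and that the relevant auxiliary elliptic problems (built from $-\Delta-\gamma'$, $\gamma'\le 3\gamma$) remain solvable.

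Next I would set up the two linear theories. For the inner problem I would invoke the linear theory for $L_0=\Delta_y+5U^4$ on $\mathbb{R}^3$ (in the spirit of del Pino--Musso--Wei and of \cite{cdm}): a right-hand side with suitable spatial decay admits a solution of controlled growth provided it is orthogonal, for every $\tau$, to the four elements attached to the kernel $\mathrm{span}\{Z_1,\dots,Z_4\}$, $Z_i=\partial_{y_i}U$; imposing these orthogonality conditions is exactly what produces the reduced ODEs for $\mu$ and $\xi$. For the outer problem I would use standard parabolic estimates for $\partial_t\psi=\Delta\psi+\dots$ with zero boundary data to obtain a bounded $\psi$ decaying away from $q$; the delicate point is the $|x-\xi|^{-1}$ far-field tail generated by the concentrated potential, whose coefficient evolves like $\mu^{1/2}$ and which, reflected by the Dirichlet condition through the Green function of $-\Delta-\gamma$, is precisely what the term $\mu^{1/2}H_\gamma$ in the ansatz is designed to cancel.

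Finally, the $Z_4$-orthogonality condition yields, to leading order, the reduced equation $-\dot\mu/\mu = 2\gamma(q)(1+o(1))$, hence $-\ln\mu(t)=2\gamma(q)t(1+o(1))$: as $\gamma\uparrow\gamma(q)$ the correction term governed by $R_\gamma(\xi)$ degenerates, which is exactly what makes the clean exponential rate — rather than an algebraic one — consistent; the $Z_i$-conditions give $\dot\xi=\mu\,\nabla_\xi R_\gamma(\xi)(1+o(1))+o(\mu)$, forcing $\xi(t)-q=O(\mu(t))$. I would then recast the whole construction as a fixed-point problem for the tuple $(\tilde\phi,\psi,\mu,\xi)$ in Banach spaces with the weighted norms above, solving the inner and outer problems by the respective linear theories and closing the loop by a contraction argument; the initial datum is $u_0:=u(\cdot,0)\in C^1(\bar\Omega)$, and positivity of $u$ for all $t$ follows because near $q$ the bubble $U_{\mu,\xi}$ dominates while away from $q$ the profile equals $\mu^{1/2}G_\gamma(x,q)+o(\mu^{1/2})>0$, together with the maximum principle. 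The hard part will be the genuinely three-dimensional outer analysis: unlike the case $n\ge5$ of \cite{cdm}, $L_0$ possesses a kernel element growing at infinity and the bubble carries a non-negligible $|x|^{-1}$ tail, so the outer source does not decay fast enough for a naive fixed point — one must track this slow tail, work with $-\Delta-\gamma$ for the correct $\gamma\approx\gamma(q)$, and derive the reduced ODE for $\mu$ with sharp constants; the hypothesis $3\gamma(q)<\lambda_1$ is what keeps all the relevant linear problems non-resonant.
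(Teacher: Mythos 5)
Your general framework (projected bubble $U_{\mu,\xi}-\mu^{1/2}H_\gamma(\cdot,\xi)$, inner--outer gluing, inner linear theory with orthogonality to $\{Z_i\}$, weighted-norm fixed point) is the one the paper uses, but two essential steps are missing or wrong. First, the choice ``fix $\gamma$ slightly below $\gamma(q)$, so that $R_\gamma(q)>0$'' breaks the scheme: the leading interaction error near the concentration point is $-5\mu^{-3/2}U(y)^4 R_\gamma(\xi)$, and with $R_\gamma(q)>0$ fixed this is genuinely of size $\mu^{-3/2}$, which the inner linear theory cannot absorb; moreover its $Z_4$-projection produces a forcing term incompatible with the exponential profile $\mu\sim e^{-2\gamma t}$ that the Green-function matching requires (and also incompatible with the claimed rate $2\gamma(q)$, since for fixed $\gamma<\gamma(q)$ one would at best obtain $2\gamma$). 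The paper sets $\gamma=\gamma(q)$ \emph{exactly}, so that $R_\gamma(q)=0$ (Lemma \ref{Lemma:gammaexists}); then $R_\gamma(\xi)=O(|\xi-q|)=O(\mu)$ and the error improves to $O(\mu^{-1/2})$. No limiting argument ``as $\gamma\uparrow\gamma(q)$'' can substitute for this, since the construction must be carried out at a single fixed $\gamma$.

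Second, you treat the $Z_4$-orthogonality condition as producing a reduced ODE for $\mu$, but in $n=3$ it does not: $Z_4\notin L^2(\mathbb{R}^3)$, so the integral $\int_{B_{2R}}Z_4\,S_{\text{in}}$ depends on the tail and the reduced equation for $\dot\Lambda$ (where $\mu=\mu_0e^{2\Lambda}$) is a \emph{nonlocal integro-differential equation}, not an ODE. You flag the slow decay of $Z_4$ and the $|x|^{-1}$ tail but do not follow the observation through. The paper introduces a global parabolic correction $\mu_0^{1/2}J[\dot\Lambda]$ driven by $\dot\Lambda\,(2Z_4/\mu+H_\gamma)$, after which the $Z_4$-orthogonality condition becomes $\mathcal{J}[\dot\Lambda](0,t)=g(t)+G[\cdots](t)$, essentially a Caputo half-derivative equation. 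Inverting this operator (Proposition \ref{Proposition:invj}) is the genuinely new three-dimensional difficulty: it requires the Laplace transform, a refined short-time expansion of the Dirichlet heat kernel via Varadhan's estimate, and the hypothesis $3\gamma(q)<\lambda_1$ to keep the transformed kernel nonvanishing in the half plane used in the contour argument. Without this step the fixed point for $(\tilde\phi,\psi,\mu,\xi)$ does not close and the leading-order asymptotics of $\mu$ remain unjustified.
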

	Furthermore, thanks to the \underline{inner-outer gluing scheme}, which is based only on elliptic and parabolic estimates, as in \cite{cdm} and \cite{dmw1} we get a codimension-1 stability of the solution stated by Theorem \ref{mainteo}. 
	In fact, since condition (\ref{Assumption1}) is stable under small perturbation of $q\in \Omega$, the stability result follows exactly as in \cite[Proof of Corollary 1.1]{cdm} (see Remark \ref{remark:1cod} in \S\ref{sec:finalargument}).
	\begin{corollary}\label{CorStability}
		Let $u$ be the solution stated in Theorem \ref{mainteo} which blows up at $q$. Then, there exists a codimension-$1$ manifold $\MMM$ in $C^{1}(\bar \Om)$ with $u_0 \in \MMM$ and such that if $\tilde u_0\in \MMM$ and it is sufficiently close to $u_0$, then the solution $\tilde u$ to (\ref{CHE}) with initial datum $\tilde u_0$ is global and blows-up in infinite time with spike in $\tilde q$ near $q$ and profile (\ref{asyprof}) with $\ln \norm{\tilde u(\cdot,t)}_\infty= \gamma(\tilde q) t(1+o(1))$ as $t\to \infty$.
	\end{corollary}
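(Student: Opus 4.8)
The plan is to obtain the corollary as a soft consequence of the quantitative nature of the inner-outer gluing construction that proves Theorem~\ref{mainteo} together with its Lipschitz dependence on the initial datum, exactly as in \cite[Proof of Corollary 1.1]{cdm} (compare \cite{dmw1}). Recall that that construction will reduce (\ref{CHE}) to a fixed-point problem, in suitable weighted norms, for the inner correction, the outer correction and the modulation parameters --- equivalently a scalar $\Lambda(t)$, with $\mu(t)$ a prescribed function of $\Lambda(t)$, together with $\xi(t)$ --- in which all the equations can be solved by a Banach contraction, except for a single scalar solvability condition: the reduced equation governing the scaling parameter $\Lambda$ has a solution with the prescribed behaviour as $t\to\infty$ only when one scalar, namely the initial value $\Lambda(t_0)$, is chosen correctly. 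Carrying $u_0$ along as a parameter throughout, I would phrase this as follows: there exist a bounded linear functional $e^*$ on $C^1(\bar\Omega)$, a fixed $e\in C^1(\bar\Omega)$ with $\langle e^*,e\rangle=1$, and a Lipschitz map $\Phi$ of the transversal component $Pv\coloneqq v-\langle e^*,v\rangle e$, all defined on a small $C^1$-ball about $u_0$, such that the scheme closes --- producing a global, positive, smooth solution with profile (\ref{asyprof}) --- precisely when $\langle e^*,u_0\rangle=\Phi(Pu_0)$.

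Granting this, I would set
\[
\MMM\coloneqq\bigl\{\, v\in C^1(\bar\Omega)\ :\ \|v-u_0\|_{C^1(\bar\Omega)}<\delta,\ \ \langle e^*,v\rangle=\Phi(Pv)\,\bigr\},
\]
which for $\delta$ small is a codimension-one embedded Lipschitz (in fact $C^1$ after a little extra regularity work, as in \cite{cdm}) manifold in $C^1(\bar\Omega)$ containing the initial datum $u_0$ of Theorem~\ref{mainteo}. For any $\tilde u_0\in\MMM$, the fixed-point argument --- whose constants are uniform over the $\delta$-ball --- then produces a global positive smooth solution $\tilde u$ of (\ref{CHE}) with the profile (\ref{asyprof}). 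To pin down its blow-up point and rate I would use that the spike position $\tilde q$ is not imposed in advance but is one of the slow parameters determined by the contraction: the $\xi$-equation forces $\xi(t)\to\tilde q$ with $\xi(t)-\tilde q=O(\mu(t))$ and $\tilde q-q=O(\delta)$. Since condition (\ref{Assumption1}) is open --- by Lemma~\ref{Lemma:gammaexists} together with continuous dependence of the Robin function $R_\gamma$ on the parameter $\gamma$ and on the pole, the map $q\mapsto\gamma(q)$ is continuous on $\Omega$ --- one has $3\gamma(\tilde q)<\lambda_1$ for $\delta$ small, so the construction is legitimate with spike at $\tilde q$; repeating the $t\to\infty$ analysis of the reduced equation for $\mu$ with $R_\gamma(\tilde q)$ in place of $R_\gamma(q)$ gives $\ln(1/\mu(t))=2\gamma(\tilde q)\,t(1+o(1))$, hence $\ln\|\tilde u(\cdot,t)\|_\infty=\tfrac12\ln(1/\mu(t))+O(1)=\gamma(\tilde q)\,t(1+o(1))$ and the profile (\ref{asyprof}) with $q,\gamma(q)$ replaced by $\tilde q,\gamma(\tilde q)$.

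The only points not already contained in the proof of Theorem~\ref{mainteo} that need to be checked carefully are: (i) that every a priori estimate in the gluing scheme --- the linear parabolic theory for the outer problem, the linear theory for the inner problem in self-similar variables, and the solvability of the modulation system --- holds with constants uniform for $u_0$ ranging over a fixed small $C^1$-ball, so that the resulting solution operator is Lipschitz in $u_0$; and (ii) that the selection map $\Phi$ fixing the initial value $\Lambda(t_0)$ is Lipschitz in the transversal data. Both are essentially built in: as stressed in \cite{cdm}, the whole scheme rests only on linear elliptic and parabolic estimates, which depend continuously --- indeed Lipschitz --- on the data, so $u_0$ enters only as an inhomogeneous term and as a parameter in a contraction, while the one-dimensional shooting that pins $\Lambda(t_0)$ is finite-dimensional and inherits its Lipschitz dependence from that contraction. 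Positivity and smoothness of $\tilde u$ transfer from Theorem~\ref{mainteo}, since on every finite time interval $\tilde u$ is a small perturbation of the positive bubble $U_{\mu(t),\xi(t)}$, which dominates the bounded lower-order remainder uniformly. I expect the hard part to be mostly bookkeeping: propagating the dependence on $u_0$ through each step of the construction and re-verifying the uniformity of the weighted estimates. For the detailed adaptation of \cite[Proof of Corollary 1.1]{cdm} I refer to Remark~\ref{remark:1cod} in \S\ref{sec:finalargument}.
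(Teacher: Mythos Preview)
Your overall architecture is right and matches the paper: the corollary is indeed a soft consequence of the Lipschitz dependence of the entire gluing construction on the outer initial datum, together with the openness of condition~(\ref{Assumption1}); the paper itself does not give an independent proof but defers to \cite[Proof of Corollary~1.1]{cdm} via Remark~\ref{remark:1cod}, exactly as you indicate at the end.

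However, you have misidentified the source of the single scalar constraint. It is \emph{not} the initial value $\Lambda(t_0)$: in this construction $\Lambda$ (and $\dot\Lambda$) is fully determined by inverting the nonlocal operator $\JJJ$ in Proposition~\ref{Proposition:invj} and then solving the fixed-point system of \S\ref{sec:solveSystem}; the choice $\beta(0)=-c_\infty\int_0^\infty h_0^*(\tau)\,d\tau$ made there is forced by the requirement that $\Lambda$ decay, and leaves no free scalar. The genuine one-dimensional degree of freedom is the constant $e_0$ in the inner initial datum $\phi(y,\tau_0)=e_0 Z_0(y)$ of Proposition~\ref{propInnerlineartheory}, coming from the unique \emph{negative} eigenvalue $\lambda_0$ of $-\Delta-5U^4$: to prevent the inner solution from growing like $e^{|\lambda_0|\tau}$, $e_0=e_0[H]$ must be tuned, and Remark~\ref{remark:1cod} records precisely that $\psi_0\mapsto e_0[\psi_0]$ is $C^1$. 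So in your framework the functional $e^*$ and direction $e$ should be built from the unstable mode $Z_0$ (localized and rescaled), not from a shooting in the scaling parameter. Once you make this correction, your description of $\MMM$ as a Lipschitz graph, the stability of (\ref{Assumption1}) under perturbation of the spike, and the recomputation of the rate with $\gamma(\tilde q)$ are all in line with the paper's intended argument.
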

	Condition (\ref{Assumption1}) implies that the point $q$ cannot be very close to boundary, since $\gamma(q)\to \lambda_1^{-}$ as $q\to \pp \Om$ (see Lemma \ref{asymptgamma*} in Appendix \ref{app:propH}). 
	Along the proof we need to consider Dirichlet problems of the type
	\begin{align*}
		&u_t =\Delta u+\g u+e^{-2\g t}f(x) \inn \Omega \times \RR^+,\\
		&u(x,t)=0 \onn \pp \Om \times \RR^+,\\
		&u(x,0)=0 \inn \Om,
	\end{align*}
	for some $f(x)\in L^p$ with $p>2$. In order to successfully apply fixed point arguments, we need 
	\begin{align*}
		\norm{u(\cdot,t)}_{\infty}\leq C e^{-2 \g t} 
	\end{align*}
	for $t>1$, which requires condition (\ref{Assumption1}). Such assumption (\ref{Assumption1}) is useful to get rid of a \emph{resonance effect}, lastly due to the fact that both the Dirichlet heat kernel $p_t^{\Omega}(x,y)$ and the parameter $\mu(t)$ decay exponentially fast.
	Indeed, the long-term behavior of the Dirichlet heat kernel is
	\begin{align*}
		p_t^{\Omega}(x,y)\sim \phi_1(x)\phi_1(y)e^{-\lambda_1 t} \ass t \to \infty,
	\end{align*}
	where $\phi_1$ is the positive eigenfunction of $-\Delta$ in $\Om$ with $\norm{\phi_1}_2=1$.
	We recall the properties of the Dirichlet heat kernel in \S\ref{sec:invj}. More specifically, we use assumption (\ref{Assumption1}) in the following steps of the proof:
	\begin{itemize}
		\item to get estimates for $J_1,J_2$ in Lemma \ref{EstJ1fromdotLambda} and Lemma \ref{Lemma:estJ2} respectively;
		\item in Lemma \ref{LinearLemmapsi} for solving the outer problem;
		\item in Proposition \ref{Proposition:invj} for the invertibility theory of the nonlocal operator $\JJJ$.
	\end{itemize}
	The number $\g(q)$ is related to the Brezis-Nirenberg problem. Define
	\begin{align*}
		\SSS_a(\Om){\coloneqq}\inf_{u\in H_0^1(\Omega)\setminus\{0\}}  \frac{\int_{\Om}\abs{\nabla u}^2\ddx-a \int_{\Om}\abs{u}^2\ddx}{\qty(\int_{\Om}\abs{u}^6\ddx)^{\frac{1}{3}}}.
	\end{align*}
	In the celebrated work \cite{brezisnirenberg}, Brezis and Nirenberg proved the existence of a constant $\mu_{\text{BN}}\in (0,\lambda_1)$ such that
	\begin{align*}
		\bn{\coloneqq} \inf\{a>0: \SSS_a(\Om)<\SSS_0\}.
	\end{align*}
	Then, Druet \cite{druet} proved $$\min_{q \in \Om}\g(q)= \mu_{\text{BN}}(\Om).$$
	Thus, when $3\mu_{BN}(\Om)<\lambda_1(\Om)$ is true, condition (\ref{Assumption1}) is satisfied in some open set $\mathcal{O}\subset \Om$, and Theorem \ref{mainteo} gives the desired solution with blow-up at any fixed point $q\in \mathcal{O}$. 
	
	When we consider the radial case $\Omega= B_1(0)$ and $q=0$, an explicit computation gives $\gamma(0)=\pi^2/4$, that is consistent with (\ref{muGK}). In fact, this is the minimum value for $\g(q)$ since Brezis and Nirenberg computed $\mu_{BN}(B_1)=\pi^2/4$. By symmetry, we deduce that condition (\ref{Assumption1}) is satisfied in the ball $B_{d^*}$, where $d^*=\abs{q^*}$ and $q^*$ is a point such that $\g\qty(q^*)=\lambda_1/3$.

	Also, we can consider smooth perturbation of the ball. Let $f: \bar B_1 \to \RR^3$ a smooth map and for $t>0$ define
	\begin{align*}
		\Omega_t:=\{x+t f(x)\,:\, x \in B_1\}.
	\end{align*}
	For small $t$ the domain $\Omega_t$ is diffeomorphic to the ball. Writing $\lambda_1$ as Rayleigh quotient and using the definition $\mu_{BN}$ we can easily see that $\mu(\Omega_t)=\mu(B_1)+\ve(t)$ and $\lambda_1(\Omega_t)=\lambda_1(B_1)+\tilde{\ve}(t)$ where $\ve(t),\tilde \ve(t)\to 0$ as $t\to 0$. Thus, for $t$ sufficiently small, the relation $3\mu_{\text{BN}}(\Omega_t)<\lambda_1(\Omega_t)$ holds, and Theorem \ref{mainteo} applies to the domain  $\Omega_t$. This shows that Galaktionov-King's {radial result is stable under small perturbation} of the domain.
	
	For the unit cube $\CCC_1$ it is known (see Remark 4.3 in \cite{wang}) that $3\mu_{\text{BN}}(\CCC_1)< \lambda_1(\mathcal{C}_1)$. Indeed, from $B_{1/2}\subset \mathcal{C}_1$ and the strict monotonicity of $\mu_{BN}(\Om)$ with respect to $\Om$ we deduce $\mu_{\text{BN}}\qty(\mathcal{C}_1)< \mu_{\text{BN}}\qty(B_{1/2})=\pi^2$. By separation of variables we easily compute $\lambda_1(\mathcal{C}_1)=3\pi^2$, thus
	\begin{align*}
		3 \mu_{\text{BN}}\qty(\mathcal{C}_1)< 3 \mu_{\text{BN}}\qty(B_{1/2})=3 \pi^2 =\lambda_1(\CCC_1).
	\end{align*}
	Hence, a slight modification of Theorem \ref{mainteo} applies: since $\CCC_1$ is a Lipschitz domain, by the parabolic regularity theory we get a smooth solution $u(x,t)$ in $\Omega\times \RR^+$ which is Lipschitz continuous in $\bar \Om \times [t_0,\infty)$.
	
	Let $\Omega^*$ be the ball with the same volume as $\Omega$. The following estimate holds true:
	\begin{align*}
		\frac{\lambda_1(\Om^*)}{4}\leq \bn(\Om) \leq \frac{\lambda_1(\Om^*)}{4}\min_{x\in \Om}R_0(x)^2.
	\end{align*}
	The lower bound was proved in \cite{brezisnirenberg} by means of a symmetrization argument. Using harmonic transplantation Bandle and Flucher \cite{bandleflucher} proved the upper bound. Thus, if it happens that we know $\min_{x\in \Om} R_0(x)^2<4/3$ we can apply Theorem \ref{mainteo} to $\Om$. Wang \cite{wang} conjectured that $\bn/\lambda_1\in [1/4,4/9)$. In particular, condition $3\mu_{BN}(\Omega)<\lambda_1(\Omega)$ could be false for "very thin rectangles" (see \cite{wang}). The range $[1/4,4/9)$ is supported by numerical computations made by Budd and Humphries in \cite{budd}.
	
	The main differences with respect to the analogue result \cite{cdm} in dimension $n\geq 5$ are the following:
	\begin{itemize}
		\item the main asymptotic behavior in Theorem \ref{mainteo} of the blow-up is dependent on the position of the point $q\in \Om.$ As far as we know, this is a completely new phenomenon;
		\item since condition (\ref{Assumption1}) is not satisfied close to the boundary, we cannot straightforward construct multi-spike solutions in the spirit of \cite{cdm}. Indeed, roughly speaking, such construction requires spikes relatively far from each other and close to the boundary to suitably bound the interaction between the bubbles.
		\item a nonlocal operator controls the dynamic of the parameter $\mu(t)$. A similar operator has been treated in \cite{dmw1}, where the domain $\Om=\RR^3$ allows an explicit inversion of the Laplace transform. 
	\end{itemize}

	The approach developed in this work is inspired by \cite{cdm}, \cite{dmw1} and \cite{harmonic}. It is constructive and allows an accurate analysis of the asymptotic dynamics and stability. 
	Let describe the general strategy. The first step consists in choosing a good approximated solution $u_3$. Here the word 'good' means that the associated error function
	\begin{align*}
		S[u](x,t){\coloneqq}-\pp_t u+\Delta u+u^{5}
	\end{align*}  
	is sufficiently small in $\Omega$. Part of the problem consists in understanding what smallness on $S[u]$ is sufficient to find a perturbation $\tilde \phi$ such that 
	\begin{align*}
		u=u_3+\tilde \phi
	\end{align*}
	is an exact solution to (\ref{CHE}). In \S\ref{sec:ansatz} we start with the scaled Aubin-Talenti bubble as building block and we modify it to match the boundary at the first order. Then we realize that we need two improvements. The first one is a global correction
	useful to get solvability conditions for the elliptic linearized operator around the standard bubble 
	$$
	L[\phi]{\coloneqq}\Delta \phi+5U^4(y)\phi.
	$$ 
	Such improvement produces a nonlocal operator which governs the second order term in the expansion of the scaling parameter $\mu(t)$. This is a \emph{low-dimensional effect}, lastly due to the fact that
	\begin{align*}
		Z_{n+1}(r){\coloneqq}\frac{n-2}{2}U(r)+U'(r)r \notin L^2(\mathbb{R}^n)\quad \text{when} \quad n\in \{3,4\}, 
	\end{align*} 
	where $Z_{n+1}$ is the unique (up to multiples) bounded radial function belonging to the kernel of $L[\phi]$. Actually, the dimensional restriction in \cite{cdm} was specially designed to avoid this effect and the presence of the corresponding nonlocal term.
	Then, by choosing $\g(q)$ as in (\ref{Assumption1}) we reduce the error close to $x=q$; this gives the asymptotic behavior (\ref{behmuxi}) of $\mu(t)$ at the first order. A second correction, local in nature, removes non-radial slow-decay terms and gives the asymptotic for $\xi$ written in (\ref{behmuxi}). At this point we have a sufficiently good ansatz, called $u_3$, to start the so called \emph{inner-outer gluing procedure} in \S\ref{sec:innout}: we decompose the problem in a system of nonlinear problems, namely an inner and an outer problem which are weakly coupled thanks to the smallness of $S[u_3]$. We solve the outer problem in \S\ref{sec:out}, that is a perturbation of the standard heat equation, for suitable parameters $\mu,\xi$ and decaying solution $\phi$ of the inner problem. Then, we look at the inner regime. We can find the inner solution, by fixed point argument, using the adaptation to $n=3$ of the linear theory for the inner problem developed in \cite{cdm}. This requires the solvability of orthogonality conditions which, in \S\ref{sec:Choiceparameters}, we prove to be equivalent to a \emph{nonlocal system} in the parameters $\mu,\xi$. We solve it in \S\ref{sec:solveSystem} using the invertibility of a nonlocal equation, which we achieve in \S\ref{sec:invj} by means of a Laplace transform argument combined with asymptotic properties of the heat kernel $p_t^\Om(x,y)$. At this point we are ready to find the inner solution $\phi$ in \S\ref{sec:finalargument}, which concludes the proof of Theorem \ref{mainteo}.
	
	Of course, the full problem consists in finding the exact initial datum that evolves in an infinite time blow-up solution. We find the positive initial condition
	\begin{align*}
		u(x,t_0)=&\mu(t_0)^{-1/2}U\qty(\frac{x-\xi(t_0)}{\mu(t_0)})-\mu(t_0)^{1/2}H_\g(x,t_0)+\mu_0(t_0)^{1/2}J_1(x,t_0)\\\nonumber
		&+\mu(t_0)^{-1/2}\phi_3\qty(\frac{x-\xi(t_0)}{\mu(t_0)},t_0)\eta_{l(t_0)}\qty(\abs{\frac{x-\xi(t_0)}{\mu(t_0)}})\\\nonumber
		&+\mu_0(t_0)^{1/2}\psi(x,t_0)+\eta_{R(t_0)}\qty(\abs{\frac{x-\xi(t_0)}{\mu(t_0)}})\mu(t_0)^{-1/2}e_0Z_0\qty(\frac{x-\xi(t_0)}{\mu(t_0)}),
	\end{align*}
	for $t_0$ fixed sufficiently large, where the existence of $\mu,\xi,\phi,\psi$ and the constant $e_0$ is a consequence of fixed point arguments, $\eta_l,l,\eta_R,R$ are defined in (\ref{mu0def}), (\ref{defetaL}) and the functions $\phi_3,J_1$ solve the problems (\ref{phi3problem}) and (\ref{ProblemJ_1}). We remark that we do not know if the solution with this initial datum corresponds to a threshold solution in the sense of \cite{nst}.\\

	We conclude this introduction giving a short bibliographic overview about related problems and recent developments. 
	The rigorous construction of blow-up solutions by bubbling, that is a solution $u(x,t)\approx U_{\mu(t),\xi(t)}(x)$ with $\mu \to 0$ for some special profile $U$, has been extensively studied in many important problems with criticality. For instance, in the harmonic map flow \cite{harmonic, RSstableflow, quanti}, in the Patlak-Keller-Segel model for chemotaxis \cite{ghoulmasmoudi, refined, collapsing, ksdelpino}, in the energy-critical wave equation \cite{dkm, km, kst,smoothII} and energy critical Schrödinger map problem \cite{mrr}.
	
	Concerning the Cauchy problem
	\begin{align*}
		&u_t =\Delta u +u^{\frac{n+2}{n-2}} \inn \RR^n \times \RR^+,\\
		&u(x,0)=u_0(x) \inn \RR^n,
	\end{align*}
	infinite blow-up positive solutions have been found in dimension $n=3$ in \cite{dmw1} by del Pino, Musso and Wei, with different blow-up rates depending on the space decay of the initial datum. Recently, Wei, Zhang and Zhou \cite{wei4D} detected analogue solutions in dimension $n=4$. 
	These works were inspired by conjectures presented in \cite{filaking}, where Fila and King used matched asymptotic methods to formally analyze the behavior of infinite blow-up solutions in the radial case, also conjecturing that for $n\geq 5$ such solutions do not exist. However, adding drift terms to the equation, Wang, Wei, Wei, Zhou \cite{drift} have shown examples of positive initial datum which evolves in multi-spike infinite blow-up by bubbling. For $n\geq 7$, del Pino, Musso and Wei \cite{bubbletowercauchy} proved the existence of sign-changing solutions which blow-up in infinite time in the form of tower of bubbles, that is a supersolution of Aubin-Talenti bubbles at a single point. For the analogue backward problem where $t\in (-\infty,0)$, ancient solutions which blow-up in infinite time have been detected by Sun, Wei, Zhang in \cite{ancientblown7} for $n\geq 7$.\\
	As we have already mentioned, blow-up for the nonlinear heat equation
	\begin{align*}
		&u_t = \Delta u + \abs{u}^{p-1}u \inn \Omega \times (0,T),
	\end{align*}
	can also happen in finite time $T<\infty$. We call it Type I blow-up if the solution satisfies
	\begin{align*}
		\limsup_{t\to T} (T-t)^{\frac{1}{p-1}}\norm{u(\cdot,t)}_{L^\infty(\Om)}<\infty,
	\end{align*}
	otherwise, if
	\begin{align*}
		\limsup_{t\to T} (T-t)^{\frac{1}{p-1}}\norm{u(\cdot,t)}_{L^\infty(\Om)}=\infty,
	\end{align*} 
	we have Type II blow-up. Several works have focused on constructing finite time blow-up solutions for the Cauchy problem. Positive Type II blow-up solutions do not exist in dimension $n\geq 7$, see Wang and Wei \cite{TypeIINonEx}, or under radial assumptions in any dimension $n\geq 3$, see Matano-Merle \cite{matanomerle1} and the pioneering work by Filippas-Herrero-Vel\'azquez \cite{filippas}. In dimension $n \geq 7$, Collot, Merle and Rapha\"el in \cite{collot} classified the dynamics near the Aubin-Talenti bubble $U$ in the $\dot{H}^1$ topology. In particular, they ruled out the Type II scenario for initial conditions $u_0$ such that $\norm{u_0-U}_{\dot H^1(\RR^n)}$ is sufficiently small. The existence of positive Type II blow-up in dimensions $n\in \{3,4,5,6\}$ is an open problem.
	
	Type II blow-up it is still admissible for sign-changing solutions, and in fact examples have been found.
	Type II blow-up solutions have been constructed by Schweyer \cite{SchweyerTypeII} in dimension $4$ under radial assumption and later by del Pino, Musso, Wei and Zhou in the non-radial setting \cite{typeIIdmwzR4} with admissible multi-spike behavior. Also, Type II blow-up solutions have been detected in dimension $n\in \{3,5,6\}$ in \cite{typeII3Ddpmwzz,type2dmwc5D,Harada5,Harada6} with different blow-up rates. 
	Type II blow-up for the critical heat equation can also happen on curves contained in the boundary of special domains with axial symmetry, see \cite{geodriven}.

	There have been developments also in the nonlocal generalization of these problems. Concerning the fractional heat equation with critical exponent
	\begin{align*}
		u_t = - \qty(-\Delta)^s u + \abs{u}^{\frac{4s}{n-2s}}u,
	\end{align*}
	Cai, Wang, Wei, Yang \cite{caiwei2022} have recently constructed solutions for both the forward and backward Cauchy problem which are sing-changing tower of bubbles at the origin for $n>6s$, and $s\in (0,1)$.  For $n \in (4s,6s)$ and $s\in (0,1)$ blow-up in finite time has been proved in \cite{cwz2020}, which is a fractional continuation of the local Type II blow-up cases $n=4,s=1$ in \cite{SchweyerTypeII} and $n=5,s=1$ in \cite{type2dmwc5D}. Regarding the associated Dirichlet problem,
	Musso, Sire, Wei, Zheng and Zhou provided in \cite{mswzzFrac} the existence of positive multi-spike infinite-time blow-up on bounded smooth domains for $n\in (4s,6s)$ and $s\in (0,1)$.

	\section{Approximate solution and estimate of the associated error}\label{sec:ansatz}
	In this section we construct an approximate solution to the problem
	\begin{align}\label{eqCHE}
		\begin{cases}
			u_t = \Delta u + u^5 & \inn \Omega \times \RR^+,\\
			u=0& \onn \partial \Omega \times \RR^+,
		\end{cases}
	\end{align}
	and we compute the associated error. Without loss of generality, we construct a solution that blows-up at $q=0 \in \Om$.
	The first approximation $u_1$ is chosen by selecting a time-scaled version of the stationary solution to the Yamabe problem
	\begin{align*}
		\Delta U + U^5=0 \inn \RR^3,
	\end{align*}
	properly adjusted to be small at the boundary $\pp \Omega$. This is constructed in \S\ref{constr:u_1}. In order to make the error small at the blow-up point, we need to select a precise first order for the dilation parameter $\mu(t)$, which matches the radial asymptotic found in \cite{gk2}. However, we observe in \S\ref{subsec:erroru_1} that $u_1$ is not close enough to an exact solution to make our perturbative scheme rigorous. In \S\ref{constr:u_2} we make a global improvement $u_2$. Such correction involves a nonlocal operator in the lower order term of $\mu(t)$, similar to a half-fractional Caputo derivative. The last improvement $u_3$ is only local, and it removes slow-decaying terms in non-radial modes by selecting the first order asymptotic of the translation parameter $\xi(t)$.

	\subsection{First global approximation}\label{constr:u_1}
	Our building blocks are the scaled Aubin-Talenti bubbles (\ref{Talentibubble})
	which satisfy
	\begin{align}\label{eqTal}
		\Delta U_{\mu,\xi}+ U_{\mu,\xi}^5=0 \inn \RR^3.
	\end{align}
	We look for a solution of the form $u_1(x,t) \approx U_{\mu(t),\xi(t)}(x)$.
	We make an ansatz for the parameters $\mu(t),\xi(t)$. Assuming that $\mu(t)\to 0$ and $\xi\to 0\in \Om$ as $t\to \infty$, we notice that $U_{\mu,\xi}(x)$ is concentrating around $x=0$ and it is uniformly small away from it. For this reason, we should have
	\begin{align}\label{u_1eqappr}
		\pp_t u_1- \Delta u_1
		&=u_1(x,t)^5 \\\nonumber
		&\approx \delta_0(x-\xi) \int_{\RR^3} \qty(\mu^{-1/2}U\qty(\frac{x-\xi}{\mu}))^5 \ddx\\ \nonumber
		&= \delta_0(x-\xi)\mu^{1/2} \int_{\RR^3} U(y)^5 \ddy \\ \nonumber
		&= \delta_0(x-\xi) c_3 \mu^{1/2}.
	\end{align}
	
	Let $\mu_0(t)$ the first order of $\mu(t)$, that is
	\begin{align*}
		\mu(t)=\mu_0(t)(1+o(1)) \ass t\to \infty.
	\end{align*}
	From (\ref{u_1eqappr}) we define the scaled function 
	$$v(x,t){\coloneqq}\mu^{-1/2}u_1(x,t),$$ 
	which should satisfy
	\begin{align}\label{eqdilated}
		&v_t \approx \Delta v + \qty( -\frac{\dot \mu}{2\mu}	)v + c_3 \delta_0(x-\xi) \inn \Omega \times \RR^+,\\\nonumber
		&v= 0 \onn \partial \Omega \times \RR^+.
	\end{align}
	We choose the parameter $\mu_0(t)$ such that
	\begin{align*}
		-\frac{\dot\mu_0(t)}{2\mu_0(t)}=\gamma, 
	\end{align*}
	for some $\gamma \in \RR^+$ that will be fixed later. This is equivalent to choose
	\begin{align}\label{mu0def}
		\mu_0(t)= b e^{-2\gamma t},
	\end{align}
	for some $b\in \RR^+$. We can fix $b=1$. Indeed, the equation is translation-invariant in time: we construct, for a sufficiently large initial time $t_0$, a solution $u(x,t)$ in $\Omega \times [t_0,\infty)$ and we conclude that $u_0(x,t){\coloneqq} u(x,t-t_0)$ is a solution to (\ref{eqCHE}) in $\Omega \times [0,\infty)$. 
	We observe that after shifting the initial time, the main dilation parameter $\mu_0$ becomes $\mu_0(t-t_0)=e^{2\gamma t_0}e^{-2\gamma t}$.
	
	With this choice (\ref{eqdilated}) reads
	\begin{align*}
		&v_t \approx  \Delta v +\gamma v+c_3 \delta_0(x-\xi)  \inn \Omega \times \RR^+,\\
		&v=0 \onn \partial \Omega \times \RR^+.
	\end{align*}
	Hence, for large time we should have
	\begin{align}\label{v_app}
		v(x,t)\approx   G_{\gamma}(x,\xi),
	\end{align}
	where $G_\gamma(x,y)$ is the Green function for the boundary value problem
	\begin{align}\label{Ggammaequation}
		&-\Delta_x G_\gamma(x,y)-\gamma G_{\gamma}(x,y)=c_3 \delta(x-y) \inn \Omega,\\\nonumber
		&G(\cdot,y)=0 \onn \partial\Omega.
	\end{align}
	We write
	\begin{align}\label{gdec}
		G_\gamma(x,y)=\Gamma(x-y)-H_\gamma(x,y),
	\end{align}
	where 
	$$
	-\Delta_x \Gamma(x)=c_3 \delta_0(x), \quad \Gamma(x)=\frac{\at}{\abs{x}}
	$$ 
	is (a multiple of) the fundamental solution of the Laplacian in $\RR^3$ and the regular part $H_\gamma(x,y)$, for fixed $y\in \Om$, satisfies
	\begin{align}\label{eqH_g}
		&-\Delta_x H_\gamma(x,y)-\gamma H_\gamma(x,y)=-\gamma \Gamma(x-y)\inn \Omega,\\\nonumber
		&H_\gamma(x,y)=\Gamma(x-y)\onn \partial \Omega .
	\end{align} 
	The function $H_\g(\cdot,y)\in C^{0,1}(\Omega)$ when $\g\in (0,\lambda_1)$. For later purpose, we also write 
	\begin{align}\label{decHg}
		H_\g(x,y)=\theta_\g(x-y)-h_\g (x,y),
	\end{align}
	where 
	\begin{align}\label{defthetag}
		\theta_\g(x){\coloneqq}\alpha_3 \frac{1-\cos(\sqrt{\g}\abs{x})}{\abs{x}}
	\end{align}
	and $h_\g(\cdot,y)\in C^\infty(\Omega)$ solves
	\begin{align}\label{eqhg}
		&\Delta_x h_\g(x,y)+\g h_\g(x,y)=0 \inn \Omega,\\\nonumber
		&h_\g(x,y)=-\alpha_3 \frac{\cos(\sqrt{\g}\abs{x-y})}{\abs{x-y}} \onn \pp \Omega.
	\end{align}
	We also define the Robin function 
	\begin{align*}
		R_\g(x){\coloneqq}H_\g(x,x)=h_\g(x,x).
	\end{align*}
	In terms of the original function $u_1$ the equation \reff{v_app} reads as
	\begin{align*}
		u_1(x,t)\approx \mu^{1/2}\frac{\alpha_3 }{\abs{x-\xi}} -  \mu^{1/2} H_\gamma(x,\xi).
	\end{align*}
	We notice that far away from the origin we have
	\begin{align*}
		U_{\mu,\xi}(x)\approx  \mu^{1/2}\frac{\alpha_3}{\abs{x-\xi}}.
	\end{align*}
	This formal analysis suggests the ansatz
	\begin{align*}
		u_1(x,t){\coloneqq} U_{\mu,\xi}(x)- \mu^{1/2}H_\gamma(x,\xi).
	\end{align*}
	\subsubsection{Dilation parameter $\mu(t)$}
	We write the full dilation parameter in the form
	\begin{equation*}
		\mu=\mu_0(t)e^{2\Lambda(t)},
	\end{equation*}
	for some $\Lambda(t)=o(1)$ as $t\to \infty$ to be found, where
	\begin{equation*}
		\mu_0(t)=e^{-2\gamma t}.
	\end{equation*}
	In this notation we have
	\begin{align*}
		\frac{\dot \mu(t)}{2\mu(t)}
		&=\frac{\dot \mu_0 e^{2\Lambda}}{2\mu_0 e^{2\Lambda}}+\frac{2\dot{\Lambda} \mu_0 e^{2\Lambda}}{2\mu_0 e^{2\Lambda}}\\&
		=-\gamma+\dot \Lambda(t),
	\end{align*}
	and
	\begin{equation*}
		\Lambda(t)=-\int_{t}^{\infty}\dot \Lambda(s)\dds,
	\end{equation*}
	where $\dot \Lambda(s)$ is an integrable function in any $[t_0,\infty)$. 
	\subsection{Error associated to $u_1$}\label{subsec:erroru_1}
	\medskip
	The next step consists in computing the error associated to the first ansatz $u_1$. We define the error operator
	\begin{align*}
		S[u]\coloneqq -\partial_t u+ \Delta u+ u^5.
	\end{align*}
	Of course, solving $S[u]=0$ is equivalent to solve the equation in (\ref{eqCHE}). 
	It is well-known that all bounded solutions to the linearized operator
	\begin{align*}
		\Delta_y \phi+ 5U(y)^{4}\phi =0 \inn \RR^n,
	\end{align*}
	are linear combinations of the functions $\{Z_i\}_{i=1}^4$ defined as
	\begin{align*}
		Z_i(y)\coloneqq\partial_{y_i} U(y), \quad i=1,2,3,
	\end{align*}
	and
	\begin{align*}
		Z_4(y)\coloneqq\frac{1}{2}U(y)+y\cdot \nabla U(y)=\frac{\alpha_3}{2} \frac{1-\abs{y}^2}{\qty(1+\abs{y}^2)^{3/2}}.
	\end{align*}
	We define the scaled variable
	\begin{align*}
		y\coloneqq y(x,t)\coloneqq\frac{x-\xi(t)}{\mu(t)}.
	\end{align*}
	Now, we compute $S[u_1](x,t)$ for $x\neq \xi(t)$. We have
	\begin{align*}
		\Delta u_1&=\mu^{-1/2}\Delta_x U\qty(\frac{x-\xi}{\mu}) - \mu^{1/2}\Delta_x H_\gamma(x,\xi)\\
		&=-\mu^{-5/2}U(y)^5+\mu^{1/2}\qty(\gamma H_\gamma(x,\xi)-\frac{\gamma \alpha_3}{\abs{x-\xi}})\\
		&=-\mu^{-5/2}U(y)^5+\mu^{1/2}\gamma H_\gamma(x,\xi)-\mu^{1/2}  \frac{\gamma\at }{\abs{x-\xi}},
	\end{align*}
	where we used equations \reff{eqTal} and \reff{eqH_g} for $U$ and $H_\gamma$ respectively.
	Using the definition of $Z_4$, the time-derivative can be written as
	\begin{align*}
		\partial_t u_1=&-\frac{1}{2}\frac{\dot \mu}{\mu}\mu^{-1/2}U(y)+\mu^{-1/2}\nabla_y U(y)\cdot \qty[-\frac{\dot \xi}{\mu}-\frac{\dot \mu}{\mu}y]\\
		&-\frac{1}{2}\frac{\dot \mu}{\mu} \mu^{1/2}H_\gamma(x,\xi)-\mu^{1/2}\dot \xi \cdot \nabla_{x_2}H_\gamma(x,\xi)\\
		=&-\qty(\frac{\dot \mu}{2\mu})\qty[\mu^{-1/2}2Z_4(y) +\mu^{1/2}H_\gamma(x,\xi)	]\\&-\mu^{-3/2}\dot\xi \cdot \nabla_y U -\mu^{1/2}\dot \xi \cdot \nabla_{x_2}H_\gamma(x,\xi).
	\end{align*}
	Hence, the error associated to $u_1$ is
	\begin{align}\label{errorS[u_1]}
		S[u_1]=&\dot \Lambda \qty(\mu^{-1/2}2Z_4(y) +\mu^{1/2}H_\g(x,\xi))-\g \mu^{-1/2}\qty( 2Z_4(y)+\frac{ \alpha_3}{\abs{y}})\\\nonumber
		&+\mu^{-3/2}\dot \xi \cdot \nabla_y U(y)+\mu^{1/2}\dot \xi \cdot \nabla_{x_2}H_\gamma(x,\xi) \\\nonumber
		&-\mu^{-3/2} 5U(y)^4 H_\g(x,\xi)\\\nonumber
		&+\mu^{-5/2}\qty[	\qty(U(y)-\mu H_\gamma(x,\xi))^5-U(y)^5+\mu 5U(y)^4 H_\g(x,\xi)].
	\end{align}
	\subsection{Global improvement}\label{constr:u_2}
	The remaining part of this section concerns the improvement of the natural ansatz $u_1$. Later in the argument we will divide the error in outer and inner part. We realize that solving the inner-outer system requires a global and a local improvement. 
	Reading Proposition \ref{propInnerlineartheory}, which is the linear theory for the inner problem, we see that, to get decay in $\phi(y,\tau)$ at distance $R$ we need $a>1$ in the definition of $\norm{h}_{\nu,2+a}$. This smallness at distance $R$ will make the inner and outer regime weakly decoupled. Our particular $h$ will satisfy $\norm{h}_{\nu,2+a}<\infty$ with $a=2$, hence we will use estimate (\ref{estinnSimplified}). Thus, we say that a term is slow-decay in space if it is not controlled by $(1+\abs{y})^{-4}$.
	We can find an exact perturbation with our scheme if we remove such terms. Looking at (\ref{errorS[u_1]}) we observe that all the terms in the first two lines are slow-decay. 
	Using the inequality $\mu(t)\lesssim (1+\abs{y})^{-1}$ we can negotiate decay in time with decay in space if needed in other terms. {For the moment we can assume $\dot \Lambda, \Lambda,  \dot \xi,\xi$ bounded by some power of $\mu(t)$.} Later we shall specify precise norms for these parameters.
	Firstly, we decompose
	\begin{align*}
		\mu^{-3/2} 5U(y)^4 H_\g(x,\xi)=&\mu^{-3/2} 5U(y)^4 \theta_\g(x-\xi) \\
		&-\mu^{-3/2}5U(y)^4 h_\g(x,\xi).
	\end{align*}
	We define
	\begin{align*}
		u_2(x,t)=u_1(x,t)+\mu_0^{1/2}J[\dl](x,t).
	\end{align*}
	The new error reads as 
	\begin{align*}
		S[u_2]&=S[u_1]+\qty(-\pp_t +\Delta_x)\qty(\mu_0^{1/2}J[\dl](x,t))+u_2^5-u_1^5\\
		&=S[u_1]+\mu_0^{1/2}\qty(-\pp_t  + \Delta_x + \gamma )J[\dl]+u_2^5-u_1^5.
	\end{align*}
	Let
	\begin{align*}
	J[\dl](x,t):=J_1[\dl](x,t)+J_2(x,t).
	\end{align*}
	Plugging $S[u_1]$ given by (\ref{errorS[u_1]}) into $S[u_2]$ we get
	\begin{align*}
		S[u_2]=& \mu^{-3/2}\dot \xi \cdot \nabla_y U(y) +\mu^{1/2}\dot \xi \cdot \nabla_{x_2} H_\g(x,\xi)+\mu^{-3/2}5U(y)^4h_\g(x,\xi)\\
		&+\mu_0^{1/2}\qty{(-\pp_t + \Delta_x + \gamma )J_1+\qty(\frac{\mu}{\mu_0})^{\frac{1}{2}}\dl\qty( \mu^{-1} 2Z_4\qty(y)+H_\g(x,\xi))}\\
		&+\mu_0^{1/2}\qty{(-\pp_t + \Delta_x + \gamma ) J_2-\qty(\frac{\mu}{\mu_0})^{\frac{1}{2}}\qty[\gamma \mu^{-1}\qty(2Z_4(y)+\frac{\alpha_3}{\abs{y}})+\mu^{-2}5U^4 \theta_\g(\mu y)]}\\
		&+\mu^{-5/2}\qty[\qty(U(y)-\mu H_\g(x,\xi)+\mu \qty(\frac{\mu_0}{\mu})^{1/2}J[\dot \Lambda](x,t))^5-U(y)^5+\mu 5U(y)^4 H_\g(x,\xi)].
	\end{align*}
	We select $J_1[\dot \Lambda](x,t)$ such that
	\begin{align}\label{ProblemJ_1}
		&\pp_t J_1 = \Delta_x J_1+ \gamma J_1+\qty(\frac{\mu}{\mu_0})^{\frac{1}{2}}\dl\qty( \mu^{-1} 2Z_4\qty(\frac{x-\xi}{\mu})+H_\g(x,\xi))\inn \Omega \times [t_0-1,\infty),\\\nonumber
		&J_1(x,t)=0 \inn \pp \Omega \times [t_0-1,\infty),\\\nonumber
		&J_1(x,t_0-1)=0 \onn \Omega,
	\end{align}
	and 
	\begin{align}\label{ProblemJ2}
		\pp_t J_2 =& \Delta_x J_2+ \gamma J_2-\qty(\frac{\mu}{\mu_0})^{\frac{1}{2}}\bigg[\gamma \qty(\mu^{-1}2Z_4\qty(\frac{x-\xi}{\mu})+\frac{\alpha_3}{\abs{x-\xi}})\\\nonumber
		&+\mu^{-2}5U\qty(\frac{x-\xi}{\mu})^4 \theta_\g(x-\xi)\bigg] \inn \Omega \times [t_0,\infty),\\\nonumber
		J_2(x,t)&=0 \onn \pp \Omega \times [t_0,\infty),\\\nonumber
		J_2(x,t_0&)=0 \inn \Omega.
	\end{align} 
	The choice of defining $J_1$ from the time $t_0-1$, as well as $\dot \Lambda(t)$, will become clear in \S\ref{sec:invj}. For the variable $\xi(t)$ it is enough to define the extension $\xi(t)=\xi(t_0)$ for $t\in [t_0-1,t_0)$.
	With these choices the error associated to $u_2$ reads as
	\begin{align}\label{errS[u_2]}
		S[u_2]=& \mu^{-3/2}\dot \xi \cdot \nabla_y U(y) +\mu^{1/2}\dot \xi \cdot \nabla_{x_2}H_\g(x,\xi)+\mu^{-3/2}5U(y)^4h_\g(x,\xi)\\\nonumber
		&+\mu^{-5/2}\qty[\qty(U(y)-\mu H_\g(x,\xi)+\mu \qty(\frac{\mu_0}{\mu})^{1/2}J[\dot \Lambda](x,t))^5-U(y)^5+\mu 5U(y)^4 H_\g(x,\xi)].
	\end{align}
	\subsubsection{Choice of $\gamma$}
	We observe that with this choice of $J_2$ we remove the singular term $\abs{x-\xi}^{-1}$ from (\ref{errorS[u_1]}).
	At this point, the main error at $x=\xi(t)$ is given by the first order of the nonlinear term
	\begin{align*}
		\mu^{-3/2}5 U(0)^4 R_\g(\xi),
	\end{align*}
	which is, in general, of size $\mu(t)^{-3/2}$. 
	We realize that we can reduce this error by selecting $\gamma$ such that $R_\g(0)=0$. The existence and uniqueness of such number is given by the following lemma.
	\begin{lemma}\label{Lemma:gammaexists}
		There exists a unique $\g=\gamma^*(0) \in (0,{\lambda_1})$ such that $R_{\gamma^*}(0)=0$.
	\end{lemma}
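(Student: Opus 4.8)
The plan is to study the scalar function $\g \mapsto R_\g(0) = H_\g(0,0)$ on the interval $(0,\lambda_1)$ and show it is continuous, strictly decreasing, positive near $\g = 0$, and tends to $-\infty$ (or at least becomes negative) as $\g \to \lambda_1^-$; the conclusion then follows from the intermediate value theorem together with strict monotonicity for uniqueness. To set this up I would first fix notation: let $w_\g = H_\g(\cdot,0)$ be the solution of \eqref{eqH_g} with $y=0$, and use the equivalent formulation through $h_\g$ in \eqref{eqhg}, since $h_\g$ solves a homogeneous Helmholtz equation with smooth boundary data and $R_\g(0) = h_\g(0,0)$. Continuity and smoothness of $\g \mapsto h_\g$ on $(0,\lambda_1)$ follows from standard elliptic theory: the operator $-\Delta - \g$ is invertible on $H_0^1(\Om)$ for $\g < \lambda_1$, the boundary datum $-\alpha_3 \cos(\sqrt\g |x|)/|x|$ depends analytically on $\g$, and the resolvent depends analytically on $\g$ away from the spectrum.

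The heart of the argument is monotonicity. First I would check the value at $\g = 0$: there $R_0(0) = H_0(0,0)$ is the classical Robin function of the Laplacian evaluated at an interior point, which is strictly positive (the regular part of the Green function is positive in the interior — this is a consequence of the maximum principle applied to $H_0(\cdot,0) = \Gamma(\cdot) - G_0(\cdot,0)$, which is harmonic, equals $\Gamma > 0$ on $\pp\Om$, hence is positive inside). Then I would differentiate in $\g$: writing $\dot w = \pp_\g H_\g(\cdot,0)$, differentiating \eqref{eqH_g} gives $-\Delta \dot w - \g \dot w = H_\g(\cdot,0) - \pp_\g(\gamma\Gamma)$-type terms, with a corresponding boundary condition from differentiating $\cos(\sqrt\g|x|)/|x|$. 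A cleaner route is to use a Green's-identity / Hadamard-type variational formula: pairing the equation for $G_\g(\cdot,0)$ with $\pp_\g G_\g(\cdot,0)$ and integrating by parts over $\Om$ yields an identity expressing $\pp_\g R_\g(0)$ in terms of $\int_\Om G_\g(x,0)^2\,dx$ (up to a positive constant), which is manifestly negative. This is the standard fact that the Robin function of $-\Delta - \g$ is strictly decreasing in $\g$; I would carry out this integration-by-parts carefully since the singularity at $x=0$ requires excising a small ball and checking the boundary terms on $\pp B_\ve(0)$ vanish as $\ve \to 0$ (they do, because $H_\g$ is bounded near $0$ while the singular part $\Gamma$ contributes a term that cancels).

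Finally, the behavior as $\g \to \lambda_1^-$: I would show $R_\g(0) \to -\infty$. The mechanism is that $G_\g(\cdot,0) = c_3(-\Delta-\g)^{-1}\delta_0$ blows up in this limit — expanding in the Dirichlet eigenbasis $\{\phi_k\}$, $G_\g(x,0) \sim c_3 \frac{\phi_1(x)\phi_1(0)}{\lambda_1 - \g}$ as $\g\to\lambda_1^-$ (since $\phi_1(0) > 0$), so $G_\g(0,0)$ is finite only through the cancellation with the singular part, and one checks $H_\g(0,0) = \Gamma(0) - \lim_{x\to 0}(\Gamma(x-0)-G_\g(x,0))$... more precisely $R_\g(0) = \lim_{x\to 0}(\Gamma(x) - G_\g(x,0)) \to -\infty$ because the regular (non-singular in $x$) part of $G_\g$ is dominated by the diverging first-eigenmode contribution $\frac{c_3\phi_1(x)\phi_1(0)}{\lambda_1-\g} \to +\infty$. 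I expect the main obstacle to be making this last limit rigorous: one must separate the $x$-singularity of $G_\g(\cdot,0)$ from its $\g$-singularity, i.e., show the eigenfunction-expansion tail $\sum_{k\ge 2}\frac{c_3\phi_k(x)\phi_k(0)}{\lambda_k-\g}$ together with the $k=1$ term reconstructs $\Gamma$'s singularity plus a bounded remainder, uniformly as $\g\to\lambda_1^-$, so that subtracting $\Gamma$ leaves $\frac{c_3\phi_1(0)^2}{\lambda_1-\g} + O(1) \to +\infty$, forcing $R_\g(0)\to -\infty$. Granting continuity, strict monotonicity, positivity at $0$ and divergence to $-\infty$, the existence and uniqueness of $\g^*(0)$ is immediate.
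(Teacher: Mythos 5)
Your proposal is correct and takes essentially the same route as the paper: show $\gamma\mapsto R_\gamma(0)$ is smooth and strictly decreasing on $(0,\lambda_1)$, positive at $\gamma=0$ by the maximum principle, and tends to $-\infty$ as $\gamma\to\lambda_1^-$ via the divergence of the first eigenmode of $G_\gamma$, then conclude by the intermediate value theorem and monotonicity. The only difference is bookkeeping: the paper cites Lemma~A.2 of \cite{nearcrit} for the smoothness and $\partial_\gamma R_\gamma(0)<0$ (which that reference establishes by the same Hadamard-type variational identity you describe), and proves the $\gamma\to\lambda_1^-$ blow-up as Lemma~\ref{asycloselambda} in Appendix~\ref{app:propH} by precisely the eigenfunction decomposition $H_\gamma=\alpha(y)\phi_1+H_0+h_{\perp,\gamma}$ that you sketch, with the improved Poincar\'e inequality controlling the orthogonal remainder uniformly in $\gamma$.
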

	\begin{proof}
		We consider the function $R_\g(0)$ as a function of $\g$. Lemma A.2 in \cite{nearcrit} shows that 
		$$
		R_\g(0):(0,\lambda_1)\to (-\infty, R_0(0))
		$$ 
		is smooth in $(0,\lambda_1)$ and $\pp_\g R_\g(0)<0$. Lemma \ref{asycloselambda} in \autoref{app:propH} shows that $R_\g(0)\to -\infty$ as $\gamma \to \lambda_1^{-}$. By the maximum principle $H_0(x,y)>0$ for all $x,y\in \Omega$, hence we have $R_0(0)>0$ and the intermediate value theorem gives the existence of
		\begin{align*}
			\gamma^*(0)\coloneqq\sup \{	\gamma>0 :\, R_\gamma(0)>0	\}.
		\end{align*}
		Finally the monotonicity of $R_\g(0)$ implies the uniqueness of $\gamma^*(0)$. 
	\end{proof} 
	
	\begin{remark}[Regularity of $\gamma^*(x)$]
		Let $R(\g,x) \coloneqq R_{\g}(x)$. Since $R(\g^*(x),x)=0$ and $\pp_\g R(\g,x)<0$ for all $x\in \Omega$, the implicit function theorem implies that $\gamma^*(x)\in C^1(\Omega)$ with
		\begin{align*}
			\nabla_x \gamma^*(x)=-\frac{\nabla_x R(\g,x)}{\pp_\g R(\g,x)}.
		\end{align*}
	\end{remark}
	\begin{remark}[radial case]
		We compute $\g(0)$ in case $\Omega=B_1(0)$. We look for a radial solution to
		\begin{align*}
			&\Delta H_\g+\g H_\g =\frac{\at}{\abs{x}}\inn B_1,\\
			&H_\g(x,0)=\frac{\at}{\abs{x}}\onn \pp B_1.
		\end{align*}
		We define $l_0(\abs{x}){\coloneqq}H_\g(x,0)$ for a function $l_0:[0,1]\to \RR$. Then $l_0$ solves
		\begin{align*}
			&\pp_{rr} l_0 +\frac{2}{r}\pp_r l_0 +\g l_0=\g \frac{\at}{r}\inn [0,1],\\
			&l_0(1)=\at, \quad l_0(r) \quad \text{bounded at } r=0.
		\end{align*}
		We write $l_0(r)=\at l(r)/r$, where $l(r)$ solves
		\begin{align*}
			&\pp_{rr}l+\g l= \g \inn [0,1],\\
			&l(1)=1, \quad l(r)=O(r)\quad \text{for } r\to 0.
		\end{align*}
		The solution to this problem is given by
		\begin{align*}
			l(r)=1-\cos(\sqrt{\g}r)+\cot(\sqrt{\g})\sin(\sqrt{\g}r),
		\end{align*}
		and we conclude that
		\begin{align*}
			H_\g(r,0)=\at \qty[ \frac{1-\cos(\sqrt{\g}r)}{r}+\frac{\sin(\sqrt{\g}r)}{r\tan(\sqrt{\g})}].
		\end{align*}
		In particular, for $r=0$ we find
		\begin{align*}
			R_\g(0)=H_\g(0,0)=\at \sqrt{\g}\cot(\sqrt{\g}).
		\end{align*}
		Asking for $R_\g(0)=0$
		\begin{align*}
			\gamma=\qty(\frac{\pi}{2}+k\pi)^2\quad \text{for} \quad k\in \mathbb{N},
		\end{align*}
		and, recalling that $\lambda_1(B_1)=\pi^2$, the unique value in $(0,\lambda_1)$ is
		\begin{align*}
			\gamma^*=\frac{\pi^2}{4},
		\end{align*}
		as predicted in the analysis of Galaktionov and King \cite{gk2}. 
	\end{remark}
	For the sake of simplicity we continue to use $\g=\g(0)$ to denote the selected number $\g^*(0)$. Since $R_\g(x)\in C^\infty(\Om)$ we expand
	\begin{align*}
		R_\g(\xi)=R_\g(0)+\xi \cdot \nabla_x R_\g(0)+\frac{1}{2}\xi^{\intercal} D_{xx}^2 R_\g\qty(\xi^*) \xi,
	\end{align*}
	for some $\xi^*\in \overline{[0,\xi]}$. Assuming $\xi=O(\mu)$ we conclude
	\begin{align*}
		\mu^{-3/2}5U(0)^4 R_\g(\xi)=O\qty(\mu^{-1/2}).
	\end{align*}
	
	\subsection{Local improvement and computation of the final error}
	In this section we make a further improvement and we obtain the final ansatz.
	We still need to remove from (\ref{errorS[u_1]}) the main order of the terms
	\begin{align*}
		\mu^{-3/2}\dot \xi \cdot \nabla_y U + \mu^{-3/2}5U(y)^4 h_\g(x,\xi).
	\end{align*}
	We define the final ansatz
	\begin{align*}
		u_3(x,t)\coloneqq u_2(x,t)+\mu(t)^{-1/2}\phi_3\qty(\frac{x-\xi(t)}{\mu(t)},t)\eta_{l(t)}\qty(\abs{\frac{x-\xi(t)}{\mu(t)}}).
	\end{align*}
	The function $\eta: [0,\infty)\to [0,1]$ denotes a smooth cut-off function such that $\eta(s)\equiv 1$ for $s<1$ and $\supp \eta \subset [0,2]$, and we define 
	\begin{align}\label{defetaL}
		\eta_{l(t)}\qty(\abs{y})\coloneqq\eta\qty(\frac{\abs{y}}{l(t)}),\qquad l(t)\coloneqq\frac{1}{k_2\mu},
	\end{align}
	where $k_2$ is a constant such that $B_{\frac{2}{k_2}}(0)\subset \Omega$, to ensure that $\supp \eta_{l} \Subset \Om$. Also we define the variable 
	$$z_3(x,t)\coloneqq \frac{y(x,t)}{l(t)}=\frac{x-\xi(t)}{\mu(t) l(t)}.$$
	We compute
	\begin{align*}
		\pp_t \qty(\mu^{-1/2}\phi_3 \eta_{l(t)})=&-\frac{\dot \mu}{2\mu} \mu^{-1/2}\phi_3 \eta_{l(t)}+\mu^{-1/2}\eta_{l(t)}\qty[\pp_t \phi_3+\nabla_y \phi_3 \cdot \qty(-\frac{\dot \mu}{\mu}y-\frac{\dot \xi}{\mu})]\\&+\mu^{-1/2}\phi_3 \pp_t \eta_{l(t)},
	\end{align*}
	and
	\begin{align*}
		\Delta_x\qty(\mu^{-1/2}\phi_3 \eta_{l})=&\mu^{-5/2}\eta_{l(t)}\Delta_y \phi_3 + 2\mu^{-3/2}\nabla_y \phi_3 \cdot\frac{y}{\abs{y}} \qty(\frac{\eta'\qty(\abs{z_3})}{\mu l})\\&+\mu^{-1/2}\phi_3 \qty(\frac{2}{\abs{z_3}}\frac{\eta'(\abs{z_3})}{\mu^2 l^2}+\frac{\eta''\qty(\abs{z_3})}{\mu^2 l^2}).
	\end{align*}
	We define
	\begin{align*}
		\mathcal{N}_3(y,t){\coloneqq}&\qty(U(y)^5- \mu H_\g(\mu y+\xi,\xi)+\mu \qty(\frac{\mu_0}{\mu})^{1/2}J[\dot \Lambda](\mu y +\xi,t)+\phi_3(y,t)\eta_l)^5-U(y)^5 \\&- 5U(y)^4 \qty(-\mu H_\g(\mu y +\xi,\xi)+\mu \qty(\frac{\mu_0}{\mu})^{1/2}J[\dot \Lambda](\mu y +\xi,t)+\phi_3 \eta_l)
	\end{align*}
	Thus, using (\ref{errS[u_2]}),
	\begin{align*}
		S[u_3]=&-\pp_t\qty(\mu^{-1/2}\phi_3 \eta_{l})+\Delta_x\qty(\mu^{-1/2}\phi_3 \eta_{l})+u_3^5-u_2^5+S[u_2]\\
		=&\mu^{-3/2}\dot \xi \cdot \nabla_y U+\mu^{1/2}\dot \xi \cdot \nabla_{x_2} H_\g(x,\xi)+\mu^{-3/2}5U(y)^4 h_\g(x,\xi)\\
		&+\mu^{-5/2}\mathcal{N}_3(y,t)+ 5U(y)^4 \mu^{-3/2} \qty(\frac{\mu_0}{\mu})^{1/2} J(x,t)+\mu^{-5/2}\phi_3 \eta_l 5U(y)^4\\
		&-\qty(-\frac{\dot \mu}{2\mu} )\mu^{-1/2}\phi_3 \eta_{l(t)}-\mu^{-1/2}\eta_{l(t)}\qty[\pp_t \phi_3+\nabla_y \phi_3 \cdot \qty(-\frac{\dot \mu}{\mu}y-\frac{\dot \xi}{\mu})]\\
		&-\mu^{-1/2}\phi_3 \pp_t \eta+\mu^{-5/2}\eta_{l(t)}\Delta_y \phi_3 + 2\mu^{-3/2}\nabla_y \phi_3 \cdot\frac{y}{\abs{y}} \qty(\frac{\eta'\qty(\abs{z_3})}{\mu l})\\
		&+\mu^{-1/2}\phi_3 \qty(\frac{2}{\abs{z_3}}\frac{\eta'(\abs{z_3})}{\mu^2 l^2}+\frac{\eta''\qty(\abs{z_3})}{\mu^2 l^2}).
	\end{align*}
	By Taylor expanding $h_\g(x,\xi)$ centered at $x=\xi$ we have
	\begin{align}\label{exphg1}
		h_\g(x,\xi)=R_\g(\xi)+ \mu y\cdot  \nabla_{x_1}h_\g(\xi,\xi) + \frac{1}{2}\mu^2 y^\intercal D_{xx}h_\g(\bar{x},\xi) y
	\end{align}
	for some $\bar{x}\in \overline{[\xi,x]}$. Now, we expand the first terms at $(\xi,\xi)=(0,0)$. By the Chain Rule we have $\nabla_{x_1}h_\g(x,x)=2 \nabla_x R_\g(x)$. Hence, we have
	\begin{align*}
		\nabla_{x_1}h_\g(\xi,\xi)=\frac{1}{2}\nabla_{x}R_\g(\xi)=\frac{1}{2}\nabla_{x}R_\g(0)+\frac{1}{2}\xi^\intercal D_{xx}R_\g\qty(\xi^{**}),
	\end{align*}
	for some $\xi^{**}\in \overline{[0,\xi]}$. Furthermore, since $R_\g(0)=0$, we have
	\begin{align*}
		R_\g(\xi)= \xi \cdot \nabla_{x}R_\g(0)+\frac{1}{2}\xi^\intercal D_{xx}R_\g(\xi^{*})\xi 
	\end{align*}
	for some $\xi^* \in \overline{[0,\xi]}$. Plugging these identities in (\ref{exphg1}) we obtain
	\begin{align}\label{exphgg}
		h_\g(x,\xi)=& \xi \cdot \nabla_{x}R_\g(0)+\frac{1}{2}\mu y\cdot \nabla_{x}R_\g(0)\\\nonumber
		&+\frac{1}{2}\xi^\intercal  D_{xx}R_\g(\xi^{*})\xi +\frac{1}{2} \mu y^\intercal D_{xx}R_\g\qty(\xi^{**}) \xi\\\nonumber
		&+ \frac{1}{2}\mu^2 y^\intercal D_{xx}h_\g(\bar{x},\xi)y.
	\end{align}
	We write
	\begin{align*}
		\xi(t)=\xi_0(t)+\xi_1(t).
	\end{align*}
	Now, we assume the following decay for the parameters $\xi_1,\dot \xi_1, \Lambda,\dot \Lambda$: 
	\begin{align*}
		&\abs{\xi_1(t)}+\abs*{\dot \xi_1(t)}\leq C \mu(t)^{1+k},\\
		&\abs*{\Lambda(t)}\leq C\mu(t)^{l_0},\\
		&\abs*{\dot \Lambda(t)}\leq C \mu(t)^{l_1},
	\end{align*}
	for some positive constants $k,l_0,l_1$ to be chosen (in \S\ref{Choiceofconst}).
	We write the full error
	\begin{align*}
		S[u_3]=
		&\mu^{-3/2}\nabla_y U(y)\cdot \qty[\dot \xi-\mu^{-1}\mu_0 \dot \xi_0 ]\eta_l\\
		&+5U(y)^4 \qty[\mu^{-3/2}h_\g(x,\xi)-\mu^{-5/2}\mu_0 \qty(\frac{1}{2}\mu_0 y \cdot \nabla_x R_\g(0))]\eta_l\\
		&+\qty[\mu^{-3/2}\nabla_y U(y)\cdot \dot \xi +5U(y)^4 \mu^{-3/2}h_\g(x,\xi)](1-\eta_l)\\
		&+\mu^{1/2}\dot \xi \cdot \nabla_{x_2} H_\g(x,\xi)+\mu^{-5/2}\mathcal{N}_3(y,t)+ 5U(y)^4 \mu^{-3/2} \qty(\frac{\mu_0}{\mu})^{1/2} J(x,t) \\
		&-\qty(-\frac{\dot \mu}{2\mu} )\mu^{-1/2}\phi_3 \eta_{l(t)}-\mu^{-1/2}\eta_{l(t)}\qty[\pp_t \phi_3+\nabla_y \phi_3 \cdot \qty(-\frac{\dot \mu}{\mu}y-\frac{\dot \xi}{\mu})]-\mu^{-1/2}\phi_3 \pp_t \eta\\
		&+\mu^{-5/2}\eta_{l(t)}\qty[\Delta_y \phi_3+5U(y)^4\phi_3 +\mathcal{M}[\mu_0,\xi_0]] \\&+ 2\mu^{-3/2}\nabla_y \phi_3 \cdot\frac{y}{\abs{y}} \qty(\frac{\eta'\qty(\abs{z_3})}{\mu l})+\mu^{-1/2}\phi_3 \qty(\frac{2}{\abs{z_3}}\frac{\eta'(\abs{z_3})}{\mu^2 l^2}+\frac{\eta''\qty(\abs{z_3})}{\mu^2 l^2}),
	\end{align*}
	where
	\begin{align}\label{defM}
		\mathcal{M}[\mu_0,\xi_0]{\coloneqq} \mu_0 \dot \xi_0\cdot \nabla_y U(y)- \frac{5}{2}U(y)^4 \mu_0 \qty(\mu_0 y \cdot \nabla_x R_\g(0))
	\end{align}
	For any fixed $t>t_0$, we select $\phi_3(x,t)$ as the bounded solution to the elliptic problem
	\begin{align}\label{phi3problem}
		\Delta_y \phi_3(y,t) + 5U(y)^4 \phi_3(y,t) = - \mathcal{M}[\mu_0,\xi_0](y,t) \inn \RR^3,
	\end{align}
	with the following orthogonality conditions on the right-hand side:
	\begin{align}\label{orthM}
		\int_{\RR^3} \mathcal{M}[\mu_0,\xi_0](y,t)Z_i(y)\ddy =0 \quad \text{for}\quad t>t_0, \quad\text{and}\quad i=1,2,3,4.
	\end{align}
	As we shall see in the proof of Lemma \ref{estphi3}, the conditions (\ref{orthM}) are essential to have $\phi_3$ bounded in space and equivalent to choose $\xi_0(t)$.
	The condition corresponding to the index $i=4$ is satisfied by symmetry. When $i=1,2,3$ the orthogonality condition (\ref{orthM}) is equivalent to
	\begin{align*}
		\mu_0 \dot\xi_{0,i} \qty(\int_{\RR^3}\abs{\pp_{y_i} U(y)}^2\ddy 
		)-\mu_0^2\qty(\int_{\RR^3}5U(y)^4 y_i \pp_{y_i}U(y)\ddy )  \frac{1}{2} \pp_{x,i} R_\g(0)=0.
	\end{align*}
	Hence, we select $\xi_{0,i}$ such that
	\begin{align*}
		\dot \xi_{0,i}(t)= \frac{\pp_{x,i} R_\g(0)\qty(\int_{\RR^3}5U(y)^4 y_i \pp_{y_i}U(y)\ddy) }{2\qty(\int_{\RR^3}\abs{\pp_{y_i} U(y)}^2\ddy)} \mu_0(t)
		.
	\end{align*}
	With the condition $\lim\limits_{t\to \infty}\xi_i(t)=0$ we get
	\begin{align}\label{xi0def}
		\xi_{0,i}(t)= \mathfrak{c}_i e^{-2\gamma t},\quad \mathfrak{c}_i= -\frac{\abs*{\pp_{x,i} R_\g(0)}\abs{\int_{\RR^3}5U(y)^4 y_i \pp_{y_i}U(y)\ddy} }{4\gamma \qty(\int_{\RR^3}\abs{\pp_{y_i} U(y)}^2\ddy)}.
	\end{align}
	Also, we define $\mathfrak{c}{\coloneqq}(\mathfrak{c}_1,\mathfrak{c}_2,\mathfrak{c}_3)$.\\
	\begin{remark}[no local improvement in the radial case]
		In case $\Omega=B_1(0)$, searching $h_\g(r,0)$ solution to (\ref{eqhg}) in the radial form, we see that 
		$$
		\nabla_x R_\g(0)=2\nabla_{x_1} h_\g(0,0)=0,
		$$ 
		hence conditions (\ref{orthM}) imply $\xi_0={0}$, as expected. Thus, the local improvement $\phi_3$, which in fact involves only non-zero modes, is null in the radial case. 
	\end{remark}
	With these choices for $\phi_3$ and $\xi_0$ we conclude with the following expression of the error associated to the final ansatz $u_3$:
	\begin{align*}
		S[u_3]=
		&\mu^{-3/2}\nabla_y U(y)\cdot \qty[\dot \xi_1+\qty(1-\mu^{-1}\mu_0) \dot \xi_0]\eta_l\\
		&+5U(y)^4 \qty[\mu^{-3/2}h_\g(x,\xi)-\mu^{-5/2}\mu_0 \qty(\frac{1}{2}\mu_0 y \cdot \nabla_x R_\g(0))]\eta_l\\
		&+\qty[\mu^{-3/2}\nabla_y U(y)\cdot \dot \xi +5U(y)^4 \mu^{-3/2}h_\g(x,\xi)](1-\eta_l)\\
		&+\mu^{1/2}\dot \xi \cdot \nabla_{x_2} H_\g(x,\xi)+\mu^{-5/2}\mathcal{N}_3(y,t)+ 5U(y)^4 \mu^{-3/2} \qty(\frac{\mu_0}{\mu})^{1/2} J(x,t) \\
		&-\qty(-\frac{\dot \mu}{2\mu} )\mu^{-1/2}\phi_3 \eta_{l(t)}-\mu^{-1/2}\eta_{l(t)}\qty[\pp_t \phi_3+\nabla_y \phi_3 \cdot \qty(-\frac{\dot \mu}{\mu}y-\frac{\dot \xi}{\mu})]-\mu^{-1/2}\phi_3 \pp_t \eta\\
		&+ 2\mu^{-3/2}\nabla_y \phi_3 \cdot\frac{y}{\abs{y}} \qty(\frac{\eta'\qty(\abs{z_3})}{\mu l})+\mu^{-1/2}\phi_3 \qty(\frac{2}{\abs{z_3}}\frac{\eta'(\abs{z_3})}{\mu^2 l^2}+\frac{\eta''\qty(\abs{z_3})}{\mu^2 l^2}).
	\end{align*}
	\subsection{Estimate of the inner and outer error}
	For later purpose, we split $S[u_3]$ in inner and outer error. At this stage, it is important to treat the terms involving directly $\dl$ as part of the outer error, since, as we shall see, a priori those are the terms with less regularity. Let
	\begin{align*}
		S[u_3]&=\Sinn+S_{\text{out}}\\
			  &=\Sinn \eta_{R(t)}(y) + (1-\eta_{R(t)}(y))\Sinn +S_{\text{out}},
	\end{align*}
	where we define the inner error
	\begin{align}\label{innS}
		\Sinn{\coloneqq}& \mu^{-3/2} \qty(\frac{\mu_0}{\mu})^{1/2} 5U(y)^4 J(x,t)+\mu^{-5/2}\mathcal{N}_3 \\\nonumber
		&+\mu^{-3/2}\eta_{l}\qty(\dot \xi_1+\qty(1-\mu^{-1}\mu_0) \dot \xi_0)\cdot \nabla_y U(y)\\\nonumber
		&+\mu^{-3/2}\eta_l 5U(y)^4 \qty(h_\g(x,\xi)-\qty(\frac{\mu_0}{\mu})\qty(\frac{1}{2}\mu_0 y\cdot \nabla_x R_\g(0)))	,
	\end{align}
	the outer error
	\begin{align}\label{outS}
		\Sout{\coloneqq}
		&\mu^{-3/2}\qty[\nabla_y U(y)\cdot \dot \xi +5U(y)^4 h_\g(x,\xi)](1-\eta_l)\\\nonumber
		&+\mu^{1/2}\dot \xi \cdot \nabla_{x_2} H_\g(x,\xi)\\\nonumber
		&-\mu^{-1/2}\qty[(\g-\dl)\eta_l \qty(\phi_3+2y \cdot \nabla_y \phi_3)+\eta_l \qty(\pp_t \phi_3 -\mu^{-1}\dot \xi \cdot \nabla_y \phi_3)+				
		\phi_3 \frac{\eta'(\abs{z_3})}{\mu l}\dot \xi\cdot \frac{z_3}{\abs{z_3}}]\\\nonumber
		&+ 2\mu^{-3/2}\nabla_y \phi_3 \cdot\frac{y}{\abs{y}} \qty(\frac{\eta'\qty(\abs{z_3})}{\mu l})+\mu^{-1/2}\phi_3 \qty(\frac{2}{\abs{z_3}}\frac{\eta'(\abs{z_3})}{\mu^2 l^2}+\frac{\eta''\qty(\abs{z_3})}{\mu^2 l^2}),
	\end{align}
	and the radius
	\begin{align}\label{Radius}
		R(t):=\mu(t)^{-\delta},
	\end{align}
	for some constant $\delta>0$ which will be chosen in (\ref{deltal1}) to make both the errors $\Sinn \eta_R$ and $\Sinn (1-\eta_R)+\Sout$ suitably small for a final contraction. 
	
	\textbf{Size of $\Sinn \eta_{R}$}. We proceed with the estimate of $\Sinn \eta_{R}$. 
	More precisely, we need the following conditions on $\delta,l_0,l_1,k$:
	\begin{align}\label{condouter1}
		&\delta+l_1<1\\\label{condcontractionNLandHolder}
		&\delta \in \qty(\frac{1-l_1}{2},\frac{1+l_1}{6})\\ \label{l1minl0}
		&l_1\leq l_0,\\\label{condk}
		&k+1\geq 2\delta+l_1,
	\end{align}
	The condition (\ref{condouter1}) is used to get the estimate in the linear outer problem, and it is due to the fact that both the heat kernel $p_t^\Om$ and the parameter $\mu_0(t)$ have an exponential decay for $t$ large. To make the quadratic term $U^3 \tilde \phi^2$ smaller than $\Sinn$ in the inner problem we need the upper bound in (\ref{condcontractionNLandHolder}). The lower bound is necessary to get a positive H\"older exponent in the regularity of $\dot \Lambda$. The last two conditions (\ref{l1minl0})-(\ref{condk}) insure that $\Sinn$ is controlled by the first term in (\ref{innS}). Thus, we fix the following values satisfying (\ref{condouter1})-(\ref{condcontractionNLandHolder}):
	\begin{align}\label{deltal1}
		\delta=\frac{2}{9},\quad l_1=\frac{2}{3}.
	\end{align}
	Here and in what follows, we write $a\lesssim b$ if there exists a constant $C$, independent of $t_0$, such that $a\leq Cb$. If both the inequalities $a \lesssim b$ and $b\lesssim a$ hold we write $a \sim b$. Using (\ref{estJ_{1,0}-Linf}) and (\ref{estonJ2}) we estimate
	\begin{align*}
		\abs{\eta_R 5U(y)^4 \qty(\frac{\mu_0}{\mu})^{1/2}\mu^{-3/2}J[\dot \Lambda](x,t)}&\lesssim \frac{\mu^{-3/2}}{1+\abs{y}^4}\qty(\mu^{l_1}+\frac{\mu}{1+\abs{y}^{1-\varepsilon}}),\\
		&\lesssim \frac{\mu^{-3/2+l_1}}{1+\abs{y}^4}
	\end{align*}
	and, since we are in the region where $\eta_R \neq 0$, using (\ref{bphi3}), we obtain
	\begin{align*}
		\abs{\eta_R{\mu^{-5/2}\mathcal{N}_3}}&\lesssim \mu^{-1/2}U(y)^3\qty(\abs{H_\g(x,\xi)}+\abs{J(x,t)}+\mu^{-1}\phi_3 \eta_l)^2\\
		&\lesssim \frac{\mu^{-1/2}}{1+\abs{y}^3}\qty(\mu R+\mu^{l_1}+\frac{\mu}{1+\abs{y}^{1-\varepsilon}}+\mu)^2\\
		&\lesssim \frac{\mu^{-1/2}}{1+\abs{y}^{4}}\mu^{-\delta}\qty(\mu^{1-\delta}+\mu^{l_1})^2\\
		&\lesssim \frac{\mu^{-1/2-\delta +2\min\{1-\delta,l_1\}}}{1+\abs{y}^{4}}
	\end{align*}
	Also,
	\begin{align*}
		\abs{\eta_R\mu^{-3/2}\eta_l \qty(\dot \xi_1+(1-\mu^{-1}\mu_0)\dot \xi_0)\cdot \nabla_y U}
		&\lesssim \eta_l \frac{\mu^{-3/2}}{1+\abs{y}^{4}}R^{2}\qty(\abs*{\dot \xi_1}+\mu^2)\\
		&\lesssim  \frac{\mu^{-3/2-2\delta  +\min\{1+k,2\}}}{1+\abs{y}^{4}}
	\end{align*}
	Now, we estimate the last term of $\Sinn\eta_R$ using expansion (\ref{exphgg}) and $\mu/\mu_0=e^{2\Lambda}$ we get
	\begin{align*}
		\abs{\eta_R\mu^{-3/2}\eta_l U(y)^4 \qty(h_\g(x,\xi)-\qty(\frac{\mu_0}{\mu})\qty(\frac{1}{2}\mu_0 y \cdot \nabla_x R_\g(0)))}\lesssim \frac{\mu^{-3/2+\min\{1,l_0\}}}{1+\abs{y}^4}.
	\end{align*} 
	Combining these estimates we obtain
	\begin{align*}
		\abs{\eta_R\Sinn}\lesssim \frac{1}{1+\abs{y}^4}\qty[\mu^{-1/2-\delta+2\min\{1-\delta,l_1\}}+\mu^{-3/2-2\delta +\min\{1+k,2\}}+\mu^{-3/2+\min\{1,l_0,l_1\}}],
	\end{align*}
	and using the values (\ref{deltal1}) we get
	\begin{align*}
		\abs{\eta_R\Sinn}&\lesssim \frac{\mu^{-\frac{3}{2}+l_1}}{1+\abs{y}^4}\lesssim \frac{\mu^{-5/6}}{1+\abs{y}^4}
	\end{align*} 
	\textbf{Size of $\Sout$.} For the first term in $\Sout$ we have
	\begin{align*}
		&\abs{(1-\eta_l)\nabla_y U \cdot \dot \xi} \lesssim \mu^{3/2}(1-\eta_l)\\
		&\abs{\mu^{-3/2}5U^4 h_\g (1-\eta_l)}\lesssim \mu^{5/2}(1-\eta_l)\\
		&\abs{\mu^{1/2}\dot \xi \cdot \nabla_{x_1} H_\g} \lesssim \mu^{3/2}
	\end{align*}
	and using the estimates given by Lemma \ref{estphi3} on $\phi_3,\nabla_y \phi_3$ and $\pp_t \phi_3$ we get
	\begin{align*}
		\bigg\lvert \mu^{-1/2}\bigg[&(\g-\dot \Lambda) \eta_l \qty(\phi_3+2y \cdot \nabla_y \phi_3)\\
		&+\eta_l \qty(\pp_t \phi_3 -\mu^{-1}\dot \xi \cdot \nabla_y \phi_3)+				
		\phi_3 \frac{\eta'(\abs{z_3})}{\mu l}\dot \xi\cdot \frac{z_3}{\abs{z_3}}\bigg] \bigg\rvert\lesssim
		\mu^{3/2}
	\end{align*}
	Finally,
	\begin{align*}
		\abs{2\mu^{-3/2}\nabla_y \phi_3 \cdot\frac{y}{\abs{y}} \qty(\frac{\eta'\qty(\abs{z_3})}{\mu l})+\mu^{-1/2}\phi_3 \qty(\frac{2}{\abs{z_3}}\frac{\eta'(\abs{z_3})}{\mu^2 l^2}+\frac{\eta''\qty(\abs{z_3})}{\mu^2 l^2})}\lesssim \mu^{3/2}.
	\end{align*}
	We conclude that
	\begin{align*}
		\abs{\Sout}\lesssim \mu^{\frac{3}{2}}.
	\end{align*}
	\textbf{Size of $\Sinn(1-\eta_R)$.} It remains to estimate the size of $\Sinn(1-\eta_R)$. We have
	\begin{align}\label{mterm}
		\abs{(1-\eta_R)5U^4 \mu^{-3/2} J[\dot \Lambda](x,t)}&\lesssim \frac{\mu^{-\frac{3}{2}+l_1+2\delta}}{1+\abs{y}^2}(1-\eta_{R})
	\end{align}
	Then,
	\begin{align*}
		\abs{(1-\eta_R)\mu^{-5/2}\mathcal{N}_3}
		&\lesssim (1-\eta_R)\mu^{-1/2}\frac{1}{1+\abs{y}^3}\qty(\abs{H_\g(x,\xi)}+\abs{J(x,t)}+\mu^{-1}\phi_3 \eta_l)^2\\
		&\lesssim (1-\eta_R)\frac{\mu^{-1/2}R^{-1}}{1+\abs{y}^2}\qty(1+\mu^{2l_1}+\mu^2)\\
		&\lesssim (1-\eta_R)\frac{1}{1+\abs{y}^2}\mu^{-1/2+\delta}.
	\end{align*}
	In particular we observe that this is smaller than (\ref{mterm}), thanks to (\ref{condcontractionNLandHolder}).
	Also,
	\begin{align*}
		\mu^{-3/2}\eta_l (\dot \xi_1+(1-\mu^{-1}\mu_0)\dot \xi_0)\cdot \nabla_y U(y)(1-\eta_R)
		&\lesssim \frac{\mu^{\frac{1}{2}+\min\{0,k-\frac{1}{2}\}}}{1+\abs{y}^2}(1-\eta_R),
	\end{align*}
	and
	\begin{align*}
		\abs*{(1-\eta_R)\eta_l 5U(y)^4 \mu^{-3/2}\qty(h_\g(x,\xi)-\qty(\frac{\mu_0}{\mu})\qty(\frac{1}{2}\mu_0 y\cdot \nabla_x R_\g(0)))
		}&\lesssim \frac{R^{-2}\mu^{-1/2}}{1+\abs{y}^2}(1-\eta_R)\\
		&\lesssim \frac{\mu^{2\delta-\frac{1}{2}}}{1+\abs{y}^2}(1-\eta_R).
	\end{align*}
	Combining these estimates we find
	\begin{align*}
		\abs{\Sinn (1-\eta_R)}&\lesssim \mu^{-\frac{3}{2}+l_1+2\delta}\frac{1}{1+\abs{y}^2}(1-\eta_R(\abs{y})).
	\end{align*}
	We conclude that
	\begin{align*}
		\abs{\Sinn(1-\eta_R)+\Sout}\lesssim \mu^{-\frac{3}{2}+l_1+2\delta}\frac{1}{1+\abs{y}^2}(1-\eta_R)+\mu^{\frac{3}{2}}.
	\end{align*}
	\subsection{Estimates of $J_1,J_2$ and $\phi_3$}
	The following lemma gives an estimate of $J_1[\dot \Lambda](x,t)$ in terms of $\dot \Lambda$. Observe that
	\begin{align*}
		\lim\limits_{t\to \infty}\qty(\frac{\mu(t)}{\mu_0(t)})^{1/2}\qty[\frac{\mu^2(t)-\abs{x-\xi(t)}^2}{\qty(\mu(t)^2+\abs{x-\xi(t)}^2)^{3/2}}+H_\g(x,\xi(t))]=-\frac{1}{\abs{x}}+H_\g(x,0),
	\end{align*}
	thus, for $t_0$ large, we will approximate $J_1$ with $\JJJ$, that is the solution to 
	\begin{align}\label{ProbJ10}
		&\pp_t \JJJ= \Delta_x \JJJ+\gamma \JJJ -\dot \Lambda(t)G_\g(x,0)\quad \inn \Omega \times [t_0-1,\infty),\\\nonumber
		&\JJJ(x,t)=0 \onn \pp \Omega \times [t_0-1,\infty),\\\nonumber
		&\JJJ(x,t_0-1)=0 \inn \Omega.
	\end{align}
	We define the $L^\infty$-weighted space
	\begin{align*}
		X_c{\coloneqq}\{	f  \in L^\infty(t_0-1,\infty): \norm{f}_{\infty,c}<\infty	\},
	\end{align*}  
	where
	\begin{align*}
		\norm{f}_{\infty,c}{\coloneqq}\sup_{t>t_0-1}\abs{f(t)\mu_0(t)^{-c}}.
	\end{align*}
	
	\begin{lemma}[Estimate of $J_1$]\label{EstJ1fromdotLambda}
		Suppose $2 \g l_1<\lambda_1-\g$ and 
		\begin{align*}
			\norm*{\dot \Lambda}_{\infty,l_1}< \infty.
		\end{align*}
		Then we have
		\begin{align}\label{estJ_{1,0}-Linf}
			\norm{J_1(\cdot,t)}_{L^\infty(\Om)}\lesssim \mu_0(t)^{l_1}\norm*{\dot \Lambda}_{\infty,l_1},
		\end{align} 
		for $t\geq t_0$.
	\end{lemma}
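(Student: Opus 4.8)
The plan is to represent $J_1$ by Duhamel's formula and then balance three competing exponential rates: the decay of the Dirichlet heat kernel, the growth $e^{\g\tau}$ produced by the zeroth order term $\g J_1$, and the decay $\mu_0(s)^{l_1}$ of the forcing. Since the solution operator of $\pp_t-\Delta-\g$ with Dirichlet conditions has kernel $e^{\g\tau}p_\tau^\Om(x,z)$, from \reff{ProblemJ_1} I would write
\begin{align*}
	J_1(x,t)=\int_{t_0-1}^{t}\int_{\Om}e^{\g(t-s)}p_{t-s}^\Om(x,z)\,F(z,s)\,\ddz\,\dds,\qquad
	F=\qty(\tfrac{\mu}{\mu_0})^{1/2}\dot\Lambda\,\qty(2\mu^{-1}Z_4\qty(\tfrac{z-\xi}{\mu})+H_\g(z,\xi)).
\end{align*}
First I would bound the forcing: $(\mu/\mu_0)^{1/2}=e^{\Lambda}\sim 1$ since $\Lambda=o(1)$; the hypothesis $\norm{\dot\Lambda}_{\infty,l_1}<\infty$ gives $\abs{\dot\Lambda(s)}\le \mu_0(s)^{l_1}\norm{\dot\Lambda}_{\infty,l_1}$; and from the explicit form of $Z_4$ one has $2\mu^{-1}Z_4(\tfrac{z-\xi}{\mu})=\alpha_3(\mu^2-\abs{z-\xi}^2)(\mu^2+\abs{z-\xi}^2)^{-3/2}$, so that $\abs{2\mu^{-1}Z_4(\tfrac{z-\xi}{\mu})}\le\alpha_3(\mu^2+\abs{z-\xi}^2)^{-1/2}\le\alpha_3\abs{z-\xi}^{-1}$; together with the boundedness of $H_\g$ on $\Om$ this yields $\abs{F(z,s)}\lesssim \mu_0(s)^{l_1}\norm{\dot\Lambda}_{\infty,l_1}\qty(\abs{z-\xi(s)}^{-1}+1)$.

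The next step is to estimate $\int_\Om e^{\g\tau}p_\tau^\Om(x,z)\qty(\abs{z-\xi(s)}^{-1}+1)\,\ddz$ with $\tau=t-s$, splitting at $\tau=1$. For $\tau\le 1$ I would use $p_\tau^\Om\le p_\tau^{\RR^3}$ together with the uniform bound $e^{\tau\Delta}\abs{\cdot-\xi}^{-1}(x)\lesssim\tau^{-1/2}$ for the Newtonian kernel (obtained by the Gaussian rescaling $z-\xi=\sqrt\tau w$) and with $\int_\Om p_\tau^\Om(x,z)\,\ddz\le 1$, getting a bound $\lesssim\tau^{-1/2}$. For $\tau\ge 1$ I would instead invoke the long-time heat kernel estimate $p_\tau^\Om(x,z)\lesssim\phi_1(x)\phi_1(z)e^{-\lambda_1\tau}$ recalled in \S\ref{sec:invj}, and since $\abs{\cdot-\xi}^{-1}\in L^1(\Om)$ and $\phi_1\in L^\infty(\Om)$, the factor $e^{\g\tau}$ leaves $\lesssim e^{-(\lambda_1-\g)\tau}$. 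Feeding this back, one reaches
\begin{align*}
	\norm{J_1(\cdot,t)}_{L^\infty(\Om)}\lesssim \norm{\dot\Lambda}_{\infty,l_1}\qty(\int_{t-1}^{t}\frac{\mu_0(s)^{l_1}}{\sqrt{t-s}}\,\dds+\int_{t_0-1}^{t-1}e^{-(\lambda_1-\g)(t-s)}\mu_0(s)^{l_1}\,\dds),
\end{align*}
the case $t_0\le t<t_0+1$ being trivial.

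Finally I would evaluate the two integrals. In the first, $\mu_0(s)^{l_1}\lesssim\mu_0(t)^{l_1}$ on $[t-1,t]$ and $\int_{t-1}^t(t-s)^{-1/2}\dds=2$, so it is $\lesssim\mu_0(t)^{l_1}$. In the second, writing $\mu_0(s)=e^{-2\g s}$ gives $e^{-(\lambda_1-\g)t}\int_{t_0-1}^{t-1}e^{(\lambda_1-\g-2\g l_1)s}\,\dds$; here is where the assumption $2\g l_1<\lambda_1-\g$ enters: the exponent $a:=\lambda_1-\g-2\g l_1$ is positive, so the integral is controlled by its endpoint, $\lesssim a^{-1}e^{a t}$, and the product is $\lesssim e^{-(\lambda_1-\g)t}e^{a t}=e^{-2\g l_1 t}=\mu_0(t)^{l_1}$. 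Combining the two gives $\norm{J_1(\cdot,t)}_{L^\infty(\Om)}\lesssim\mu_0(t)^{l_1}\norm{\dot\Lambda}_{\infty,l_1}$ for $t\ge t_0$. The hard part is exactly this last matching --- it is the \emph{resonance} between the Dirichlet heat kernel and $\mu_0(t)$ mentioned in the introduction: were $2\g l_1\ge\lambda_1-\g$, the same computation would produce an extra factor of $t$ (borderline case) or a strictly worse decay rate, and this is precisely why condition \reff{Assumption1}, through $2\g l_1<\lambda_1-\g$, is imposed. The only other point requiring some care is the uniform short-time control of the heat semigroup applied to the Newtonian singularity, which is however standard.
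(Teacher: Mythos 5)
Your proof is correct, and it takes a genuinely different route from the paper's. The paper first invokes a parabolic comparison to replace $J_1$ by the simpler approximant $\JJJ$ solving (\ref{ProbJ10}) (fixed center $0$, autonomous forcing $-\dot\Lambda(t)G_\g(x,0)$), then projects onto the Dirichlet eigenbasis $\{w_k\}$, computes each coefficient $b_k(t)=c_k\int_{t_0-1}^t e^{-(\lambda_k-\g)(t-s)}\dot\Lambda(s)\,\dds$ explicitly, and obtains an $L^2$ bound of the form $\norm{\JJJ(\cdot,t)}_{L^2}\lesssim\norm{\dot\Lambda}_{\infty,l_1}e^{-2\g l_1 t}$; it then upgrades $L^2\to L^\infty$ by interior and boundary parabolic regularity. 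You instead work directly in $L^\infty$ on $J_1$ itself via the Duhamel kernel $e^{\g\tau}p_\tau^\Om$, bound the spatial integral $\int_\Om e^{\g\tau}p_\tau^\Om(x,z)\bigl(\abs{z-\xi}^{-1}+1\bigr)\,\ddz$ by $\tau^{-1/2}$ for $\tau\le1$ (Gaussian domination $p_\tau^\Om\le p_\tau^{\RR^3}$ plus the rescaling $z-\xi=\sqrt{\tau}\,w$) and by $e^{-(\lambda_1-\g)\tau}$ for $\tau\ge1$ (eigenfunction expansion (\ref{ptseries}) with the summability (\ref{Grigeq})), and then do the time integral. What each approach buys: the $L^2$ route sidesteps the Newtonian singularity of $G_\g(\cdot,0)$ entirely (it is square-integrable in $\RR^3$) but requires an extra parabolic regularity step to reach $L^\infty$; your route is self-contained in $L^\infty$ and avoids the regularity bootstrap, at the cost of the short-time heat kernel manipulation. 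Both proofs hinge on exactly the same exponential-rate comparison $2\g l_1<\lambda_1-\g$ in the long-time part of the time integral, and you have correctly identified this as the only non-routine point. One stylistic note: the asymptotic $p_\tau^\Om(x,z)\sim\phi_1(x)\phi_1(z)e^{-\lambda_1\tau}$ stated in the paper is an equivalence as $\tau\to\infty$, not a priori a uniform upper bound for all $\tau\ge 1$; the uniform bound you actually need, $p_\tau^\Om(x,z)\lesssim e^{-\lambda_1\tau}$ for $\tau\ge1$, follows from (\ref{ptseries}) together with (\ref{Grigeq}), and it would be cleaner to cite those directly.
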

	Since we have selected $l_1<1$ in (\ref{deltal1}), condition (\ref{Assumption1}) guarantees that $2\g l_1<\lambda_1-\g$.
	\begin{proof}
		By parabolic comparison, it is enough to prove the bound for $\JJJ$ defined as the solution to (\ref{ProbJ10}).
		Indeed, we have
		\begin{align*}
			\abs{\qty(\frac{\mu}{\mu_0})^{1/2}\dot \Lambda(t)\qty(\frac{\mu^2-\abs{x-\xi}^2}{\qty(\mu^2+\abs{x-\xi}^2)^{3/2}}+H_\g(x,\xi))}\lesssim \abs*{\dot \Lambda(t)}\abs{-\frac{1}{\abs{x}}+H_\g(x,0)}
		\end{align*}
		We decompose 
		\begin{align*}
			\JJJ(x,t)=\sum_{k=1}^{\infty} b_k(t)w_k(x) \inn L^2(\Omega), \quad \text{for}\quad t\geq t_0-1,
		\end{align*}
		where $w_k$ is the $k$-th eigenfunction of $-\Delta$ on $\Om$. Plugging the decomposition into the equation we find
		\begin{align*}
			b_k=c_k \int_{t_0-1}^{t} e^{-\qty(\lambda_k-\gamma)(t-s)}\dot \Lambda(s) \dds,\quad \text{where}\quad c_k{\coloneqq}-\int_\Omega G_\g(x,0) w_k(x)\ddx.
		\end{align*}
		In particular, we have
		\begin{align*}
			\norm{\JJJ(\cdot,t)}_{L^2(\Omega)} \leq  \norm{G_\g(\cdot,0)}_{L^2(\Omega)} \int_{t_0-1}^{t} e^{-(\lambda_1-\gamma)(t-s)}\abs*{\dot \Lambda(s)}\dds.
		\end{align*}
		Using $\norm*{\dot \Lambda}_{\infty,l_1}<\infty$ and $2\g l_1<\lambda_1-\g$ we obtain
		\begin{align*}
			\norm{\JJJ(\cdot,t)}_{L^2(\Omega)}&\leq \norm{G_\g(\cdot,0)}_{L^2(\Omega)} \norm*{\dot \Lambda}_{\infty,l_1} e^{-\min\{2\g l_1,\lambda_1-\g\} (t-(t_0-1))},\\
			&\lesssim \norm*{\dot \Lambda}_{\infty,{l_1}} e^{-2\g l_1 (t-(t_0-1))}
		\end{align*}
		Finally, from standard parabolic estimates, using the $L^2$-bound and equation (\ref{ProbJ10}), we get for $t\geq t_0$
		\begin{align*}
			\norm{\JJJ(\cdot,t)}_{L^\infty(\Omega')}\lesssim \norm*{\dot \Lambda}_{\infty,{l_1}} e^{-2\g l_1 t},
		\end{align*}
		for any $\Omega' \Subset \Omega$. By boundary regularity estimates this inequality can be extended to $\Omega$ thanks to the smoothness of $\pp \Om$. 
	\end{proof}
	
	\begin{lemma}[Estimate of $J_2$]\label{Lemma:estJ2}
		Let $J_2(x,t)$ be the unique solution to the problem
		\begin{align*}
			\pp_t J_2 =& \Delta_x J_2+ \gamma J_2-\qty(\frac{\mu}{\mu_0})^{\frac{1}{2}}\bigg[\gamma \qty(\mu^{-1}2Z_4\qty(\frac{x-\xi}{\mu})+\frac{\alpha_3}{\abs{x-\xi}})\\
			&+\mu^{-2}5U\qty(\frac{x-\xi}{\mu})^4 \theta_\g(x-\xi)\bigg] \inn \Omega \times [t_0,\infty),\\\nonumber
			J_2(x,t)&=0 \onn \pp \Omega \times [t_0,\infty),\\\nonumber
			J_2(x,t_0&)=0 \inn \Omega.
		\end{align*} 
		Suppose that $3\g<\lambda_1$. Then, there exists $t_0$ large such that 
		\begin{align}\label{estonJ2}
			\abs{J_2(\mu y+\xi,t)}\lesssim \mu(t)\frac{1}{1+\abs{y}^{1-\ve}},
		\end{align}	 
		for any $\ve>0$ and for all $(x,t)\in \Om \times [t_0,\infty)$ where $y=(x-\xi)/\mu$.
	\end{lemma}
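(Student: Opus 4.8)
The plan is to represent $J_2$ by the variation-of-constants formula for $\pp_t-\Delta-\g$ against the Dirichlet heat kernel $p_t^\Om$, to bound the (explicit, sign-definite) forcing by exploiting a cancellation built into the ansatz, and then to split the time integral into a \emph{recent} range $t-s\le 1$ — which produces the self-similar profile — and an \emph{old} range $t-s\ge 1$ — which the hypothesis $3\g<\lambda_1$ renders lower order. Write $f_2(z,s)$ for the forcing $\qty(\mu/\mu_0)^{1/2}\big[\g\big(\mu^{-1}2Z_4\big(\tfrac{z-\xi}{\mu}\big)+\tfrac{\at}{\abs{z-\xi}}\big)+\mu^{-2}5U\big(\tfrac{z-\xi}{\mu}\big)^4\theta_\g(z-\xi)\big]$ (with $\mu=\mu(s)$, $\xi=\xi(s)$); it is nonnegative since $2Z_4(y)+\at\abs{y}^{-1}>0$ and $\theta_\g\ge 0$, so $J_2\le 0$ by the maximum principle ($\g<\lambda_1$), and
\begin{align*}
\abs{J_2(x,t)}=\int_{t_0}^{t}e^{\g(t-s)}\int_\Om p_{t-s}^\Om(x,z)\,f_2(z,s)\,\ddz\,\dds .
\end{align*}

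The structural input is the bound on $f_2$. From $Z_4(y)=\tfrac{\at}{2}\tfrac{1-\abs{y}^2}{(1+\abs{y}^2)^{3/2}}$ one reads off $\big|2Z_4(y)+\at\abs{y}^{-1}\big|\lesssim\abs{y}^{-1}(1+\abs{y}^2)^{-1}$; the decisive feature is that the $\abs{y}^{-1}$ tail of the dilation mode $2Z_4$ cancels the Coulomb tail $\at\abs{y}^{-1}$, which originates from the singular part $\at\abs{x-\xi}^{-1}$ of $\mu^{1/2}H_\g(x,\xi)$ — exactly the contribution $J_2$ was introduced to absorb. Together with $U^4(y)\lesssim(1+\abs{y})^{-4}$, $\theta_\g(\zeta)\lesssim\abs{\zeta}(1+\abs{\zeta}^2)^{-1}$ and $(\mu/\mu_0)^{1/2}=e^{\Lambda}\lesssim 1$ for $t_0$ large, this gives
\begin{align*}
f_2(z,s)\lesssim\frac{1}{\abs{z-\xi}}\cdot\frac{\mu(s)^2}{\mu(s)^2+\abs{z-\xi}^2},
\end{align*}
so $f_2$ is $\lesssim\mu(s)^{-1}$ on the bubble scale $\abs{z-\xi}\sim\mu(s)$, carries only an integrable $\abs{z-\xi}^{-1}$ singularity, decays like $\mu(s)^2\abs{z-\xi}^{-3}$ at distances $\gg\mu(s)$, and has $\norm{f_2(\cdot,s)}_{L^1(\Om)}\lesssim\mu(s)^2\ln(1/\mu(s))$.

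On the recent range $s\in[t-1,t]$ we have $e^{\g(t-s)}\lesssim 1$, $\mu(s)\sim\mu(t)$, $\xi(s)=\xi(t)+O(\mu(t))$, and the Gaussian bound $p_\tau^\Om(x,z)\lesssim\tau^{-3/2}e^{-c\abs{x-z}^2/\tau}$. Since $\int_0^1\tau^{-3/2}e^{-c\abs{x-z}^2/\tau}\dtau\lesssim\abs{x-z}^{-1}$, after freezing $\mu(s),\xi(s)$ to $\mu=\mu(t),\xi=\xi(t)$ this part of $\abs{J_2(x,t)}$ is controlled by the Newtonian-type integral $\int_\Om\abs{x-z}^{-1}\abs{z-\xi}^{-1}\mu^2\big(\mu^2+\abs{z-\xi}^2\big)^{-1}\ddz$, which a decomposition of $\Om$ into the regions $\abs{z-\xi}\le\mu$, $\mu\le\abs{z-\xi}\le\tfrac12\abs{x-\xi}$, $\abs{z-\xi}\sim\abs{x-\xi}$ and $\abs{z-\xi}\ge 2\abs{x-\xi}$ bounds by $\lesssim\mu$ for $\abs{x-\xi}\le\mu$ and by $\lesssim\mu^2\abs{x-\xi}^{-1}\ln(2\abs{x-\xi}/\mu)$ for $\abs{x-\xi}\ge\mu$. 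Using $\sup_{r\ge 1}r^{-\ve}\ln r<\infty$, for every $\ve>0$ this is $\lesssim_\ve\mu(t)\big(1+\abs{y}^{1-\ve}\big)^{-1}$ with $y=(x-\xi(t))/\mu(t)$, which is the claimed profile. On the old range $s\in[t_0,t-1]$, i.e.\ $\tau=t-s\ge 1$, the long-time decay of the Dirichlet heat kernel gives $p_\tau^\Om(x,z)\lesssim e^{-\lambda_1\tau}$ (as in the proof of Lemma~\ref{EstJ1fromdotLambda}), so this part is $\lesssim\int_{t_0}^{t-1}e^{-(\lambda_1-\g)(t-s)}\norm{f_2(\cdot,s)}_{L^1(\Om)}\dds\lesssim\int_{t_0}^{t-1}e^{-(\lambda_1-\g)(t-s)}\,s\,e^{-4\g s}\dds$. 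The assumption $3\g<\lambda_1$ is exactly $\lambda_1-\g>2\g$, i.e.\ the decay rate of $\Delta+\g$ beats the rate $2\g$ of $\mu(t)\sim e^{-2\g t}$; hence this integral is $O\big(\mu(t)^{1+\eta}\big)$ for some $\eta>0$ (up to a factor $t$ in the borderline case $\lambda_1=5\g$), which is lower order. Taking $t_0$ large absorbs the freezing errors on the recent range and the factor $e^{2\Lambda}$, giving \eqref{estonJ2}.

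I expect the recent-range analysis to be the main obstacle: because the forcing decays only like $\abs{z-\xi}^{-3}$ in the outer region, the Newtonian convolution sits a logarithm away from the clean $\abs{x-\xi}^{-1}$ behaviour, which is precisely what forces the $\ve$-loss in the statement, and the region-by-region bookkeeping together with the optimisation of $r\mapsto r^{-\ve}\ln r$ is where the care lies. The hypothesis $3\g<\lambda_1$ plays no role there; it enters only in the old range, where it prevents a resonance between the exponential decays of $p_t^\Om$ and of $\mu(t)$ that would otherwise make the old-time contribution of size $\gtrsim\mu(t)$ and destroy the estimate.
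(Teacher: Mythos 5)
Your proof takes a genuinely different route from the paper's, and the overall strategy is sound. The paper rescales to $v = \mu(t)^{-1} J_2$, uses the same cancellation $2Z_4(y) + \at |y|^{-1} = O(|y|^{-3})$ together with the quadratic vanishing of $\theta_\gamma$ at the origin to bound the forcing for $v$, and builds a supersolution of the form $v_0(|y|)\eta + v_1$ with $v_0$ an explicit radial Newtonian profile and $v_1$ controlled by the maximum principle for $\pp_t - \Delta - (3\gamma - 2\dl)$. You instead represent $J_2$ by Duhamel's formula, bound the forcing directly, and split the time integral: the recent range $t-s\le 1$ yields a Newtonian-type convolution estimated region by region, while the old range $t-s\ge 1$ uses the long-time heat kernel decay $p^\Om_\tau\lesssim e^{-\lambda_1\tau}$. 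The structural cancellation in $2Z_4 + \at|y|^{-1}$ is the same input in both arguments; what differs is whether the time integration is handled by comparison (paper) or explicitly via heat kernel bounds (you). Your route makes it more transparent exactly where $3\gamma<\lambda_1$ enters and why it is needed.

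The final step of your old-range estimate is, however, incomplete, and the same subtlety is present implicitly in the paper's argument. You bound the old-range contribution uniformly in $x$ by $\mu(t)^{1+\eta}$ with $\eta = (\lambda_1-3\gamma)/(2\gamma)$, coming from the convolution of $e^{-(\lambda_1-\gamma)\tau}$ against $s\,e^{-4\gamma s}$. But the target $\mu(t)\big(1+|y|^{1-\ve}\big)^{-1}$ is not uniform in $x$: at an interior point with $|x-\xi|\sim 1$, i.e.\ $|y|\sim\mu^{-1}$, it is of size $\mu^{2-\ve}$. Hence the old range is dominated only if $\mu^{1+\eta}\lesssim\mu^{2-\ve}$, i.e.\ $\ve\ge 1-\eta = (5\gamma-\lambda_1)/(2\gamma)$. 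Thanks to $3\gamma<\lambda_1$ this threshold lies strictly below $1$, so an admissible $\ve\in(0,1)$ exists; but the bound cannot hold for all small $\ve>0$ when $\lambda_1<5\gamma$, since the $\phi_1$-component $\int_\Om J_2\phi_1$ then decays only like $e^{-(\lambda_1-\gamma)t}$, which exceeds $\mu^{2-\ve}$. The paper's deduction $|v_1|\lesssim\mu^{1-\ve}$ by comparison carries exactly the same hidden requirement $(5-2\ve)\gamma<\lambda_1$, since the supersolution $A\mu^{1-\ve}w(x)$ needs a positive profile $w$ for $-\Delta - (5-2\ve)\gamma$. Downstream the lemma is applied with $\ve$ in a fixed range where only $\ve\in(0,1)$ matters, so this is harmless, but your argument should record the constraint on $\ve$ rather than assert without qualification that the old range is lower order.
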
	
	\begin{proof}
		Firstly, we observe that
		\begin{align*}
			\abs{ \frac{1-\abs{y}^2}{(1+\abs{y}^2)^{3/2}}+\frac{1}{\abs{y}}}\lesssim \frac{1}{\abs{y}\qty(1+\abs{y}^{2-\ve})}.
		\end{align*}
		Also, by Taylor expanding the function $\theta_\g$ in (\ref{defthetag}) near the origin, we see that
		\begin{align*}
			\abs{\mu^{-2}5U(y)^4\theta_\g(\mu y)}&\lesssim \frac{\mu^{-1}}{1+\abs{y}^4} \abs{y}\\
			&\lesssim \frac{\mu^{-1}}{\abs{y}\qty(1+\abs{y}^{2-\epsilon})},
		\end{align*}
		where $\epsilon>0$ can be taken arbitrarily small.  
		Thus, by parabolic comparison, it is enough to find a supersolution to the problem
		\begin{align*}
			&\pp_t u = \Delta_x u + \g u + \mu^{-1} \frac{1}{\abs{y(x,t)}(1+\abs{y(x,t)}^{2-\ve})} \inn \cyt,\\
			&u(x,t)=0 \onn \pp \cyt,\\
			&u(x,t_0)=0 \inn \Omega.
		\end{align*}	
		Let $v(x,t){\coloneqq} \mu(t)^{-1} u(x,t)$, which satisfies
		\begin{align*}
			&\pp_t v 
			= \Delta_x v+ (3\g -2\dl)v + \frac{\mu^{-2}}{\abs{y(x,t)}\qty(1+\abs{y(x,t)}^{2-\ve})} \inn \cyt,\\\nonumber
			&v= 0 \onn \pp \Omega \times [t_0,\infty),\\\nonumber
			&v(x,t_0)= 0 \inn \Omega.
		\end{align*}
		We look for a supersolution $\bar v$ of the form
		\begin{align*}
			\bar v(x,t)=v_0\qty(\frac{x-\xi}{\mu})\eta\qty(\frac{x-\xi}{C_0})+v_1(x,t).
		\end{align*}
		We need
		\begin{align}\label{supersolutionv1v2}
			\pp_t v_1-\Delta_x v_1-(3\g-\dot \Lambda)v_1\geq \eta \bigg[&	-\pp_t v_0+\mu^{-2}\Delta_y v_0+(3\g-\dot \Lambda)v_0\\\nonumber
			&+\frac{\mu^{-2}}{\abs{y}(1+\abs{y}^{2-\ve})}	\bigg]\\\nonumber
			&+(1-\eta)\frac{\mu^{-2}}{\abs{y}(1+\abs{y}^{2-\ve})}+\qty(\Delta_x \eta-\pp_t \eta)v_0\\\nonumber
			&+2\mu^{-1}\nabla_x \eta \cdot \nabla_y v_0,
		\end{align}
		with $v_1\geq0$ on $\pp \Om \times [t_0,\infty)$ and $v_0(y(x,t_0))\geq 0$ for $x\in \Om$. Without loss of generality let $\Omega \subset B_1$. Consider the positive radial solution $v_0(\abs{y},t)$ to
		\begin{align*}
			&\Delta_y v_0 +2\frac{1}{\abs{y}\qty(1+\abs{y}^{2-\ve})}=0 \onn B_{\frac{1}{\mu(t)}},\\
			&v_0\equiv 0 \onn \pp B_{\frac{1}{\mu(t)}},
		\end{align*}
		given by the formula of variation of parameters
		\begin{align*}
			v_0(\abs{y},t)&=2\omega_3\int_{\abs{y}}^{\frac{1}{\mu(t)}} \frac{1}{\rho^2} \int_{0}^{\rho}\frac{s}{1+s^{2-\ve}}\dds \, d\rho.
		\end{align*}
		From this formula we obtain the following estimates in $(x,t)\in\Om \times [t_0,\infty)$:
		\begin{align*}
			&\abs{v_0(\abs{y},t)}+\abs{\pp_t v_0(\abs{y},t)}\lesssim \frac{1}{1+\abs{y}^{1-\ve}},
		\end{align*}
		Thus, if $\abs{x-\xi}<C_0$, for $C_0$ sufficiently small, then
		\begin{align*}
			-\pp_t v_0+&\mu^{-2}\Delta_y v_0+(3\g-\dot \Lambda)v_0+\frac{\mu^{-2}}{\ay (1+\ay^{2-\ve})}
			\\&=-\frac{\mu^{-2}}{\ay (1+\ay^{2-\ve})}+O\qty(\frac{1}{1+\abs{y}^{1-\ve}})\leq 0.
		\end{align*}
		Then, let $v_1$ be the solution to
		\begin{align*}
			\pp_t v_1-\Delta_x v_1-(3\g-\dot \Lambda)v_1 =&(1-\eta)\frac{\mu^{-2}}{\abs{y}(1+\abs{y}^{2-\ve})}+(\Delta_x \eta-\pp_t \eta)v_0\\&+2\mu^{-1}\nabla_x \eta \cdot \nabla_y v_0 \inn \cyt,
		\end{align*}
		with
		\begin{align*}
			&v_1(x,t)=0 \onn \pp \Om \times [t_0,\infty),\\
			&v_1(x,t_0)=0 \inn \Om.
		\end{align*}
		In the right-hand side we have
		\begin{align*}
			&(1-\eta)\frac{\mu^{-2}}{\abs{y}(1+\abs{y}^{2-\ve})}\lesssim \mu^{1-\ve},
			\\
			&\abs{(\Delta_x-\pp_t \eta)v_0}\lesssim \mu^{1-\ve},\\
			&\abs{2\mu^{-1}\nabla_x \eta \cdot \nabla_y v_0}\lesssim \mu^{1-\ve}.
		\end{align*}
		Since $3\g-\dot \Lambda(t)<\lambda_1$ provided that $t_0$ is sufficiently large, the comparison principle applies and we get $\abs{v_0}\lesssim \mu^{1-\ve}$. Thus, we verified inequality (\ref{supersolutionv1v2}). Also, we have $v=v_1\geq0$ on $\pp \Om \times [t_0,\infty)$ and $\eta v_0(y(x,t_0))\geq 0$. Thus, $v$ is a supersolution, and going back to the original function $u=\mu v$ we get estimate (\ref{estonJ2}) for $J_2$. 
	\end{proof}
	
	\begin{lemma}[Estimate on $\phi_3$]\label{estphi3}
		Let $\mathcal{M}[\xi_0,\mu_0]$ be defined as in (\ref{defM}). If the orthogonality conditions (\ref{orthM}) on $\mathcal{M}[\xi_0,\mu_0]$ hold, then there exists a bounded solution to the problem 
		\begin{align}\label{eqphi3}
			\Delta_y \phi_3 + 5U(y)^4 \phi_3(y,t)=-\mathcal{M}[\xi_0,\mu_0](y,t) \inn \RR^3.
		\end{align}
		We have the following estimates on $\phi_3$ and its derivatives:
		\begin{align}\label{bphi3}
			\abs{\phi_3(y,t)}+(1+\abs{y})\abs{\nabla_y \phi_3(y,t)}+\abs{\pp_t \phi_3(y,t)} \lesssim \mu^2(t) f(y,t),
		\end{align}
		where $f$ is a smooth bounded function.
	\end{lemma}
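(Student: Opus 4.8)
The plan is to reduce the elliptic problem \reff{eqphi3} to a single ordinary differential equation in the $\ell=1$ spherical-harmonic sector and to read off \reff{bphi3} from the asymptotics of that ODE; the role of the orthogonality \reff{orthM} will be precisely to discard the unbounded homogeneous mode.

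\textbf{Normalising the datum.} First I would rewrite $\mathcal{M}[\xi_0,\mu_0]$ using \reff{defM} together with \reff{xi0def}. Since \reff{xi0def} forces $\dot\xi_0(t)$ to be parallel to $\nn_x R_\g(0)$ and proportional to $\mu_0(t)$, one gets $\mathcal{M}[\xi_0,\mu_0](y,t)=\mu_0(t)^2\,g(y)$ with $g(y)=\big(\nn_x R_\g(0)\cdot y\big)\,\rho(\abs{y})$, where $g$ is a fixed smooth function independent of $t$ and $\rho$ is radial with $\rho(r)=O(r^{-3})$ as $r\to\infty$ and $\rho$ bounded near the origin; in particular $\abs{g(y)}\lesssim(1+\abs{y})^{-2}$. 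Thus it suffices to solve $\Delta\Psi+5U^4\Psi=-g$ in $\RR^3$ with $\Psi$ bounded and set $\phi_3(y,t)=\mu_0(t)^2\Psi(y)$; then $\pp_t\phi_3=-4\g\,\phi_3$, and since $\mu_0(t)\sim\mu(t)$ for $t\geq t_0$, all three terms in \reff{bphi3} are controlled once $\Psi$ and $\nn_y\Psi$ are bounded appropriately. (If $\nn_x R_\g(0)=0$, i.e.\ the radial case, then $g\equiv0$ and $\phi_3\equiv0$, so we may assume $\nn_x R_\g(0)\neq0$.)

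\textbf{The mode-one ODE.} Since $g$ has the product form (harmonic polynomial of degree one)$\times$(radial), I look for $\Psi$ of the same form, $\Psi(y)=(\nn_x R_\g(0)\cdot y)\,w(\abs{y})$. Then \reff{eqphi3} becomes the scalar equation
\begin{align*}
w''+\frac{4}{r}w'+5U(r)^4 w=-\rho(r)\qquad\text{on }(0,\infty),
\end{align*}
to be solved with $w$ regular at the origin. A basis of homogeneous solutions is $w_1(r)=U'(r)/r$ — the radial profile of $Z_i=\pp_{y_i}U$, smooth at $0$ and with $w_1(r)\sim r^{-3}$ at infinity — together with a second solution $w_2$ obtained by reduction of order, singular like $r^{-3}$ at $0$ and tending to a nonzero constant at infinity; Abel's identity gives $r^4(w_1w_2'-w_1'w_2)=\text{const}$. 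Building by variation of parameters the particular solution $w_p$ regular at $0$ (using $w_1$ near the origin, $w_2$ away from it), a routine asymptotic computation shows that $w_p$ acquires at infinity a component proportional to $w_2$, hence a bounded but generically non-decaying contribution, whose coefficient is a fixed nonzero multiple of $\int_0^\infty w_1(s)\rho(s)s^4\,ds$. Integrating out the angular variables identifies this integral, up to a nonzero constant, with $\int_{\RR^3}\mathcal{M}[\xi_0,\mu_0](y,t)\,Z_i(y)\,\ddy$, which vanishes by the assumed \reff{orthM} (equivalently, by the choice \reff{xi0def}). With this cancellation $w_p(r)\lesssim(1+r)^{-1}$ and $w_p'(r)\lesssim(1+r)^{-2}$, so $\abs{\Psi(y)}+(1+\abs{y})\abs{\nn_y\Psi(y)}\lesssim1$; elliptic regularity makes $\Psi$ (hence $\phi_3$) smooth, and undoing $\phi_3=\mu_0^2\Psi$, $\pp_t\phi_3=-4\g\phi_3$, $\mu_0\sim\mu$ gives \reff{bphi3} with a smooth bounded $f$.

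\textbf{Main obstacle.} The difficulty is entirely in the last step: the datum $g$ decays only like $(1+\abs{y})^{-2}$, which is borderline for $-\Delta$ in $\RR^3$, so a generic right-hand side with this decay would force a solution growing logarithmically (or, in the $\ell=1$ sector, linearly). Two structural facts rescue the estimate — that $\mathcal{M}[\xi_0,\mu_0]$ carries no radial ($\ell=0$) component, so the delicate $Z_4$-solvability issue never arises, and that in the $\ell=1$ sector the orthogonality \reff{orthM} is exactly the condition annihilating the growing homogeneous mode $w_2$. Verifying the identity between $\int_0^\infty w_1\rho\,s^4\,ds$ and $\int\mathcal{M}Z_i$, and tracking the precise decay rates of $w_p$ and $w_p'$ afterwards, is where care is needed; the remainder is elementary ODE analysis and standard elliptic regularity.
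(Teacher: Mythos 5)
Your proposal is correct and follows essentially the same route as the paper: both reduce to the $\ell=1$ radial ODE with $Z_i$ supplying the regular homogeneous solution, use variation of parameters, and invoke the orthogonality \reff{orthM} to annihilate the homogeneous mode ($w_2\to\mathrm{const}$) that would otherwise make $\phi_3$ grow linearly at infinity. The only difference is presentational: the paper writes everything through the radial coefficients $\phi_{3,i}(r)$ and the reduction-of-order integral $\phi_{3,i}(r)=z_i(r)\int_0^r\frac{\mathcal{I}_i(\rho)}{\rho^2 z_i(\rho)^2}\,d\rho$ with $\mathcal{I}_i$ estimated near $0$ and $\infty$, whereas you factor out the degree-one polynomial and make the second homogeneous solution explicit — these are equivalent computations.
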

	\begin{proof}
		From the explicit form of the function $\mathcal{M}$ given in (\ref{defM}) we estimate its size by
		\begin{align*}
			\abs{\mathcal{M}[\mu_0,\xi_0](y,t)}
			&\leq \mu^2 \frac{1}{1+\abs{y}^2},
		\end{align*}
		and we observe that $\mathcal{M}$ has only modes $i=1,2,3$. Thus, we decompose $\phi_3$ in such modes:
		\begin{align*}
			\phi_3(y)=\sum_{i=1}^3 \phi_{3,i}(r) \vartheta_i(y/r),\quad r{\coloneqq}\abs{y},\quad \phi_{3,i}(r){\coloneqq}\int_{S^2}\phi_3(r\theta)\vartheta_i(\theta)\, d\theta.
		\end{align*}
		Similarly, we define $$z_i(r){\coloneqq}\int_{S^2}Z_i(r\theta)\vartheta_i(\theta) \,d\theta.$$ 
		The formula of variation of constants gives
		\begin{align*}
			\phi_{3,i}(r)=z_i(r)\int_0^r \frac{1}{\rho^2 z_i(\rho)^2}\mathcal{I}_i(\rho) \,d\rho,
		\end{align*}
		where
		\begin{align*}
			\mathcal{I}_i(\rho){\coloneqq}\int_{0}^{\rho} M_i(s)z_i(s) s^2 \dds,
		\end{align*}
		and 
		$$
		M_i(r){\coloneqq}\int_{S^2}\mathcal{M}(r\theta)\vartheta_i(\theta) \,d\theta.
		$$ 
		Since 
		$$
		\abs{M_i(r)}\lesssim \frac{1}{1+r^2},
		$$ 
		and 
		$$
		\abs{z_i(r)}\lesssim \frac{r}{(1+r^3)},
		$$ 
		we deduce
		\begin{align*}
			\abs{\mathcal{I}_i(\rho)}\lesssim \rho^4 \ass \rho \to 0.
		\end{align*}
		Also, by the orthogonality conditions (\ref{orthM}) we have
		\begin{align*}
			\abs{\mathcal{I}_i(\rho)}&=\abs{\int_{\rho}^{\infty} M_i(s)z_i(s)s^2 \dds }\\
			&\lesssim \frac{1}{\rho} \ass \rho \to \infty.
		\end{align*}
		With these estimates we conclude 
		\begin{align*}
			\abs{\phi_3(r)}&\lesssim \frac{r}{1+r^3} \int_{0}^{r}\frac{\qty(1+\rho^2)^3}{\rho^4}\abs{\mathcal{I}(\rho)}\,d\rho\\
			&\lesssim \frac{r}{1+r^3} \int_{0}^{r}\frac{\qty(1+\rho^2)^3}{\rho^4} \frac{\rho^4}{1+\rho^5}\,d\rho\\
			&\lesssim 1.
		\end{align*}
		Similarly, taking the space and time derivatives of equation (\ref{eqphi3}), we deduce the bounds on $\nabla_y \phi_3$ and $\pp_t \phi_3$. 
	\end{proof}
	We conclude this section with summarizing the estimates of the error $S[u_3]$.
	\begin{lemma}\label{Lemma:FinalAnsatzError}
		Let $3\g<\lambda_1$, $\mu=\mu_0 e^{2\Lambda}$ and $\xi=\xi_0+\xi_1$, where $\mu_0,\xi_0$ are given by (\ref{mu0def}) and (\ref{xi0def}) respectively. Assume 
		\begin{align*}
			&\abs{\Lambda(t)}\lesssim \mu_0(t)^{l_0}, \quad \abs{\dot \Lambda(t)}\lesssim \mu_0(t)^{l_1},\\&R(t)=\mu^{-\delta},\quad \abs{\dot \xi_1(t)}\lesssim \mu_0^{1+k},
		\end{align*}
		for positive constant $\delta,l_0,l_1,k$ satisfying (\ref{condouter1}),(\ref{condcontractionNLandHolder}), (\ref{l1minl0}) and (\ref{condk}). Then, setting $x=\mu y+\xi$, for $t_0$ sufficiently large the following estimate on the error function $S[u_3]$ holds:
		\begin{align*}
			S[u_3](y,t)=\Sinn(y,t) \eta_{R(t)}\qty(\abs{y}) + \Sinn(y,t) (1-\eta_{R(t)}(\abs{y}))+\Sout(y,t),
		\end{align*}
		where
		\begin{align*}
			&\abs{\Sinn(y,t)\eta_{R(t)}}\lesssim \mu^{-\frac{3}{2}+l_1}\frac{1}{1+\abs{y}^4},\\
			&\abs{\Sout(y,t)}\lesssim \mu^{\frac{3}{2}},\\
			&\abs{\Sinn(y,t)(1-\eta_{R(t)})}\lesssim \mu^{-\frac{3}{2}+l_1+2\delta}\frac{1}{1+\abs{y}^2}.
		\end{align*}
	\end{lemma}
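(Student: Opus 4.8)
The statement is a consolidation of the term-by-term bounds already carried out in this section, so the plan is to organize those computations into the three announced pieces, noting first that $S[u_3]=\Sinn\eta_{R}+\Sinn(1-\eta_{R})+\Sout$ holds trivially once $S[u_3]=\Sinn+\Sout$ with $\Sinn$ given by (\ref{innS}) and $\Sout$ by (\ref{outS}). For the inner error I would bound the four groups of terms in (\ref{innS}) separately. The slowly-decaying term $\mu^{-3/2}(\mu_0/\mu)^{1/2}5U^4 J$ is controlled by combining the bound (\ref{estJ_{1,0}-Linf}) of Lemma \ref{EstJ1fromdotLambda} on $J_1$ — this is exactly where the hypothesis $3\g<\lambda_1$ enters, since for $l_1<1$ it forces $\g(2l_1+1)<3\g<\lambda_1$, i.e.\ $2\g l_1<\lambda_1-\g$ — together with the bound (\ref{estonJ2}) of Lemma \ref{Lemma:estJ2} on $J_2$, yielding $\lesssim\mu^{-3/2+l_1}(1+\ay)^{-4}$. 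The remainder $\mu^{-5/2}\mathcal{N}_3$ is the Taylor remainder of $(U-\mu H_\g+\cdots)^5$, hence is bounded by $\mu^{-1/2}U^3(\abs{H_\g}+\abs{J})^2$; inserting $\abs{H_\g}\lesssim\mu R$, $\abs{J}\lesssim\mu^{l_1}$ and $R=\mu^{-\delta}$ gives $\lesssim\mu^{-1/2-\delta+2\min\{1-\delta,l_1\}}(1+\ay)^{-4}$. The translation term involving $\dot\xi_1+(1-\mu^{-1}\mu_0)\dot\xi_0$ is estimated on $\{\ay\le 2R\}$ using $\abs{\nabla_y U}\lesssim(1+\ay)^{-2}$ and a factor $R^2$ to reach the $(1+\ay)^{-4}$ weight, together with the decay hypotheses $\abs{\dot\xi_1}\lesssim\mu_0^{1+k}$ and $\abs{\dot\xi_0}\lesssim\mu_0^2$; and the final $h_\g$ term is handled by the Taylor expansion (\ref{exphgg}) of $h_\g$ and $\mu/\mu_0=e^{2\Lambda}$, giving $\lesssim\mu^{-3/2+\min\{1,l_0,l_1\}}(1+\ay)^{-4}$.

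Collecting these, $\abs{\Sinn}\lesssim(1+\ay)^{-4}\big[\mu^{-1/2-\delta+2\min\{1-\delta,l_1\}}+\mu^{-3/2-2\delta+\min\{1+k,2\}}+\mu^{-3/2+\min\{1,l_0,l_1\}}\big]$, and I would then check that conditions (\ref{l1minl0}) and (\ref{condk}) make the last (the $J$-generated) exponent the smallest one, and that substituting $\delta=2/9$, $l_1=2/3$ from (\ref{deltal1}) produces the announced bound $\mu^{-3/2+l_1}(1+\ay)^{-4}$.

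For $\Sout$ the point is that every remaining term is genuinely small. The pieces carrying the factor $(1-\eta_l)$ are supported where $\abs{x-\xi}\gtrsim l\mu=1/k_2$, so $U$ and $\nabla_y U$ there are $O(\mu^{3/2})$ or smaller; the boundary term $\mu^{1/2}\dot\xi\cdot\nabla_x H_\g$ is $O(\mu^{3/2})$ since $\nabla_x H_\g$ is bounded and $\dot\xi=O(\mu)$; and all the $\phi_3$-type contributions — including the cutoff-derivative terms with $\eta'(\abs{z_3})$, $\eta''(\abs{z_3})$, which are supported on $\ay\sim l$ where $\mu l=1/k_2$ is $O(1)$ — are estimated with the bounds (\ref{bphi3}) on $\phi_3,\nabla_y\phi_3,\partial_t\phi_3$ from Lemma \ref{estphi3}, again giving $O(\mu^{3/2})$. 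Hence $\abs{\Sout}\lesssim\mu^{3/2}$.

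Finally, for $\Sinn(1-\eta_R)$ I would repeat the four estimates on the region $\{\ay\gtrsim R\}$, where one trades two powers of $(1+\ay)^{-1}$ for $R^{-2}=\mu^{2\delta}$, so the weight $(1+\ay)^{-4}$ becomes $(1+\ay)^{-2}$ at the cost of $\mu^{2\delta}$; using once more $\abs{H_\g}\lesssim\mu R$, $\abs{J}\lesssim\mu^{l_1}$, the dominant contribution is $\mu^{-3/2+l_1+2\delta}(1+\ay)^{-2}(1-\eta_R)$, once one verifies via (\ref{condouter1})--(\ref{condk}) that this indeed dominates the other three. The main thing to watch throughout is the exponent arithmetic: one must keep track of which of the several candidate powers of $\mu$ is the smallest and confirm that the standing conditions on $\delta,l_0,l_1,k$ were chosen precisely so that the $J$-term wins — the genuinely analytic content (the parabolic comparison bounds on $J_1,J_2$ and the elliptic bound on $\phi_3$) having already been supplied by Lemmas \ref{EstJ1fromdotLambda}, \ref{Lemma:estJ2} and \ref{estphi3}.
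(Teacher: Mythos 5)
Your proposal follows the paper's own argument exactly: it uses the identical decomposition $S[u_3]=\Sinn\eta_R+\Sinn(1-\eta_R)+\Sout$ from (\ref{innS})--(\ref{outS}), bounds each group in $\Sinn$ term by term via Lemmas \ref{EstJ1fromdotLambda}, \ref{Lemma:estJ2} and \ref{estphi3}, trades $(1+\abs y)^{-2}$ for $R^{-2}=\mu^{2\delta}$ on the support of $1-\eta_R$, and finishes by the same exponent arithmetic under (\ref{condouter1})--(\ref{condk}) and the choices (\ref{deltal1}). Two trivial slips — labelling the exponent $-3/2+\min\{1,l_0,l_1\}$ as ``$J$-generated'' when it actually arises from the $h_\g$ term (though it coincides with the $J$-term exponent since $\min\{1,l_0,l_1\}=l_1$), and counting ``the other three'' where the paper compares four contributions in the $1-\eta_R$ region — do not affect the correctness.
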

	
	\section{The Inner-outer scheme}\label{sec:innout}
	\medskip 
	We recall that our final purpose is to find an unbounded global in time solution $u$ to (\ref{eqCHE}) of the form
	\begin{align}\label{exactSol}
		u=u_3+\tilde\phi,
	\end{align}
	for a small perturbation $\tilde\phi$. The latter is constructed by means of the inner-outer gluing method. This consists in looking for a perturbation of the form
	\begin{align}\label{deftildephi}
		\tilde \phi(x,t)=\mu_0(t)^{1/2}\psi(x,t)+\eta_{R(t)}\qty(\abs{y})\mu(t)^{-1/2}\phi\qty(y,t),
	\end{align}
	where
	\begin{align*}
		\eta_{R(t)}\qty(\abs{y})=\qty(\frac{\abs{y}}{R(t)}),\quad y{\coloneqq}y(x,t){\coloneqq}\frac{x-\xi(t)}{\mu(t)},
	\end{align*}
	and $\eta(s)$ is a cut-off function with $\supp \eta\subset [0,2]$ and $\eta\equiv 1$ in $[0,1]$. We have already chosen $R=R(t)$ in (\ref{Radius}).
	In terms of $\tilde \phi$ the equation reads as 
	\begin{align*}
		0=S[u]&=-\pp_t u+ \Delta_x u +u^5\\
		&= \qty(-\pp_t u_3+\Delta_x u_3+u_3^5)-\pp_t \tilde \phi +\Delta_x \tilde \phi +(u_3+\tilde \phi)^4-u_3^5\\
		&=S[u_3]-\pp_t\tilde \phi+\Delta_x \tilde \phi + 5u_3^4 \tilde \phi +\mathcal{N}(u_3,\tilde \phi)
	\end{align*}
	where 
	\begin{align}\label{DefNLterm}
		\mathcal{N}(u_3,\tilde \phi){\coloneqq}(u_3+\tilde \phi)^5-u_3^5 -5u_3^4 \tilde \phi.
	\end{align}
	Hence the problem for $\tilde \phi$ is
	\begin{align*}
		&\pp_t \tilde \phi= \Delta_x \tilde \phi+ 5u_3^4 \tilde \phi +S[u_3] +\mathcal{N}(u_3,\tilde{\phi}) \inn \Omega \times [t_0,\infty),\\
		&\tilde \phi = -u_3 \onn \pp \Omega \times [t_0,\infty).
	\end{align*}
	Now, the main idea is to split the problem for $\tilde \phi$ in a system for $(\psi,\phi)$, localizing the inner regime. 
	We divide the error in
	\begin{align*}
		S[u_3]=&S_{\text{in}}\eta_R+S_{\text{in}}(1-\eta_R)+S_{\text{out}},
	\end{align*}
	where $\Sinn, S_{\text{out}}$ are defined in (\ref{innS}) and (\ref{outS}) respectively.
	Considering $\tilde \phi$ as in (\ref{deftildephi}) we compute
	\begin{align*}
		\pp_t \tilde \phi  =& \frac{\dot \mu_0}{2\mu_0}\mu_0^{1/2}\psi +\mu_0^{1/2}\pp_t \psi  + \mu^{-1/2} \phi \pp_t \eta\qty(\frac{y(x,t)}{R(t)})-\frac{\dot \mu}{2\mu}\mu^{-1/2}\phi \eta_R \\
		&+\mu^{-1/2}\qty(\pp_t \phi+ \nabla_y \phi \cdot \pp_t y(x,t))\eta_R\\
		=&-\g\mu_0^{1/2}\psi +\mu_0^{1/2}\pp_t \psi + \mu^{-1/2} \phi \qty[ \nabla_z \eta \qty(\frac{y}{R})\cdot \qty(-\frac{\dot R}{R}\frac{y}{R}-\frac{\dot \mu}{\mu}\frac{y}{R}-\frac{\dot \xi}{\mu R})]\\
		&+\qty(-\frac{\dot \mu}{2\mu})\mu^{-1/2}\phi \eta_R +\mu^{-1/2}\eta_R \qty(\pp_t \phi+ \nabla_y \phi\cdot \qty(-\frac{\dot \mu}{\mu}y-\frac{\dot \xi}{\mu})),
	\end{align*}
	and 
	\begin{align*}
		\Delta_x \tilde \phi =& \mu_0^{1/2}\Delta_x \psi + \mu^{-1/2}\Delta_x \qty(\phi(y(x,t),t)\eta_{R(t)}(y(x,t)))\\
		=&\mu_0^{1/2}\Delta_x \psi + \mu^{-5/2}\eta_R(y)\Delta_y \phi(y,t)  + \mu^{-1/2}\phi\qty(\frac{2}{\abs{z}}\frac{\eta'(\abs{z})}{\mu^2 R^2}+\frac{\eta''\qty(\abs{z})}{\mu^2 R^2})\\
		&+ 2\mu^{-1/2}\frac{1}{\mu}\nabla_y \phi(y,t) \cdot \frac{z}{\abs{z}} \frac{\eta'(\abs{z})}{\mu R},
	\end{align*}
	where $z{\coloneqq}y/R$. We split
	\begin{align*}
		5u_3^4 \tilde \phi= 5u_3^4 \mu_0^{1/2}\psi \eta_R + 5 u_3^4 \mu_0^{1/2}\psi(1-\eta_R)+5u_3^4 \mu^{-1/2} \phi \eta_R.
	\end{align*}
	Hence, the full equation becomes
	\begin{align*}
		-\g \mu_0^{1/2}\psi +\mu_0^{1/2}\pp_t \psi &+\mu^{-1/2}\phi \pp_t \eta_R +\eta_R \mu^{-1/2}\pp_t \phi 
		\\&+\eta_R\qty{(\g-\dot \Lambda)\mu^{-1/2}(\phi+2 \nabla_y \phi \cdot y)  -\mu^{-1/2}\nabla_y \phi\cdot \qty(\frac{\dot \xi}{\mu})}\\
		=&\mu_0^{1/2}\Delta_x \psi +\mu^{-5/2}\eta_R\Delta_y \phi +\mu^{-1/2}\phi \qty(\frac{2}{\abs{z}}\frac{\eta'(\abs{z})}{\mu^2 R^2}+\frac{\eta''\qty(\abs{z})}{\mu^2 R^2}) \\
		&+2\mu^{-1/2}\frac{1}{\mu}\nabla_y \phi \cdot \frac{z}{\abs{z}}\frac{\eta'(\abs{z})}{\mu R} \\
		&+5u_3^4 \mu_0^{1/2}\psi \eta_R + 5u_3^4 \mu_0^{1/2}\psi (1-\eta_R) + 5u_3^4 \mu^{-1/2}\phi \eta_R\\
		&+S_{\text{in}}\eta_R+S_{\text{in}}\qty(1-\eta_R)+S_{\text{out}}\\
		&+\mathcal{N}(u_3,\tilde \phi)(1-\eta_R) +\mathcal{N}(u_3,\tilde \phi)\eta_R.
	\end{align*}
	We divide the full problem in a system.
	Firstly, we look for a solution $\psi$ to
	\begin{align*}
		\mu_0^{1/2}\pp_t \psi =&\mu_0^{1/2}\Delta_x \psi+\g \mu_0^{1/2}\psi+5u_3^4 \mu_0^{1/2}\psi (1-\eta_R)+\mu^{-1/2}\phi \pp_t \eta_R \\\nonumber&
		+ \eta_R\qty{(\g-\dot \Lambda)\mu^{-1/2}(\phi+2 \nabla_y \phi \cdot y)  -\mu^{-1/2}\nabla_y \phi\cdot \qty(\frac{\dot \xi}{\mu})}\\\nonumber
		&+\mu^{-1/2}\phi \qty(\frac{2}{\abs{z}}\frac{\eta'(\abs{z})}{\mu^2 R^2}+\frac{\eta''\qty(\abs{z})}{\mu^2 R^2}) +2\mu^{-1/2}\frac{1}{\mu}\nabla_y \phi \cdot \frac{z}{\abs{z}}\frac{\eta'(\abs{z})}{\mu R} \\\nonumber
		&+S_{\text{in}}\qty(1-\eta_R)+S_{\text{out}}+\mathcal{N}(u_3,\tilde \phi)(1-\eta_R),\quad\quad\quad \inn \Omega \times [t_0,\infty)\\\nonumber
		\psi(x,t)=&-\mu_0^{-1/2}u_3(x,t)\quad \onn \pp \Omega \times [t_0,\infty).
	\end{align*}
	Thus, after dividing by $\mu_0^{1/2}$, $\psi$ solves the \textbf{outer problem} 
	\begin{align}\label{OutPr}
		\pp_t \psi =&\Delta_x \psi+\g \psi+5u_3^4 \psi (1-\eta_R)+\mu^{-1}\qty(\frac{\mu}{\mu_0})^{1/2}\phi \pp_t \eta_R \\\nonumber&
		+ \mu^{-1}\qty(\frac{\mu}{\mu_0})^{1/2}\eta_R\qty{(\g-\dot \Lambda)(\phi+2 \nabla_y \phi \cdot y)  -\nabla_y \phi\cdot \qty(\frac{\dot \xi}{\mu})}\\\nonumber
		&+\mu^{-1}\qty(\frac{\mu}{\mu_0})^{1/2}\qty(\phi \qty(\frac{2}{\abs{z}}\frac{\eta'(\abs{z})}{\mu^2 R^2}+\frac{\eta''\qty(\abs{z})}{\mu^2 R^2}) +2\frac{\nabla_y \phi}{\mu} \cdot \frac{z}{\abs{z}}\frac{\eta'(\abs{z})}{\mu R}) \\\nonumber
		&+\mu_0^{-1/2}S_{\text{in}}\qty(1-\eta_R)+\mu_0^{-1/2}S_{\text{out}}+\mu_0^{-1/2}\mathcal{N}(u_3,\tilde \phi)(1-\eta_R),\quad\quad\quad \inn \Omega \times [t_0,\infty)\\\nonumber
		\psi(x,t)=&-\mu_0^{-1/2}u_3(x,t)\quad \onn \pp \Omega \times [t_0,\infty),
	\end{align}
	Then, $\phi$ has to solve the problem
	\begin{align*}
		&\mu^{-1/2}\pp_t \phi= \mu^{-5/2}\Delta_y \phi + 5u_3^4 \mu^{-1/2}\phi+ 5u_3^4 \mu_0^{1/2}\psi +S_{\text{in}}+\mathcal{N}(u_3,\tilde \phi) \qquad \inn B_{2R}(0)\times [t_0,\infty).
	\end{align*}
	Equivalently, multiplying by $\mu^{5/2}$, $\phi$ solves
	\begin{align}\label{InnEq}
		\mu^2 \pp_t \phi =&  \Delta_y \phi + 5U^4 \phi+5U^4 \qty(\frac{\mu_{0}}{\mu})^{1/2}\mu\psi(\mu y+\xi,t)+ B_0\qty[\phi+\mu\psi](\mu y+\xi,t)\\\nonumber
		&+\mu^{5/2}S_{\text{in}}(\mu y+\xi,t)+\mathcal{N}(\mu^{1/2}u_3,\mu^{1/2}\tilde \phi)(\mu y+\xi,t) \inn B_{2R}(0)\times [t_0,\infty),
	\end{align}
	where $B_0$ is the linear operator
	\begin{align}\label{DefB0}
		B_0[f]{\coloneqq}5\qty[\qty(U-\mu H_\gamma+\mu J[\dot \Lambda]+\mu^{-1/2}\phi_3(y,t)\eta_3)^4-U^4]f,
	\end{align}
	\subsubsection{General strategy for solving the inner-outer system}
	We now describe the method we use to solve system (\ref{OutPr})-(\ref{InnEq}). 
	Firstly, for fixed parameters $\Lambda,\dot \Lambda,\xi,\dot \xi$ and inner function $\phi$ in suitable weighted spaces, we solve problem (\ref{OutPr}) in $\psi=\psi[\Lambda,\dot \Lambda,\xi,\dot \xi,\phi]$. This is done in \S\ref{sec:out}. 
	We insert such $\psi$ in the inner problem. At this point we need to find $\Lambda,\dot \Lambda,\xi,\dot \xi$ and $\phi$.
	We make the change of variable $t(\tau)$ defined by the ODE
	\begin{align*}
		&\frac{dt(\tau)}{d\,\tau}=\mu^2(t(\tau))\\
		&t(\tau_0)=t_0,
	\end{align*}
	which explicitly gives
	\begin{align}\label{tauoft}
		\tau-\tau_0
		&=\int_{t_0}^t\frac{ds}{\mu(s)^2}\dds\\
		&=\int_{t_0}^t\frac{ds}{\mu_0(s)^2}(1+o(1))\dds\\
		&=\frac{1}{4\gamma}\mu_0(t)^{-2}(1+o(1)).
	\end{align}
	Expressing equation (\ref{InnEq}) in the new variables $(y,\tau)$ we get the \textbf{inner problem}
	\begin{align}\label{InnPr}
		\pp_\tau \phi =  \Delta_y \phi+ 5U^4 \phi+H[\phi,\psi,\Lambda,\dot \Lambda,\xi,\dot \xi](y,\tau) \inn B_{2R}\times[\tau_0,\infty),
	\end{align}
	where
	\begin{align}\label{RHSInnPr}
		H[\phi,\psi,\Lambda,\dot \Lambda,\xi,\dot \xi](y,\tau){\coloneqq}
		&5\Uy^4 \mu \qty(\frac{\mu_0}{\mu})^{1/2} \psi(\mu y +\xi,t(\tau))\\\nonumber
		&+B_0\qty[\phi+\mu\psi](\mu y+\xi,t(\tau))+\mu^{5/2}S_{\text{in}}(\mu y+\xi,t(\tau))\\\nonumber
		&+\mathcal{N}(\mu^{1/2}u_3,\mu^{1/2}\tilde \phi)(\mu y+\xi,t(\tau)).
	\end{align}
	Let $Z_0$ be the positive radially symmetric bounded eigenfunction associated to the only negative eigenvalue $\lambda_0$ of the problem 
	\begin{align*}
		-\Delta_y \phi- 5U(y)^4 \phi=\lambda_0\phi \quad \text{for}\quad \phi \in L^\infty(\RR^3).  
	\end{align*}	
	It is known 
	that $\lambda_0$ is simple and 
	\begin{align*}
		Z_0(y)\sim \frac{e^{-\sqrt{\abs{\lambda_0}}\abs{y}}}{\abs{y}}\ass \abs{y}\to \infty.
	\end{align*}
	We solve (\ref{InnPr}) with a multiple of $Z_0(y)$ as initial datum, namely
	\begin{align}\label{InnPrInCond}
		\phi(\tau_0,y)=e_0 Z_0(y) \inn B_{2R},
	\end{align}
	for some constant $e_0=e_0[H]$ to be found. 
	Formally, this initial datum (\ref{InnPrInCond}) allows $\phi$ to remain small along its trajectory. Indeed, multiplying (\ref{InnEq}) by $Z_0$ and integrating we obtain
	\begin{align*}
		\mu^2\pp_t p(t)+\lambda_0 p(t)=q(t),
	\end{align*}
	where
	\begin{align*}
		p(t)\coloneqq\int_{\RR^3} \phi(y,t)Z_0(y) \ddy,\quad q(t)\coloneqq\int_{\RR^3}h(y,t)Z_0\ddy. 
	\end{align*}
	The general solution $p(t)$ is given by
	\begin{align*}
		p(t)=e^{\abs{\lambda_0}\int_0^t \mu(s)^{-2}\dds } \qty(p(t_0)+\int_{t_0}^{t}\mu(s)^{-2} q(s) e^{-\abs{\lambda_0}\mu(s)^{-2}}\dds ).
	\end{align*}
	This shows that in order to get a decaying solution $p(t)$ (and hence $\phi(y,t)$), the following initial conditions should hold:
	\begin{align*}
		p(t_0)= \int_{\RR^3}\phi(y,t_0)Z_0(y)\ddy = -\int_{t_0}^{\infty} \mu(s)^{-2} q(s) e^{-\abs{\lambda_0}\mu(s)^{-2}}\dds.
	\end{align*}
	This argument formally suggests that, to avoid the instability caused by $Z_0$, the small initial value $\phi(y,t_0)$ needs to be constrained along $Z_0$. \\
	Another important observation is that, in order to solve the problem (\ref{InnPr})-(\ref{InnPrInCond}) we need to constrain the right-hand side $H$ to be orthogonal to $\{Z_i\}_{i=1}^4$. Namely we need
	\begin{align}\label{InnPrOrth}
		\int_{B_{2R}} H(y,\tau)Z_i(y)\ddy =0 \quad \text{for} \quad \tau\in [\tau_0,\infty)  \quad \text{and}\quad  i=1,2,3,4.
	\end{align}
	Indeed, the elliptic kernel generated by $\{Z_i\}_{i=1}^4$ is a subset of the kernel of the parabolic operator
	\begin{align*}
		\mu^2 \pp_t \phi = \Delta_y \phi +5 U(y)^4 \phi.
	\end{align*}
	Hence, we expect to have solvability of the inhomogeneous problem (\ref{InnPr}) with suitable space-time decay if the orthogonality conditions (\ref{InnPrOrth}) are satisfied. \\
	As we shall see in \S\ref{sec:Choiceparameters}, condition (\ref{InnPrOrth}) with index $i=4$ is equivalent to a nonlocal problem in $\Lambda$, for fixed $\phi,\xi$. Such operator is similar to an half-derivative in the sense of Caputo \cite{caputo}, and we develop an invertibility theory in \S\ref{sec:invj}. In \S\ref{sec:Choiceparameters} we solve (\ref{InnPrOrth}) by fixed-point argument and hence we find $\Lambda,\xi$. A main ingredient of the full proof is the linear theory for the inner problem developed in \cite{cdm} and adapted in dimension 3 in \cite{dmw1}.
	\subsubsection{Statement of the linear estimate for the inner problem}
	We recall the result on the linear theory in dimension 3, proved in \cite{dmw1}. To state the result, we decompose a general function $h(\cdot,\tau)\in L^2(B_{2R})$ for any $\tau \in [\tau_0,\infty)$ in spherical modes. Let $\{\vartheta_m\}_{m=0}^\infty$ the orthonormal basis of $L^2(S^2)$ made up of spherical harmonics, namely the eigenfunctions of the problem
	\begin{align*}
		\Delta_{S^2}\vartheta_m +\lambda_m \vartheta_m=0\inn S^2,
	\end{align*}
	where $0=\lambda_0<\lambda_1=\lambda_2=\lambda_3=2<\lambda_4\leq \dots$. We decompose $h$ into the form
	\begin{align*}
		h(y,\tau)=\sum_{m=1}^{\infty} h_m(\abs{y},\tau)\vartheta_m\qty(\frac{y}{\abs{y}}),\quad h_j(\abs{y},\tau)=\int_{S^2}h(r\theta,\tau)\vartheta_m(\theta)\, d \theta.
	\end{align*}
	Furthermore, we write $	h=h^0+h^1+h^\perp$ where
	\begin{align*}
		h^0=h_0\qty(\abs{y},\tau),\quad h^1=\sum_{m=1}^{3}h_m\qty(\abs{y},\tau) \vartheta_m\qty(\frac{y}{\abs{y}}), \quad h^\perp = \sum_{m=4}^{\infty} h_m\qty(\abs{y},\tau)\vartheta_m\qty(\frac{y}{\abs{y}}).
	\end{align*}
	We solve the inner problem (\ref{InnProbGeneral}) for functions $h$ in the space $X_{\nu,2+a}$ defined by
	\begin{align}\label{DefXnu4space}
		X_{\nu,2+a}{\coloneqq}\{ h\in L^\infty\qty(B_{2R}\times [\tau_0,\infty)): \norm{h}_{\nu,2+a}<\infty 		\},
	\end{align}
	where
	\begin{align*}
		\norm{h}_{\nu,2+a}{\coloneqq}\sup_{\tau>\tau_0, y \in B_{2R}}\tau^{\nu} (1+\abs{y}^{2+a})\abs{h(y,\tau)}.
	\end{align*}
	
	\begin{proposition}\label{propInnerlineartheory}
		Let $\nu,a$ be positive constants. Then for all sufficiently large $R>0$ and any $h(y,\tau)$ with $\norm{h}_{\nu,2+a}<\infty$ such that
		\begin{align*}
			\int_{B_{2R}} h(y,\tau)Z_j(y)\ddy =0 \inn [\tau_0,\infty), \quad\text{for}\quad i=1,2,3,4,
		\end{align*}
		there exist $\phi[h]$ and $e_0[h]$ which solves 
		\begin{align}\label{InnProbGeneral}
			&\pp_\tau \phi= \Delta_ y\phi + 5U(y)^4 \phi +h(y,\tau)\inn B_{2R}\times (\tau_0,\infty)\\\nonumber
			&\phi(y,\tau_0)=e_0 Z_0(y) \inn B_{2R}.
		\end{align}
		They define linear operators of $h$ that satisfy the estimates
		\begin{align}\label{estinn}
			\abs{\phi(y,\tau)}+(1+\abs{y})\abs{\nabla_y \phi(y,\tau)}\lesssim \tau^{-\nu}\bigg[&	\frac{R^2 \theta_0(R,a)}{1+\abs{y}^3}\norm*{h^0}_{\nu,2+a}\\\nonumber
			&+ \frac{R^3 \theta_1(R,a)}{1+\abs{y}^4}\norm*{h^1}_{\nu,2+a}+\frac{1}{1+\abs{y}^a}\norm*{h^{\perp}}_{\nu,2+a}		\bigg],
		\end{align}
		and 
		$$
		\abs{e_0[h]}\lesssim \norm{h}_{\nu,2+a},
		$$
		where
		\begin{equation*}
			\theta_0(R,a) {\coloneqq} \left \{ 
			\begin{matrix}  
				1   & \hbox{\rm  if } a> 2, \\   \log R  & \hbox{\rm if } a= 2, \\ R^{2-a}   & \hbox{ \rm if } a < 2, 
			\end{matrix}\right. , \quad 
			\theta_1(R,a) {\coloneqq} \left \{ 
			\begin{matrix}
				1   & \hbox{\rm  if } a> 1, \\   \log R  & \hbox{\rm if } a= 1, \\ R^{1-a}   & \hbox{ \rm if } a < 1.
			\end{matrix}\right.	
		\end{equation*}
	\end{proposition}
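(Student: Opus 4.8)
The plan is to adapt to $n=3$ the inner linear theory of \cite{cdm}, following the three–dimensional treatment of \cite{dmw1}, exploiting that both the operator $\partial_\tau-\Delta_y-5U^4$ and the ball $B_{2R}$ respect the spherical–harmonic splitting $h=h^0+h^1+h^\perp$, and writing correspondingly $\phi=\phi^0+\phi^1+\phi^\perp$. First I would dispose of the higher modes $h^\perp$ (spherical modes $m\ge 4$): here $L[\phi]=\Delta\phi+5U^4\phi$ has no resonance and a spectral gap, so the orthogonality relations are not needed; one constructs a supersolution of the form $\tau^{-\nu}(1+\abs{y})^{-a}$ on $B_{2R}\times[\tau_0,\infty)$, using $a<2+a$ and the fast decay $U^4\sim\abs{y}^{-8}$, and the parabolic comparison principle yields $\abs{\phi^\perp}\lesssim\tau^{-\nu}(1+\abs{y})^{-a}\norm{h^\perp}_{\nu,2+a}$, with the gradient bound following from interior parabolic estimates. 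No power of $R$ appears in this sector.

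Next I would treat the mode–one part $h^1$. The functions $Z_i=\partial_{y_i}U$, $i=1,2,3$, span the mode–one bounded kernel of $L$ and decay only like $\abs{y}^{-2}$; the orthogonality conditions $\int_{B_{2R}}h\,Z_i=0$ are exactly what make the associated elliptic problem solvable, and a variation–of–parameters analysis in the radial variable — with $\abs{z_i(r)}\sim r/(1+r^3)$, together with the Dirichlet condition matched at $r=2R$ — produces the estimate with weight $R^3\theta_1(R,a)(1+\abs{y})^{-4}$, the power of $R$ and the factor $\theta_1$ reflecting the cost of forcing the slowly decaying kernel element to the desired decay on the truncated domain. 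The parabolic term $\partial_\tau\phi$ is then absorbed as a lower–order right–hand side by a fixed–point iteration, the $\tau^{-\nu}$ rate being preserved because differentiation in $\tau$ gains a factor $\tau^{-1}$. The radial part $h^0$ is handled along the same lines, now using the orthogonality against the radial kernel element $Z_4\sim(1+r)^{-1}$ (which, unlike the $Z_i$, genuinely fails to be square–integrable on $\RR^3$), giving instead the weight $R^2\theta_0(R,a)(1+\abs{y})^{-3}$.

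The radial sector also carries the unique negative eigenvalue $\lambda_0$ of $-\Delta-5U^4$, with exponentially decaying eigenfunction $Z_0$; this is the origin of the constant $e_0[h]$ and of the prescribed initial datum $\phi(\cdot,\tau_0)=e_0Z_0$. Projecting \eqref{InnProbGeneral} onto $Z_0$ gives an ODE $\dot p+\lambda_0 p=q$ with $p(\tau)=\int\phi\,Z_0$ and $q(\tau)=\int h\,Z_0/\norm{Z_0}_{L^2}^2$; since $\lambda_0<0$ the homogeneous solutions grow, so I would select $p(\tau_0)=-\int_{\tau_0}^\infty e^{-\abs{\lambda_0}(s-\tau_0)}q(s)\,ds$, which both suppresses the instability and defines $e_0[h]$ as a bounded linear functional with $\abs{e_0[h]}\lesssim\norm{h}_{\nu,2+a}$; the $Z_0$–component of $\phi^0$ then inherits the $\tau^{-\nu}$ decay. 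To turn these a priori estimates into an existence statement I would solve the problem on finite cylinders $B_{2R}\times[\tau_0,T]$ with zero datum at $\tau=T$ and $\phi(\cdot,\tau_0)=e_0^T Z_0$, with $e_0^T$ fixed so that the $Z_0$–component vanishes at $T$, obtain $\phi_T$ by standard parabolic theory, observe that the above bounds hold uniformly in $T$, and let $T\to\infty$ using interior parabolic compactness; linearity of $h\mapsto(\phi[h],e_0[h])$ is immediate from the construction.

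The main obstacle will be the a priori estimates in the radial and mode–one sectors: one must combine the elliptic solvability granted by the orthogonality relations with a careful accounting of the truncation radius $R$ — precisely because $Z_4$, and to a lesser degree the $Z_i$, fail or barely fail to lie in $L^2(\RR^3)$, which is the low–dimensional effect flagged in the introduction — and then check that neither the parabolic term $\partial_\tau\phi$ nor the Dirichlet condition on $\partial B_{2R}$ destroys the weighted space–time decay. This bookkeeping is what makes the $n=3$ argument heavier than its analogue in \cite{cdm}.
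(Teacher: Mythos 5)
The paper does not prove this proposition: the text immediately before the statement reads ``We recall the result on the linear theory in dimension 3, proved in \cite{dmw1}'', so the paper simply cites the reference and there is no internal argument to compare your sketch against. Your outline is nonetheless a reasonable reconstruction of how \cite{dmw1} adapts the \cite{cdm} inner linear theory to $n=3$: the spherical-mode splitting $h=h^0+h^1+h^\perp$, the handling of the unstable direction $Z_0$ via the tail-integral choice $p(\tau_0)=-\int_{\tau_0}^\infty e^{-\abs{\lambda_0}(s-\tau_0)}q(s)\,ds$ defining $e_0[h]$, and the finite-cylinder approximation with passage to the limit by interior parabolic compactness are the correct structural ingredients.

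Two factual slips should be corrected. First, $U(y)\sim\abs{y}^{-1}$ at infinity, so $U^4\sim\abs{y}^{-4}$, not $\abs{y}^{-8}$; this does not break the supersolution argument for $h^\perp$ (what matters is only that the potential $5U^4$ decays faster than $\abs{y}^{-2}$), but the exponent as written is wrong. Second, in $n=3$ the translation kernel elements $Z_i=\partial_{y_i}U\sim\abs{y}^{-2}$, $i=1,2,3$, do lie in $L^2(\RR^3)$; only $Z_4\sim\abs{y}^{-1}$ fails to be square-integrable. The $R$-weights in the mode-one sector therefore come from the slow but still square-integrable decay of the $Z_i$ relative to the target rate $\abs{y}^{-4}$, not from an $L^2$ failure; the genuine $L^2$ failure, and the nonlocal phenomenon flagged in the introduction, is specific to $Z_4$ and the radial sector.

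The step that would demand the most care is the claim that $\partial_\tau\phi$ can be absorbed as a lower-order perturbation around the elliptic variation-of-parameters solution: $\partial_\tau\phi$ is in general of the same parabolic order as $\Delta_y\phi$, and the absorption works only because the relevant regime has $R^2\ll\tau$, so the diffusion time across $B_{2R}$ is much shorter than the ambient time scale. The references establish the weighted a priori bounds by an indirect compactness (blow-up) argument rather than by an explicit elliptic construction plus perturbation; your direct route is plausible but would require verifying that the fixed-point contraction constant carries a negative power of $R$ or of $\tau_0$ so that the iteration actually closes.
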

	As we said in \S \ref{constr:u_2}, in order to make the system for $(\phi,\psi)$ weakly coupled, $\phi$ needs to be small at distance $y\sim R$. For this reason, we need to take $a>1$ in the statement of Proposition \ref{propInnerlineartheory}. This makes clear why we need to improve ansatz $u_1$ to $u_3$ in \S\ref{sec:ansatz}.
	Since in our problem $h=H$ as in (\ref{InnPr}) decays as 
	$$\abs{h}\lesssim \mu^{1+l_1} \frac{1}{1+\abs{y}^{4}}=\tau^{-\nu} \frac{1}{1+\abs{y}^{4}},$$
	where $\tau$ is given in (\ref{tauoft}),
	we apply estimate (\ref{estinn}) with constants 
	$$
	a=2,\quad \nu=\frac{1+l_1}{2},
	$$ 
	in the simplified form
	\begin{align}\label{estinnSimplified}
		\abs{\phi}+(1+\abs{y})\abs{\nabla_y \phi(y,\tau)}\lesssim \norm{h}_{\nu,4}\tau^{-\nu}\qty[\frac{R^2 \log(R)}{1+\abs{y}^3}+\frac{R^3}{1+\abs{y}^4}],
	\end{align} 
	and observe that
	\begin{align*}
		\qty[\frac{R^2 \log(R)}{1+\abs{y}^3}+\frac{R^3}{1+\abs{y}^4}]\lesssim  \left\{ 
		\begin{matrix}  
			 R^{-1}\log R   & \hbox{\rm  if } \abs{y}\sim R, \\   R^3  & \hbox{\rm if } \abs{y} \sim 0.
		\end{matrix}\right.
	\end{align*}
	We look for $\phi$ in the space of functions
	\begin{align*}
		X_*{\coloneqq}\{ \phi(y,t)\in L^\infty(\Om \times [t_0,\infty)): \norm{\phi}_*<\infty\},
	\end{align*}
	where
	\begin{align}\label{NormphiDef}
		\norm{\phi}_{*}{\coloneqq}&\sup_{\tau \in [\tau_0,\infty), y \in B_{2R} } \tau^{\nu}\qty[\frac{R^2 \log(R)}{1+\abs{y}^3}+\frac{R^3}{1+\abs{y}^4}]^{-1} \qty[\abs{\phi(y,\tau)}+(1+\abs{y})\abs{\nabla_y \phi(y,\tau)}]\\\nonumber
		&+\sup_{\substack{\tau \in [\tau_0,\infty), y \in B_{2R} \\ \tau_1,\tau_2 \in [\tau,\tau+1]}} \tau^{\nu}\qty[\frac{R^2 \log(R)}{1+\abs{y}^3}+\frac{R^3}{1+\abs{y}^4}]^{-1}\frac{\abs{\phi(y,\tau_1)-\phi(y,\tau_2)}}{\abs{\tau_1-\tau_2}^{\frac{1}{2}+\ve}}\\\nonumber
		&+\sup_{\substack{\tau \in [\tau_0,\infty), y \in B_{2R} \\ \tau_1,\tau_2 \in [\tau,\tau+1]}}\tau^{\nu}\qty[\frac{R^2 \log(R)}{1+\abs{y}^3}+\frac{R^3}{1+\abs{y}^4}]^{-1} (1+\abs{y})\frac{\abs{\nabla_y\phi(y,\tau_1)-\nabla_y\phi(y,\tau_2)}}{\abs{\tau_1-\tau_2}^{\frac{1}{2}+\ve}},
	\end{align}
	for $\ve>0$ fixed small (as in \S\ref{Choiceofconst}).

	We notice that, by standard parabolic estimates, from (\ref{estinnSimplified}) we also get the bound on the H\"older seminorms in (\ref{NormphiDef}), thus
	\begin{align}\label{UsedLinearEst}
		\norm{\phi}_*\leq C \norm{h}_{\nu,4}.
	\end{align}
	\subsubsection{Spaces for the parameters}
	We introduce weighted H\"older spaces for the parameters $\Lambda,\xi$. Let
	\begin{align*}
		X_{\sharp,a,b,\sigma}{\coloneqq}\{\Lambda \in C(t_0,\infty): \norm{\Lambda}_{\sharp,a,b,\sigma}<\infty\},
	\end{align*}
	where
	\begin{align*}
		\norm{\Lambda}_{\sharp,a,b,\sigma}{\coloneqq} \sup_{t>t_0}\qty{\mu(t)^{-a}\norm{\Lambda}_{\infty,[t,t+1]}}+\sup_{t>t_0}\qty{\mu(t)^{-b}[\Lambda]_{0,\sigma,[t,t+1]}},
	\end{align*}
	and
	\begin{align*}
		&\norm{\Lambda}_{\infty,[t,t+1]}=\sup_{s\in[t,t+1]}\abs{\Lambda(s)},\\
		&[\Lambda]_{0,\sigma,[t,t+1]}{\coloneqq}\sup_{\substack{s_1,s_2 \in [t,t+1]\\ s_1 \neq s_2}} \frac{\abs{\Lambda(s_1)-\Lambda(s_2)}}{\abs{s_1-s_2}^{\sigma}}.
	\end{align*}
	We look for $\Lambda$ such that
	\begin{align}\label{NormsLambda,dotLambda}
		&\norm{\Lambda}_{\sharp,l_0,\delta_0,\frac{1}{2}+\ve}+\norm*{\dot \Lambda}_{\sharp,l_1,\delta_1,\ve}<\mathfrak{b}_1,
	\end{align}
	for some positive constant $\ve,\delta_0,\delta_1,l_0,l_1$ to be chosen (see \S\ref{Choiceofconst}).
	We also define $X_{\sharp,c,\sigma}{\coloneqq}X_{\sharp,c,c,\sigma}$ and
	\begin{align*}
		\norm{h}_{\sharp,c,\sigma}{\coloneqq}\sup_{t>t_0} \mu(t)^{-c} \qty[\norm{h}_{\infty,[t,t+1]}+[h]_{0,\sigma,[t,t+1]}].
	\end{align*}
	We consider $\xi_1$ such that 
	\begin{align}\label{NormsXi,dotXi}
		&\norm{\xi_1}_{\sharp,1+k,\frac{1}{2}+\ve}+\norm*{\dot \xi_1}_{\sharp,1+k,\ve}<\mathfrak{b}_2,
	\end{align}
	for some $k>0$ (see \S\ref{Choiceofconst}). The positive constants $\mathfrak{b}_{1},\mathfrak{b}_{2}$ will be selected as small as needed. 
	\subsubsection{Choice of constants} \label{Choiceofconst}
	Here we select the constants 
	\begin{align*}
		l_0,l_1,\delta_0,\delta_1,\ve,\delta,k,\alpha,\beta,\sigma,\kappa,
	\end{align*}
	which are sufficient to find the perturbation $\tilde \phi$ in (\ref{exactSol}) by the inner-outer gluing scheme. 
	Firstly, we indicate where the constants appear in the scheme:
	\begin{itemize}
		\item $l_0,l_1,\delta_0,\delta_1,\ve$ appear in the definition (\ref{NormsLambda,dotLambda});
		\item $k$ is used in the norm (\ref{NormsXi,dotXi}) for $\xi$;
		\item $\delta$ appears in $R(t)=\mu^{-\delta}$, that is the radius of the inner regime;
		\item $\alpha,\beta$ is used in the norms for the outer problem, see (\ref{defFouterNorm}) and (\ref{normPsi});
		\item $\sigma>0$ appears in the choice of $\beta=l_1+\delta+\sigma$ in the outer problem;
		\item $\kappa >0$ is the constant appearing in Proposition \ref{PropOuterNonlinear}.
	\end{itemize}
	We fix the following values:
	\begin{itemize}
		\item $\delta=\frac{2}{9}$;
		\item $l_1=k=\frac{2}{3}$;
		\item $l_0=l_1+\frac{\delta}{2}=\frac{7}{9}$;
		\item $\sigma=2\alpha=\ve=\frac{1}{100}$;
		\item $\delta_1=l_1+\delta-\sigma -(1-\delta)(1+\alpha/2)(1+2\ve)$;
		\item $\delta_0 = l_1 + \delta - \sigma - (1-\delta)(1+\alpha/2)2\ve$;
		\item $\beta=\frac{1}{2}+l_1+\delta-\sigma$;
		\item $\kappa = \gamma (\sigma-\alpha \delta)$
	\end{itemize}
	These choices are dictated by the following constraints, based on the estimate of the approximate solution, the linear theory for inner (Proposition \ref{propInnerlineartheory}) and outer problem (Lemma \ref{LinearLemmapsi}), the characterization of the orthogonality conditions (\ref{SystemEquivalent}) and the estimates in Proposition \ref{Proposition:invj}:
	\begin{itemize}
		\item $l_1+\delta<1$ to make $\beta<3/2$ and apply the outer linear estimate (\ref{LinEstPsi1inf}); 
		\item we need $$\delta \in \qty(\frac{1-l_1}{2}+\hat \ve,\frac{1+l_1}{6}), \quad\text{where}\quad \hat \ve=\frac{(1+\alpha/2)(1+2\ve)-l_1+\sigma}{1+(1+\alpha/2)(1+2\ve)}-\frac{1-l_1}{2}.$$ 
		Up to choosing $\sigma>\alpha>0$ and $\ve>0$ small enough, these range is equivalent to (\ref{condouter1}), which, together with the previous restriction, impose a range for $\delta$ and $l_1$ leading (for instance) to the choice (\ref{deltal1});
		\item $l_0\geq l_1$ and $k+1\geq 2\delta +l_1$ to get $\mu^{5/2}\Sinn$ controlled by the term $\mu(t) 5U(y)^4 J(x,t)$;
		\item $\sigma>\alpha \delta>0$, $\ve>0$ and $\kappa \in (0,2\gamma (\sigma-\alpha \delta))$. This allows to estimate the $R^{\alpha}\log R\lesssim e^{-\kappa t}\mu^{-\sigma}$ when we need to control the term $\mu^{-1} \phi \Delta_x \eta_R$ in the outer error;
		\item $k=l_1$. From (\ref{Eqi=123Char}) we need  $\abs{\xi_1}+\abs*{\dot \xi_1} \lesssim \mu^{1+l_1}$, thus the choice of $k$, which is consistent with (\ref{condk});
		\item in the outer problem we obtain $\abs{\psi(x,t)}\lesssim \frac{\mu^{l_1-\sigma}R^{-1}}{1+\abs{y}^\alpha}$. The nonlocal equation (\ref{i=4Equivalent}) and the estimate (\ref{l1minl0}) asks for $\abs{\Lambda(t)}\lesssim \abs{\psi(\xi(t),t)}$. Thus, this leads to the a choice of $l_0\in [l_1,l_1+\delta-\sigma]$;
		\item from estimate (\ref{LinEstLambda<h}), equation (\ref{i=4Equivalent}) and the bound on the $\ve$-H\"older seminorm of $\psi$ we get
		\begin{align*}
			[\Lambda]_{0,\frac{1}{2}+\ve,[t,t+1]}\lesssim [\psi(\xi(\cdot),\cdot)]_{0,\ve,[t,t+1]}\lesssim  \frac{\mu^{l_1+\delta-\sigma}}{(\mu R)^{(1+\frac{\alpha}{2})2\ve}}= \mu^{\delta_0},
		\end{align*}
		which gives $\delta_0$;
		\item similarly, from (\ref{estpsi}) the H\"older estimate on the outer solution gives $$[\psi(0,\cdot)]_{0,\frac{1}{2}+\ve,[t,t+1]}\lesssim \mu^{l_1+\delta-\sigma}(\mu R)^{-(1+\frac{\alpha}{2})(1+2\ve)},$$ 
		and by equation (\ref{i=4Equivalent}) and estimate (\ref{LinEstdotLambda<h}) we need $$[\dot \Lambda]_{0,\ve,[t,t+1]}\lesssim [\psi(0,\cdot)]_{0,\frac{1}{2}+\ve,[t,t+1]}.$$ This leads to the choice of $\delta_1$;
		\item after choosing $\sigma=2\alpha>0$ small so that $\delta>\sigma$, the constant $\ve$ is chosen small enough to make $\delta_1$ positive (any choice of $\alpha \in (0,\delta/2)$ and $\ve$ such that $\delta_1=\delta_1(\alpha,\ve)>0$ is sufficient).
	\end{itemize}

	\section{Solving the outer problem}\label{sec:out}
	\medskip
	We devote this section to solve the outer problem (\ref{OutPr})
	\begin{align*}
		&\pp_t \psi =\Delta_x \psi+\g \psi+V \psi +f[\psi,\phi,\Lambda,\dot \Lambda,\xi,\dot \xi](x,t), \inn \Omega \times [t_0,\infty),\\\nonumber
		&\psi(x,t)=-\mu_0^{-1/2}u_3(x,t)\quad \onn \pp \Omega \times [t_0,\infty),\\
		&\psi(x,t_0)=\psi_0(x)\inn \Om,
	\end{align*}
	where $\psi_0(x)$ is any suitable small initial condition, 
	\begin{align}\label{deff(x,t)}
		f(x,t)=&\mu^{-1}\qty(\frac{\mu}{\mu_0})^{1/2}\phi \pp_t \eta_R \\\nonumber&
		+ \mu^{-1}\qty(\frac{\mu}{\mu_0})^{1/2}\eta_R\qty{(\g-\dot \Lambda)(\phi+2 \nabla_y \phi \cdot y)  -\nabla_y \phi\cdot \qty(\frac{\dot \xi}{\mu})}\\\nonumber
		&+\mu^{-1}\qty(\frac{\mu}{\mu_0})^{1/2}\qty(\phi \qty(\frac{2}{\abs{z}}\frac{\eta'(\abs{z})}{\mu^2 R^2}+\frac{\eta''\qty(\abs{z})}{\mu^2 R^2}) +2\frac{\nabla_y \phi}{\mu} \cdot \frac{z}{\abs{z}}\frac{\eta'(\abs{z})}{\mu R}) \\\nonumber
		&+\mu_0^{-1/2}S_{\text{in}}\qty(1-\eta_R)+\mu_0^{-1/2}S_{\text{out}}+\mu_0^{-1/2}\mathcal{N}(u_3,\tilde \phi)(1-\eta_R)
	\end{align}
	and potential
	\begin{align*}
		V(x,t)&=5u_3^4 (1-\eta_R),
	\end{align*}
	which, by the definition of $u_3$, using again the bounds on $H_\gamma,J,\phi_3$ and the support of $(1-\eta_R)$, satisfies
	\begin{align}\label{VBound}
		\abs{V}&\lesssim \mu^{-2}U(y)^4 \abs{\qty(1-\mu \frac{H_\gamma+J+\mu^{-1}\phi_3 \eta_l}{U})}^4 \\\nonumber
		&\lesssim (1-\eta_R)\frac{\mu^{-2}}{1+\abs{y}^4}[1+\mu(1+\abs{y})]^4\\\nonumber
		&\lesssim (1-\eta_R)\frac{\mu^{-2}}{1+\abs{y}^4}\\\nonumber
		&\lesssim \frac{\mu^{-2}}{1+\abs{y}^2}R^{-2}.
	\end{align}
	Let 
	$$
	\psi_1(x,t){\coloneqq}\mu_0(t)^{1/2}\psi(x,t).
	$$ 
	Then, the problem for $\psi_1$ becomes
	\begin{align}\label{probpsi1Real}
		&\pp_t \psi_1 = \Delta_x \psi_1+V \psi_1 + F[\psi,\phi,\Lambda,\dot \Lambda, \xi,\dot \xi](x,t)\inn  \Om \times [t_0,\infty),\\\nonumber
		&\psi_1(x,t)=g(x,t)  \onn \Om \times [t_0,\infty),\\\nonumber
		&\psi_1(x,t_0)=\psi_{1,0}(x) \inn \Omega
	\end{align}
	where
	\begin{align*}
		&F(x,t){\coloneqq}\mu_0(t)^{\frac{1}{2}}f(x,t)\\
		&g(x,t){\coloneqq}-u_3(x,t)\\
		&\psi_{1,0}(x){\coloneqq}\mu_0(t_0)^{\frac{1}{2}}\psi_0(x).
	\end{align*}
	In particular, in the proof of Proposition \ref{PropOuterNonlinear} we prove that for any $\alpha>0$  
	\begin{align}\label{RelF}
		&\abs{F(x,t)}\lesssim e^{-\kappa t_0}\frac{\mu^{l_1+\delta-\sigma}\mu^{-2}}{1+\abs{y}^{2+\alpha}}.
	\end{align}
	Also, using the definition of $u_3$, in (\ref{sizeboundary}) we prove
	\begin{align}\label{Relg}
		&\abs{g(x,t)}\lesssim \mu^{\frac{5}{2}}.
	\end{align}
	Firstly, we consider the linear version of (\ref{probpsi1Real}). Let
	\begin{align}\label{defFouterNorm}
		\abs{F(x,t)}\leq \norm{F}_{\beta-2,\alpha+2} \frac{\mu^{\beta}\mu^{-2}}{1+\abs{y}^{\alpha+2}},
	\end{align}
	for some $\beta>0,\alpha>0$, where $\norm{F}_{\beta-2,\alpha+2}$ is the best constant for such inequality. 
	Also, for $\delta \in (0,1/2)$ and $\sigma \in (0,1)$ we define the H\"older norms
	\begin{align*}
		&[f]_{0,2\delta,\delta, \Omega \times [t,t+1]}:=\sup_{\substack{x_1\neq x_2 \in \Omega \\ t_1\neq t_2 \in [t,t+1]}}\frac{\abs{f(x_1,t_1)-f(x_2,t_2)}}{\abs{x_1-x_2}^{2\delta}+\abs{t-t_1}^{\delta}}\\
		&[f(x,\cdot)]_{\sigma,[t,t+1]}:= \sup_{t_1\neq t_2 \in [t,t+1]}\frac{\abs{f(x,t)-f(x,t)}}{\abs{t_1-t_2}^{\sigma}}\\
		&[f(\cdot,t)]_{0,\sigma, \Omega}:=\sup_{x_1\neq x_2 \in \Omega}\frac{\abs{f(x_1,t)-f(x_2,t)}}{\abs{t_1-t_2}^{\sigma}}
	\end{align*}
	\begin{lemma}\label{LinearLemmapsi}
		Let $F$ such that $\norm{F}_{\beta-2,\alpha+2}<\infty$ for some constants $\beta<3/2 $ and $\alpha\in (0,1)$. Furthermore, assume that $\norm{e^{as}g(s)}_{L^{\infty}(\pp  \Omega \times (t_0,\infty))}<\infty$ for some $a>0$ and $\norm{h}_{L^\infty(\Omega)}<\infty$. Let $\psi_1[F,g,h]$ be the unique solution to 
		\begin{align}\label{psi1linearproblem}
			&\pp_t \psi_1 = \Delta_x \psi_1 + V \psi_1 + F(x,t)\inn \cyt,\\\nonumber
			&\psi_1(x,t) = g(x,t) \onn \pp \cyt,\\\nonumber
			&\psi_1(x,t_0)=h(x) \inn \Om.
		\end{align}
		Then, for $b \in (0,\lambda_1)$ and $\tilde a \in (0,\min\{a,\lambda_1-\ve\}]$ for $\ve>0$ arbitrary small, we have
		\begin{align}\label{LinEstPsi1inf}
			\abs{\psi_1(x,t)}\lesssim &\norm{F}_{\beta-2,\alpha+2} \frac{\mu^{\beta}}{1+\abs{y}^{\alpha}}+e^{-b(t-t_0)}\norm{h}_{L^\infty(\Omega)}\\\nonumber
			&+e^{-\tilde a(t-t_0)}\norm{e^{as}g}_{L^\infty(\pp \Omega \times (t_0,\infty))}
		\end{align}
		for all $x=\mu y+\xi \in \Om$ and $t>t_0$. Furthermore, the following local estimate on the gradient holds:
		\begin{align}\label{LinEstPsi1nabla}
			&\abs{\nabla_x\psi_1(x,t)}\lesssim \norm{F}_{\beta-2,\alpha+2} \frac{
				\mu^{\beta-1}}{1+\abs{y}^{\alpha+1}}\quad \text{for}\quad \abs{y}<R,\\\nonumber
			&[\nabla_x \psi_1(\cdot,t)]_{0,2\ve,B_{\mu R}(\xi)}\lesssim \norm{F}_{\beta-2,\alpha+2} {\mu^{\beta-1-2\ve}} R^{-1-2\ve}
		\end{align}	
		where $R\leq \delta \mu^{-1}$ for sufficiently small $\delta>0$.
		Also, one has
		\begin{align}\label{LinEstPsi1Calpha}
			&\qty[\qty(\mu R)^{2+\alpha}]^{\frac{1}{2}+\ve}\sup_{x \in B_{R\mu}(\xi)}{[\psi_1(x,\cdot)]_{0,\frac{1}{2}+\ve,[t,t+1]}}\\\nonumber
			&+\qty[\qty(\mu R)^{2+\alpha}]^{\ve}\sup_{x\in B_{R\mu(\xi)}}[\psi_1(x,\cdot)]_{0,\ve,[t,t+1]} \lesssim \norm{F}_{\beta-2,\alpha+2}\mu^{\beta}.
		\end{align}
	\end{lemma}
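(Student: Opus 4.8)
\emph{Proof plan.} The plan is to prove the three estimates of the lemma in the order stated, first disposing of the zeroth--order potential $V$ as a small perturbation. On $\supp V=\{x:\,5u_3^4(x,t)(1-\eta_R)\neq0\}\subset\{\abs{y}\gtrsim R\}$ one has $\abs{V}\lesssim\mu^{-2}(1+\abs{y}^4)^{-1}$, hence $\norm{V(\cdot,t)}_{L^{3/2+\ve}(\Omega)}\lesssim\mu(t)^{2\delta-C\ve}\to0$ as $t\to\infty$; since $3/2$ is the scaling--critical exponent for the $3$--d heat flow and $3/2+\ve$ is subcritical, the Dirichlet semigroup $e^{\tau\mathcal L_V}$ generated by $\mathcal L_V=\Delta+V$ enjoys the same $L^p$--$L^q$ smoothing and the same long--time decay $e^{-(\lambda_1-\ve)\tau}$ as the free heat semigroup, with constants tending to the free ones as $t_0\to\infty$ --- a standard Duhamel/Neumann--series argument, using in addition $\lambda_1(\mathcal L_V)\geq\lambda_1-C\norm{V}_{L^{3/2}}$ from the Sobolev inequality. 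I may therefore absorb $V$ into the heat flow.

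I would then split $\psi_1=\psi_1^h+\psi_1^g+\psi_1^F$, the three pieces carrying respectively the initial datum, the boundary datum and the forcing, the other data put to zero. By the reduction above, $\norm{e^{\tau\mathcal L_V}}_{L^\infty\to L^\infty}\lesssim e^{-b\tau}$ for every $b<\lambda_1$ once $t_0$ is large, so $\abs{\psi_1^h}=\abs{e^{(t-t_0)\mathcal L_V}h}\lesssim e^{-b(t-t_0)}\norm{h}_{L^\infty}$. For $\psi_1^g$ one lifts $g$ to $\bar\Omega$ and writes $\psi_1^g$ via Duhamel with $e^{\tau\mathcal L_V}$; since $\abs{g}\leq\norm{e^{as}g}_{L^\infty}e^{-as}$ on $\pp\Omega$ and the $L^2$--decay rate of $e^{\tau\mathcal L_V}$ is $\lambda_1(\mathcal L_V)\geq\lambda_1-\ve$, one gets $\abs{\psi_1^g}\lesssim e^{-\tilde a(t-t_0)}\norm{e^{as}g}_{L^\infty(\pp\Omega\times(t_0,\infty))}$ for $\tilde a\leq\min\{a,\lambda_1-\ve\}$ --- this is where the restriction $\tilde a\leq\lambda_1-\ve$ comes from.

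The heart of the matter is $\psi_1^F$, for which I would build a barrier of the form
\begin{align*}
\bar\psi(x,t)=C\norm{F}_{\beta-2,\alpha+2}\Big[\,\mu(t)^{\beta}(1+\abs{y}^2)^{-\alpha/2}+M\,\mu(t)^{\beta+\alpha}\Phi(x)\,\Big],\qquad y=\frac{x-\xi(t)}{\mu(t)},
\end{align*}
with $\Phi>0$ a solution of $-\Delta\Phi=c\Phi$ in $\Omega$, $\Phi\geq1$, for a constant $c$ with $2\gamma(\beta+\alpha)<c<\lambda_1$; such a $c$ exists \emph{precisely because} $\beta<3/2$ and $3\gamma<\lambda_1$ (take $\alpha$ small, so that $2\gamma(\beta+\alpha)<3\gamma<\lambda_1$), and this is the point at which the hypothesis $\beta<3/2$ enters the proof of (\ref{LinEstPsi1inf}). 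In the concentration and intermediate range $\abs{y}\lesssim\mu^{-1}$ the dominant term in $\pp_t\bar\psi-\Delta_x\bar\psi-V\bar\psi$ is $-C\norm{F}_{\beta-2,\alpha+2}\mu^{\beta-2}\Delta_y\big((1+\abs{y}^2)^{-\alpha/2}\big)$, which for $\alpha<1$ is positive (in dimension $3$, $\Delta_y(1+\abs{y}^2)^{-\alpha/2}=\alpha(1+\abs{y}^2)^{-\alpha/2-2}[(\alpha-1)\abs{y}^2-3]<0$) and bounded below by $c'C\norm{F}_{\beta-2,\alpha+2}\mu^{\beta-2}(1+\abs{y}^2)^{-(\alpha+2)/2}\gtrsim\abs{F}$ for $C$ large; the time--derivative terms are $O(\mu^\beta)\ll\mu^{\beta-2}$ and $V\bar\psi$ carries an extra $R^{-2}$ on $\supp V$, so both are absorbed for $t_0$ large. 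Near $\pp\Omega$, where the first summand's Laplacian degenerates to the size $\mu^{\beta+\alpha}$ of the source there, the second summand takes over because $\pp_t(\mu^{\beta+\alpha}\Phi)-\Delta(\mu^{\beta+\alpha}\Phi)-V(\mu^{\beta+\alpha}\Phi)=\mu^{\beta+\alpha}\Phi\,[\,c-2\gamma(\beta+\alpha)+o(1)\,]$ with $c-2\gamma(\beta+\alpha)>0$, so choosing $M$ and then $C$ large enough the comparison closes; note that $M\mu^{\beta+\alpha}\Phi\lesssim\mu^{\beta}(1+\abs{y})^{-\alpha}$ throughout $\Omega$, so the barrier respects the target size. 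Together with $\bar\psi\geq0$ on $\pp\Omega$ and at $t=t_0$, the parabolic comparison principle gives $\abs{\psi_1^F}\leq\bar\psi\lesssim\norm{F}_{\beta-2,\alpha+2}\,\mu^{\beta}(1+\abs{y})^{-\alpha}$, proving (\ref{LinEstPsi1inf}).

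The gradient bound (\ref{LinEstPsi1nabla}) and the time--H\"older bounds (\ref{LinEstPsi1Calpha}), needed only on $B_{2\mu R}(\xi)$, follow from interior and boundary parabolic regularity: after parabolically rescaling by $\mu$ in space on cylinders over which $\mu(s)\sim\mu(t)$, the equation $\pp_t\psi_1=\Delta_x\psi_1+V\psi_1+F$ has right--hand side controlled by the $L^\infty$ bound just obtained, and Schauder (or De Giorgi--Nash--Moser plus bootstrap) estimates produce the claimed powers of $\mu$, one derivative costing $\mu^{-1}$ and giving $\mu^{\beta-1}(1+\abs{y})^{-\alpha-1}$, and the $C^{\frac12+\ve}_t$ seminorms acquiring the weight $[(\mu R)^{2+\alpha}]^{\frac12+\ve}$ from the parabolic scaling of $B_{\mu R}(\xi)$, the concentration scale of $F$ being used to optimise the cylinder size. \textbf{The main obstacle} is the barrier for $\psi_1^F$: one must choose the self--similar profile so that its Laplacian has a favourable sign everywhere (forcing $\alpha<1$) and, simultaneously, add a boundary correction whose very existence hinges on $2\gamma(\beta+\alpha)<\lambda_1$ --- it is exactly on $\pp\Omega$ that the exponential rate $2\gamma\beta$ of the source amplitude competes with the spectral gap $\lambda_1$, and where the resonance that would otherwise degrade the estimate is ruled out by $\beta<3/2$ together with $3\gamma<\lambda_1$. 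A secondary technical point is the uniform--in--$t_0$ control of the perturbed semigroup $e^{\tau\mathcal L_V}$, which rests on $\norm{V}_{L^{3/2+\ve}}$ being both small and subcritical.
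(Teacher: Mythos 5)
Your overall plan is the same as the paper's—prove \eqref{LinEstPsi1inf} by a supersolution/comparison argument, splitting $\psi_1$ into pieces for $h$, $g$, $F$, then obtain \eqref{LinEstPsi1nabla} and \eqref{LinEstPsi1Calpha} by parabolic rescaling and interior Schauder estimates—but the two middle steps (absorbing $V$ and building the barrier for $\psi_1^F$) follow a genuinely different route, and a couple of the details in yours need tightening.

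For $V$, the paper does not invoke any semigroup or $L^{3/2+\ve}$ smallness at all: since $\abs{V\bar\psi_2}\lesssim R(t_0)^{-2}\mu^{\beta-2}(1+\abs{y}^{2+\alpha})^{-1}$ has \emph{the same weighted form as $F$} with a tiny coefficient, a large fixed multiple of the $V=0$ barrier is already a supersolution for the $V\neq0$ problem once $t_0$ is large. This absorption is a one-line remark and avoids the issue in your write-up: $V=V(x,t)$ is time-dependent, so $e^{\tau\mathcal L_V}$ is not a semigroup but at best an evolution family, and the claim $\norm{e^{\tau\mathcal L_V}}_{L^\infty\to L^\infty}\lesssim e^{-b\tau}$ for all $b<\lambda_1$ would need to be argued via the comparison principle (at which point you are back to the barrier method) rather than ``a standard Duhamel/Neumann-series argument''.

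For the barrier on $\psi_1^F$, the paper first conjugates out the exponential rate, writing $\psi_2=e^{-c(t-t_0)}\psi_3$ with $c=2\gamma\beta$, and builds a supersolution for $\psi_3$ of the form $\ppsi_0(y)\eta((x-\xi)/d)+\ppsi_1(x,t)$, where $\ppsi_0$ is the \emph{exact} radial Dirichlet solution of $\Delta_y\ppsi_0=-2(1+\abs{y}^{2+\alpha})^{-1}$ in $B_{\mu^{-1}}$ (formula \eqref{VarPar}), $\eta$ is a cutoff supported in a small ball $\abs{x-\xi}<2d$, and $\ppsi_1$ is a time-dependent boundary correction. The cutoff plays a role you gloss over: your bound ``the time-derivative terms are $O(\mu^\beta)\ll\mu^{\beta-2}$'' compares only the $y=O(1)$ amplitudes; since $\pp_t$ of your profile decays as $\abs{y}^{-\alpha}$ while the source and the $\Delta_y$ term decay as $\abs{y}^{-\alpha-2}$, they become comparable at $\abs{y}\sim\mu^{-1}$, and the domination only holds on $\abs{x-\xi}<d$ with $d$ small. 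You then rely on the elliptic term $M\mu^{\beta+\alpha}\Phi$ (with $M$ large) to win the remaining region; that does close, but it is a genuine step that should be stated. Two other points of comparison: your explicit profile $(1+\abs{y}^2)^{-\alpha/2}$ has the right-sign Laplacian only for $\alpha<1$, so your proof tacitly restricts to $\alpha<1$ (harmless for the application, where $\alpha$ is tiny, but worth saying); and because the conjugation is done at the level of the barrier rather than of the equation, your spectral condition becomes $2\gamma(\beta+\alpha)<\lambda_1$ rather than the sharper $2\gamma\beta<\lambda_1$ that $\beta<3/2$ and $3\gamma<\lambda_1$ directly give—again fine for $\alpha$ small, but strictly weaker than what is available. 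Also, ``$\Phi>0$ a solution of $-\Delta\Phi=c\Phi$ in $\Omega$'' should be stated as the Dirichlet boundary-value problem $\Phi=1$ on $\pp\Omega$ (it is the same function as the paper's $v_0$), since for $c<\lambda_1$ there are no positive eigenfunctions.

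With those caveats, your argument is a valid alternative construction of the barrier; the remaining parts (initial-datum and boundary-datum supersolutions via $v_0,v_1$, the gradient/H\"older bounds via parabolic rescaling on cylinders) match the paper's proof.
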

	
	\begin{proof}
		To prove the result is enough to find a supersolution to the problem
		\begin{align*}
			&\pp_t \psi_2 =\Delta \psi_2+F\inn \cyt,\\
			&\psi_2=g \onn \pp \cyt,\\
			&\psi_2=h \inn \Om.
		\end{align*}
		We use the notation $\psi_2=\psi_2[F,g,h]$. Indeed, suppose that $\bar{\psi}_2$ is a supersolution to this problem. By (\ref{VBound}) we have
		\begin{align*}
			\abs*{V\bar \psi_2}\lesssim \frac{\mu^{\beta-2}}{1+\abs{y}^{2+\alpha}}R(t_0)^{-2},
		\end{align*}
		and hence $\norm*{V \bar \psi_2}_{\beta-2,2+\alpha}<R(t_0)^{-2}$ for $t_0$ sufficiently large. Thus, we find that a large multiple of $\bar \psi_2$ is a supersolution of (\ref{psi1linearproblem}).
		Firstly, let $F,g\equiv 0$ and consider $\psi_2[0,0,h]$. Let $v_0(x)$ be the solution to 
		\begin{align*}
			&-\Delta_x v_0-b v_0=0 \inn \Om,\\
			&v_0=1\onn \pp \Om,
		\end{align*}
		for $b \in (0,\lambda_1)$ and define
		\begin{align*}
			\bar \psi_2 =\norm{h}_\infty e^{-b (t-t_0)}v_0(x).
		\end{align*}
		We claim that $\bar \psi_2$ is a supersolution for $\psi_2[0,0,h]$. Indeed, we have
		\begin{align*}
			&\pp_t \bar \psi_2 - \Delta_x \bar \psi_2 =\norm{h}_\infty e^{-b(t-t_0)}\qty(-b v_0-\Delta_x v_0)=0\inn \cyt,\\
			&\bar \psi_2(x,t)=\norm{h}_\infty e^{-b(t-t_0)}\geq 0 \onn \pp \cyt,\\
			&\bar \psi_2(x,t_0)=\norm{h}_\infty v_0(x)\geq h(x) \inn \Om, 
		\end{align*}
		where the last inequality is a consequence of the maximum principle applied to $v_0$.
		Secondly, we look for a supersolution to $\psi_2[0,g,0]$. Let $v_1(x)$ to be the solution to
		\begin{align*}
			&-\Delta_x v_1 - \tilde a v_1 =0 \inn \Om,\\
			&v_1(x)=1 \onn \pp \Om,
		\end{align*}
		where $\tilde a \in (0,\min\{a,\lambda_1-\ve\}] $ and consider
		\begin{align*}
			\bar \psi_2(x,t)= \norm{e^{as}g}_{L^\infty(\pp\Om\times (t_0,\infty))} e^{-\tilde a(t-t_0)}v_1(x).
		\end{align*}
		We verify that
		\begin{align*}
			&\pp_t \bar \psi_2- \Delta \bar \psi_2  = \norm{e^{as}g}_\infty e^{-\tilde a(t-t_0)}\qty(-\tilde a v_1-\Delta v_1)=0 \inn \cyt,\\
			&\bar \psi_2(x,t)=\norm{e^{as}g}_\infty e^{-\tilde a(t-t_0)}\geq g(x,t)\onn \pp \cyt,\\
			&\bar \psi_2(x,t_0)= \norm{e^{as}g}_{L^\infty(\pp\Om\times (t_0,\infty))} v_1(x)\geq 0 \inn \Om,
		\end{align*}
		where we used $\tilde a\leq a$ to get the second inequality and $\tilde a<\lambda_1$ to get the third one by the maximum  principle.
		It remains to find a supersolution for $\psi_2[F,0,0]$. Let $\psi_2[F,0,0]=e^{-c(t-t_0)}\psi_3$, where $c=2\g\beta $ so that
		\begin{align*}
			\pp_t \psi_3
			= \Delta_x \psi_3 + c \psi_3+\frac{\mu^{-2}}{1+\abs{y}^{2+\alpha}}. 
		\end{align*}
		We find a bounded $\bar \psi_3$ supersolution in case $c<\lambda_1$, that is $3\g <\lambda_1$. Consider 
		\begin{align*}
			\bar \psi_3 = \ppsi_0\qty(\frac{x-\xi}{\mu})\eta\qty(\frac{x-\xi}{d})+\ppsi_1(x,t).
		\end{align*}
		We need 
		\begin{align}\label{condpsi2supers}
			\pp_t \ppsi_1-\Delta_x \ppsi_1-c\ppsi_1 \geq& \eta \qty[-\pp_t \ppsi_0+ \mu^{-2}\Delta_y \ppsi_0 +c \ppsi_0+ \frac{\mu^{-2}}{1+\abs{y}^{2+\alpha}}]\\\nonumber
			&+(1-\eta)\frac{\mu^{-2}}{1+\abs{y}^{2+\alpha}}+(\Delta_x \eta-\pp_t \eta)\ppsi_0+2 \mu^{-1}\nabla_x \eta\cdot \nabla_y \ppsi_0,
		\end{align}
		with $\bar \psi_3(x,t)\geq 0$ on $\pp \cyt$ and initial datum $\bar \psi_3(x,t_0)\geq 0$.
		Suppose without loss of generality that $\Omega\subset B_1$ and take $\ppsi_0$ as the solution to 
		\begin{align*}
			&\Delta_y \ppsi_0=-\frac{2}{1+\ay^{2+\alpha}}\inn B_{\mu^{-1}}\\
			&\ppsi_0 = 0\onn \pp B_{\mu^{-1}}.
		\end{align*}
		From the variation of parameters formula
		\begin{align}\label{VarPar}
			\ppsi_0(\abs{y})=2\omega_3\int_{\abs{y}}^{\mu^{-1}}\frac{1}{\rho^2}\int_{0}^{\rho}\frac{s^2}{1+s^{2+\alpha}}\dds \,d\rho,
		\end{align}
		we find
		\begin{align*}
			&\abs{\ppsi_0}\lesssim\frac{1}{1+\abs{y}^{\alpha}},
		\end{align*}
		and
		\begin{align*}
			\abs{\pp_t \ppsi_0} &\lesssim  \pp_t\qty(\frac{1}{\mu}) \mu^2 \int_0^{\mu^{-1}}\frac{s^2}{1+s^{2+\alpha}}\dds + \pp_t\qty(\abs{y(x,t)})\frac{1}{\abs{y}^2}\int_{0}^{\abs{y}}\frac{s^2}{1+s^{2+\alpha}}\dds \\&\lesssim \frac{1}{1+\abs{y}^\alpha}+\frac{\abs{y}^2}{1+\abs{y}^{2+\alpha}}\\
			&\lesssim \frac{1}{1+\abs{y}^{\alpha}}
		\end{align*}
		Also, if $\abs{x-\xi}<d$ for $d$ fixed sufficiently small, we obtain
		\begin{align*}
			-\pp_t \ppsi_0+ \mu^{-2}\Delta_y \ppsi_0 +c \ppsi_0+ \frac{\mu^{-2}}{1+\abs{y}^{2+\alpha}}=-\frac{\mu^{-2}}{1+\ay^{2+\alpha}}+O\qty(\frac{1}{1+\abs{y}^{\alpha}}) <0.
		\end{align*}
		Now, we take $\ppsi_1$ as the solution to
		\begin{align*}
			&\pp_t \ppsi_1 -\Delta_x \ppsi_1-c\ppsi_1=(1-\eta) \frac{\mu^{-2}}{1+\abs{y}^{2+\alpha}} + \qty(\Delta_x \eta-\pp_t \eta)\ppsi_0+2 \mu^{-1}\nabla_x \eta \cdot \nabla_y \ppsi_0,\\
			&\ppsi_1=0 \onn \pp \cyt,\\
			&\ppsi_1(x,t_0)=0 \inn \Om.
		\end{align*}
		We estimate the right-hand side by
		\begin{align*}
			&(1-\eta)\frac{\mu^{-2}}{1+\abs{y}^{2+\alpha}}\lesssim  \mu^{\alpha},\\
			& \abs{\qty(\Delta_x \eta-\pp_t \eta)\ppsi_0} \lesssim \mu^{\alpha},\\
			&\abs{2 \mu^{-1}\nabla_x \eta \cdot \nabla_y \ppsi_0}\lesssim \mu^{\alpha}.
		\end{align*}
		Hence, by comparison principle using $c<\lambda_1$ we obtain a solution $\abs*{\bar \psi_3}\lesssim \mu^{\alpha}$. Thus, inequality (\ref{condpsi2supers}) is satisfied. Also, $\bar \psi_3=0$ on $\pp \cyt$ and $\psi_3(x,t_0)=\eta \ppsi_0(x,t_0)\geq 0$. We conclude that $\bar \psi_3$ is a supersolution and the bound (\ref{LinEstPsi1inf}) is proven. Now, we prove the gradient estimate (\ref{LinEstPsi1nabla}).
		Let 
		$$\psi_1(x,t)=:\tilde \psi\qty(z(x,t),\tau(t)),\quad\text{where}\quad z\coloneqq\frac{x-\xi(t)}{R(t)\mu(t)}$$
		and $\dot \tau(t)=(R(t)\mu(t))^{-2}$, that gives $\tau(t)\sim \mu^{-2}$. We can take $\tau(t_0)=2$. 
		The equation for $\tilde \psi$ becomes
		\begin{align*}
			\pp_\tau \tilde \psi = \Delta_z \tilde \psi +a(z,\tau) \cdot \nabla_z \tilde \psi + b(z,\tau)\tilde \psi + \tilde F(z,\tau),
		\end{align*}
		where $\tilde F(z,\tau(t))=(R\mu)^2 F(\mu R z+\xi, \tau(t))$, and the coefficients
		\begin{align*}
			&a(z,\tau)\coloneqq (\mu_0 R)\qty[z \pp_t(\mu_0)+\dot \xi]  ,\quad b(z,\tau)\coloneqq (R\mu)^2 V(\mu R z +\xi,\tau(t))\lesssim \frac{1}{1+R^2\abs{z}^2},
		\end{align*}
		are uniformly bounded. Since $\norm{F}_{\beta-2,\alpha+2}<\infty$, we have
		\begin{align*}
			\tilde F(z,\tau(t))=(R\mu)^2 F(\mu Rz+\xi, \tau(t))\lesssim \mu^{\beta}\frac{\norm{F}_{\beta-2,\alpha+2}}{1+\abs{Rz}^{2+\alpha}}.
		\end{align*}
		We have already proved the $L^\infty$-bound 
		$$
		\norm{\psi_1}_{\beta,\alpha}\lesssim \norm{F}_{\beta-2,\alpha+2}.
		$$
		We apply standard local parabolic estimates for the gradient: let $\sigma \in (0,1)$ and $\tau_1\geq \tau(t_0)+2$, then
		\begin{align*}
			[\nabla_z \tilde \psi_1(\cdot,\tau_1)]_{0,\sigma,B_1(0)}+\norm*{\nabla_z \tilde \psi_1(\cdot,\tau_1)}_{L^\infty\qty(B_1(0))}&\lesssim \norm*{\tilde \psi}_{L^\infty(B_2(0))\times (\tau_1-1,\tau_1)}+\norm*{\tilde F}_{L^\infty\qty(B_{2}(0)\times (\tau_1-1,\tau_1))}\\
			&\lesssim \mu(t(\tau_1-1))^{\beta} \norm{F}_{\beta-2,\alpha+2}\\
			&\lesssim \mu(t(\tau_1))^{\beta} \norm{F}_{\beta-2,\alpha+2}.
		\end{align*}
		In the original variables, for any $t\geq t_0+2$ we find
		\begin{align}\label{gradestPsifinal}
			(R\mu)^{1+\sigma}[\nabla_x \psi_1(\cdot,t)]_{0,\sigma, B_{\mu R}(\xi)}+R\mu \norm{\nabla_x \psi_1(\cdot,t)}_{L^\infty(B_{\mu R}(\xi))}\lesssim \mu^{\beta}\norm{F}_{\beta-2,\alpha+2}.
		\end{align}
		By similar parabolic estimates using $\norm{\nabla_x\psi_0}_\infty<\infty$ we can extend estimate (\ref{gradestPsifinal}) up to $t=t_0$, thus, the proof of (\ref{LinEstPsi1nabla}) is complete. Now, we prove estimate (\ref{LinEstPsi1Calpha}). We consider the H\"older seminorms.
		We perform the change of variable
		\begin{align*}
			\psi_1(x,t)=\hat{\psi}\qty(z,\tau),
		\end{align*}
		where $z{\coloneqq}(x-\xi)/({R\mu_0})^{1+\frac{\alpha}{2}}$ and $\tau$ satisfies
		\begin{align*}
			\frac{d\tau}{dt}=\frac{1}{\qty(\mu_0(t) R(t))^{2+\alpha}},
		\end{align*}
		that is
		\begin{align*}
			\tau-\tau_0&=\int_{t_0}^\infty \frac{\dds }{\qty(\mu_0(t) R(t))^{2+\alpha}}\dds \\
			&=C \qty(\mu_0 R)^{-(2+\alpha)}(1+o(1)).
		\end{align*}
		The equation for $\hat\psi$ is
		\begin{align*}
			\pp_\tau \hat  \psi = \Delta_z \hat \psi +\hat a(z,\tau)\cdot \nabla_z \hat \psi+\hat b(z,\tau) \hat  \psi + \hat f(z,\tau),
		\end{align*}
		where
		\begin{align*}
			&\hat a(z,\tau)=   (\mu_0 R)^{1+\frac{\alpha}{2}}\qty[z  \pp_t(\mu_0 R)^{1+\frac{\alpha}{2}}+\dot \xi] ,\quad \hat b(z,\tau) = (\mu_0 R)^{2+\alpha}V((\mu_0 R)^{1+\frac{\alpha}{2}}z+\xi,t(\tau)),\\
			&\hat F = (\mu_0 R)^{2+\alpha} F((\mu_0 R)^{1+\frac{\alpha}{2}}z+\xi,t(\tau))
		\end{align*}
		Then, applying local parabolic estimates on $\tilde \psi$, we get
		\begin{align*}
			[\psi_1(x,\cdot)]_{0,\frac{1+2\ve}{2},[t,t+1]}=&\sup_{{t_1 \neq t_2 \in [t,t+1]}}\frac{\abs{\psi_1(x,t_1)-\psi_1(x,t_2)}}{\abs{t_1-t_2}^{\frac{1+2\ve}{2}}}
			\\\lesssim & \sup_{\tau_1,\tau_2 \in [\tau,\tau+1]}\frac{\abs*{\hat \psi(z_1,\tau_1)-\hat \psi(z_1,\tau_2)}}{\abs{\tau_1-\tau_2}^{\frac{1+2\ve}{2}}}\frac{\abs{\tau_1-\tau_2}}{\abs{t_1-t_2}}^{\frac{1+2\ve }{2}}\\
			\lesssim& [\hat \psi(z,\cdot)]_{0,\frac{1+2\ve}{2},[\tau_1,\tau_2]}  \frac{1}{[\qty(\mu R)^{2+\alpha}]^{\frac{1+2\ve}{2}}}\\
			\lesssim& \norm{F}_{\beta-2,\alpha+2} \mu^{\beta}\frac{1}{[\qty(\mu R)^{2+\alpha}]^{\frac{1+2\ve}{2}}},
		\end{align*}
		where $\tau_i=\tau(t_i)$ and $z=z(x,t_i)$ for $i=1,2$.
		Similarly, using the H\"older coefficient $(2\ve,\ve)$, we get
		\begin{align*}
			[\psi_1(x,\cdot)]_{ 0,\ve, [t,t+1]}\lesssim \mu^{\beta}\frac{1}{[\qty(\mu R)^{2+\alpha}]^{\ve}}.
		\end{align*}
	\end{proof}

	We introduce the following weighted norms for $\psi$:
	\begin{align}\label{normPsi}
		\norm{\psi}_{**}:= &\sup_{x \in \Omega, t>t_0} \qty{g_1(x,t)^{-1}\abs{\psi(x,t)}}+\sup_{t>t_0}\qty{g_2(t)^{-1}\sup_{x \in B_{\mu R}(\xi)}[\psi(x,\cdot)]_{0,\ve,[t,t+1]}} \\\nonumber
		&+\sup_{t>t_0}\sup_{x\in B_{\mu R}(\xi)}\qty{g_3(x,t)^{-1}\abs{\nabla_x \psi(x,t)}}+\sup_{t>t_0} g_4(t)^{-1}[\nabla_x \psi]_{0,2\ve,\ve,B_{R\mu}(\xi)\times [t,t+1] }\\\nonumber
		&+\sup_{t>t_0}\qty{g_5(t)^{-1}\sup_{x\in B_{\mu R}(\xi)}[\psi(x,\cdot)]_{0,\frac{1}{2}+\ve,[t,t+1]}}
	\end{align}
	where
	\begin{align*}
		&g_1(x,t)=\frac{\mu^{\beta}}{1+\abs{y}^{\alpha}},\quad g_2(t)=\mu^{\beta}\qty[(\mu R)^{2+\alpha}]^{-\ve},\quad g_3(x,t)=\frac{\mu^{\beta-1}}{1+\abs{y}^{\alpha+1}},\\
		&g_4(t)=\mu^{\beta}[\mu R]^{-1-2\ve},\quad g_5(t)={\mu^{\beta}}{\qty[(\mu R)^{2+\alpha}]^{-(\frac{1}{2}+\ve)}}
	\end{align*}
	and define the space of functions 
	\begin{align*}
		X_{**}=\{\psi \in L^\infty(\cyt): \norm{\psi}_{**}<\infty\}.
	\end{align*}
	Now, we are ready to solve the outer problem (\ref{OutPr}) for $\phi$ such that
	\begin{align}\label{Normphi}
		\norm{\phi}_{*}<\bold b,
	\end{align} 
	for parameters satisfying (\ref{NormsLambda,dotLambda}) and (\ref{NormsXi,dotXi}).
	\begin{proposition}\label{PropOuterNonlinear}
		Assume that $\Lambda,\xi_1,\phi$ satisfy (\ref{NormsLambda,dotLambda}), (\ref{NormsXi,dotXi}) and (\ref{Normphi}) respectively. Also, suppose $\psi_0 \in C^2(\bar \Om)$ such that
		\begin{align*}
			\norm{\psi_0}_{L^\infty}+\norm{\nabla \psi_0}<e^{-\kappa t_0},
		\end{align*}
		for some $\kappa\in (0,2\gamma(\sigma-\alpha \delta))$. Then, there exists $t_0$ large so that problem (\ref{OutPr}) has a unique solution $\psi=\Psi[\Lambda,\dot \Lambda,\xi,\dot \xi,\phi]$ and given $\alpha>0$, there exists $C_{**}$ such that
		\begin{align}\label{estpsi}
			\norm{\psi}_{**}\leq e^{-\kappa t_0}C_{**},
		\end{align}
		where $C_{**}=C_{**}(\bold b,\mathfrak{b}_1,\mathfrak{b}_2)$ and $\bold b,\mathfrak{b}_1,\mathfrak{b}_2$ are the constants in (\ref{Normphi}),(\ref{NormsLambda,dotLambda}) and (\ref{NormsXi,dotXi}) respectively.
	\end{proposition}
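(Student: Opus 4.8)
The plan is to solve the outer problem (\ref{OutPr}) by a fixed point argument in the space $X_{**}$, using Lemma \ref{LinearLemmapsi} as the solution operator. Fixing parameters $\Lambda,\dot\Lambda,\xi,\dot\xi$ subject to (\ref{NormsLambda,dotLambda})--(\ref{NormsXi,dotXi}) and an inner function $\phi$ with (\ref{Normphi}), I would define, for $\psi$ in the ball $\{\norm{\psi}_{**}\le e^{-\kappa t_0}C_{**}\}$, the source $f=f[\psi,\phi,\Lambda,\dot\Lambda,\xi,\dot\xi]$ from (\ref{deff(x,t)}), put $F=\mu_0^{1/2}f$ and $g=-u_3$, and set $\mathcal T(\psi)\coloneqq\mu_0^{-1/2}\psi_1[F,g,\psi_{1,0}]$, where $\psi_1[F,g,\psi_{1,0}]$ solves the linear problem (\ref{psi1linearproblem}) as in Lemma \ref{LinearLemmapsi} (the potential term $V\psi_1$ is already absorbed there through the supersolution construction). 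A fixed point of $\mathcal T$ is precisely the desired solution $\Psi[\Lambda,\dot\Lambda,\xi,\dot\xi,\phi]$ of (\ref{OutPr}), and the resulting $X_{**}$ bound is exactly (\ref{estpsi}).

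The heart of the argument is the weighted estimate $\norm{F}_{\beta-2,\alpha+2}\lesssim e^{-\kappa t_0}$, up to a constant depending only on the fixed bounds $\mathfrak b_1,\mathfrak b_2$, on the bound for $\norm{\phi}_*$ and on $C_{**}$, with $\beta=\tfrac{1}{2}+l_1+\delta-\sigma<\tfrac{3}{2}$ and all other constants as in \S\ref{Choiceofconst}; together with $\norm{e^{as}g}_{L^\infty(\pp\Omega\times(t_0,\infty))}<\infty$ for $a=5\gamma$ (since $|u_3|\lesssim\mu^{5/2}$ on $\pp\Omega$, the bubble $U_{\mu,\xi}$ and the corrections decaying at the boundary) and $\psi_{1,0}=\mu_0(t_0)^{1/2}\psi_0$ small. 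To bound $F$ I would go term by term through (\ref{deff(x,t)}): the pieces $\mu_0^{-1/2}\Sinn(1-\eta_R)$ and $\mu_0^{-1/2}\Sout$ are controlled by Lemma \ref{Lemma:FinalAnsatzError}; the pieces containing $\phi$ and $\nabla_y\phi$ are estimated from $\norm{\phi}_*$ via the simplified inner bound (\ref{estinnSimplified}), using that $\pp_t\eta_R$, $\eta'(\abs{z})$ and $\eta''(\abs{z})$ are supported in the annulus $\abs{y}\sim R$, where the inner weight is of size $R^{-1}\log R$; and the nonlinear term $\mu_0^{-1/2}\mathcal N(u_3,\tilde\phi)(1-\eta_R)$ is handled by its quadratic structure (\ref{DefNLterm}) together with the smallness of $\tilde\phi$ in the outer region. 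In each case the powers of $R=\mu^{-\delta}$ are exchanged for powers of $\mu$ via $R^{\alpha}\log R\lesssim e^{-\kappa t}\mu^{-\sigma}$ and the relations among $l_0,l_1,\delta,\sigma,\alpha,k$ fixed in \S\ref{Choiceofconst}; this is what produces the uniform gain $e^{-\kappa t_0}$.

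With these bounds, Lemma \ref{LinearLemmapsi} with $b\in(0,\lambda_1)$ close to $\lambda_1$, $\tilde a=\min\{5\gamma,\lambda_1-\ve\}$ and small $\alpha>0$ gives $\norm{\mathcal T(\psi)}_{**}\lesssim\norm{F}_{\beta-2,\alpha+2}+\norm{\psi_{1,0}}_{L^\infty}+\norm{e^{as}g}_{L^\infty}$, the Hölder seminorms in $\norm{\cdot}_{**}$ coming for free from (\ref{LinEstPsi1nabla})--(\ref{LinEstPsi1Calpha}), which only use the weighted $L^\infty$ bound on $F$. Fixing $C_{**}$ large in terms of the data and then $t_0$ large, $\mathcal T$ maps the ball into itself. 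For the contraction, $F$ depends on $\psi$ only through $\mu_0^{-1/2}\mathcal N(u_3,\tilde\phi)(1-\eta_R)$ (the remaining pieces are independent of $\psi$ or enter only through the fixed potential), and since $\mathcal N$ is at least quadratic in $\tilde\phi$ while $\tilde\phi=O(e^{-\kappa t_0})$ in the relevant norm, $\norm{F[\psi]-F[\tilde\psi]}_{\beta-2,\alpha+2}\lesssim e^{-\kappa t_0}\norm{\psi-\tilde\psi}_{**}$; hence $\mathcal T$ is a contraction for $t_0$ large, and the Banach fixed point theorem yields the unique $\psi=\Psi[\Lambda,\dot\Lambda,\xi,\dot\xi,\phi]$ satisfying (\ref{estpsi}). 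Since the whole scheme is linear in the data, running the same estimates on differences gives the Lipschitz dependence of $\Psi$ on the parameters that is needed later.

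The step I expect to be the main obstacle is precisely the weighted $L^\infty$ bound on $F$: for every term of (\ref{deff(x,t)}) one must reconcile the spatial weight $(1+\abs{y}^{2+\alpha})^{-1}$, the time weight $\mu^{\beta-2}$ and the several powers of $R$ produced by the cut-offs $\eta_R,\eta_l$ and by (\ref{estinnSimplified}), and check that the inequalities of \S\ref{Choiceofconst}---in particular $l_1+\delta<1$, $k+1\ge 2\delta+l_1$, $l_0\ge l_1$, $\sigma>\alpha\delta$ and $0<\kappa<2\gamma(\sigma-\alpha\delta)$---are exactly what is needed to close everything with the uniform factor $e^{-\kappa t_0}$. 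The hypothesis $3\gamma<\lambda_1$ enters here and in Lemma \ref{LinearLemmapsi} through the requirement $2\gamma\beta<\lambda_1$, which is what lets the exponentially decaying Dirichlet heat semigroup beat the exponential growth of $\mu_0^{-1/2}$; without it the outer linear estimate would already fail.
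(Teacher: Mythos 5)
Your proposal is correct and follows essentially the same route as the paper: a Banach fixed point argument in $X_{**}$ built on the linear solvability and weighted estimates of Lemma \ref{LinearLemmapsi}, with the term-by-term bound $\norm{F}_{\beta-2,\alpha+2}\lesssim e^{-\kappa t_0}$ (using $\beta=\tfrac12+l_1+\delta-\sigma<\tfrac32$, the cut-off supports on $\abs{y}\sim R$, and $R^{\alpha}\log R\lesssim e^{-\kappa t}\mu^{-\sigma}$) and the contraction coming from the quadratic smallness of $\mathcal N(u_3,\tilde\phi)(1-\eta_R)$. The only cosmetic difference is that you fold $g=-u_3$ and $\psi_{1,0}$ into the fixed-point map $\mathcal T$ while the paper first extracts a fixed $\psi_B=T[0,-u_3,\mu_0(t_0)^{1/2}\psi_0]$ and iterates only on $\psi_A$ with zero boundary and initial data; by linearity of $\psi_1[\cdot,\cdot,\cdot]$ the two formulations are identical.
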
 
	\begin{proof}
	Let $T_1$ the linear operator, defined by Lemma \ref{LinearLemmapsi}, such that, given $\beta<3/2,\alpha>0$ and functions $f,g,h$ with bounded norms $\norm{f}_{\beta-2,\alpha+2},\norm{e^{as}g}_{\infty},\norm{h}_\infty$ respectively, $T[f,g,h]$ is the solution to (\ref{psi1linearproblem}).\\
	Let 
	\begin{align*}
		\mu_0^{1/2}\psi=\psi_A+\psi_B,
	\end{align*}
	where we define $\psi_B{\coloneqq}T(0,-u_3,\mu_0(t_0)^{1/2}\psi_0)$. From the definition of $u_3(x,t)$ we expand for $x\in \pp \Om$ and $t_0$ large, to get
	\begin{align}\label{sizeboundary}
		u_3(x,t)
		=&\frac{\alpha_3\mu^{1/2}}{\qty(\mu^2+\abs{x-\xi}^2)^{1/2}}-\mu^{1/2}\frac{\alpha_3}{\abs{x-\xi}}\\\nonumber
		=&\alpha_3 \mu^{1/2}\abs{x-\xi}^{-1}\qty[\qty(\frac{\mu^2+\abs{x-\xi}^2}{\abs{x-\xi}^2})^{-1/2}-1]\\\nonumber
		=&\mu^{5/2}f_B(x,t),
	\end{align}
	for a smooth bounded function $f_B(x,t)$ on $\pp \Omega \times [t_0,\infty)$. Hence, Lemma \ref{LinearLemmapsi} gives the bound
	\begin{align*}
		\abs{\psi_B}\lesssim e^{-b (t-t_0)}\norm{\psi_0}_{L^\infty} + e^{-a(t-t_0)}\norm{e^{as}u_3}_{L^\infty(\pp \Om \times [t_0,\infty))},
	\end{align*}
	for any $b<\lambda_1$ and $a<\min\{5\g,\lambda_1-\ve\}$ for any $\ve>0$.\\
	Now, we apply the fixed point theorem to find $\psi_A$ such that $\psi$ satisfies (\ref{estpsi}). We obtain a solution $\psi$ if $\psi_A$ satisfies
	\begin{align*}
		\psi_A=\mathcal{A}(\psi_A),\quad \mathcal{A}(\psi_A)\coloneqq T[F(\psi_A),0,0],
	\end{align*}
	where $F(x,t)=\mu_0(t)^{1/2}f(x,t)$ and $f$ is given by  (\ref{deff(x,t)}).
	We look for $\psi_A$ in 
	\begin{align*}
		\mathcal{B}=\{	\psi_A: \quad \norm*{\mu_0^{-1/2}\psi_A}_{**}<M e^{-\kappa t_0}	\},
	\end{align*}
	where $M$ is a fixed large constant, independent of $t$ and $t_0$.
	We prove that $\AAA(\psi_A) \in \BBB$  for any $\psi_A \in \BBB$.  
	Firstly, we estimate the $L^\infty$ norm of $F(\psi_A)$. From (\ref{normPsi}) we apply Lemma \ref{LinearLemmapsi} with $\beta=1/2+l_1+\delta<3/2$.
	We recall that $F=\mu_0^{1/2}f$ where
	\begin{align*}
		f(x,t)=&\mu_0^{-1/2}\mathcal{N}(u_3,\tilde \phi)(1-\eta_R) \\\nonumber&
		+\mu^{-1}\qty(\frac{\mu}{\mu_0})^{1/2}\phi \pp_t \eta_R
		+ \mu^{-1}\qty(\frac{\mu}{\mu_0})^{1/2}\eta_R\qty{(\g-\dot \Lambda)(\phi+2 \nabla_y \phi \cdot y)  -\nabla_y \phi\cdot \qty(\frac{\dot \xi}{\mu})}\\\nonumber
		&+\mu^{-1}\qty(\frac{\mu}{\mu_0})^{1/2}\qty(\phi \qty(\frac{2}{\abs{z}}\frac{\eta'(\abs{z})}{\mu^2 R^2}+\frac{\eta''\qty(\abs{z})}{\mu^2 R^2}) +2\frac{\nabla_y \phi}{\mu} \cdot \frac{z}{\abs{z}}\frac{\eta'(\abs{z})}{\mu R}) \\\nonumber
		&+\mu_0^{-1/2}S_{\text{in}}\qty(1-\eta_R)+\mu_0^{-1/2}S_{\text{out}}.
	\end{align*}
	We have $\eta''(y/R)\neq 0$ and $\eta'(y/R)\neq 0$ only if $\abs{y}\sim R$, hence we estimate
	\begin{align}\label{termphiDeltaeta}
		\abs{\mu^{-1}\qty(\frac{\mu}{\mu_0})^{1/2}\phi \Delta \eta}
		&\lesssim \mu^{-1}\norm{\phi}_{*}\mu^{1+l_1}\qty[\frac{R^2 \log(R)}{1+\abs{y}^3}+\frac{R^3}{1+\abs{y}^4}] \frac{\mu^{-2}}{1+\abs{y}^2}\eta''\qty(\abs{\frac{x-\xi}{\mu R}})\\\nonumber
		&\lesssim \mu^{l_1+\delta}\log(R)\frac{\mu^{-2}}{1+\abs{y}^2}\\\nonumber
		&\lesssim e^{-\kappa t_0}\mu^{l_1+\delta-\sigma}\frac{\mu^{-2}}{1+\abs{y}^{2+\alpha}}.
	\end{align}
	Using the bound on the gradient given in the definition of $\norm{\phi}_{*}$ we obtain
	\begin{align*}
		\abs{\mu^{-1}\qty(\frac{\mu}{\mu_0})^{1/2}\qty( 2\frac{\nabla_y \phi}{\mu} \cdot \frac{z}{\abs{z}}\frac{\eta'(\abs{z})}{\mu R}) }
		&\lesssim \mu^{-1} \mu^{1+l_1}\qty[\frac{R^2 \log(R)}{1+\abs{y}^4}+\frac{R^3}{1+\abs{y}^5}]\qty(\frac{\abs{\eta'(\abs{z})}}{\mu^2 R})\\
		&\lesssim \norm{\phi}_{*}\mu^{l_1+\delta}\log(R)\frac{\mu^{-2}}{1+\abs{y}^2}\\
		&\lesssim e^{-\kappa t_0}\mu^{l_1+\delta-\sigma}\frac{\mu^{-2}}{1+\abs{y}^{2+\alpha}}.
	\end{align*}
	Similarly, also using the bounds on $\dot \Lambda,\dot \xi$ we have
	\begin{align*}
		\abs{\mu^{-1}\qty(\frac{\mu}{\mu_0})^{1/2}\phi \pp_t \eta_R}
		&\lesssim \norm{\phi}_{*} \mu^{l_1+\delta}\log(R) \abs{\eta'(\abs{z})\frac{z}{\abs{z}}\cdot \qty(-\frac{\dot \xi}{\mu R}-z \frac{\pp_t(\mu R)}{\mu R})}\\
		&\lesssim  \mu^{l_1+\delta}\log(R) \frac{\abs{\eta'(\abs{z})}}{\mu^2 R^2}\\
		&\lesssim e^{-\kappa t_0}\mu^{l_1+\delta-\sigma}\frac{\mu^{-2}}{1+\abs{y}^{2+\alpha}}.
	\end{align*}
	Also, since $\delta<1/3$, we have 
	\begin{align*}
		\abs{(\mu \mu_0)^{-1/2}\eta_R\qty{\qty(\g-\dot \Lambda)(\phi+2 \nabla_y \phi \cdot y)  -\nabla_y \phi\cdot \qty(\frac{\dot \xi}{\mu})}}
		&\lesssim \mu^{-1} \mu^{1+l_1}\qty[\frac{R^2 \log(R)}{1+\abs{y}^4}+\frac{R^3}{1+\abs{y}^5}] \\
		&\lesssim\mu^{l_1}\norm{\phi}_* R^3\\
		&\lesssim \mu^{l_1+\delta}\frac{\mu^{-2}}{R^2}\\
		&\lesssim e^{-\kappa t_0}\mu^{l_1+\delta-\sigma}\frac{\mu^{-2}}{1+\abs{y}^{2+\alpha}}.
	\end{align*}
	Furthermore, using Lemma \ref{Lemma:FinalAnsatzError} we estimate
	\begin{align*}
		\abs{\mu^{-1/2}\Sout}&\lesssim \mu\\
		&\lesssim \mu^{l_1+\delta}\frac{\mu^{-2}}{R^2}\\
		&\lesssim e^{-\kappa t_0}\mu^{l_1+\delta-\sigma}\frac{\mu^{-2}}{1+\abs{y}^{2+\alpha}}.
	\end{align*}
	and
	\begin{align*}
		\abs{\mu^{-1/2}\Sinn (1-\eta_R)}&\lesssim \mu^{l_1+2\delta} \frac{\mu^{-2}}{1+\abs{y}^2}\\
		&\lesssim e^{-\kappa t_0}\mu^{l_1+\delta-\sigma}\frac{\mu^{-2}}{1+\abs{y}^{2+\alpha}}.
	\end{align*}
	Finally, since $\norm*{\mu_0^{-1/2}\psi_A}_{**}$ is bounded we get
	\begin{align}\label{term-nlouter}
		\abs{\mu_0^{-1/2}\mathcal{N}_3(u_3,\tilde \phi)(1-\eta_R)}\lesssim& \mu^{-1/2} u_3^3 \qty(\mu^{-1/2}\phi \eta_R +\mu^{1/2}\psi)^2(1-\eta_R)\\\nonumber
		\lesssim& \frac{\mu^{-2}}{1+\abs{y}^3}\qty(\mu^{-1}\abs{\phi}^2 \eta_R^2 +\mu_0\abs*{\mu_0^{-1/2}\psi_A}^2)(1-\eta_R)\\\nonumber
		\lesssim& \frac{\mu^{-2}}{1+\abs{y}^{2+\alpha}}\bigg[	\mu^{-1}\norm{\phi}_*^2  \mu^{2(1+l_1)}\qty(R^{-1}\log(R))^2\\\nonumber&\quad \quad \quad \quad\quad+\norm*{\mu_0^{-1/2}\psi_A}_{**}^2 \mu \frac{\mu^{2(l_1+\delta-\sigma)}}{R^{2\alpha}} 	\bigg]\\\nonumber
		&\lesssim e^{-\kappa t_0}\mu^{l_1+\delta-\sigma}\frac{\mu^{-2}}{1+\abs{y}^{2+\alpha}}.\nonumber
	\end{align}
	Summing up these estimates we conclude that 
	\begin{align*}
		\abs{f(x,t)}\lesssim   e^{-\kappa t_0}\mu^{l_1+\delta-\sigma}\frac{\mu^{-2}}{1+\abs{y}^{2+\alpha}}.
	\end{align*}
	Hence, we have
	\begin{align*}
		\abs{F(x,t)}\lesssim   e^{-\kappa t_0}\mu^{\frac{1}{2}+l_1+\delta-\sigma}\frac{\mu^{-2}}{1+\abs{y}^{2+\alpha}},
	\end{align*}
	and Lemma \ref{LinearLemmapsi} gives
	\begin{align*}
		\abs{T[F(\psi_A),0,0]}\lesssim e^{-\kappa t_0}\mu^{\frac{1}{2}+l_1+\delta-\sigma}\frac{1}{1+\abs{y}^{\alpha}}.
	\end{align*}
	Since $F\in L^\infty(\Om \times [t_0,\infty))$, classic parabolic estimates give $\psi\in C^{1+\hat \sigma,\frac{1+\hat \sigma}{2}}(\Om \times [t_0,\infty))$ for any $\hat \sigma<1$ and from Lemma \ref{LinearLemmapsi} we get
	\begin{align}\label{bddphinorm}
		\norm*{\mu_0^{-1/2}\psi_A}_{**}\leq M e^{-k t_0}
	\end{align}
	for sufficiently large $M$ (independent of $t_0$). This proves $\AAA(\psi_A)\in \BBB$.
	Now, we claim that the map $\AAA(\psi)$ is a contraction, that is: there exists $\bold  c<1$ such that, for any $\psi_A^{(1)},\psi_A^{(2)} \in \BBB$,
	\begin{align*}
		\norm{\mu_0^{-1/2}\AAA(\psi_A^{(1)})-\mu_0^{-1/2}\AAA(\psi_A^{(2)})}_{**}\leq \bold c \norm{\mu_0^{-1/2}\psi_A^{(1)}-\mu_0^{-1/2}\psi_A^{(2)}}_{**}.
	\end{align*}
	Since $\psi$ appears in $F(\psi)$ only in the nonlinear term $\NNN$, we get
	\begin{align*}
		\AAA(\psi_A^{(1)})-\AAA(\psi_A^{(2)})=T\bigg[&\NNN\qty(u_3,\psi_A^{(1)}+\psi_{B}+\mu^{-1/2}\phi\eta_R)\\
		&-\NNN\qty(u_3,\psi_A^{(2)}+ \psi_{B}+\mu^{-1/2}\phi\eta_R),0,0\bigg].
	\end{align*}
	From definition (\ref{DefNLterm}) we write
	\begin{align*}
		&\NNN\qty(u_3,\psi_A^{(1)}+\psi_{B}+\mu^{-1/2}\phi\eta_R)- \NNN\qty(u_3,\psi_A^{(2)}+ \psi_{B}+\mu^{-1/2}\phi\eta_R)		\\
		=&\bigg[\qty(u_3+\psi_A^{(1)}+\psi_{B}+\mu^{-1/2}\phi\eta)^5 - (u_3 +\psi_A^{(2)}+\psi_{B} + \mu^{-1/2}\phi\eta)^5 \\\nonumber
		 &- 5u_3^{4}(\psi^{(1)}-\psi^{(2)})\bigg]\\
		=&\bigg[\qty(u_3+\psi_A^{(1)}+\psi_{B}+\mu^{-1/2}\phi\eta)^5-\qty(u_3+\psi_A^{(2)}+\psi_{B}+\mu^{-1/2}\phi\eta)^5 \\& -5(u_3+\mu^{-1/2}\phi\eta)^4\mu_0^{1/2}\qty(\psi^{(1)}-\psi^{(2)})\bigg]\\
		&+ 5 \qty[(u_3+\mu^{-1/2}\phi\eta)^4-u_3^4](\psi^{(1)}-\psi^{(2)})\\
		=:&N_1+N_2.
	\end{align*}
	We estimate 
	\begin{align*}
		\abs{N_1(x,t)}&\lesssim \mu^{-3/2}U^3\mu_0 \abs{\mu_0^{-1/2}(\psi_A^{(1)}-\psi_A^{(2)})}^2\\
		&\lesssim \mu^{-1/2}U^3 \abs{\mu_0^{-1/2}(\psi_A^{(1)}-\psi_A^{(2)})}^2\\
		&\lesssim \frac{\mu^{-1/2}}{1+\abs{y}^{3}}\frac{\mu^{2\qty(l_1+\delta-\sigma)}}{1+\abs{y}^{2\alpha}}\norm{\mu_0^{-1/2}(\psi_A^{(1)}-\psi_A^{(2)})}_{**}^2\\
		&\lesssim e^{-\kappa t_0} \mu^{\frac{1}{2}+l_1+\delta-\sigma}\frac{\mu^{-2}}{1+\abs{y}^{2+\alpha}}\norm{\mu_0^{-1/2}(\psi_A^{(1)}-\psi_A^{(2)})}_{**}
	\end{align*}
	and
	\begin{align*}
		\abs{N_2(x,t)}&\lesssim u_3^3 \mu_0^{-1/2}\phi \eta \mu_0^{\frac{1}{2}}\abs{\mu_0^{-1/2}(\psi_A^{(1)}-\psi_A^{(2)})}\\
		&\lesssim \frac{\mu^{-3/2}}{1+\abs{y}^3} \norm{\phi}_* \mu^{1+l_1}R^3 \frac{\mu^{l_1+\delta-\sigma}}{1+\abs{y}^\alpha}\norm{\mu_0^{-1/2}(\psi_A^{(1)}-\psi_A^{(2)})}_{**}\\
		&\lesssim e^{-\kappa t_0} \mu^{\frac{1}{2}+l_1+\delta-\sigma}\frac{\mu^{-2}}{1+\abs{y}^{2+\alpha}}\norm{\mu_0^{1/2}(\psi_A^{(1)}-\psi_A^{(2)})}_{**}
	\end{align*}
	Finally, using $\beta=1/2+l_1+\delta-\sigma<3/2$ we apply $T[\cdot,0,0]$ to $F(\psi)$ we obtain
	\begin{align}\label{AAAcontr}
		\abs{\AAA[\psi_A^{(1)}]-\AAA[\psi_A^{(2)}]}\lesssim e^{-\kappa t_0} \mu^{\frac{1}{2}+l_1+\delta-\sigma}\frac{\mu^{-2}}{1+\abs{y}^{\alpha}}\norm{\mu_0^{-1/2}(\psi^{(1)}-\psi^{(2)})}_{**}.
	\end{align}
	Arguing as in (\ref{bddphinorm}), from (\ref{AAAcontr}) and standard parabolic estimates we obtain
	\begin{align*}
		\norm{\mu_0^{-1/2}(\AAA[\psi_A^{(1)}]-\AAA[\psi_A^{(2)}])}_{**}\leq \bold c  \norm{\mu_0^{-1/2}(\psi_A^{(1)}-\psi_A^{(2)})}_{**},
	\end{align*}
	with $\bold c<1$ if $t_0$ is taken sufficiently large. Applying the Banach fixed point theorem we get existence and uniqueness of $\psi_A$ and hence of $\psi=\mu_0^{-1/2}(\psi_A+\psi_B)$ with estimate (\ref{estpsi}) that is a consequence of estimates (\ref{LinEstPsi1inf})-(\ref{LinEstPsi1Calpha}).
	\end{proof}
	
	\begin{remark}[Continuity with respect to the initial condition $\psi_0$]\label{rem:continuityPsi}
		Given an initial datum $\psi_0$ Proposition \ref{PropOuterNonlinear} defines a solution $\psi=\Psi[\psi_0]$ to (\ref{OutPr}),
		from a small neighborhood of $0$ in the $L^\infty(\Om)$ space with the $C^1$-norm $\norm{\psi}_\infty+\norm{\nabla\psi_0}_\infty$ into the Banach space $L^\infty$ with norm $\norm{\psi}_{**}$ defined in (\ref{normPsi}). In fact, from the proof of Proposition \ref{PropOuterNonlinear} and the implicit function theorem, $\psi_0 \mapsto \Psi[\psi_0]$ is a diffeomorphism and hence
		\begin{align*}
			\norm{\Psi[\psi_0^1]-\Psi[\psi_0^2]}_{**}\leq c \qty[\norm{\psi_0^1-\psi_0^2}_\infty+\norm{\nabla_x\psi_0^1-\nabla_x\psi_0^2}_\infty],
		\end{align*}
		for some positive constant $c$.
	\end{remark}
	The function $\psi=\Psi[\Lambda,\dot \Lambda,\xi,\dot \xi,\phi]$ depends continuously on the parameters $\Lambda,\dot \Lambda,\xi,\dot \xi,\phi$. To see this we argue similarly to \cite[Proposition 4.3]{dmw1}. For example, fix $\dot \Lambda,\xi,\dot \xi,\phi$ and consider 
	$$
	\bar \psi{\coloneqq}\psi^{(1)}-\psi^{(2)}\quad \text{where}\quad \psi^{(i)}=\Psi[\Lambda_i,\dot \Lambda,\xi,\dot \xi,\phi],\quad \text{for}\quad  i=1,2
	$$ 
	for $\Lambda_1,\Lambda_2$ satisfying (\ref{NormsLambda,dotLambda}). Then $\bar \psi$ solves
	\begin{align*}
		\pp_t \bar \psi = \Delta \bar \psi+\gamma \bar \psi+ V[\Lambda_1]\bar \psi+\qty(V[\Lambda_1]-V[\Lambda_2])\psi^{(2)}+F[\Lambda_1]-F[\Lambda_2].
	\end{align*}
	One can easily check each term in $F$ and obtain
	\begin{align*}
		\norm{F[\Lambda_1]-F[\Lambda_2]}_{\beta-2,\alpha+2}\leq \bold c \norm{\Lambda_1-\Lambda_2}_{l_0,\infty},
	\end{align*}
	with $\bold c<1$ if $t_0$ is large enough. Also, using (\ref{VBound}) we find that
	\begin{align*}
		\norm{V[\Lambda_1]-V[\Lambda_2]}_{\beta-2,\alpha+2}\leq \bold  c \norm{\Lambda_1-\Lambda_2}_{l_0,\infty}.
	\end{align*}
	Then, arguing as in the proof of (\ref{LinEstPsi1inf}), a multiple of $\norm{\Lambda_1-\Lambda_2}_{\sharp,l_0,\delta_0,\frac{1}{2}+\ve} \ppsi$, where $\ppsi$ is the supersolution constructed in Lemma \ref{LinearLemmapsi}, is a supersolution for $\bar \psi$.
	Similarly, one obtain analogue estimates fixing $\xi,\dot \Lambda, \dot \xi$. Let us consider all the parameters fixed. We define $\bar \psi\coloneqq\psi[\Lambda,\dot \Lambda,\xi,\dot \xi,\phi_1]-\psi[\Lambda,\dot \Lambda,\xi,\dot \xi,\phi_2]$, which satisfies the equation
	\begin{align*}
		\pp_t \bar \psi = \Delta \bar \psi +V\bar \psi + F[\phi_1]-F[\phi_2].
	\end{align*}
	For instance, we estimate
	\begin{align*}
		\abs{	\mu^{-1}\qty(\frac{\mu}{\mu_0})^{1/2}\qty(\phi_1-\phi_2) \pp_t \eta_R }\lesssim & \norm{\phi_1-\phi_2}_*\mu^{l_1}R^{-1}\log(R)  \mu^2 R^{2+\alpha}\frac{\mu^{-2}}{1+\abs{y}^{2+\alpha}}\\
		\lesssim& \,\bold c\norm{\phi_1-\phi_2}_{*} \mu^{l_1+\delta-\sigma}\frac{\mu^{-2}}{1+\abs{y}^{2+\alpha}},
	\end{align*}
	with $\bold c<1$ when $t_0$ is fixed large enough, and arguing as in (\ref{termphiDeltaeta})-(\ref{term-nlouter}), we obtain similar estimate on the other terms of $F[\phi_1]-F[\phi_2]$. 
	Having the $L^\infty$-bound, the estimate for the gradient and the H\"older norms of $\bar \psi$ follow as in the proof of Lemma \ref{PropOuterNonlinear}. We summarize the continuity of $\psi[\Lambda,\dot \Lambda,\xi,\dot \xi,\phi]$ with respect to the parameters in the following Proposition.
	\begin{proposition}\label{PropertiesPsi}
		Under the same assumption of Proposition \ref{PropOuterNonlinear}, the function $\psi=\Psi[\Lambda,\dot\Lambda,\xi,\dot \xi,\phi]$ is continuous with respect to the parameters $\Lambda,\dot\Lambda,\xi,\dot \xi,\phi$. Moreover the following estimate holds:
		\begin{align*}
			\lVert \Psi[\Lambda^{(1)},{\dot \Lambda}^{(1)},&\xi^{(1)},{\dot \xi}^{(1)}, \phi^{(1)}]-\Psi[\Lambda^{(2)},{\dot \Lambda}^{(2)},\xi^{(2)},{\dot \xi}^{(2)}, \phi^{(2)}]\rVert_{**}\\
			\leq \bold c &\bigg\{\norm*{\Lambda^{(1)}-\Lambda^{(2)}}	_{\sharp,l_0,\delta_0,\frac{1}{2}+\ve}
			+\norm*{\dot \Lambda^{(1)}-\dot \Lambda^{(2)}}_{\sharp,l_1,\delta_1,\ve}\\
			&+\norm*{\xi_1^{(1)}-\xi_1^{(2)}}_{\sharp,1+l_1,\frac{1}{2}+\ve}	
			+\norm*{{\dot \xi}_1^{(1)}-{\dot\xi}_1^{(2)}}_{\sharp,1+l_1,\ve}
			+\norm*{\phi^{(1)}-\phi^{(2)}}_*\bigg\}	
		\end{align*}
		where $\bold c<1$ provided that $t_0$ is sufficiently large and the constants $\mathfrak{b}_1,\mathfrak{b}_2$ in (\ref{NormsLambda,dotLambda}),(\ref{NormsXi,dotXi}) are sufficiently small.
	\end{proposition}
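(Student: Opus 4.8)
The plan is to reuse, with differences in place of the functions themselves, the linearization scheme of Proposition~\ref{PropOuterNonlinear}, varying one parameter at a time. First I would fix, say, $\dot\Lambda,\xi,\dot\xi,\phi$, put $\psi^{(i)}=\Psi[\Lambda_i,\dot\Lambda,\xi,\dot\xi,\phi]$ for $\Lambda_1,\Lambda_2$ satisfying (\ref{NormsLambda,dotLambda}), and subtract the two copies of (\ref{OutPr}); the difference $\bar\psi=\psi^{(1)}-\psi^{(2)}$ then solves
\begin{align*}
	\pp_t\bar\psi=\Delta\bar\psi+\g\bar\psi+V[\Lambda_1]\bar\psi+\qty(V[\Lambda_1]-V[\Lambda_2])\psi^{(2)}+F[\Lambda_1]-F[\Lambda_2]\inn\cyt,
\end{align*}
with the boundary and initial data treated exactly as in the proof of Proposition~\ref{PropOuterNonlinear} (they carry an extra $\mu^{5/2}$ times the relevant difference and are negligible). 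The same identity, with the appropriate source difference, holds when instead one of $\xi_1,\dot\Lambda,\dot\xi_1,\phi$ is varied.

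The core step will be the term-by-term estimate of $\norm{F[\cdot_1]-F[\cdot_2]}_{\beta-2,\alpha+2}$ and of $\norm{(V[\cdot_1]-V[\cdot_2])\psi^{(2)}}_{\beta-2,\alpha+2}$. Going through (\ref{deff(x,t)}), the terms of $f$ that are linear in $\phi$ and $\nabla_y\phi$ (those carrying $\pp_t\eta_R$, $\eta'(\abs{z})$, $\eta''(\abs{z})$, $(\g-\dot\Lambda)$ and $\dot\xi/\mu$) depend linearly on $\phi$, and, using $\norm{\cdot}_*$ and the parameter bounds (\ref{NormsLambda,dotLambda})--(\ref{NormsXi,dotXi}), they reproduce (\ref{termphiDeltaeta})--(\ref{term-nlouter}) with $\norm{\phi^{(1)}-\phi^{(2)}}_*$, $\norm{\dot\Lambda^{(1)}-\dot\Lambda^{(2)}}_{\sharp,l_1,\delta_1,\ve}$ or $\norm{\dot\xi_1^{(1)}-\dot\xi_1^{(2)}}_{\sharp,1+l_1,\ve}$ in place of the single-parameter norms, always with a prefactor $\lesssim\mu^{l_1+\delta-\sigma}$. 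The nonlinear term $\mathcal{N}(u_3,\tilde\phi)(1-\eta_R)$ is Lipschitz in $\psi$ and in $\phi$ by the mean value theorem, producing, as in (\ref{term-nlouter}), a quadratic-in-smallness factor; the potential difference $V[\Lambda_1]-V[\Lambda_2]$ is Lipschitz in $\Lambda$ with the extra weight $R(t_0)^{-2}$ from (\ref{VBound}). In every case one reaches
\begin{align*}
	\norm{F[\cdot_1]-F[\cdot_2]}_{\beta-2,\alpha+2}+\norm{(V[\cdot_1]-V[\cdot_2])\psi^{(2)}}_{\beta-2,\alpha+2}\le\mathbf c\,\norm{(\text{varied parameter})_1-(\text{varied parameter})_2}
\end{align*}
in the corresponding norm among (\ref{NormsLambda,dotLambda})--(\ref{NormsXi,dotXi}) and $\norm{\cdot}_*$, with $\mathbf c<1$ once $t_0$ is large.

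Finally I would absorb $V[\Lambda_1]\bar\psi$ into the left-hand side as in Proposition~\ref{PropOuterNonlinear} (legitimate because $\norm{V\bar\psi}_{\beta-2,\alpha+2}$ gains the factor $R(t_0)^{-2}$), so that a fixed multiple of $\ppsi$, the supersolution built in Lemma~\ref{LinearLemmapsi}, dominates $\bar\psi$; this gives the $g_1$-weighted $L^\infty$ part of $\norm{\bar\psi}_{**}$, and the gradient pieces $g_3,g_4$ and the H\"older-seminorm pieces $g_2,g_5$ (see (\ref{normPsi})) would follow from the same local parabolic estimates used for Lemma~\ref{LinearLemmapsi}. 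Carrying this out for all five parameters and adding by the triangle inequality yields the claimed estimate with $\mathbf c<1$. The hard part will be purely the bookkeeping of the second step: one has to check that \emph{every} term of $F$ in (\ref{deff(x,t)}) and of $V$ depends on each parameter in a genuinely Lipschitz way with a prefactor that is a strictly positive power of $\mu$ (or $e^{-\kappa t_0}$), and---most delicately---that the H\"older seminorms of $\dot\Lambda$ and $\dot\xi_1$ entering those estimates match the weights in (\ref{NormsLambda,dotLambda})--(\ref{NormsXi,dotXi}), so that the contraction constant is uniform in $t$.
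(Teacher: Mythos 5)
Your proposal follows essentially the same route as the paper: fix four of the five parameters, subtract the two copies of the outer problem to obtain an equation for $\bar\psi$ with source $(V[\Lambda_1]-V[\Lambda_2])\psi^{(2)}+F[\Lambda_1]-F[\Lambda_2]$, bound those differences in the $\norm{\cdot}_{\beta-2,\alpha+2}$ norm using the term-by-term estimates already established for $F$ and the bound (\ref{VBound}) for $V$, and then dominate $\bar\psi$ by a multiple of the supersolution $\ppsi$ from Lemma \ref{LinearLemmapsi}, with the gradient and H\"older parts of $\norm{\cdot}_{**}$ recovered from the same local parabolic estimates. The only parts you flag more explicitly than the paper does are the boundary/initial data differences and the need to check the H\"older weights line up, both of which are indeed the correct bookkeeping points; the paper treats these tersely by pointing to the analogue in \cite[Proposition 4.3]{dmw1} and to (\ref{termphiDeltaeta})--(\ref{term-nlouter}).
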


	\section{Characterization of the orthogonality conditions (\ref{InnPr})}\label{sec:Choiceparameters}
	Given the function $\psi=\Psi[\Lambda,\dot\Lambda,\xi,\dot \xi,\phi]$ provided by Proposition \ref{PropOuterNonlinear}, we plug it in the inner problem for $\phi$. From the linear theory stated in Proposition \ref{propInnerlineartheory}, the inner problem (\ref{InnPr}) with initial datum (\ref{InnProbGeneral}) can be solved if the orthogonality conditions
	\begin{align}\label{OrthCondFin}
		\int_{B_{2R}} H[\Lambda,\dot \Lambda,\xi,\dot \xi,\phi](y,t(\tau))Z_i(y)\ddy =0 \quad \text{for}\quad t>t_0,\quad \text{and}\quad i=1,2,3,4,
	\end{align} 
	are satisfied.
	The aim of this section is to characterize this set of conditions as an nonlocal system in $\Lambda,\xi$ for fixed $\phi\in X_*$. 
	The next lemma shows that the orthogonality condition with index $i=4$ is equivalent to a nonlocal equation in the variable $ \Lambda$, for fixed $\phi,\xi$.
	\begin{lemma}\label{Lemma:Chari=4}
		Assume that $\Lambda,\xi,\phi$ satisfy (\ref{NormsLambda,dotLambda}),(\ref{NormsXi,dotXi}) and (\ref{NormphiDef}) respectively. Let $\psi=\Psi[\Lambda,\dot \Lambda,\xi,\dot \xi,\phi]$ be the solution to problem (\ref{OutPr}) given by Proposition \ref{PropOuterNonlinear}. Then, the condition (\ref{OrthCondFin}) with index $i=4$ is equivalent to
		\begin{align}\label{i=4Equivalent}
			(1+ a[\dot \Lambda,\xi](t))\JJJ[\dot \Lambda](0,t)=g(t)+G[\Lambda,\dot \Lambda,\xi,\dot \xi,\phi](t) \quad \text{for}\quad t \in [t_0,\infty),
		\end{align}
		where $\JJJ$ is the solution to
		\begin{align*}
			&\pp_t \JJJ=\Delta \JJJ+\g \JJJ -\dot \Lambda(t) G_\g(x,0) \inn \Om \times [t_0-1,\infty),\\
			&\JJJ(x,t)=0 \onn \pp \Om \times [t_0-1,\infty),\\
			&\JJJ(x,t_0-1)=0 \inn \Om.
		\end{align*}
		The function $a$ is smooth, decays as $t\to 0$ and $a[0,0]\equiv0$. Then, for $\kappa\in (0,2\gamma(\sigma-\alpha \delta))$, the following estimates on $g$ and $G$ hold:
		\begin{align*}
			\norm{g}_{\sharp,l_0,\delta_1,\frac{1}{2}+\ve}+
			\norm{g}_{\sharp,l_0,\delta_0,\ve}\leq C_0 e^{-\kappa t_0} ,
		\end{align*}
		and
		\begin{align}\label{CharPropG1}
			\lVert G[\Lambda,\dot \Lambda,\xi,\dot \xi,\phi] \rVert_{\sharp,l_0,\delta_1,\frac{1}{2}+\ve}+&\lVert G[\Lambda,\dot \Lambda,\xi,\dot \xi,\phi] \rVert_{\sharp,l_0,\delta_0,\ve}\leq	
			e^{-\kappa t_0}\big\{\norm{\Lambda}_{\sharp,l_0,\delta_0,\ve}\\\nonumber
			&+\norm*{\dot \Lambda}_{\sharp,l_1,\delta_1,\ve}+\norm{\xi_1}_{\sharp,1+l_1,\frac{1}{2}+\ve}+\norm*{\dot \xi_1}_{\sharp,1+l_1,\ve}+\norm{\phi}_*\big\}.
		\end{align}
		Furthermore, we have
		\begin{align}\label{CharPropG2}
			\lVert G[\Lambda^{(1)},\dot \Lambda^{(1)},\xi^{(1)},\dot \xi^{(1)},\phi^{(1)}]&-G[\Lambda^{(2)},\dot \Lambda^{(2)},\xi^{(2)},\dot \xi^{(2)},\phi^{(2)}] \rVert_{\sharp,l_0,\delta_1,\frac{1+2\ve}{2}}\\\nonumber
			\leq \bold c \big\{
			&\norm*{ \Lambda^{(1)}- \Lambda^{(2)}}_{\sharp,l_0,\delta_0,\frac{1+2\ve}{2}}+\norm*{ \dot \Lambda^{(1)}- \dot \Lambda^{(2)}}_{\sharp,l_1,\delta_1,\ve}\\\nonumber
			& +\norm*{\xi_{1}^{(1)}- \xi_{1}^{(2)}}_{\sharp,1+l_0,\frac{1}{2}+\ve}+ \norm*{\dot \xi_{1}^{(1)}- \dot \xi_{1}^{(2)}}_{\sharp,1+l_0,\ve}\\\nonumber
			&+ \norm*{\phi^{(1)}-\phi^{(2)}}_{*}\big\}
		\end{align}
		with constant $\bold c<1$ provided that $t_0$ is fixed sufficiently large and $\mathfrak{b}_i$ small for $i=1,2$.
	\end{lemma}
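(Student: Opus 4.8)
The plan is to compute the left-hand side of the orthogonality condition (\ref{OrthCondFin}) with $i=4$ in closed form, recognize exactly one summand as the nonlocal operator applied to $\dot\Lambda$, and absorb everything else --- after dividing by the coefficient of that summand --- into the remainders $g$ and $G$. Concretely, I would insert the definition (\ref{RHSInnPr}) of $H$ and split $\int_{B_{2R}}H\,Z_4\,\ddy$ into its four natural groups: the outer coupling $5U^4\mu\,(\mu_0/\mu)^{1/2}\psi$, the linear term $B_0[\phi+\mu\psi]$ of (\ref{DefB0}), the inner error $\mu^{5/2}\Sinn$ with $\Sinn$ as in (\ref{innS}), and the nonlinearity $\mathcal N(\mu^{1/2}u_3,\mu^{1/2}\tilde\phi)$ of (\ref{DefNLterm}). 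All summands containing $\nabla_yU=Z_i$ with $i=1,2,3$ drop out against $Z_4$ by parity; the remaining ones, after testing and dividing by the coefficient of the leading term, will feed only into the bounded remainders $g$ and $G$; and the nonlocal structure is carried by a single summand of $\mu^{5/2}\Sinn$, namely $\mu\,(\mu_0/\mu)^{1/2}\,5U(y)^4J(\mu y+\xi,t)$.

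To treat this term I would write $J=J_1[\dot\Lambda]+J_2$. By Lemma~\ref{Lemma:estJ2} the $J_2$-part contributes an $O(\mu^2)$ quantity which, being $\dot\Lambda$-independent and smooth in $t$, I would keep in $g$. For the $J_1$-part I would use, exactly as in the proof of Lemma~\ref{EstJ1fromdotLambda}, the parabolic comparison of $J_1[\dot\Lambda]$ with $\JJJ[\dot\Lambda]$ --- the source of $J_1$ being a localized-near-$\xi$ perturbation of $-\dot\Lambda G_\gamma(\cdot,0)$ --- together with the interior expansion $J_1[\dot\Lambda](\mu y+\xi,t)=\JJJ[\dot\Lambda](0,t)+(\text{error of relative size }O(\mu R))$ on $B_{2R}$, valid since $\mu R=\mu^{1-\delta}\to0$. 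Both errors are linear in $\dot\Lambda$ with coefficients small relative to the main term, so they may be written as $a[\dot\Lambda,\xi](t)\,\JJJ[\dot\Lambda](0,t)$ with $a$ smooth, $a(t)\to0$ and $a[0,0]\equiv0$. Altogether
\begin{align*}
	\int_{B_{2R}}H\,Z_4\,\ddy=c_4\,\mu\,(\mu_0/\mu)^{1/2}\,\bigl[(1+a[\dot\Lambda,\xi](t))\,\JJJ[\dot\Lambda](0,t)-\mathfrak r(t)\bigr],
\end{align*}
where $c_4{\coloneqq}\int_{\RR^3}5U^4Z_4\,\ddy$ and $\mathfrak r=\mathfrak r[\Lambda,\dot\Lambda,\xi,\dot\xi,\phi]$. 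The decisive point is $c_4\neq0$: since $\Delta Z_4+5U^4Z_4=0$ one gets $c_4=-\int_{\RR^3}\Delta Z_4=-\lim_{\rho\to\infty}\int_{\partial B_\rho}\partial_\nu Z_4=-2\pi\alpha_3$, using $Z_4(y)\sim-\tfrac{\alpha_3}{2}\abs{y}^{-1}$; this non-vanishing is the quantitative form of the low-dimensional resonance $Z_{n+1}\notin L^2(\RR^3)$ recalled in \S\ref{constr:u_2}. Dividing by $c_4\,\mu\,(\mu_0/\mu)^{1/2}$ turns (\ref{OrthCondFin}) with $i=4$ into (\ref{i=4Equivalent}), with $g+G{\coloneqq}\mathfrak r$.

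It then remains to split $\mathfrak r$ and estimate the pieces. I would let $g$ be the part of $\mathfrak r$ determined only by the known objects $\mu_0,\xi_0,\phi_3$ (the $\mathcal N_3$-piece of $\Sinn$, the constant-in-$y$ term $\xi_0\cdot\nabla_xR_\gamma(0)$ extracted from the expansion (\ref{exphgg}) of $h_\gamma$, the $J_2$-contribution, and the $\Lambda=0,\xi_1=0$ values of the remaining terms), and let $G$ be the part that is at least linear in one of $\Lambda,\dot\Lambda,\xi_1,\dot\xi_1,\phi$ (the outer coupling, $B_0[\phi+\mu\psi]$, $\mathcal N(\mu^{1/2}u_3,\mu^{1/2}\tilde\phi)$, and the parameter-dependent remainders of $\mu^{5/2}\Sinn$ obtained after expanding $\mu=\mu_0e^{2\Lambda}$ and $h_\gamma$ via (\ref{exphgg})). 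For the size of $g$ I would use Lemma~\ref{Lemma:FinalAnsatzError}, Lemma~\ref{estphi3} and the $t$-smoothness of $\mu_0,\xi_0,\phi_3$; the exponent $l_0$ was fixed in \S\ref{Choiceofconst} precisely so that the decay left over beyond $\mu_0^{l_0}$ is $\mu_0^{\kappa/(2\gamma)}$, which at $t\geq t_0$ is $\leq\mu_0(t_0)^{\kappa/(2\gamma)}=e^{-\kappa t_0}$ --- this produces the prefactor $e^{-\kappa t_0}$ --- while the $\tfrac12+\ve$ and $\ve$ Hölder-in-time seminorms follow from interior parabolic regularity for $J_2$ and $\JJJ$. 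For $G$ I would use $\norm{\psi}_{**}\lesssim e^{-\kappa t_0}$ from Proposition~\ref{PropOuterNonlinear}, the pointwise bounds on $\phi_3,\nabla_y\phi_3$ from Lemma~\ref{estphi3} inside (\ref{DefB0}), the a priori bound on $\norm{\phi}_*$, Lemma~\ref{EstJ1fromdotLambda}, and the $\ve$-Hölder control of $\dot\Lambda$; each such term then carries both a spare power of $\mu_0$ (hence an $e^{-\kappa t_0}$) and a factor of one of the parameter norms, which is (\ref{CharPropG1}). Finally (\ref{CharPropG2}) follows by subtracting the two representations of $\mathfrak r$ and invoking the Lipschitz dependence of $\psi$ on the parameters (Proposition~\ref{PropertiesPsi}) together with the elementary Lipschitz estimates for $B_0$, $\mathcal N$, $\mu_0e^{2\Lambda}$ and $J_1[\cdot]$; the gain of a constant $<1$ is again one spare power of $\mu_0$.

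The main obstacle will be this first step --- extracting the nonlocal term cleanly. One must control the two successive approximations $J_1[\dot\Lambda]\approx\JJJ[\dot\Lambda]$ and $\JJJ[\dot\Lambda](\mu y+\xi,\cdot)\approx\JJJ[\dot\Lambda](0,\cdot)$ strongly enough, in particular in the $\tfrac12+\ve$ Hölder-in-time norm and near the $\abs{x}^{-1}$ singularity of $G_\gamma(\cdot,0)$, so that the resulting error is a genuine multiplicative corrector $a$ with $a[0,0]\equiv0$, rather than a term comparable with $\JJJ[\dot\Lambda](0,t)$ itself. The second, more tedious difficulty is the bookkeeping of sorting the various summands of $\int_{B_{2R}}H\,Z_4\,\ddy$ into $g$ versus $G$ while simultaneously propagating all five weighted (Hölder) norms appearing in the statement, which is exactly where the constraints on the constants collected in \S\ref{Choiceofconst} are used.
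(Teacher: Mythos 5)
Your proposal matches the paper's proof in structure and in all essential details: you split $\int_{B_{2R}}HZ_4$ into the same four blocks, extract the nonlocal term $\JJJ[\dot\Lambda](0,t)$ from the $J$-part of $\mu^{5/2}\Sinn$ via exactly the two approximations ($J_1\approx\JJJ$ and interior evaluation at $y=0$), normalize by $\int 5U^4Z_4\,\ddy$, and sort the remainder into a parameter-independent $g$ and a parameter-dependent $G$ using the same ingredients (Lemmas~\ref{EstJ1fromdotLambda}, \ref{Lemma:estJ2}, \ref{estphi3}, Propositions~\ref{PropOuterNonlinear}, \ref{PropertiesPsi}). Your explicit verification that $c_4=\int_{\RR^3}5U^4Z_4\,\ddy=-2\pi\alpha_3\neq0$ is a small addition the paper leaves implicit, but otherwise this is the same argument.
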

	\begin{proof}
		We recall that 
		\begin{align*}
			H[\phi,\psi,\mu,\dot \mu,\xi,\dot \xi](y,\tau){\coloneqq}
			&5\Uy^4 \mu \qty(\frac{\mu_0}{\mu})^{1/2} \psi(\mu y +\xi,t(\tau))\\\nonumber
			&+B_0\qty[\phi+\mu\psi](\mu y+\xi,t(\tau))+\mu^{5/2}S_{\text{in}}(\mu y+\xi,t(\tau))\\\nonumber
			&+\mathcal{N}(\mu^{1/2}u_3,\mu^{1/2}\tilde \phi)(\mu y+\xi,t(\tau)).
		\end{align*}
		Hence, (\ref{InnPrOrth}) with index $i=4$ becomes
		\begin{align*}
			0=&\mu^{5/2}\int_{B_{2R}}Z_4(y)S_{\text{in}}(y,t)  \ddy+\mu \qty(\frac{\mu_0}{\mu})^{1/2}\int_{B_{2R}} Z_4(y) 5U(y)^4 \psi(\mu y +\xi,t) \ddy \\
			&+\int_{B_{2R}}Z_4(y)B_0\qty[\phi+\mu\psi](\mu y+\xi,t) \ddy 
			+ \int_{B_{2R}}Z_4(y)\mathcal{N}(\mu^{1/2}u_3,\mu^{1/2}\tilde \phi)(\mu y+\xi,t(\tau))\ddy \\
			=&: \sum_{j=1}^{4}i_j(t).
		\end{align*}
		We follow the analogue \cite[Lemma 5.1]{dmw1} to estimate the terms $i_j(t)$. Firstly, we analyze $i_1$. We have
		\begin{align*}
			i_1(t)=&\mu^{5/2}\int_{B_{2R}}
			\Sinn(y,t) Z_4(y)\ddy\\
			=&\mu  \qty(\frac{\mu_0}{\mu})^{1/2} \int_{B_{2R}}5U(y)^4 J(\mu y+\xi,t)Z_4(y)\ddy\\ 
			&+ \int_{B_{2R}}\mathcal{N}_3(y,t) Z_4(y)\ddy   \\\nonumber
			&+\mu \int_{B_{2R}}Z_4(y)5U(y)^4 h_\g(\mu y+\xi,\xi)	\ddy 
			\\=:&a_1(t)+a_2(t)+a_3(t),
		\end{align*}
		where we used that the integral of $Z_4(y)U(y)^4\pp_{y_i} U(y)$ on $B_{2R}(0)$ is null by symmetry for $i=1,2,3$. Also,
		\begin{align*}
			\mu^{-1}  \qty(\frac{\mu_0}{\mu})^{-1/2} a_1(t)=& \intb 5U(y)^4 Z_4(y)J[\dl](\mu y +\xi,t) \ddy \\
			=& \JJJ[\dl](0,t) \intb 5U(y)^4 Z_4(y) \ddy \\
			&+\qty[ J[\dot \Lambda](\xi,t)-\JJJ[\dl](0,t)] \intb 5U(y)^4 Z_4(y) \ddy\\
			&+ \intb 5U(y)^4 Z_4(y)[J[\dl](\mu y +\xi,t)-J[\dl](\xi,t)] \ddy\\
			=:&a_{11}[\dot \Lambda](t)+a_{12}[\dot \Lambda,\xi](t)+a_{13}[\dot \Lambda,\xi](t).
		\end{align*}
		The main term in the left-hand side of (\ref{i=4Equivalent}) is given by
		\begin{align*}
			\mu^{-1}  \qty(\frac{\mu_0}{\mu})^{-1/2}	c_1^{-1}(1+O(R^{-2}))^{-1}a_{11}(t)=\JJJ[\dot \Lambda](0,t),
		\end{align*}
		where $c_1(1+O(R^{-2}))=\int_{B_{2R}} 5U^4 Z_4 \ddy$. To analyze the terms $a_{12}$, we decompose $w[\dot \Lambda](x,t)=J[\dot \Lambda](x,t)-\JJJ[\dot \Lambda](x,t)$ as a sum of a solution in $\RR^3$ and a smooth one in $\Omega$ with more decay. Then, using the Duhamel's formula in $\RR^3$ as in \cite[Proof of (7.5)]{dmw1} we deduce
		\begin{align}\label{estPertJ}
			\norm*{a_{12}[\dot \Lambda,\xi]}_{\sharp,l_0,\delta_1,\frac{1}{2}+\ve}+\norm*{a_{12}[\dot \Lambda,\xi]}_{\sharp,l_0,\delta_0,\ve}\lesssim e^{-\kappa t_0}\big\{\norm*{ \Lambda}_{\sharp,l_0,\delta_0,\frac{1}{2}+\ve}&+\norm*{\dot \Lambda}_{\sharp,l_1,\delta_1,\ve}\\\nonumber&+\norm{\xi_1}_{\sharp,1+l_1,\frac{1}{2}+\ve}\big\}. 
		\end{align}
		We analyze $a_{13}$ by splitting $J$ as a sum of a solution to the same equation in $\RR^3$ and smooth remainder in $\Om$ with more decay, and, proceeding as in \cite[Proof of (5.10)]{dmw1}, again by Duhamel's formula in $\RR^3$ we obtain
		\begin{align*}
			\abs{J[\dl](\mu y +\xi,t)-J[\dl](\xi,t)}= \abs{y\mu }^{\sigma} \Pi[\dot \Lambda,\xi](t)\theta(\abs{y}),
		\end{align*}
		for some $\sigma \in (0,1)$ and bounded smooth function $\theta$, and $\Pi[\dot \Lambda,\xi]$ satisfying the estimate above for $a_{12}$. After integration, $a_{13}[\dot \Lambda,\xi](t)$ satisfies (\ref{estPertJ}). 
		Taking into account the behavior of $J_1,J_2$ and $\phi_3$ given in (\ref{estJ_{1,0}-Linf}), (\ref{estonJ2}) and (\ref{bphi3}) respectively, we have
		\begin{align*}
			a_2&= \int_{B_{2R}} Z_4(y)\mathcal{N}_3(y,t)\ddy\\
			&= \int_{B_{2R}}Z_4 \qty{10 \qty(U(y)+s(-\mu H_\g+\mu  J+\mu^{-1/2}\phi_3 \eta_l))^3\qty(-\mu H_\g+\mu J+\mu^{-1/2}\phi_3 \eta_l)^2}\ddy\\
			&=\mu^2 \int_{B_{2R}} 10 Z_4(y)U(y)^3 Q[\Lambda,\dot \Lambda,\xi](y,t)\ddy ,
		\end{align*}
		for some constant $s\in (0,1)$ and bounded smooth function $Q[\Lambda,\xi ](y,t)$ satisfying (\ref{estPertJ}). \\
		Finally, Taylor expanding $h_\g(x,\xi)$ at $x=\xi$, we get
		\begin{align*}
			a_3=&\mu \intb Z_4(y)5U(y)^4  h_\g(\mu y+\xi,\xi)\ddy \\
			=& \mu R_\g(\xi) \int_{B_{2R}} Z_4(y) 5U(y)^4 \ddy\\
			&+\mu^3 \intb Z_4(y)5U(y)^4 \qty(y\cdot D_{xx}h_\g(\mu y^*(y)+\xi,\xi)\cdot y) \ddy  \\
			=&\mu^2 \Pi_2[\Lambda,\xi](t),
		\end{align*}
		for some $y^* \in \overline{[0,y]}$ and a smooth bounded function $\Pi_2(t)$. The term $\mu^{-1/2}\mu_0^{-1/2}i_1(t)=\sum_{i=1}^{3}a_i(t)$ gives the left-hand side in (\ref{i=4Equivalent}). Now, we look at $i_2$. We decompose
		\begin{align*}
			\mu^{-\frac{1}{2}}\mu_0^{-\frac{1}{2}}i_2(t)=&\intb Z_4(y) 5U(y)^4 \psi[\Lambda,\xi,\dot \Lambda,\dot \xi,\phi](\mu y +\xi,t) \ddy\\
			=&\psi[0,0,0,0,0](0,t)\intb Z_4(y) 5U(y)^4  \ddy\\
			&+\intb Z_4(y) 5U(y)^4\qty{\psi[0,0,0,0,0](\mu y+\xi,t)-\psi[0,0,0,0,0](0,t)}  \ddy\\
			&+\intb Z_4(y) 5U(y)^4\qty{\psi[\Lambda,\xi,\dot \Lambda,\dot \xi,\phi](\mu y +\xi,t)-\psi[0,0,0,0,0](\mu y+\xi,t)}  \ddy\\
			\eqqcolon&b_1(t)+b_2[\Lambda,\xi](t)+b_3[\Lambda,\xi,\dot \Lambda,\dot\xi,\phi](t),
		\end{align*}
		The term
		\begin{align*}
			b_1(t)=\psi[0,0,0,0,0](0,t)\int_{B_{2R}} 5U(y)^4 Z_4(y)\ddy,
		\end{align*}
		is independent of parameters and, as a consequence of Proposition \ref{PropOuterNonlinear}, satisfies the estimate
		\begin{align*}
			&\norm{b_1}_{\sharp,l_0,\delta_0,\ve}+\norm{b_1}_{\sharp,l_0,\delta_1,\frac{1}{2}+\ve}\leq  C e^{-\kappa t_0}.
		\end{align*}
		Applying the mean value theorem to $\psi$ and using the gradient estimate we deduce the same bound for $b_2$. This gives the main term $b_1(t)+b_2(t)=g(t)$ in the right-hand side of (\ref{i=4Equivalent}).
		We analyze $b_3(t)$. By Proposition \ref{PropertiesPsi} applied to
		\begin{align*}
			\psi[ \Lambda,\xi,\dot \Lambda,\dot \xi,\phi]-\psi[0,0,0,0,0]
		\end{align*}
		we obtain
		\begin{align*}
			\norm{b_3}_{\sharp,l_0,\delta_1,\frac{1}{2}+\ve}+\norm{b_3}_{\sharp,l_0,\delta_0,\ve}\lesssim &e^{-\kappa t_0}\bigg\{\norm{\Lambda}_{\sharp,l_0,\delta_0,\frac{1}{2}+\ve}+\norm*{\dot \Lambda}_{\sharp, l_1,\delta_1,\ve}\\
			&+\norm*{\dot \xi_1}_{\sharp,1+l_1,\ve}+\norm{ \xi_1}_{\sharp,1+l_1,\frac{1}{2}+\ve}+\norm{\phi}_*\bigg\}.
		\end{align*}
		Also, again as a consequence of the Lipschitz estimates in $\psi$ we have for example
		\begin{align*}
			\norm*{b_3[\dot \Lambda_1]-b_3[\dot \Lambda_2]}_{\sharp,l_0,\delta_1,\frac{1}{2}+\ve}\leq \bold c \norm*{\dot \Lambda_1- \dot \Lambda_2}_{\sharp, l_1,\delta_1,\ve},
		\end{align*}
		for any $\dot \Lambda_1,\dot \Lambda_2 \in X_{\sharp,l_1,\delta_1,\ve}$ and fixed $\Lambda,\xi,\dot \Lambda,\dot \xi$ in the respective spaces. 
		We analyze $i_3$. We recall that 
		\begin{align*}
			B_0[\phi+\mu  \psi]=5\qty[\qty(U-\mu H_\gamma+\mu J[\dot \Lambda]+\mu^{-1/2}\phi_3(y,t)\eta_3)^4-U^4][\phi+\mu \psi],
		\end{align*}
		which is linear in $\phi,\psi$ and satisfies
		\begin{align*}
			\abs{B_0[\phi+\mu \psi](\mu y+\xi,t)}\lesssim \frac{\mu}{1+\abs{y}^{3}}\mu \abs{\phi+\mu \psi}.
		\end{align*}
		It follows that
		\begin{align*}
			\abs{i_3(t)}\lesssim &  \mu {\norm{\phi}_* \mu^{1+l_1}{R^3 }+\norm{\psi}_{**}\mu^2 \mu^{l_1+\delta-\sigma}}\\
			\lesssim &e^{-\kappa t_0}\mu^{l_0}.
		\end{align*}
		Then, the H\"older bounds on $\psi$ and $\phi$ in the respective norms give estimate (\ref{CharPropG1}) for $i_3$, and using Proposition \ref{PropertiesPsi} we also get the Lipschitz property (\ref{CharPropG1}) for $i_3$. 
		Finally, we have
		\begin{align*}
			\abs{\mathcal{N}(\mu^{1/2}u_3,\mu^{1/2}\tilde \phi)(\mu y+\xi,t(\tau))}&\lesssim \frac{1}{1+\abs{y}^3}\qty(\phi+\mu \psi)^2\\
			&\lesssim \frac{1}{1+\abs{y}^3}\qty(\norm{\phi}_*^2 \mu^{2(1+l_1)}R^6+\norm{\psi}_{**}^2\mu^2 \frac{\mu^{2(l_1+\delta-\sigma)}}{1+\abs{y}^{2\alpha}}	)\\
			&\lesssim e^{-\kappa t_0}\mu^{l_0},
		\end{align*}
		and (\ref{CharPropG1})-(\ref{CharPropG1}) for $i_4$ follows arguing as for $i_3$. Summing up the estimates we obtain $G[\Lambda,\dot \Lambda,\xi,\dot \xi,\phi](t)=b_3+i_3+i_4$ as in (\ref{i=4Equivalent}) with the properties (\ref{CharPropG1}) and (\ref{CharPropG2}).
	\end{proof}
	Now, we characterize the conditions
	\begin{align*}
		\int_{B_{2R}} Z_i(y) H[\Lambda,\dot \Lambda,\xi,\dot \xi,\phi](y,t)\ddy =0, \quad\text{for}\quad t\in (t_0,\infty) \quad \text{and}\quad i=1,2,3.
	\end{align*}
	This characterization is given in the following lemma, whose proof, similar to the one of Lemma \ref{Lemma:Chari=4}, is omitted.
	\begin{lemma}\label{Lemma:Chari=123}
		The relation (\ref{OrthCondFin}) for $i=1,2,3$ is equivalent to
		\begin{align}\label{Eqi=123Char}
			\dot \xi_{1,i}=\mathfrak{c}_i \mu_0(t)^{1+l_1}+\Theta_i[\Lambda,\dot \Lambda,\xi,\phi](t)
		\end{align}
		for smooth bounded function $\Theta$ which satisfies
		\begin{align}\label{smallnessTheta}
			\lVert \Theta[\Lambda,\dot \Lambda,\xi,\phi] \rVert_{\sharp,1+l_1,\frac{1}{2}+\ve}+&\lVert \Theta[\Lambda,\dot \Lambda,\xi,\phi] \rVert_{{\sharp,1+l_1,\ve}}\leq	
			e^{-\kappa t_0}\big\{\norm*{\Lambda}_{\sharp,l_0,\delta_0,\ve}\\\nonumber
			&+\norm*{\dot \Lambda}_{\sharp,l_1,\delta_1,\ve}+\norm{\xi_1}_{\sharp,1+l_1,\frac{1}{2}+\ve}+\norm{\phi}_*\big\}.
		\end{align}
		Furthermore, we have
		\begin{align}\label{CharPropT2}
			\lVert \Theta[\Lambda^{(1)},\dot \Lambda^{(1)},\xi^{(1)},\dot \xi^{(1)},\phi^{(1)}]&-\Theta[\Lambda^{(2)},\dot \Lambda^{(2)},\xi^{(2)},\dot \xi^{(2)},\phi^{(2)}] \rVert_{\sharp,1+l_1,\frac{1}{2}+\ve}\\\nonumber
			\leq \bold c \big\{
			&\norm*{ \Lambda^{(1)}- \Lambda^{(2)}}_{\sharp,l_0,\delta_0,\frac{1}{2}+\ve}+\norm*{ \dot \Lambda^{(1)}- \dot \Lambda^{(2)}}_{\sharp,l_1,\delta_1,\ve}\\\nonumber
			& +\norm*{\xi_{1}^{(1)}- \xi_{1}^{(2)}}_{\sharp,1+l_1,\frac{1}{2}+\ve}+ \norm*{\dot \xi_{1}^{(1)}- \dot \xi_{1}^{(2)}}_{\sharp,1+l_1,\ve}\\\nonumber
			&+ \norm*{\phi^{(1)}-\phi^{(2)}}_{*}\big\},
		\end{align}
		with constant $\bold c<1$ provided that $t_0$ is fixed sufficiently large and $\mathfrak{b}_i$ small for $i=1,2$.
	\end{lemma}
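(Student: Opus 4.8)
The plan is to mimic the proof of Lemma~\ref{Lemma:Chari=4} almost verbatim, the only structural novelty being a symmetry cancellation that removes the nonlocal operator. First I would expand, for $i\in\{1,2,3\}$ and with $x=\mu y+\xi$,
\[
\int_{B_{2R}} Z_i(y)\,H[\Lambda,\dot\Lambda,\xi,\dot\xi,\phi](y,t(\tau))\,\ddy = i_1(t)+i_2(t)+i_3(t)+i_4(t),
\]
splitting $H$ (see \eqref{RHSInnPr}) into the $\mu^{5/2}\Sinn$ part ($i_1$), the term $5U^4\mu(\mu_0/\mu)^{1/2}\psi$ ($i_2$), the $B_0[\phi+\mu\psi]$ part ($i_3$) and the nonlinearity $\mathcal N(\mu^{1/2}u_3,\mu^{1/2}\tilde\phi)$ ($i_4$), and then normalise by multiplying through by $\mu^{-1/2}\mu_0^{-1/2}$, exactly as in \cite[Lemma~5.1]{dmw1}.

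The second step is to extract the invertible main term. Inside $\Sinn$ (see \eqref{innS}) the summand $\mu^{-3/2}\eta_l\,(\dot\xi_1+(1-\mu^{-1}\mu_0)\dot\xi_0)\cdot\nabla_y U$ is \emph{not} weighted by $U^4$; since $\eta_l\equiv1$ on $B_{2R}$, pairing it with $Z_i=\partial_{y_i}U$ and using $\int_{\RR^3}\partial_{y_j}U\,\partial_{y_i}U\,\ddy=\delta_{ij}\,\|Z_i\|_{L^2(\RR^3)}^2$ together with $\int_{B_{2R}}=\int_{\RR^3}+O(R^{-1})$ produces the main term $e^{\Lambda(t)}\dot\xi_{1,i}(t)\,\|Z_i\|_{L^2(\RR^3)}^2\,(1+o(1))$, while the contribution of $(1-\mu^{-1}\mu_0)\dot\xi_0=(1-e^{-2\Lambda})\dot\xi_0$ is $O(\mu_0^{1+l_0})$ and, being $\Lambda$-dependent, joins $\Theta_i$. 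The crucial point is that, in contrast with the case $i=4$, the \emph{radially symmetric} contributions vanish identically, because $\int_{\RR^3}5U^4Z_i\,\ddy=\int_{\RR^3}\partial_{y_i}(U^5)\,\ddy=0$; in particular the nonlocal operator $\JJJ[\dot\Lambda](0,t)$ that governed \eqref{i=4Equivalent} disappears, which is exactly why the $\xi$-equations are local. Consequently the $J$-term, the $\mathcal N_3$-term and the $h_\g$-term in $\mu^{5/2}\Sinn$, as well as $i_2$, must be Taylor expanded in $y$ about $x=\xi$: the $y$-independent part is annihilated by $\int 5U^4Z_i=0$ and the next ($y$-linear) term carries an extra factor of $\mu$. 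Using \eqref{estJ_{1,0}-Linf}, \eqref{estonJ2}, \eqref{bphi3}, the expansion \eqref{exphgg}, the error estimates of Lemma~\ref{Lemma:FinalAnsatzError} and the outer bound \eqref{estpsi} of Proposition~\ref{PropOuterNonlinear}, one checks that, after dividing by the nonzero factor $e^{\Lambda}\|Z_i\|_{L^2}^2(1+o(1))$, the right-hand side organises into an explicit forcing of size $\mu_0^{1+l_1}$ --- what I would denote $\mathfrak c_i\mu_0(t)^{1+l_1}$, the analogue of the term $g(t)$ in Lemma~\ref{Lemma:Chari=4} --- plus a remainder $\Theta_i$ depending on $(\Lambda,\dot\Lambda,\xi,\dot\xi,\phi)$. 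This is precisely \eqref{Eqi=123Char}.

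For \eqref{smallnessTheta} I would estimate $\Theta_i$ summand by summand, as in Lemma~\ref{Lemma:Chari=4}: the sup-norm part of $\|\Theta_i\|_{\sharp,1+l_1,\cdot}$ inherits an overall factor $e^{-\kappa t_0}$ from \eqref{estpsi} and from the smallness of $\Sinn(1-\eta_R)$ and $\Sout$ in Lemma~\ref{Lemma:FinalAnsatzError}, while the $\ve$- and $(\tfrac12+\ve)$-Hölder-in-time seminorms are controlled via the interior parabolic Hölder estimate \eqref{LinEstPsi1Calpha} for $\psi$, via \eqref{bphi3} for $\phi_3$ and via the parabolic regularity of $J_1,J_2$; here one must keep track of the time change $\tau\mapsto t(\tau)$ of \eqref{tauoft}. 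The Lipschitz estimate \eqref{CharPropT2} then follows from the continuous dependence of $\psi=\Psi[\Lambda,\dot\Lambda,\xi,\dot\xi,\phi]$ on its arguments (Proposition~\ref{PropertiesPsi}), combined with the multilinearity of $B_0$, $\mathcal N$ and $\mathcal N_3$ in $(\phi,\psi)$ and the mean value theorem applied to the $\Lambda$- and $\xi$-dependent coefficients.

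I expect the main obstacle to be the Hölder-in-time control of the $i_2$-contribution: it needs the refined estimate \eqref{LinEstPsi1Calpha} evaluated at the moving centre $x=\xi(t)$ and a careful reconciliation of the two parabolic time scales in play --- $\mu^{-2}$ for the inner variable $\tau$ and $(\mu R)^{-2-\alpha}$ for the Hölder estimate of $\psi$ --- which is carried out exactly as in \cite[Proof of Lemma~5.1 and Proposition~4.3]{dmw1}.
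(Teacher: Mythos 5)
The paper does not actually write out a proof of this lemma: after stating it the authors remark only that the proof is ``similar to the one of Lemma~\ref{Lemma:Chari=4}, is omitted''. Your proposal is, in effect, the proof they chose to omit, and it is correct both in overall architecture and in the one genuinely new observation it must supply. The decomposition into $i_1,\dots,i_4$, the normalisation by $\mu^{-1/2}\mu_0^{-1/2}$, the identification of the invertible main term $e^{\Lambda}\dot\xi_{1,i}\|Z_i\|_{L^2}^2(1+O(R^{-1}))$ coming from the $\mu\,\eta_l\,\dot\xi\cdot\nabla_y U$ summand of $\mu^{5/2}\Sinn$ (note $\int \partial_{y_j}U\,\partial_{y_i}U\,\ddy=\delta_{ij}\|Z_i\|^2$ by parity), and the sourcing of the H\"older and Lipschitz bounds from Lemma~\ref{Lemma:FinalAnsatzError}, Proposition~\ref{PropOuterNonlinear} (estimate \eqref{LinEstPsi1Calpha}) and Proposition~\ref{PropertiesPsi} all match what the authors are implicitly invoking. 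Crucially, you pinpoint the structural reason the system \eqref{SystemEquivalent} is local in $\xi$ while nonlocal in $\Lambda$: since $\int_{\RR^3}5U^4 Z_i\,\ddy=\int_{\RR^3}\partial_{y_i}(U^5)\,\ddy=0$ (and more generally $\int 5U^4 f Z_i\,\ddy=0$ for any radially symmetric $f$, by parity), the $y$-independent parts of $J$, $\psi$, $h_\g$ and $\mathcal N_3$ contribute nothing, so the operator $\JJJ[\dot\Lambda](0,t)$ never appears; only the next ($y$-linear) terms in the Taylor expansion survive, each carrying an extra power of $\mu$.

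Two small remarks. First, your phrase ``the radially symmetric contributions vanish identically'' is exactly right in content but slightly compressed in wording: what is used is that the mode-$0$ (in the spherical decomposition) parts of the tested functions are annihilated against $Z_1,Z_2,Z_3$, not merely the constants. Second, you could be a bit more explicit about which subterm actually produces the explicit forcing $\mathfrak c_i\,\mu_0(t)^{1+l_1}$: after the cancellation, the surviving contributions are all $O(\mu_0^{1+l_1})$ or smaller (the $\psi$-term via $\nabla\psi\sim\mu^{\beta-1}$ with $\beta=\tfrac12+l_1+\delta-\sigma$, the $J$-term via $\|J_1\|_\infty\lesssim\mu^{l_1}$ together with interior parabolic gradient bounds, the $h_\gamma$-term via the $(1-e^{-4\Lambda})\sim\mu^{l_0}$ mismatch, and the $(1-e^{-2\Lambda})\dot\xi_0\sim\mu_0^{1+l_0}$ piece), and it is the leading one of these that defines $\mathfrak c_i$ --- a different constant from the $\mathfrak c_i$ of \eqref{xi0def}, the reuse of the symbol being a mild abuse of notation in the paper (compare also the inconsistent power $\mu_0^2$ vs.\ $\mu_0^{1+l_1}$ between \eqref{SystemEquivalent} and \eqref{Eqi=123Char}). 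Neither point affects the validity of your argument.
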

	
	\section{Choice of parameters $\Lambda,\xi$}\label{sec:solveSystem}
	In the previous section we have proved that if $\phi \in X_*$ and $\Lambda,\xi$ satisfy (\ref{NormsLambda,dotLambda}) and (\ref{NormsXi,dotXi}) then the system of orthogonality conditions
	\begin{align*}
		\int_{B_{2R}} H[\Lambda,\dot \Lambda,\xi,\dot \xi,\phi](y,t(\tau))Z_i(y)\ddy=0 \quad \text{for}\quad t\in [t_0,\infty) \quad \text{and}\quad i=1,2,3,4,
	\end{align*}
	is equivalent to the nonlocal system in $[t_0,\infty)$
	\begin{align}\label{SystemEquivalent}
		\begin{cases}
			(1+ a[\dot \Lambda,\xi](t))\JJJ[\dot \Lambda](0,t)=g(t)+G[\Lambda,\dot \Lambda,\xi,\dot \xi,\phi](t),& \\
			\dot \xi_{1,i}=\mathfrak{c}_i\mu_0(t)^2 \qty(1+\Theta_i[\Lambda,\dot \Lambda,\xi,\phi])& \text{for}\quad i=1,2,3,
		\end{cases}
	\end{align}
	with $g,G,a$ as in Lemma \ref{Lemma:Chari=4} and $\Theta_i$ as in Lemma \ref{Lemma:Chari=123}.
	Next, we verify that this system is solvable for $\Lambda,\xi$ satisfying (\ref{NormsLambda,dotLambda}),(\ref{NormsXi,dotXi}) respectively. This relies on the following proposition, proved in \S\ref{sec:invj}, about the solvability of the nonlocal operator $\JJJ[\dot \Lambda](0,t)=g(t)$ for $g$ as in (\ref{i=4Equivalent}).
	
	\begin{proposition}\label{Proposition:invj}
		Let $h:[t_0,\infty)\to \RR$ a function satisfying $\norm{h}_{\sharp,c_1,c_2,\ve}<\infty$ 
		for some constants $\ve>0$ and $c_1,c_2$ such that
		\begin{align}\label{condc1c2lambda1}
			0<c_2\leq c_1<\frac{\lambda_1-\g}{2\g}.
		\end{align}
		Then there exists a function $\Lambda\in C^{\frac{1}{2}+\ve}(t_0-1,\infty)$ satisfying
		\begin{align}\label{LinEstLambda<h}
			\JJJ[\dot \Lambda](0,t)=h(t) \inn (t_0,\infty),
		\end{align}
		where $\JJJ[\dot \Lambda]$ satisfies (\ref{ProbJ10}),
		and there exists a constant $C_1$ such that
		\begin{align}\label{estinvLambda}
			\norm{\Lambda}_{\sharp,c_1,c_2,\ve+\frac{1}{2}}\leq C_1 \norm{h}_{\sharp,c_1,c_2,\ve}.
		\end{align}
		Moreover, if $\norm{h}_{\sharp,c_1,c_2,\frac{1}{2}+\ve} <\infty$ then $\Lambda \in C^{1,\ve}(t_0-1,\infty)$ and there exists a constant $C_2$ such that
		\begin{align}\label{LinEstdotLambda<h}
			\norm*{\dot \Lambda}_{\sharp,c_1,c_2,\ve}\leq C_2 \norm{h}_{\sharp,c_1,c_2,\frac{1}{2}+\ve}.
		\end{align}
		Thus, the linear operators
		\begin{align}\label{T1InverseJLambda}
			T_1: &X_{\sharp,c_1,c_2,\ve}\to X_{\sharp,c_1,c_2,\ve+\frac{1}{2}}\\\nonumber
			&h(t)\mapsto \Lambda[h](t),
		\end{align}
		and
		\begin{align}\label{T2InverseJdotLambda}
			\hat T_1{\coloneqq}\frac{d}{dt}\circ T_1: &X_{\sharp,c_1,c_2,\frac{1}{2}+\ve}\to X_{\sharp,c_1,c_2,\ve}\\\nonumber
			&h(t)\mapsto \dot \Lambda[h](t),
		\end{align}
		are well-defined and continuous.
	\end{proposition}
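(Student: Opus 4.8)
The plan is to make the nonlocal operator $\dot\Lambda\mapsto\JJJ[\dot\Lambda](0,\cdot)$ fully explicit, recognise it as a perturbation of a Caputo $\tfrac12$-derivative, and invert it by a Laplace-transform/contour argument — the same scheme used for $\Om=\RR^3$ in \cite{dmw1}, but relying only on the asymptotics of the Dirichlet heat kernel $p_t^\Om$ rather than on a closed-form expression. First, expanding $\JJJ[\dot\Lambda]=\sum_k b_k(t)w_k(x)$ in the $L^2$-orthonormal Dirichlet eigenfunctions $w_k$ of $-\Delta$ on $\Om$ (eigenvalues $\lambda_k$) and solving the scalar ODEs exactly as in the proof of Lemma \ref{EstJ1fromdotLambda}, one gets $b_k(t)=-\big(\int_\Om G_\g(x,0)w_k\,dx\big)\int_{t_0-1}^t e^{-(\lambda_k-\g)(t-s)}\dot\Lambda(s)\,ds$, and since $\int_\Om G_\g(x,0)w_k\,dx=c_3 w_k(0)/(\lambda_k-\g)$,
\begin{align*}
\JJJ[\dot\Lambda](0,t)=-\int_{t_0-1}^t k(t-s)\dot\Lambda(s)\,ds,\\
k(\tau)=c_3\sum_k\frac{w_k(0)^2}{\lambda_k-\g}\,e^{-(\lambda_k-\g)\tau}=c_3\int_\tau^\infty e^{\g s}p_s^\Om(0,0)\,ds.
\end{align*}
From the interior short-time expansion $p_s^\Om(0,0)=(4\pi s)^{-3/2}+O(e^{-c/s})$ and the long-time behaviour $p_s^\Om(0,0)\sim\phi_1(0)^2e^{-\lambda_1 s}$ recalled in \S\ref{sec:invj}, one checks that $k(\tau)=\kappa_0\,\tau^{-1/2}+k_1(\tau)$ with $\kappa_0=2c_3(4\pi)^{-3/2}>0$ and $k_1$ bounded near $\tau=0$, while $k$ itself decays like $e^{-(\lambda_1-\g)\tau}$ as $\tau\to\infty$. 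The $\tau^{-1/2}$ singularity identifies $\JJJ[\dot\Lambda](0,\cdot)$ with the Caputo $\tfrac12$-derivative of $\Lambda$ based at $t_0-1$, up to lower-order terms.

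Next I would pass to the Laplace transform in $\tau=t-(t_0-1)$. A direct computation gives
\begin{align*}
\widehat k(z)=c_3\sum_k\frac{w_k(0)^2}{(\lambda_k-\g)(z+\lambda_k-\g)},
\end{align*}
a series convergent by the Weyl bound $\sum_{\lambda_k\le\Lambda}w_k(0)^2=O(\Lambda^{3/2})$, holomorphic on $\{\operatorname{Re}z>-(\lambda_1-\g)\}$; moreover $\operatorname{Re}\widehat k(z)=c_3\sum_k w_k(0)^2(\operatorname{Re}z+\lambda_k-\g)\big/\big[(\lambda_k-\g)|z+\lambda_k-\g|^2\big]>0$ there, so $\widehat k$ has no zeros in that half-plane. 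The structure of $k$ yields, by standard Laplace estimates, $\widehat k(z)=\kappa_0\sqrt\pi\,z^{-1/2}+O(|z|^{-1})$ as $|z|\to\infty$ along any vertical line in that half-plane. Hence the equation $\JJJ[\dot\Lambda](0,\cdot)=h$, i.e. $k\ast\dot\Lambda=-h$ after extending $\dot\Lambda$ by $0$ to $t<t_0-1$, is solved by the convolution operator with Laplace symbol $1/\widehat k(z)=\kappa_0^{-1}\pi^{-1/2}z^{1/2}\big(1+O(|z|^{-1/2})\big)$, and the inversion may be carried out along the contour $\{\operatorname{Re}z=-2\g c_1\}$, which by hypothesis \eqref{condc1c2lambda1} lies strictly to the right of $-(\lambda_1-\g)$, so that $\widehat k$ is holomorphic and nonvanishing up to and on it.

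Splitting $1/\widehat k$ into the principal symbol $\kappa_0^{-1}\pi^{-1/2}z^{1/2}$ — the Caputo $\tfrac12$-derivative, costing exactly half a derivative — plus a genuinely lower-order symbol, and using that $\Lambda=-\int_\cdot^\infty\dot\Lambda$ smooths by one order (so $\Lambda$ behaves like a $\tfrac12$-integration of $h$), one obtains the regularity gains claimed: $\norm h_{\sharp,c_1,c_2,\ve}<\infty$ gives $\Lambda\in C^{1/2+\ve}$ with estimate \eqref{estinvLambda}, while the extra assumption $\norm h_{\sharp,c_1,c_2,\frac12+\ve}<\infty$ gives $\dot\Lambda\in C^{\ve}$ with \eqref{LinEstdotLambda<h}. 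The weighted bounds follow by carrying these multiplier estimates through with the weight $\mu_0(t)^{c}=e^{-2\g c t}$: the exponential decay rate is produced by the position $\operatorname{Re}z=-2\g c_1$ of the inversion contour — legitimate precisely because $2\g c_1<\lambda_1-\g$ — and the Hölder estimates on each unit interval $[t,t+1]$, $t>t_0$, come from the $\tfrac12$-derivative and $\tfrac12$-integral mapping properties applied to the shifted symbol $\widehat k(z-2\g c_1)$; the interval $(t_0-1,t_0)$, on which \eqref{LinEstLambda<h} is not imposed, buffers the $\tau^{-1/2}$-type singularity of the half-derivative at the base point, so that $\Lambda$ and $\dot\Lambda$ have the stated regularity on $(t_0,\infty)$. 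Continuity of $T_1$ in \eqref{T1InverseJLambda} and of $\hat T_1=\frac{d}{dt}\circ T_1$ in \eqref{T2InverseJdotLambda} is then immediate from linearity and \eqref{estinvLambda}, \eqref{LinEstdotLambda<h}.

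The main obstacle is this Laplace-multiplier step: establishing the uniform asymptotics $\widehat k(z)=\kappa_0\sqrt\pi\,z^{-1/2}+O(|z|^{-1})$ along the shifted contour from the heat-kernel asymptotics — including the intermediate time regime — and transferring the resulting symbol estimate into the anisotropic weighted Hölder scale $X_{\sharp,c_1,c_2,\sigma}$, while verifying that the lower-order remainder operator genuinely gains decay and regularity there. By contrast, the spectral identity for $\JJJ$, the positivity $\operatorname{Re}\widehat k>0$, and the Caputo identification are soft; and the overall architecture mirrors \cite{dmw1}, the only difference being that on a bounded domain $\widehat k$ must be analysed through $p_t^\Om$ instead of being written in closed form.
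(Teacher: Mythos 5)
Your plan is the same as the paper's: express $\JJJ[\dot\Lambda](0,\cdot)$ as a convolution against a kernel built from the Dirichlet heat kernel, take Laplace transforms, observe that the symbol is $\sim z^{-1/2}$ at infinity and nonvanishing on the relevant half-plane, and invert by a contour argument, landing in the weighted H\"older scale via Hardy--Littlewood-type estimates. Two of your observations are cleaner than the paper's. First, the identity
\[
k(\tau)=c_3\int_\tau^\infty e^{\g s}\,p_s^\Om(0,0)\,ds
\]
is a genuine simplification: it reduces the short-time analysis to the \emph{on-diagonal} expansion $p_s^\Om(0,0)=(4\pi s)^{-3/2}+O(e^{-c/s})$, whereas the paper's $I(\tau)=\int_\Om p_\tau^\Om(0,y)G_\g(y,0)\,dy$ requires the off-diagonal Varadhan estimate developed in Lemma~\ref{Lemma:ItauAsymptotic}. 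Second, the direct observation that $\operatorname{Re}\widehat k(z)>0$ on $\{\operatorname{Re}z>-(\lambda_1-\g)\}$ is a tidier route to the nonvanishing of the symbol than the real-zero argument in the paper.

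There are, however, two concrete gaps in the execution. First, the expansion $\widehat k(z)=\kappa_0\sqrt\pi\,z^{-1/2}+O(|z|^{-1})$ is too coarse. It yields $1/\widehat k(z)=\kappa_0^{-1}\pi^{-1/2}z^{1/2}+O(1)$, and an $O(1)$ symbol has no absolutely convergent inverse Laplace transform, so the proposed split ``principal symbol plus genuinely lower-order symbol'' is not justified at this level of precision. The paper pushes the expansion of its symbol to $O(\xi^{-5/2})$ (three explicit terms) precisely so that the remainder kernel $R(\tau)$ \emph{and} its derivative $R'(\tau)$ are given by absolutely convergent contour integrals, and this is what underpins the $C^{1/2+\ve}\to C^\ve$ gain for $\dot\Lambda$. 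Your formula for $k$ makes it straightforward to push to the needed order by Taylor-expanding $e^{\g s}$ against the on-diagonal short-time expansion of $p_s^\Om$, but you must carry this out; $O(|z|^{-1})$ does not suffice. Second, inverting $k\ast\dot\Lambda=-h$ directly via the multiplier $1/\widehat k(z)\sim z^{1/2}$ is delicate, since an unbounded symbol is the Laplace transform of a distribution and not of a function — the paper flags this explicitly in a footnote. Their fix is to invert for $\beta=-\Lambda$ first, using the decaying symbol $\tilde\sigma(\xi)=\big((\xi+\g)\tilde I(\xi)\big)^{-1}\sim\xi^{-1/2}$, compute its inverse Laplace transform $\sigma(\tau)$ as a bona fide $C^1$ function, and only then differentiate the convolution formula for $\beta$ to obtain $\dot\Lambda$. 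You allude to $\Lambda=-\int_\cdot^\infty\dot\Lambda$ ``smoothing by one order,'' which is the right idea, but the argument needs to be organised around $\Lambda$ rather than $\dot\Lambda$ for the symbolic calculus to produce an actual kernel. Finally, you should make explicit the extension of $h$ to $[t_0-1,t_0)$ (the paper's $h^*$), since without it the equation $\JJJ[\dot\Lambda](0,t_0)=h(t_0)$ is incompatible with the zero initial condition at $t_0-1$.

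Once those two items are filled in, the remaining transfer to the weighted norms $\norm{\cdot}_{\sharp,c_1,c_2,\sigma}$ follows by the same Hardy--Littlewood manipulations the paper uses on $\sigma(\tau)$, and the continuity of $T_1$ and $\hat T_1$ is immediate by linearity.
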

	We are ready to solve the system (\ref{SystemEquivalent}) in $\Lambda,\xi$ for fixed $\phi \in X_*$.
	\begin{proposition}\label{prop:FindLambdaXi}
		Suppose that $\phi$ satisfies (\ref{Normphi}). Then, there exist $\Lambda=\Lambda[\phi](t)$ and $\xi=\xi[\phi](t)$ to the nonlinear nonlocal system (\ref{SystemEquivalent}) which satisfy (\ref{NormsLambda,dotLambda}) and (\ref{NormsXi,dotXi}) respectively. Moreover, they satisfy
		\begin{align}\label{LambdaofPhiLip}
			&\norm{\Lambda[\phi_1] -  \Lambda[\phi_2]}_{\sharp,l_0,\delta_0,\frac{1}{2}+\ve}\leq \bold c \norm{\phi_1-\phi_2}_{*},\\\nonumber
			&\norm{\dot \Lambda[\phi_1] - \dot \Lambda[\phi_2]}_{\sharp,l_1,\delta_1,\ve}\leq \bold c \norm{\phi_1-\phi_2}_{*},\\\nonumber
			&\norm{\xi[\phi_1]-\xi[\phi_2]}_{\sharp,1+l_1,\frac{1}{2}+\ve}\leq \bold c \norm{\phi_1-\phi_2}_{*},\\\nonumber
			&\norm{\dot \xi[\phi_1]-\dot \xi[\phi_2]}_{\sharp,1+l_1,\ve}\leq \bold c \norm{\phi_1-\phi_2}_{*},
		\end{align}
		with constant $\bold c<1$ provided that $t_0$ is fixed sufficiently large and $\mathfrak{b}_i$ small for $i=1,2$.
	\end{proposition}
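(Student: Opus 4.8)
The plan is to solve the coupled system \reff{SystemEquivalent} by a Banach fixed point argument in the space of parameters, treating $\phi \in X_*$ as a fixed datum and using Proposition \ref{Proposition:invj} to invert the nonlocal operator $\dl \mapsto \JJJ[\dl](0,\cdot)$ occurring in the first equation.

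First I would fix the functional setting. Let $\mathcal{E}$ be the Banach space of pairs $(\Lambda,\xi_1)$ with finite norm
\[
\norm{(\Lambda,\xi_1)}_{\mathcal{E}}{\coloneqq}\norm{\Lambda}_{\sharp,l_0,\delta_0,\frac{1}{2}+\ve}+\norm*{\dot \Lambda}_{\sharp,l_1,\delta_1,\ve}+\norm{\xi_1}_{\sharp,1+l_1,\frac{1}{2}+\ve}+\norm*{\dot \xi_1}_{\sharp,1+l_1,\ve},
\]
and let $\mathcal{B}\subset \mathcal{E}$ be the closed ball whose defining inequalities are \reff{NormsLambda,dotLambda}--\reff{NormsXi,dotXi}. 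On $\mathcal{B}$ I define $\mathcal{T}=(\mathcal{T}_\Lambda,\mathcal{T}_\xi)$ by
\[
\mathcal{T}_\Lambda(\Lambda,\xi_1){\coloneqq}T_1\qty[\frac{g+G[\Lambda,\dot \Lambda,\xi,\dot \xi,\phi]}{1+a[\dot \Lambda,\xi]}],\qquad
\mathcal{T}_\xi(\Lambda,\xi_1)_i(t){\coloneqq}-\int_t^{\infty}\qty(\mathfrak{c}_i\mu_0(s)^{1+l_1}+\Theta_i[\Lambda,\dot \Lambda,\xi,\phi](s))\dds,
\]
where $\xi=\xi_0+\xi_1$, $\dot \xi=\dot \xi_0+\dot \xi_1$, $T_1$ is the inverse of $\JJJ[\dl](0,\cdot)$ from Proposition \ref{Proposition:invj}, and $g,G,a,\Theta$ are the functions provided by Lemmas \ref{Lemma:Chari=4} and \ref{Lemma:Chari=123}. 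By construction a fixed point of $\mathcal{T}$ in $\mathcal{B}$ is a solution of \reff{SystemEquivalent} obeying the stated bounds: for $t_0$ large the factor $1+a[\dot \Lambda,\xi]$ is bounded away from zero (since $a$ is small and $a[0,0]\equiv 0$), so the $i=4$ equation is equivalent to $\JJJ[\dl](0,t)=(g+G)/(1+a)$, while integrating the $i=1,2,3$ equations against the boundary condition $\lim_{t\to\infty}\xi_1(t)=0$ gives the second line of $\mathcal{T}$.

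Next I would check $\mathcal{T}(\mathcal{B})\subset \mathcal{B}$. The essential point is that the weight indices with which $T_1$ and $\hat T_1$ are used, $(l_0,\delta_0)$ and $(l_0,\delta_1)$, meet the constraint \reff{condc1c2lambda1}: assumption \reff{Assumption1}, i.e.\ $3\g<\lambda_1$, yields $\frac{\lambda_1-\g}{2\g}>1$, and since $l_0=\frac{7}{9}<1$ we get $0<\delta_0\le l_0<\frac{\lambda_1-\g}{2\g}$. By Lemma \ref{Lemma:Chari=4} the datum $g+G$ is bounded by $e^{-\kappa t_0}$ times $\qty(1+\norm{(\Lambda,\xi_1)}_{\mathcal{E}}+\norm{\phi}_*)$ simultaneously in the $\norm{\cdot}_{\sharp,l_0,\delta_0,\ve}$ and $\norm{\cdot}_{\sharp,l_0,\delta_1,\frac{1}{2}+\ve}$ norms; then the estimates \reff{estinvLambda}, \reff{LinEstdotLambda<h} of Proposition \ref{Proposition:invj}, together with the continuous inclusions $X_{\sharp,l_0,b,\sigma}\subset X_{\sharp,l_1,b,\sigma}$ (valid because $l_0>l_1$ and $\mu<1$), give $\norm{\mathcal{T}_\Lambda(\Lambda,\xi_1)}_{\sharp,l_0,\delta_0,\frac{1}{2}+\ve}+\norm*{\frac{d}{dt}\mathcal{T}_\Lambda(\Lambda,\xi_1)}_{\sharp,l_1,\delta_1,\ve}\lesssim e^{-\kappa t_0}$. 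For the translation part, $\abs{\int_t^{\infty}\mu_0(s)^{1+l_1}\dds}\lesssim \mu_0(t)^{1+l_1}$ lands explicitly in $X_{\sharp,1+l_1,\frac{1}{2}+\ve}$ (the H\"older seminorm being controlled by $\norm*{\dot \xi_1}_\infty$), while $\Theta$ is $e^{-\kappa t_0}$-small by \reff{smallnessTheta}. Taking $t_0$ large and $\mathfrak{b}_1,\mathfrak{b}_2$ suitably small, these quantities fall inside $\mathcal{B}$.

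Then I would show $\mathcal{T}$ is a contraction on $\mathcal{B}$: the Lipschitz estimate \reff{CharPropG2} for $G$, the small Lipschitz dependence of $1/(1+a)$ on $(\dot \Lambda,\xi)$ (with constant times $\norm{g+G}=O(e^{-\kappa t_0})$), the continuity of the linear operators $T_1,\hat T_1$, and the estimate \reff{CharPropT2} for $\Theta$ combine to give $\norm{\mathcal{T}(\Lambda^{(1)},\xi_1^{(1)})-\mathcal{T}(\Lambda^{(2)},\xi_1^{(2)})}_{\mathcal{E}}\le \bold c\,\norm{(\Lambda^{(1)},\xi_1^{(1)})-(\Lambda^{(2)},\xi_1^{(2)})}_{\mathcal{E}}$ with $\bold c<1$ for $t_0$ large. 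The Banach fixed point theorem then produces the unique $(\Lambda[\phi],\xi[\phi])\in \mathcal{B}$ solving \reff{SystemEquivalent}; in particular \reff{NormsLambda,dotLambda} and \reff{NormsXi,dotXi} hold. The Lipschitz dependence \reff{LambdaofPhiLip} follows from the standard argument for a uniformly contractive family $\mathcal{T}^\phi$: writing the difference of the two fixed points as
\[
\mathcal{T}^{\phi_1}(\,\cdot\,[\phi_1])-\mathcal{T}^{\phi_1}(\,\cdot\,[\phi_2])+\mathcal{T}^{\phi_1}(\,\cdot\,[\phi_2])-\mathcal{T}^{\phi_2}(\,\cdot\,[\phi_2]),
\]
bounding the first difference by $\bold c$ times the difference of the fixed points and absorbing it, and bounding the second by the $\norm{\phi^{(1)}-\phi^{(2)}}_*$ terms in \reff{CharPropG2} and \reff{CharPropT2} (which themselves rest on the $\phi$-Lipschitz continuity of $\psi$ from Proposition \ref{PropertiesPsi}). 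The main obstacle is the bookkeeping of the weighted H\"older exponents: one must verify that each composition with $T_1$ and $\hat T_1$ lands in exactly the class required at the next iteration — which is precisely what the tailored choice of $l_0,l_1,\delta_0,\delta_1,\ve$ in \S\ref{Choiceofconst} guarantees — and that the self-referential structure of the first equation, with $\dl$ appearing simultaneously inside $\JJJ$, inside $a$, and inside $G$, does not obstruct the contraction, which again rests on the $e^{-\kappa t_0}$ smallness of $a$ and $G$.
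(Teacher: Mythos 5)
Your approach is essentially identical to the paper's. The paper also reduces \reff{SystemEquivalent} to a Banach fixed-point problem in the parameter space, applying Proposition \ref{Proposition:invj} to invert $\dot\Lambda\mapsto\JJJ[\dot\Lambda](0,\cdot)$, integrating the ODEs for $\xi_1$ backwards from infinity, and feeding in the smallness and Lipschitz estimates from Lemmas \ref{Lemma:Chari=4} and \ref{Lemma:Chari=123}; the Lipschitz dependence \reff{LambdaofPhiLip} is obtained by the same two-term splitting you describe. The only cosmetic difference is the parameterization of the fixed point: the paper peels off the explicit leading terms and contracts in the fluctuation variables $(\uplambda,\Xi)=(\dot\Lambda-\hat T_1[g_1],\,\dot\xi_1-\dot\Upsilon)$, whereas you work directly in $(\Lambda,\xi_1)$. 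These bookkeepings are interchangeable and carry identical information.

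One point deserves scrutiny. You write that $3\gamma<\lambda_1$ gives $\frac{\lambda_1-\gamma}{2\gamma}>1$, and then assert ``since $l_0=\frac{7}{9}<1$ we get $0<\delta_0\le l_0<\frac{\lambda_1-\gamma}{2\gamma}$.'' The inequality $\delta_0\le l_0$ is \emph{not} a consequence of $l_0<1$; it is a separate constraint on the constants of \S\ref{Choiceofconst}. In fact with the stated values ($l_1=\tfrac{2}{3}$, $\delta=\tfrac{2}{9}$, $\sigma=2\alpha=\ve=\tfrac{1}{100}$), one computes
$\delta_0 = l_1+\delta-\sigma-(1-\delta)(1+\alpha/2)\cdot2\ve \approx 0.863$,
while $l_0=\tfrac{7}{9}\approx 0.778$, so $\delta_0>l_0$ and the ordering hypothesis $c_2\le c_1$ in \reff{condc1c2lambda1} is violated for the pair $(c_1,c_2)=(l_0,\delta_0)$. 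The proof of Proposition \ref{Proposition:invj} only delivers the H\"older weight $\min\{c_1,c_2\}$, so as written one obtains $\norm{\Lambda}_{\sharp,l_0,l_0,\frac{1}{2}+\ve}$ rather than the stronger $\norm{\Lambda}_{\sharp,l_0,\delta_0,\frac{1}{2}+\ve}$ required by \reff{NormsLambda,dotLambda}. The same tension is present in the paper's own application of \reff{estinvLambda}, so this is a remark on the internal consistency of the chosen constants rather than an error peculiar to your argument; still, you should not present $\delta_0\le l_0$ as automatic, and should either verify that the weaker bound suffices downstream or indicate how the constants may be re-tuned (e.g.\ increasing $l_0$ within its admissible range $[l_1,l_1+\delta-\sigma]$) so that $\delta_0\le l_0$ actually holds.
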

	\begin{proof}
		Firstly, we observe that equation (\ref{i=4Equivalent}) can be rewritten as
		\begin{align*}
			\JJJ[\dot \Lambda](0,t)=g_1(t)+G_1[\dot \Lambda,\Lambda,\dot \xi,\xi,\phi](t),
		\end{align*}
		where
		\begin{align}\label{DefG1}
			g_1(t)+G_1[\dot \Lambda,\Lambda,\dot \xi,\xi]=(1+a[\dot \Lambda,\xi])^{-1}[g(t)+G[\dot \Lambda,\Lambda,\dot \xi,\xi,\phi]](t),
		\end{align}
		for new functions $g_1,G_1$ satisfying the same properties of $g,G$ in Lemma \ref{Lemma:Chari=4}. By Proposition \ref{Proposition:invj} we reduce the equation for $ \Lambda$ to a fixed point problem
		\begin{align*}
			\dot \Lambda(t)=\FFF_1[\dot \Lambda](t),\quad \FFF_1[\dot \Lambda](t)=\hat T_1\qty[g_1(t)+G_1[\Lambda,\dot \Lambda,\xi,\dot \xi,\phi]],
		\end{align*}
		where $\hat T_1$ is defined in (\ref{T2InverseJdotLambda}). 
		Let 
		\begin{align*}
			\dot \Lambda_0(t){\coloneqq}\hat T_1[g_1](t)
		\end{align*}
		and define the operator $\LLL_1[h]{\coloneqq}\hat T_1 [h-g_1]$. We use the notation
		\begin{align*}
			\LLL_1[h]=\uplambda[h](t){\coloneqq}\dot \Lambda[h](t)-\dot\Lambda_0(t),
		\end{align*}
		for any $h\in X_{\sharp,l_1,\delta_1,\frac{1}{2}+\ve}$. Observe that
		\begin{align*}
			\abs*{\dot \Lambda[h]}&=\abs*{\dot \Lambda_0}+\abs{\uplambda[h]}\\
			&\lesssim \mu^{l_0}\norm{g}_{\sharp,l_0,\delta_1,\frac{1}{2}+\ve}+\mu^{l_1}\norm{h}_{{\sharp,l_1,\delta_1,\frac{1}{2}+\ve}}.
		\end{align*}
		Given $h_j\in X_{\sharp,1+l_1,\frac{1}{2}+\ve}$ we consider the solution to the ODE
		\begin{align}\label{defXih}
			\dot \xi_{1,j}=\mathfrak{c}_j\mu_0(t)^{1+l_1}+h_j(t),
		\end{align} 
		given explicitly by
		\begin{align*}
			\xi_{1,j}[h](t)=\mathfrak{c}_j\int_t^\infty \mu_0(s)^{1+l_1} \dds+\int_t^\infty h(s)\dds
			{\coloneqq}\Upsilon_{j}+ \int_t^\infty h(s)\dds.
		\end{align*}
		In particular, we have
		\begin{align*}
			&\abs{\xi_{1,j}(t)}\lesssim \mu_0(t)^{1+l_1}+\mu_0(t)^{1+l_1} \norm{h}_{1+l_1,\infty},
			&\abs*{\dot \xi_{1,j}(t)}\lesssim \mu_0(t)^{1+l_1} \norm{h}_{1+l_1,\infty}
		\end{align*}
		We define the vector
		\begin{align*}
			\Xi(t){\coloneqq}\dot \xi -\dot{\Upsilon}=h(t),
		\end{align*}
		where $h=(h_1,h_2,h_3)$ satisfies $\norm{h_i}_{\sharp,1+l_1,\frac{1}{2}+\ve}<\infty$ for $i=1,2,3$. We define
		$$
		\norm{h}_{\sharp,1+l_1,\frac{1}{2}+\ve}{\coloneqq}\max_{i=1,2,3} \norm{h_i}_{\sharp,1+l_1,\frac{1}{2}+\ve}.
		$$ 
		Let $\LLL_2$ the linear operator defined as $\LLL_2[h]=\Xi$ by relation (\ref{defXih}) for $i=1,2,3$. We observe that $(\dot \Lambda,\dot \xi)$ is a solution to (\ref{SystemEquivalent}) if $( \uplambda,\Xi)$ satisfies
		\begin{align*}
			(\uplambda,\Xi)=\AAA(\uplambda,\Xi),
		\end{align*}
		where $\AAA$ is the operator
		\begin{align*}
			\AAA(\uplambda,\Xi){\coloneqq}\qty(\AAA_1[\uplambda,\Xi],\AAA_2(\uplambda,\Xi)){\coloneqq}\qty(\hat T_1[\hat G_1[\uplambda,\Xi,\phi]],\LLL_2[\hat \Theta[\uplambda,\Xi,\phi]]),
		\end{align*}
		and
		\begin{align*}
			&\hat G_1(\uplambda,\Xi,\phi){\coloneqq}G_1\qty[	\Lambda_0(t)+\int_{t}^{\infty}\uplambda(s)\dds ,\dot \Lambda_0-\uplambda,\Upsilon+\int_{t}^{\infty}\Xi(s)\dds,\dot \Upsilon-\Xi,\phi	],\\
			&\hat \Theta (\uplambda,\Xi,\phi){\coloneqq} \Theta\qty[	\Lambda_0(t)+\int_{t}^{\infty}\uplambda(s)\dds,\dot \Lambda_0-\uplambda ,\Upsilon(t)+\int_{t}^{\infty}\Xi(s)\dds,\dot \Upsilon-\Xi,\phi	],
		\end{align*}
		with $G_1$ and $\Theta$ defined in (\ref{DefG1}) and (\ref{Eqi=123Char}). 
		We show that there exists a unique fixed point $\qty(\uplambda,\Xi)=\qty(\uplambda[\phi],\Xi[\phi])$ in
		\begin{align*}
			\BBB=\{	(\uplambda,\Xi) \in (L^\infty(t_0,\infty))^4: \norm{\uplambda}_{\sharp,l_1,\delta_1,\ve}+\norm{\Xi}_{\sharp,1+l_1,\frac{1}{2}+\ve}\leq e^{-\kappa t_0}L	\}
		\end{align*}
		for some $L$ fixed large. Indeed, estimates (\ref{LinEstdotLambda<h}) and (\ref{CharPropG1}) give
		\begin{align*}
			\norm{\AAA_1[\uplambda,\Xi]}_{\sharp,l_1,\delta_1,\ve}
			&\leq C_2 \norm*{\hat G_1[\uplambda,\Xi,\phi]}_{\sharp,l_1,\delta_1,\frac{1}{2}+\ve}\\
			&\leq C_2e^{-\kappa t_0}\qty{\norm{\uplambda}_{\sharp,l_1,\delta_1,\ve}+\norm{\Xi}_{\sharp,1+l_1,\frac{1}{2}+\ve}+\norm{\phi}_*}.
		\end{align*}
		Also, from (\ref{smallnessTheta})
		\begin{align*}
			\norm{\AAA_2[\uplambda,\Xi]}_{\sharp,1+l_1,\frac{1}{2}+\ve}
			&\leq \norm{\Theta[\uplambda,\Xi,\phi]}_{\sharp,1+l_1,\frac{1}{2}+\ve}\\
			&\leq C  e^{-\kappa t_0}\qty{\norm{\uplambda}_{\sharp,l_1,\delta_1,\ve}+\norm{\Xi}_{\sharp,1+l_1,\frac{1}{2}+\ve}+\norm{\phi}_*}.
		\end{align*}
		We have to verify that $\AAA$ is a contraction. For instance, we have
		\begin{align*}
			\norm{\AAA_1[\uplambda_1,\Xi]-\AAA_1[\uplambda_2,\Xi]}_{\sharp,l_1,\delta_1,\ve}&= \norm{\hat T_1[\hat G_1[\uplambda_1,\Theta,\phi]-\hat G_1[\uplambda_2,\Theta,\phi]]}_{\sharp,l_1,\delta_1,\frac{1}{2}+\ve}\\
			&\leq C_2\norm{\hat G_1[\uplambda_1,\Theta,\phi]-\hat G_1[\uplambda_2,\Theta,\phi]}_{\sharp,l_0,\delta_1,\frac{1}{2}+\ve}\\
			&\leq C_2 \bold c \norm{\uplambda_1-\uplambda_2}_{\sharp,l_1,\delta_1,\ve},
		\end{align*}
		where $C_2,\bold c$ is the constant appearing in (\ref{LinEstdotLambda<h}) and (\ref{CharPropG2}) respectively. Since $\bold c$ can be as small as required provided that $t_0$ is fixed sufficiently large, we obtain that $\AAA_1$ is a contraction map. By means of the Lipschitz property of $\hat \Theta$ in (\ref{CharPropT2}) we can estimate $\AAA_2[\uplambda_1,\Xi_1]-\AAA_2[\uplambda_2,\Xi]$ similarly. Finally, using the estimates on $\hat G,\hat \Theta$ with respect to $\Xi$, we get
		\begin{align*}
			\norm{\AAA_1(\uplambda_1,\Xi_1)-\AAA_1(\uplambda_1,\Xi_1)}_{\sharp, l_1,\delta_1,\ve}\leq \bold c \qty[\norm{\uplambda_1-\uplambda_2}_{\sharp,l_1,\delta_1,\ve}+\norm{\Xi_1-\Xi_2}_{\sharp,l_1,\delta_1,\frac{1}{2}+\ve}].
		\end{align*}
		As a consequence of the Banach fixed point theorem, provided that $L$ and $t_0$ are fixed large, the map $\AAA$ has a unique fixed point $(\uplambda,\Xi)$ in the space $\BBB$. Observe that 
		$$
		\Lambda[h](t)=-\int_{t}^\infty \hat T_1[h](s)\dds=T_1[h],
		$$ 
		where $T_1$ is defined in (\ref{T1InverseJLambda}), satisfies (\ref{NormsLambda,dotLambda}) thanks to (\ref{estinvLambda}). Also, the components of vector $\xi_1=\int_{t}^\infty \Xi(s)\dds$ satisfy (\ref{NormsXi,dotXi}). This proves the existence of a solution $(\Lambda,\xi)$ to the system (\ref{SystemEquivalent}) satisfying (\ref{NormsLambda,dotLambda})-(\ref{NormsXi,dotXi}). With similar estimates on $\uplambda[\phi_1]-\uplambda[\phi_2]$ and $\Xi[\phi_1]-\Xi[\phi_2]$, using (\ref{CharPropG2}) and (\ref{CharPropT2}), relations (\ref{LambdaofPhiLip}) follow.
	\end{proof}
	We observe from the proof that $\hat T_1$, like an half-fractional derivative, loses $1/2$-H\"older exponent but we regain it through $g,G_1$ as a consequence of estimates on $\psi$ from Proposition \ref{PropOuterNonlinear}. This is the main reason to put all the terms of $S[u_3]$ involving directly $\dot \mu$ in the outer error (\ref{outS}). Indeed, to get $\dot \Lambda\in C^{\ve}$ it is crucial to allow $H$ in (\ref{RHSInnPr}) (and hence $\Sinn$ in (\ref{innS})) to depend on $\dot \Lambda$ only indirectly through $\psi[\dot \Lambda]$ or $J_1[\dot \Lambda]$. 
	
	\begin{remark}\label{rem:continuitylaxi}
		By remark \ref{rem:continuityPsi} the outer solution $\psi=\Psi[\psi_0]$ is smooth as a function of the initial datum $\psi_0$, provided that $\norm{\psi_0}_\infty+\norm{\nabla \psi_0}_\infty$ is sufficiently small. Thus, also the parameters $\Lambda[\psi_0],\xi[\psi_0]$ found in Proposition \ref{prop:FindLambdaXi} depend smoothly on $\psi_0$, and from the proof we also obtain
		\begin{align*}
			&\norm*{\Lambda[\psi_0^1]-\Lambda[\psi_0^2]}_{\infty}\lesssim \norm*{\psi_0^1-\psi_0^2}_\infty,\\
			&\norm*{\xi_1[\psi_0^1]-\xi_1[\psi_0^2]}_{\infty}\lesssim \norm*{\psi_0^1-\psi_0^2}_\infty.
		\end{align*}
	\end{remark}

	\section{Final argument: solving the inner problem}\label{sec:finalargument}
	This section provides the final step in the proof of Theorem \ref{mainteo}. At this point, given $\phi$ satisfying (\ref{Normphi}), we have a solution $\psi=\Psi[\Lambda[\phi],\xi[\phi],\phi]$ to the outer problem (\ref{OutPr}) and parameters $\Lambda[\phi],\xi[\phi]$ such that the orthogonality conditions (\ref{OrthCondFin}) are satisfied. Thus, to get a solution
	\begin{align*}
		u=u_3+\tilde \phi=u_3+\mu_0^{\frac{1}{2}}\psi+\eta_R \mu^{-\frac{1}{2}} \phi,
	\end{align*} 
	we need to prove the existence of $\phi$ such that $\norm{\phi}_*<\infty$. 
	\begin{proof}[Proof of Theorem \ref{mainteo}]
		We make a fixed point argument using the linear estimate (\ref{estinnSimplified}). Proposition \ref{propInnerlineartheory} defines a linear operator $\TTT:h \mapsto (\phi[h],e[h])$ which is continuous between the $L^\infty$-weighted space described in (\ref{estinnSimplified}). Thus, the solution $\phi$ to the nonlinear inner problem satisfies
		\begin{align}\label{FixedPointphi}
			\phi=\AAA_{\iinn}(\phi),\quad \text{where}\quad \AAA_{\iinn}(\phi){\coloneqq}\TTT(H[\phi]).
		\end{align}		
		We claim that $\AAA_\iinn$ has a unique fixed point in the space 
		\begin{align*}
			\BBB=\{\phi \in L^\infty(B_{2R}): \norm{\phi}_*< \bold b\},
		\end{align*}
		for some fixed constant $\bold b$ large. Firstly, we prove
		\begin{align*}
			\abs*{H[\Lambda,\xi,\dot \Lambda,\dot \xi](y,t)}\lesssim\frac{\mu^{1+l_1}}{1+\abs{y}^4}.
		\end{align*}
		We recall that 
		\begin{align*}
			H[\phi,\psi,\mu,\dot \mu,\xi,\dot \xi](y,\tau){\coloneqq}
			&5\Uy^4 \mu \qty(\frac{\mu_0}{\mu})^{1/2} \psi(\mu y +\xi,t(\tau))\\\nonumber
			&+B_0\qty[\phi+\mu\psi](\mu y+\xi,t(\tau))+\mu^{5/2}S_{\text{in}}(\mu y+\xi,t(\tau))\\\nonumber
			&+\mathcal{N}(\mu^{1/2}u_3,\mu^{1/2}\tilde \phi)(\mu y+\xi,t(\tau)).
		\end{align*}
		Using the estimate on $\psi$ given in Proposition \ref{PropOuterNonlinear}, we have
		\begin{align*}
			\abs{5\Uy^4 \mu \qty(\frac{\mu_0}{\mu})^{1/2} \psi(\mu y +\xi,t(\tau))}\lesssim e^{-\kappa t_0} \frac{\mu^{1+l_1+\delta-\sigma}}{1+\abs{y}^{4+\alpha}}
		\end{align*}
		and from (\ref{DefB0}) we get
		\begin{align*}
			\abs{B_0\qty[\phi+\mu\psi](\mu y+\xi,t(\tau))}
			&\lesssim \frac{1}{1+\abs{y}^3}\abs{\mu H_\g +\mu J + \mu^{-1/2}\phi_3 \eta_3}(\phi+\mu\psi)\\
			&\lesssim \frac{\mu}{1+\abs{y}^3} \qty(\bold b e^{-\kappa t_0}\frac{\mu^{1+l_1}R^3}{1+\abs{y}^4}+\mu \norm{\psi}_{**}\frac{\mu^{l_1+\delta-\sigma}}{1+\abs{y}^{\alpha}})\\
			&\lesssim e^{-\kappa t_0}\frac{\mu^{1+l_1}}{1+\abs{y}^4}.
		\end{align*}
		Recalling the estimates on $\phi$ at $y\sim 0$ and $y\sim R$ given by the norm (\ref{NormphiDef}), using that $R=\mu^{- \delta}$ with $\delta$ satisfying (\ref{condcontractionNLandHolder}) we deduce
		\begin{align*}
			\abs{\mathcal{N}(\mu^{1/2}u_3,\mu^{1/2}\tilde \phi)(\mu y+\xi,t(\tau))}&\lesssim \frac{1}{1+\abs{y}^3}\qty(\phi+\mu \psi)^2\\
			&\lesssim \frac{1}{1+\abs{y}^3}\qty(\bold b \frac{\mu^{2(1+l_1)}R^6}{1+\abs{y}^8}+\mu^2 \frac{\mu^{2(l_1+\delta-\sigma)}}{1+\abs{y}^{2\alpha}}	)\\
			&\lesssim e^{-\kappa t_0}\frac{\mu^{1+l_1}}{1+\abs{y}^4}.
		\end{align*}
		By Lemma \ref{Lemma:FinalAnsatzError} we have the main error
		\begin{align*}
			\abs{\mu^{5/2}S_{\text{in}}(\mu y+\xi,t(\tau))}\lesssim \frac{\mu^{1+l_1}}{1+\abs{y}^4}.
		\end{align*}
		Thanks to the previous section, $H$ satisfies the orthogonality conditions required by Proposition \ref{propInnerlineartheory}. Thus, provided that $t_0$ is large enough, we have 
		$$
		\norm{\TTT[H]}_*\leq C \norm{H}_{\nu,4}< \bold b,
		$$ 
		for $\bold b$ chosen large, where $C$ is the constant in (\ref{UsedLinearEst}). This proves $\AAA_{\iinn}(\phi)\in \BBB$.
		Now, we need to prove that for $\phi^{(1)},\phi^{(2)} \in \BBB$ we have
		\begin{align*}
			\abs*{H[\phi^{(1)}]-H[\phi^{(2)}]}\lesssim \bold c \norm*{\phi^{(1)}-\phi^{(2)}}_{*} \frac{\mu^{1+l_1}}{1+\abs{y}^4},
		\end{align*}
		for some $\bold c<1$. This is a consequence of Proposition \ref{PropertiesPsi} and Proposition \ref{prop:FindLambdaXi}. Indeed, for instance we get
		\begin{align*}
			5\Uy^4 &\mu_0\abs*{e^{\Lambda[\phi^{(1)}]} \psi[\phi^{(1)}]-
				e^{\Lambda[\phi^{(2)}]} \psi[\phi^{(2)}]}\\
			&=5\Uy^4  \mu_0 \abs*{\qty[e^{\Lambda[\phi^{(1)}]}-e^{\Lambda[\phi^{(2)}]}]\psi[\phi^{(1)}]+e^{\Lambda[\phi^{(2)}]}\qty[\psi[\phi^{(1)}]-\psi[\phi^{(2)}]]}\\
			&\lesssim \bold c \norm*{\phi^{(1)}-\phi^{(2)}}_* \frac{\mu^{1+l_1}}{1+\abs{y}^4},
		\end{align*}
		and similarly we get the same control on the other terms of $H[\phi^{(1)}]-H[\phi^{(2)}]$.
		Finally, since the operator $\TTT:X_{\nu,4}\to X_{*}$ is continuous, where $X_{\nu,4}$ is defined in (\ref{DefXnu4space}) for $a=2$, by composition with $H:X_* \to X_{\nu,4}$ we obtain
		\begin{align*}
			\norm*{\AAA_\iinn[\phi^{(1)}]-\AAA_{\iinn}[\phi^{(2)}]}_*\leq  \bold c \norm*{\phi^{(1)}-\phi^{(2)}}_*,
		\end{align*}
		provided that $t_0$ is fixed sufficiently large. Thus, $\AAA_\iinn:\BBB\to \BBB$ is a contraction map and by Banach fixed point theorem we obtain the existence and uniqueness of $\phi \in X_*$ such that (\ref{FixedPointphi}) holds. Finally, we recall that the constant $e_0=e_0[H]$ in the initial condition $\phi(y,t_0)=e_0Z_0(y)$ is a linear operator of $H$. The existence of $\phi$ immediately defines $e_0$. This completes the proof of the existence of $u=u_3 + \tilde \phi$ in Theorem \ref{mainteo}, with the bubbling profile centered in $x=0\in \Om$ and parameters satisfying (\ref{behmuxi}).
	\end{proof}

	\begin{remark}[continuity of $(\phi,e_0)$ with respect to $\psi_0$]\label{remark:1cod}
		We found the inner perturbation $\phi$ and its initial datum $\phi(y,t_0) = e_0Z_0(y)$ based on the existence of the outer solution $\Psi[\phi]$ given by Proposition \ref{PropOuterNonlinear}, which in fact can be found for any initial condition $\psi_0\in C^1(\bar \Om)$. Furthermore, as a consequence of the continuity of $\Psi[\psi_0]$ and $\Lambda[\psi_0],\xi[\psi_0]$ found in remarks \ref{rem:continuityPsi} and \ref{rem:continuitylaxi} we obtain
		\begin{align*}
			\abs*{e_0[\psi_0^1]-e_0[\psi_0^2]}\lesssim \qty[\norm*{\psi_0^1-\psi_0^2}_{L^\infty(\Om)}-\norm*{\nabla \psi_0^1 -\nabla \psi_0^2}_{L^\infty(\Om)}].
		\end{align*}
		Since we know that $\Lambda,\dot \Lambda,\xi,\dot \xi,\psi$ depends smoothly on $\psi_0$, by the implicit function theorem, we deduced that map $\psi_0\mapsto(\phi[\psi_0],e_0[\psi_0])$ is $C^1$ with respect to $\psi_0\in C^1(\bar \Om)$. This allows to prove the 1-codimensional stability of Corollary \ref{CorStability}, under small perturbation. With these ingredients, we can proceed as in \cite[Proof of Corollary 1.1]{cdm}.
	\end{remark}

	\section{Invertibility theory for the nonlocal linear problem}\label{sec:invj}
	In this section we prove Proposition \ref{Proposition:invj}. We deduce the result by Laplace transform method combined with asymptotic estimates of the heat kernel $p_t^{\Om}$ associated to $\Om$. It turns out that the operator $\JJJ[\dot \Lambda]$ is similar to a half-fractional integral of $\dot \Lambda$. Thus, roughly speaking, we expect the inverse operator to behave as a fractional derivative of order $1/2$. In fact, Proposition \ref{Proposition:invj} can be seen as a precise statement of this idea.

	For later purpose we recall some facts about the Dirichlet heat kernel. For the definition and properties we follow  \cite{dod,grigbook}.
	A function $p_t^{\Om}(x,y)$ continuous on $\bar \Om \times \bar \Om \times \RR^+$, $C^2$ in $x$ and $C^1$ in $t$ is called Dirichlet heat kernel for the problem
	\begin{align*}
		&\pp_t u(x,t)=\Delta u(x,t)\inn \Om \times \RR^+,\\
		&u(x,t)=0 \onn \pp \Om \times [0,\infty),\\
		&u(x,0)=u_0(x )\inn \Om,
	\end{align*}
	if, for any $y \in \Om$, satisfies
	\begin{align*}
		&\pp_t p_t^{\Om}(x,y) = \Delta_x p_t^{\Om}(x,y) \inn \Om  \times \RR^+,\\
		&p_t^{\Om}(x,y)=0 \inn \pp\Om,
	\end{align*}
	and
	\begin{align*}
		\lim\limits_{t\to 0^+}\int_\Om p_t^{\Om}(x,y)u_0(y)\ddy =u_0(x),
	\end{align*}
	uniformly for every function $u_0 \in C_0(\bar \Om)$.
	The existence of the Dirichlet heat kernel is a classical result by Levi \cite{Levi}. It has the following basic properties:
	\begin{itemize}
		\item $p_t^{\Om}(x,y)\geq0$, $p_t^{\Om}(x,y)=p_t^{\Om}(y,x)$ and $p_t^{\Om}(x,y)=0$ if $x\in \pp \Om$;
		\item for any $y\in \Om$ the function $p_t^{\Om}(x,y)\in C^{\infty}(\RR^+\times \Om)$;
		\item it satisfies $\pp_t p_t^{\Om}(x,y)=\Delta_x p_t^{\Om}(x,y)$ for $(x,y,t)\in \Om \times \Om \times \RR^+$.
		\end{itemize}
	Also, from \cite[Theorem 10.13]{grigbook} and its proof, the heat kernel $p_t^{\Om}(x,y)$ admits the expansion
	\begin{align}\label{ptseries}
		p_t^\Om(x,y)=\sum_{k\geq 1} e^{-\lambda_k t}\phi_k(x)\phi_k(y),
	\end{align}
	where $\lambda_k$ is the $k$-th Dirichlet eigenvalue of $-\Delta$ on $\Om$ and $\phi_k$ the corresponding eigenfunction and also for $n\geq 1$ (see \cite[Remark 10.15]{grigbook})
	\begin{align}\label{Grigeq}
		\sum_{k=n}^{\infty}\sup_{x,y \in \Om}\abs{\phi_k(x)\phi_k(y)}<\infty
	\end{align}
	The series (\ref{ptseries}) converges absolutely and uniformly in $[\ve,\infty]\times \Om\times \Om$ for any $\ve>0$, as well as in the topology of $C^\infty(\RR^+\times \Om \times \Om)$. 
	
	Before starting the proof of Proposition \ref{Proposition:invj}, we recall an estimate on the short time behavior of the heat kernel $p_\tau^\Om(x,y)$ due to Varadhan \cite[Theorem 4.9]{VaradhDiff}. We will use it in the following form as in Hsu \cite[Corollary 1.6]{Hsu}.
	\begin{lemma}[short time estimate of $p_\tau^{\Om}$]
		Let $\ve>0$ fixed such that $B_\ve(0)\subset \Om$. Then, there exists $\tau_0>0$ such that, for $y\in B_{\ve}(0)$ and $\tau \in (0,\tau_0)$ we have
		\begin{align}\label{boundVaradhan}
			p^{\RR^3}_\tau(0,y)(1-e^{-\frac{\delta^2}{4\tau}})	\leq p^\Omega_\tau(0,y),
		\end{align}
		where $\delta<\delta_0$ is independent of $y$ and
		$$
		\delta_0{\coloneqq}d(\pp \Om, \pp B_{\ve})=\min_{a\in \pp \Om, b \in \pp B_\ve(0)} \abs{a-b}>0.
		$$
	\end{lemma}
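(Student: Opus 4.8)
The plan is to recognize \eqref{boundVaradhan} as Varadhan's short-time lower bound for the Dirichlet heat kernel \cite{VaradhDiff}, which could simply be quoted from \cite[Corollary 1.6]{Hsu}; for completeness I would give a short self-contained proof by parabolic comparison. Fix an interior point $y\in B_\ve(0)$ and consider the \emph{defect}
\[
v(x,\tau)\coloneqq p^{\RR^3}_\tau(x,y)-p^\Om_\tau(x,y),\qquad x\in\bar\Om,\ \tau>0.
\]
First I would record the standard fact --- via Levi's parametrix construction, or equivalently via the strong Markov property of Brownian motion at the first exit time $\sigma_\Om$ from $\Om$, which yields $v(x,\tau)=\mathbb E_x\big[\mathbf 1_{\{\sigma_\Om\le\tau\}}\,p^{\RR^3}_{\tau-\sigma_\Om}(X_{\sigma_\Om},y)\big]$ --- that for $y$ fixed in $\Om$ the defect $v$ is nonnegative, bounded, and continuous on $\bar\Om\times[0,\infty)$ with $v(\cdot,0)\equiv0$ on $\Om$. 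Moreover $v$ solves the heat equation in $\Om\times(0,\infty)$ and has lateral boundary data $v(x,\tau)=p^{\RR^3}_\tau(x,y)$ for $x\in\pp\Om$.

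The only geometry I would use is that, for $x\in\pp\Om$ and $y\in B_\ve(0)$, the segment $[x,y]$ crosses $\pp B_\ve(0)$, whence $\abs{x-y}\ge d(\pp\Om,\pp B_\ve(0))=\delta_0$; therefore $v(x,\tau)\le(4\pi\tau)^{-3/2}e^{-\delta_0^2/(4\tau)}$ on $\pp\Om\times(0,\infty)$. I would then use the $x$-independent barrier $\bar v(\tau)\coloneqq(4\pi\tau)^{-3/2}e^{-\delta_0^2/(4\tau)}$ together with $\tau_0\coloneqq\delta_0^2/6$: a one-line computation gives $\frac{d}{d\tau}\log\bar v(\tau)=\tau^{-2}\big(\frac{\delta_0^2}{4}-\frac{3\tau}{2}\big)>0$ for $\tau\in(0,\tau_0)$, so $\pp_\tau\bar v-\Delta_x\bar v=\bar v'(\tau)>0$ there, i.e.\ $\bar v$ is a supersolution of the heat equation on $\Om\times(0,\tau_0)$ which dominates $v$ on the lateral boundary and has vanishing initial trace. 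The comparison principle then gives $v\le\bar v$ on $\Om\times(0,\tau_0)$; evaluating at $x=0$ and factoring out $p^{\RR^3}_\tau(0,y)=(4\pi\tau)^{-3/2}e^{-\abs{y}^2/(4\tau)}$ yields
\[
p^\Om_\tau(0,y)\;\ge\;p^{\RR^3}_\tau(0,y)\Big(1-e^{-(\delta_0^2-\abs{y}^2)/(4\tau)}\Big),\qquad 0<\tau<\tau_0.
\]
Since $\abs{y}<\ve$ and, for $\ve$ small enough that $\ve<\delta_0$ as one may always arrange, $\delta\coloneqq\sqrt{\delta_0^2-\ve^2}$ is positive, strictly less than $\delta_0$, and independent of $y$, this is exactly \eqref{boundVaradhan}.

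I do not expect a genuine obstacle: the estimate is classical and the computations (monotonicity of $\bar v$, the Gaussian factorization) are routine. The one point deserving care is the legitimacy of the comparison argument, namely that $v$ is bounded up to $\tau=0$ despite $p^{\RR^3}_\tau(\cdot,y)$ and $p^\Om_\tau(\cdot,y)$ each being singular at $(y,0)$; the probabilistic representation above settles this at once, since on $\{\sigma_\Om\le\tau\}$ one has $X_{\sigma_\Om}\in\pp\Om$, hence $p^{\RR^3}_{\tau-\sigma_\Om}(X_{\sigma_\Om},y)\le\sup_{s>0}(4\pi s)^{-3/2}e^{-d(\pp\Om,y)^2/(4s)}<\infty$, and it also makes the probabilistic meaning of \eqref{boundVaradhan} transparent.
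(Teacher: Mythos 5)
Your argument is a genuinely different route from the paper's. Where the paper quotes the two Varadhan log-asymptotics $\limsup_{\tau\to 0}4\tau\log(p^{\RR^3}_\tau-p^\Om_\tau)\le -d_{\pp\Om}(x,y)^2$ and $\lim_{\tau\to 0}4\tau\log p^{\RR^3}_\tau=-d(x,y)^2$ and combines them with the elementary geometric bound $d_{\pp\Om}(0,y)\ge \ve+\delta_0$, you give a self-contained parabolic-comparison (or, equivalently, Dynkin--Hunt) argument with the spatially constant barrier $\bar v(\tau)=(4\pi\tau)^{-3/2}e^{-\delta_0^2/(4\tau)}$. Your approach is more elementary, avoids Varadhan's machinery, and has the pleasant feature of producing an explicit $\tau_0=\delta_0^2/6$; it is a perfectly good alternative proof of the estimate that the paper actually needs (in Lemma 7.2, $\ve$ is a free small parameter).

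However, there is a genuine gap if the goal is the lemma \emph{as stated}, i.e.\ for every fixed $\ve>0$ with $B_\ve(0)\subset\Om$. Your bound at $x=0$ reads $p^\Om_\tau(0,y)\ge p^{\RR^3}_\tau(0,y)\bigl(1-e^{-(\delta_0^2-\abs{y}^2)/(4\tau)}\bigr)$, which gives a uniform $\delta<\delta_0$ only when $\sup_{y\in B_\ve(0)}\abs{y}^2=\ve^2<\delta_0^2$. If $\ve\ge\delta_0$ (which is perfectly consistent with $B_\ve(0)\subset\Om$; for a ball of radius $R$ this happens whenever $\ve\ge R/2$), the factor is negative for $\abs{y}$ close to $\ve$ and your conclusion is vacuous there. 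You acknowledge this by inserting the extra hypothesis ``$\ve<\delta_0$, as one may always arrange,'' but that changes the statement. The structural reason is that your barrier is $x$-independent: after evaluating at $x=0$ and dividing by $p^{\RR^3}_\tau(0,y)=(4\pi\tau)^{-3/2}e^{-\abs{y}^2/(4\tau)}$ you lose the factor $e^{-\abs{y}^2/(4\tau)}$, whereas the paper's argument keeps track of the full two-leg distance $d_{\pp\Om}(0,y)\ge\ve+\delta_0$ so that $d_{\pp\Om}(0,y)^2-\abs{y}^2\ge(\ve+\delta_0)^2-\ve^2>\delta_0^2$ uniformly, and the polynomial prefactor $(4\pi\tau)^{3/2}$ (not another exponential) is what must be absorbed by $e^{-(\delta_0^2-\delta^2)/(4\tau)}$. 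To close the gap within your framework you would need an $x$-dependent supersolution (or a sharper use of the Brownian-bridge representation encoding the distance from $0$ to $\pp\Om$ \emph{and} from $\pp\Om$ back to $y$), at which point you are essentially reproving Varadhan's estimate. As it stands, your proof establishes the lemma under the additional restriction $\ve<\delta_0$, which suffices for the application in Lemma 7.2 but is strictly weaker than the claim.
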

	\begin{proof}
		Recall the identities in \cite[p. 675]{VaradhDiff}
		\begin{align}\label{VarId1}
			&\limsup_{\tau\to 0} 4\tau \log(p^{\RR^3}_{\tau}(x,y)-p^{\Om}_{\tau}(x,y))\leq -d_{\pp \Omega}(x,y)^2,\\\label{VarId2}
			&\lim_{\tau\to 0}4 \tau \log(p^{\RR^3}_\tau(x,y))=- d(x,y)^2,
		\end{align}
		where
		\begin{align*}
			d_{\pp \Omega}(x,y){\coloneqq}\inf_{z \in \pp \Omega}\{ d(x,z)+d(z,y) \}.
		\end{align*}
		From (\ref{VarId1}) for $\tau \in (0,\tau_0)$ we have
		\begin{align*}
			p_{\tau}^{\RR^3}(x,y)- e^{-\frac{	d_{\pp \Omega}^2(x,y)-c(\tau_0)}{4\tau}}\leq p_{\tau}^{\Om}(x,y),
		\end{align*}
		for all $x,y \in \Om$, where $0\leq c(\tau_0)=o(1)$ as $\tau_0\to 0$. 
		In particular, fix $x=0$ and consider $y \in B_{\ve}(0)\subset \Om$ for a small $\ve>0$. Then, choosing $\tau_0$ smaller if needed, we have
		\begin{align*}
			d_{\pp \Omega}^2(0,y)-c(\tau_0)\geq \ve^2 +\delta_0^2.
		\end{align*}
		Thus for $y \in B_\ve (0)$ 
		\begin{align*}
			e^{-\frac{	d_{\pp \Omega}^2(0,y)-c(\tau_0)}{4\tau}}\leq e^{-\frac{\ve^2+\delta_0^2}{4 \tau}}\leq e^{-\frac{d(0,y)^2}{4\tau}}e^{-\frac{\delta_0^2}{4\tau}},
		\end{align*}
		and (\ref{VarId2}) says
		\begin{align*}
			p_{\tau}^{\RR^3}(0,y)=e^{-\frac{d^2(0,y)}{4\tau}(1+o(1))} \ass \tau\to 0^+.
		\end{align*}
		Thus, we have for $\tau<\tau_0$ small and $y \in B_\ve(0)$
		\begin{align}\label{VarIdentity}
			p^{\RR^3}_\tau(0,y)(1-e^{-\frac{\delta^2}{4\tau}})	\leq p^\Omega_\tau(0,y),
		\end{align}
		for any $\delta<\delta_0$ independent of $y$. 
	\end{proof}
	We mention that the uniform bound (\ref{VarIdentity}) holds for $y$ ranging in any convex subset of the domain, see \cite[p.374-375]{Hsu}.
	Also, for any $\tau>0$ and $x,y \in \Om$ we have the upper bound
	\begin{align}\label{bound:MPconseq}
		p^\Omega_\tau(x,y)\leq p^{\RR^3}_\tau(x,y),
	\end{align}
	as a consequence of the maximum principle. Thus, Varadhan's estimate (\ref{boundVaradhan}) is a precise statement about the idea that for small times the heat kernel "does not feel the boundary". We refer to Kac \cite{Kac} and Dodziuk \cite{dod} for statements with the same flavor. In the proof of Proposition \ref{Proposition:invj} we need the following lemma.
	\begin{lemma}\label{Lemma:ItauAsymptotic}
		Define the function
		\begin{align*}
			I(\tau){\coloneqq}\int_{\Om} p_{\tau}^{\Om}(0,y)G_\g(y,0)\ddy,
		\end{align*}
		where $p_{\tau}^{\Om}(x,y)$ denotes the Dirichlet heat kernel associated to $\Om$ and $G_\g(x,y)$ the Green function of the operator $-\Delta-\g$ on $\Om$. Then $I(\tau)$ has the following asymptotic behavior:
		\begin{align}\label{asyI(t)}
			I(\tau)=
			\begin{cases}
				O\qty(e^{-\lambda_1 \tau})&\text{for} \quad \tau \to \infty,\\
				c_{1,*} \frac{1}{\sqrt{\tau}}+c_{2,*} \sqrt{\tau}+c_{3,*}\tau +O\qty(\tau^{3/2})  
				&\text{for} \quad  \tau\to 0^+,
			\end{cases}
		\end{align}
		for some constant $c_{i,*}$ and $i=1,2,3$.
	\end{lemma}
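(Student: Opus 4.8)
The plan is to analyze the two regimes $\tau\to\infty$ and $\tau\to0^+$ separately, using the spectral expansion (\ref{ptseries}) for large $\tau$ and Varadhan-type short-time comparison with the free heat kernel on $\RR^3$ for small $\tau$. For the large-$\tau$ asymptotics, I would substitute the expansion $p_\tau^\Om(0,y)=\sum_{k\ge1}e^{-\lambda_k\tau}\phi_k(0)\phi_k(y)$ into the definition of $I(\tau)$ and integrate term-by-term; this is justified by the uniform convergence recalled after (\ref{Grigeq}) together with $G_\g(\cdot,0)\in L^1(\Om)$ (indeed $L^2(\Om)$, as used in Lemma \ref{EstJ1fromdotLambda}). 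This gives $I(\tau)=\sum_{k\ge1}e^{-\lambda_k\tau}\phi_k(0)\int_\Om\phi_k(y)G_\g(y,0)\,dy$, and since $\lambda_1$ is the smallest eigenvalue and the coefficients are bounded (by Cauchy–Schwarz against $\norm{G_\g(\cdot,0)}_{L^2}$ and (\ref{Grigeq})), the leading term is $O(e^{-\lambda_1\tau})$; one could even extract the precise constant $\phi_1(0)\int_\Om\phi_1 G_\g(\cdot,0)$, but only the $O$-bound is needed.

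For the short-time behavior, the idea is that $G_\g(y,0)=\alpha_3|y|^{-1}-H_\g(y,0)$ and that $p_\tau^\Om$ concentrates near $y=0$, so $I(\tau)$ is governed by the singular part $\alpha_3|y|^{-1}$. Concretely I would write $I(\tau)=\alpha_3\int_\Om p_\tau^\Om(0,y)|y|^{-1}\,dy-\int_\Om p_\tau^\Om(0,y)H_\g(y,0)\,dy$. For the second integral, since $H_\g(\cdot,0)$ is Lipschitz (stated after (\ref{eqH_g})) and bounded near $0$, a Taylor expansion $H_\g(y,0)=R_\g(0)+y\cdot\nabla_{x_1}H_\g(0,0)+O(|y|^2)$ combined with the moment expansions of $p_\tau^\Om$ (see below) produces an expansion in integer and half-integer powers of $\tau$ starting at $\tau^0$; by our choice $\gamma=\gamma(0)$ we have $R_\g(0)=0$, which is what makes the first correction appear at order $\tau^{1/2}$ rather than order $1$ — this is where the $c_{2,*}\sqrt\tau$ term originates. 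For the singular integral $\int_\Om p_\tau^\Om(0,y)|y|^{-1}\,dy$, I would split $\Om=B_\ve(0)\cup(\Om\setminus B_\ve)$; on $\Om\setminus B_\ve$ the integrand is exponentially small in $\tau$ by the Gaussian upper bound (\ref{bound:MPconseq}); on $B_\ve(0)$ I replace $p_\tau^\Om$ by $p_\tau^{\RR^3}(0,y)=(4\pi\tau)^{-3/2}e^{-|y|^2/4\tau}$, the error controlled above and below by the Varadhan comparison (\ref{boundVaradhan})–(\ref{bound:MPconseq}), the discrepancy being $O(e^{-\delta^2/4\tau})$ which is beyond all powers of $\tau$. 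Then $\int_{\RR^3}(4\pi\tau)^{-3/2}e^{-|y|^2/4\tau}|y|^{-1}\,dy$ is an explicit Gaussian integral equal to $c\,\tau^{-1/2}$, giving the $c_{1,*}\tau^{-1/2}$ leading term; restricting to $B_\ve$ instead of $\RR^3$ changes this by an exponentially small amount.

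The remaining ingredient — and the main technical obstacle — is the short-time moment expansion for $p_\tau^\Om(0,y)$ against a smooth weight, i.e.\ controlling $\int_\Om p_\tau^\Om(0,y)\,y_j\,dy$ and $\int_\Om p_\tau^\Om(0,y)\,|y|^2\,dy$ and higher moments up to the order giving the $O(\tau^{3/2})$ remainder. For the free kernel these are trivial ($\int y_j e^{-|y|^2/4\tau}=0$ by oddness, $\int|y|^2 e^{-|y|^2/4\tau}\sim c\tau$), so the real work is transferring them to the Dirichlet kernel; again the Varadhan comparison confines the difference to $O(e^{-\delta^2/4\tau})$ on a ball around $0$ plus exponentially small tails, so all the finite-order terms come from the free kernel while the boundary contributes only beyond-all-orders corrections. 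Assembling the singular-part contribution $c_{1,*}\tau^{-1/2}$, the odd first moment (which vanishes, consistent with $R_\g(0)=0$ killing the competing $\tau^0$ term from $H_\g$), the second moments producing $c_{2,*}\tau^{1/2}$ and $c_{3,*}\tau$, and bounding everything of order $\tau^{3/2}$ and higher into the remainder, yields the stated expansion. I expect the bookkeeping of which Taylor term of $H_\g(y,0)$ pairs with which moment of $p_\tau^\Om$ to be the fiddly part, but conceptually the splitting into a singular Gaussian integral plus a smooth-weight moment expansion, both reduced to $\RR^3$ via Varadhan, is the whole proof.
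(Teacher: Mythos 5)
Your overall strategy (spectral expansion for large $\tau$, split of $G_\gamma$ into singular and regular parts, Varadhan comparison to reduce to the free Gaussian kernel near $0$) matches the paper's proof exactly, and your handling of $I_1(\tau)=\alpha_3\int p_\tau^\Om(0,y)|y|^{-1}\,dy$ and of the $\tau\to\infty$ regime is sound. However, there is a genuine gap in your treatment of the regular part: you Taylor-expand $H_\gamma(y,0)=R_\gamma(0)+y\cdot\nabla_{x_1}H_\gamma(0,0)+O(|y|^2)$ as though $H_\gamma(\cdot,0)$ were $C^1$ at the origin, but the paper explicitly records (and it follows from the equation \reff{eqH_g}, whose right-hand side is only $L^p_{\rm loc}$ for $p<3$) that $H_\gamma(\cdot,y)$ is merely $C^{0,1}$ across the diagonal. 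Concretely, the decomposition $H_\gamma=\theta_\gamma-h_\gamma$ of \reff{decHg} isolates a conical singularity $\theta_\gamma(y)=\alpha_3\tfrac{\gamma}{2}|y|+O(|y|^3)$ which is Lipschitz but not differentiable at $y=0$, while $h_\gamma$ is smooth. With your $C^1$ expansion the integral $\int p_\tau^{\RR^3}(0,y)H_\gamma(y,0)\,dy$ would produce only integer powers of $\tau$: the constant $R_\gamma(0)\cdot 1$, the odd first moment contributing $0$, the quadratic term contributing $c\,\tau$, and so on — \emph{no} $\sqrt\tau$ at all. Your concluding sentence attributes $c_{2,*}\tau^{1/2}$ to "second moments," but $\int p_\tau^{\RR^3}(0,y)|y|^2\,dy\sim c\tau$, not $\sqrt\tau$; the half-integer power can only come from the \emph{first} moment $\int p_\tau^{\RR^3}(0,y)|y|\,dy\sim c\sqrt\tau$ of the $|y|$-singularity of $\theta_\gamma$.

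So the step you call "fiddly bookkeeping" is actually the crux: without first peeling off $\theta_\gamma$ (or otherwise recognizing the $|y|$-type behavior of $H_\gamma$ near the diagonal), your expansion misses the $c_{2,*}\sqrt\tau$ term entirely, and the lemma would come out with the wrong exponent structure. The fix is what the paper does: write $H_\gamma(y,0)=\theta_\gamma(y)-h_\gamma(y,0)$, expand $\theta_\gamma$ via the cosine series to get the $|y|$ and $|y|^3$ pieces (yielding the $\sqrt\tau$ and part of the $\tau^{3/2}$ remainder), expand the smooth $h_\gamma$ as a genuine Taylor polynomial with vanishing constant term because $h_\gamma(0,0)=\pm R_\gamma(0)=0$ (yielding $c_{3,*}\tau$ from its Hessian and a vanishing odd first-moment contribution), and integrate each piece against $p_\tau^{\RR^3}$ with the Varadhan estimate controlling the $p_\tau^\Om$-versus-$p_\tau^{\RR^3}$ discrepancy by terms that are $O(e^{-c/\tau})$.
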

	
	\begin{proof}
		\textbf{Step 1} (Asymptotic for $\tau\to \infty$). We recall that the heat kernel $p_\tau^{\Omega}(x,y)$ admits the series expansion (\ref{ptseries}) which converges absolutely and uniformly in the domain $[\ve,\infty)\times \Omega \times \Omega$ for any $\ve>0$, as well as in the topology $C^\infty(\RR^+\times \Omega\times \Omega)$. By the uniform convergence with respect to $y\in \Om$ we obtain for $\tau>0$
		\begin{align}\label{expItau}
			I(\tau)&=\int_\Omega \sum_{k=1}^\infty e^{-\lambda_k \tau}\phi_k(0)\phi_k(y)G_\g(y,0)\ddy \\\nonumber
			&= \sum_{k=1}^\infty e^{-\lambda_k \tau}\phi_k(0)\int_\Omega\phi_k(y)G_\g(y,0)\ddy.
		\end{align}
		Multiplying equation \reff{Ggammaequation} by $\phi_k$ and integrating by parts we get
		\begin{align*}
			-\lambda_k \int_\Om G_\g(x,0)\phi_k(x)\ddx
			&=\int_\Om G_\g(x,0)\Delta \phi_k(x)\\
			&=\int_\Om \phi_k(x)\Delta G_\g(x,0)\ddx  \\
			&= -\g \int_\Om G_\g(x,0)\phi_k(x)\ddx-c_3\int_\Om \phi_k(x) \delta_0(x)\ddx\\
			&=  -\g \int_\Om G_\g(x,0)\phi_k(x)\ddx-c_3 \phi_k(0),
		\end{align*}
		that gives
		\begin{align}\label{coeffGgamma}
			\int_\Om G_\g(x,0)\phi_k(x)\ddx =c_3 \frac{\phi_k(0)^2}{\lambda_k-\g}.
		\end{align}
		We plug (\ref{coeffGgamma}) into (\ref{expItau}). Finally, from (\ref{Grigeq}) we obtain the asymptotic behavior (\ref{asyI(t)}) for $\tau\to \infty$. \\
		\textbf{Step 2} (Asymptotic for $\tau\to 0^+$).
		Firstly, we split 
		\begin{align*}
			I(\tau)=&\int_\Om p_\tau^{\Omega}(0,y) \frac{\alpha_3}{\abs{y}}\ddy+\int_\Om p_\tau^{\Omega}(0,y) H_\g(y,0)\ddy\\
			=&:I_1(\tau)+I_2(\tau).
		\end{align*}
		We analyze $I_1(\tau)$. For the region $B_{\ve}(0)$ we invoke Varadhan's estimate (\ref{VarIdentity}) and we obtain
		\begin{align*}
			\int_{B_\ve(0)} \frac{p_\tau^\Omega(0,y)}{\abs{y}}\ddy 
			&\geq \int_{B_\ve} \frac{e^{-\frac{\abs{y}^2}{4\tau}}}{[4\pi\tau]^{3/2}}\frac{1}{\abs{y}}\ddy (1-e^{-\frac{\ve^2}{\tau}})\\
			&=4\pi \int_0^\ve \frac{e^{-\frac{\rho^2}{4\tau}}}{[4\pi\tau]^{3/2}}\rho \,d\rho (1-e^{-\frac{\ve^2}{\tau}})\\
			&=\frac{1}{\sqrt{4\pi\tau}}\int_{0}^{\frac{\ve}{2\sqrt{\tau}}}e^{-r^2}r \ddr (1-e^{-\frac{\ve^2}{\tau}}) \\
			&=\frac{1}{\sqrt{4\pi \tau}}\qty(\frac{1-e^{-\frac{\ve^2}{4\tau}}}{2})\qty(1-e^{-\frac{\ve^2}{\tau}})\\
			&=\frac{1}{4\sqrt{\pi \tau}}+O\qty(\frac{e^{-\frac{c}{\tau}}}{\sqrt{\tau}})
		\end{align*}
		for some $c>0$, and by (\ref{bound:MPconseq}) we have
		\begin{align*}
			\int_{B_\ve(0)} \frac{p_\tau^\Omega(0,y)}{\abs{y}}\ddy
			&\leq \int_{B_\ve(0)} \frac{p_\tau^{\RR^3}(0,y)}{\abs{y}}\ddy\\
			&\leq \frac{1}{4 \sqrt{\pi\tau}}+O\qty(\frac{e^{-\frac{c}{\tau}}}{\sqrt{\tau}}).
		\end{align*}
		From these bounds we conclude
		\begin{align*}
			\int_{B_\ve(0)} \frac{p_\tau^\Omega(0,y)}{\abs{y}}\ddy=\frac{1}{4 \sqrt{\pi\tau}}+O\qty(\frac{e^{-\frac{c}{\tau}}}{\sqrt{\tau}}).
		\end{align*}
		In the region $\Omega \setminus B_\ve(0)$ by (\ref{bound:MPconseq}) we get
		\begin{align*}
			\int_{\Omega\setminus B_\ve (0)} \frac{p_\tau^\Omega(0,y)}{\abs{y}}\ddy &\leq  \tau^{-3/2}\int_{\ve}^{1} e^{-\frac{\rho^2}{cs}}\rho \, d\rho \\
			&=\tau^{-1/2} \int_{\frac{\ve}{\sqrt{s}}}^{\frac{1}{\sqrt{s}}} e^{-r^2}r\ddr =O\qty(\frac{e^{-\frac{c}{\tau}}}{\sqrt{\tau}}).
		\end{align*}
		We conclude that
		\begin{align*}
			I_1(\tau)&=\alpha_3\int_{\Omega\setminus B_\ve (0)} \frac{p_\tau^\Omega(0,y)}{\abs{y}}\ddy+	\alpha_3\int_{B_\ve(0)} \frac{p_\tau^\Omega(0,y)}{\abs{y}}\ddy\\
			&= \frac{c_{1,*}}{\sqrt{\tau}}+O\qty(\frac{e^{-\frac{c}{\tau}}}{\sqrt{\tau}}) \ass \tau \to 0^+,\quad \text{with}\quad c_{1,*}=\frac{\alpha_3}{4\sqrt{\pi \tau}}.
		\end{align*}
		Now, we estimate the term $I_2(\tau)$. We treat it similarly to $I_1(\tau)$ but we get a lower order term in the expansion since $H_\g(y,0)$ is not singular. We use decomposition (\ref{decHg}) for $H_\g(y,0)$ and we consider the integral over $B_{\ve}(0)$. Using the cosine expansion we get
		\begin{align*}
			\theta_\g(y,0)=\alpha_3\frac{\g}{2}\abs{y}+O(\abs{y}^3).
		\end{align*}
		Thus, we compute the integral associated to the first term with Varadhan's estimate (\ref{boundVaradhan}) and the upper bound (\ref{bound:MPconseq}):
		\begin{align}\label{1-cosInt}
			\int_{B_\ve(0)} p_{\tau}^{\Om}(y,0)\frac{1-\cos(\sqrt{\g}\abs{y})}{\abs{y}} \ddy 
			&= \alpha_3\frac{\g}{2}\int_{B_\ve(0)}  \frac{e^{-\frac{\abs{y}^2}{4\tau}}}{[4\pi\tau]^{3/2}}\abs{y} \ddy \qty(1+o\qty(e^{-\frac{c}{\tau}}))\\\nonumber
			&=4 \pi \alpha_3\frac{\g}{2} \int_{0}^{\ve} \frac{e^{-\frac{\rho^2}{4\tau}}}{[4\pi \tau]^{3/2}} \rho^3 \,d\rho \qty(1+o\qty(e^{-\frac{c}{\tau}}))\\\nonumber
			&=4 \pi \alpha_3 \sqrt{\tau}\frac{\g}{2}\int_0^{\frac{\ve}{2\sqrt{\tau}}} e^{-r^2}r^{3}\ddr \qty(1+o\qty(e^{-\frac{c}{\tau}}))\\\nonumber
			&=c_{2,*} \sqrt{\tau}\qty(1+o\qty(e^{-\frac{c}{\tau}})),
		\end{align}
		for an explicit constant $c_{2,*}$. The same computation on the remainder $O\qty(\abs{y}^3)$ gives a term of order $O\qty(\tau^{3/2})$. Another Taylor expansion at $y=0$ gives
		\begin{align*}\label{exphgy0}
			h_\g(y,0)=\nabla_y h_\g(0,0)\cdot y +\frac{1}{2}y\cdot D_{yy}h_\g(0,0)\cdot y+O(\abs{y}^3),	
		\end{align*}
		where $D_{yy}h_\g(0,0)$ denotes the Hessian of $h_\g(\cdot,0)$ evaluated in
		$y=0$. Integrating the first term on $B_\ve(0)$ against $p_t^\Om(0,y)$ and using (\ref{boundVaradhan})-(\ref{bound:MPconseq}) wee see by symmetry of the integrand $p_t^{\RR^3}(0,y)\nabla_y h_\g(0,0)\cdot y$ that the integral gives an exponentially decaying term. 
		The second term in (\ref{exphgy0}) can be treated similarly to (\ref{1-cosInt}) and gives a term of order $c_{3,*}\tau(1+o(1))$ for some explicit constant $c_{3,*}$. 
		The integral of $p_{\tau}^\Om(y,0) H_\g(y,0)$ on the complement can be treated as before and gives an exponentially decay term for $\tau\to 0$.   
		Thus, we obtain that
		\begin{align*}
			I_2(\tau)=c_{2,*} \sqrt{\tau}+c_{3,*}\tau +O\qty(\tau^{3/2}) \ass \tau \to 0^+.
		\end{align*}
		We conclude that $I(\tau)=I_1(\tau)+I_2(\tau)$ has the asymptotic (\ref{asyI(t)}) for $\tau\to 0^+$.
	\end{proof}
	We start here the main proof of Proposition \ref{Proposition:invj}.
	\subsection*{Proof of Proposition \ref{Proposition:invj}}
	Firstly, we observe that $J(0,t_0)=h(t_0)$ is in general not compatible with a null initial condition. For this reason it is natural to solve the problem for $\JJJ$ starting from $t=t_0-1$.
	We look for $\Lambda(t)$ for $t \in (t_0-1,\infty)$. The function $\JJJ$ is a solution to the problem
	\begin{align*}
		&\pp_t \JJJ= \Delta_x \JJJ + \g \JJJ - \dot \Lambda(t) G_\gamma(x,0) \inn \Omega \times (t_0-1,\infty),\\
		&\JJJ(x,t)\equiv 0 \onn \pp \Omega \times (t_0-1,\infty),
	\end{align*}
	such that
	\begin{align*}
		\JJJ(0,t)=h^*(t) \inn (t_0,\infty),
	\end{align*}
	where
	\begin{align}
		h^*(t)=
		\begin{cases}
			h(t)&t\in [t_0,\infty),\\
			h_{\ext}(t)&t\in [t_0-1,t_0),
		\end{cases}
	\end{align}
	and 
	\begin{align*}
		h_{\ext}(t)=\eta(t) h(t_0),
	\end{align*}
	where $\eta$ is a smooth function such that $\eta(t_0-1)=0$, $\eta(t_0)=1$ and
	$$
	\abs{\eta(t_0-\nu)h(t_0)-h(t_0+\nu)}\leq  [h]_{\ve,[t_0,t_0+1]} \nu^\ve,
	$$
	for any $\nu\leq 1$. This choice gives an extension $h^*(t)\in C^{\ve}$ with
	\begin{align}\label{normh0*}
		\norm{h^*}_{\sharp,c_1,c_2,(t_0-1,\infty)}\lesssim \norm{h}_{\sharp,c_1,c_2,(t_0,\infty)}.
	\end{align}
	Let $s{\coloneqq}t-(t_0-1)$ and for $s\in (0,\infty)$ define
	\begin{align}\label{covbetaLambda}
		&\JJJ_0(x,s){\coloneqq}e^{-\g s} \JJJ(x,s+(t_0-1)), \\\nonumber
		&\beta(s){\coloneqq}- \Lambda(s+(t_0-1)),\\\nonumber
		&h_0^*(s){\coloneqq}h^*(s+(t_0-1)).
	\end{align}
	The function $\JJJ_0$ is a solution to
	\begin{align*}
		&\pp_s \JJJ_0(x,s)= \Delta_x \JJJ_0 + e^{-\g s}\dot \beta(s)G_\g(x,0) \inn \Omega\times (0,\infty)\\\nonumber
		&\JJJ_0(x,s)=0 \onn \pp \Omega\times (0,\infty),\\\nonumber
	\end{align*}
	such that
	\begin{align}\label{eqJ0}
		\JJJ_0[\dot \beta](0,s)=h_{0}^*(s)e^{-\g s} \inn (0,\infty).
	\end{align}
	Imposing the initial condition $\JJJ(x,t_0)\equiv 0$ in $\Om$, that is $\JJJ_0(x,0)\equiv 0$, by Duhamel's formula we have
	\begin{align}\label{Duah}
		\JJJ_0[\dot \beta](0,s)=\int_0^s e^{-\g (s-\tau)}\dot \beta(s-\tau) I(\tau) \, d\tau,
	\end{align}
	where
	\begin{align*}
		I(\tau){\coloneqq}\int_\Omega p_\tau^{\Omega}(0,y) G_\g(y,0)\ddy,
	\end{align*}
	and $p_\tau^{\Omega}(x,y)$ denotes the heat kernel associated to $\Omega$. 
	The asymptotic behavior of $I(\tau)$ is given by Lemma \ref{Lemma:ItauAsymptotic}.
	We denote the Laplace transform of a function $f$ as 
	\begin{align*}
		\tilde f(\xi){\coloneqq}\int_{0}^{\infty}e^{-\xi s}f(s)\dds. 
	\end{align*}
	We refer to the book \cite{Doetsch} by Doetsch for classic properties of the Laplace transform. Applying the Laplace transform to \reff{Duah}, using \reff{eqJ0} and the basic property
	\begin{align*}
		\tilde{\dot f}(\xi)=\xi\tilde f(\xi)-f(0),
	\end{align*} 
	we obtain
	\begin{align*}
		\tilde h_0^*(\xi+\g)&= \tilde{\dot \beta}(\xi+\gamma) \tilde{I}(\xi)\\
		&=\qty[(\xi+\gamma)\tilde \beta(\xi+\gamma)-\beta(0)] \tilde{I}(\xi),
	\end{align*}
	and hence
	\begin{align}\label{formulatildebeta}
		\tilde{\beta}(\xi+\gamma)=\frac{\beta(0)}{\xi+\gamma}+\tilde h_0^*(\xi+\g)\tilde{\sigma}(\xi),
	\end{align}
	where
	\begin{align*}
		\tilde \sigma(\xi){\coloneqq}\frac{1}{(\xi+\gamma)\tilde{I}(\xi)}.
	\end{align*}
	By definition we have
	\begin{align*}
		\tilde{I}(\xi)=\int_{0}^{\infty}e^{-\xi s}I(s)\dds,
	\end{align*}
	that is well defined and analytic in the right-half plane $\Re\xi>-\lambda_1$ thanks to Lemma \ref{Lemma:ItauAsymptotic}.
	By expansion (\ref{asyI(t)}) we have 
	\begin{align*}
		\abs*{e^{-\xi s}I(s)}\lesssim g(s),\quad	g(s)=
		\begin{cases}
			\frac{1}{\sqrt{s}} &\text{for} \quad s \to 0^+,\\
			e^{-(\lambda_1+\Re \xi) s} &\text{for} \quad s \to +\infty,
		\end{cases}
	\end{align*}
	and $g$ is integrable in $\RR^+$ if $\Re{\xi}>-\lambda_1$. Thus, using (\ref{expItau}), in any half plane $\Re \xi\geq c$ where $c>-\lambda_1$ the dominated convergence theorem applies to get
	\begin{align*}
		\tilde I(\xi)&= \int_{0}^{\infty} e^{-\xi s} I(s)\dds \\&= \int_{0}^{\infty} e^{-\xi s} \sum_{k=1}^{\infty} \frac{\phi_k(0)^2}{\lambda_k-\g}e^{-\lambda_k s} \dds\\
		& =  \sum_{k=1}^{\infty}\frac{\phi_k(0)^2}{\lambda_k-\g}\int_{0}^{\infty} e^{-\xi s} e^{-\lambda_k s} \dds\\
		& =  \sum_{k=1}^{\infty}\frac{\phi_k(0)^2}{\lambda_k-\g}\frac{1}{\lambda_k+\xi}
	\end{align*} 
	At this point we can extend $\tilde I(\xi)$ analytically from $\{\xi \in \mathbb{C}: \xi>-\lambda_1\}$ to $\mathbb{C}\setminus \{-\lambda_k\}_{k=1}^\infty$. Let $\xi=a+ib$ and rewrite the series as
	\begin{align*}
		\tilde{I}(\xi)=&\sum_{k=1}^\infty \frac{\phi_k(0)^2}{\lambda_k-\g} \frac{1}{\lambda_k + a+ib}\\
		=&\sum_{k=1}^\infty \frac{\phi_k(0)^2}{\lambda_k-\g} \frac{\lambda_k+a}{(\lambda_k+a)^2+b^2}-ib \sum_{k=1}^\infty  \frac{\phi_k(0)^2}{\lambda_k-\g}\frac{1}{(\lambda_k +a)^2+b^2}.
	\end{align*}
	Since the coefficients of the series are positive, $\tilde I (\xi)=0$ implies $b=0$. Plugging $b=0$ into the first series we obtain that a root $\xi=a$ of $\tilde I$ satisfies
	\begin{align*}
		\sum_{k=1}^\infty \frac{\phi_k(0)^2}{\lambda_k-\g} \frac{1}{\lambda_k+a}=0.
	\end{align*}
	Hence, we deduce that the set of zeros of $\tilde I$ is given by a sequence $\{-a_k\}_{k=1}^{\infty}$ where $a_k\in (\lambda_k,\lambda_{k+1})$. In particular, 
	\begin{align}\label{tildeInonzero}
		\tilde I(\xi)\neq 0 \quad \text{for}\quad \Re \xi>-\lambda_1.
	\end{align}
	By standard argument \cite[Theorem 33.7]{Doetsch} on the Laplace transform, using (\ref{asyI(t)}), we have
	\begin{align*}
		\tilde{I}(\xi)=  c_{1,*}\sqrt{\pi}\xi^{-1/2}+c_{2,*}\frac{\sqrt{\pi}}{2} \xi^{-3/2}+c_{3,*}\xi^{-2}+O(\xi^{-5/2})\ass \abs{\xi}\to \infty,
	\end{align*}
	{in the half-plane} $\Re \xi>-\lambda_1$. 
	Thus, in the same half-plane we have
	\begin{align}\label{expsigmatilde}
		\tilde{\sigma}(\xi)=&\frac{1}{(\xi+\g)\tilde I(\xi)}\\\nonumber
		=&d_{1,*}\xi^{-1/2}+d_{2,*}\xi^{-3/2}+d_{3,*}\xi^{-2}+O(\xi^{-5/2})
		\ass \abs{\xi}\to \infty.
	\end{align}
	As a consequence of (\ref{tildeInonzero}), $\tilde \sigma(\xi)$ has a unique singularity at $\xi=-\g$ in the half-plane of convergence.
	By \cite[Theorem 28.3]{Doetsch} the function $\tilde \sigma(\xi)$
	can be represented as a Laplace transform of a function.\footnote{We cannot have an estimate directly on $\dot \beta$ at this point. Indeed, $(\tilde{I}(\xi))^{-1}$ is not a Laplace transform of a function since diverges as $\abs{\xi}\to \infty$. However, it still can be represented as the Laplace transform of a distribution, see \cite[Theorem 29.3]{Doetsch}.} Finally, we compute the inverse Laplace transform by means of the Residue theorem defining the rectangular contour integral $\mathcal{C}_R$ as in \autoref{contourfig}, which is suggested by the proof of \cite[Theorem 35.1]{Doetsch}. 
	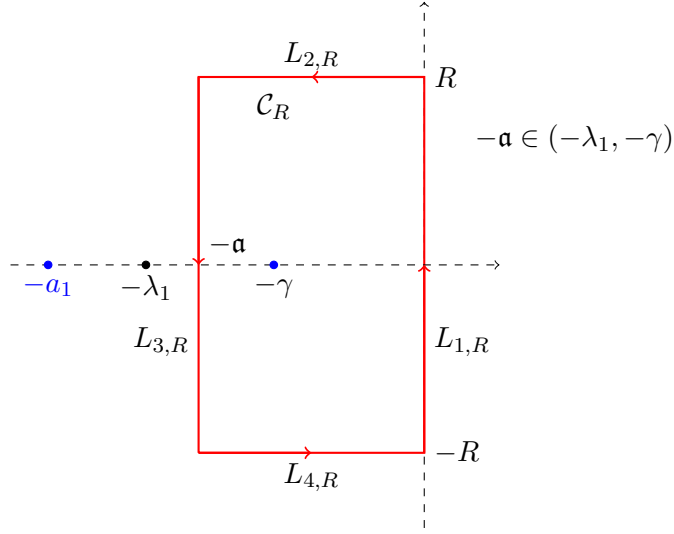
\begin{figure}
		\centering
		\begin{tikzpicture}

			\coordinate (O) at (2,2);
			
			\def\a{3}
			\def\b{5}
			\path (O) --+(0:\a) coordinate (A);
			\path (O) --+(90:\b) coordinate (C);
			\path (C) --+(0:\a) coordinate (B);
			\path (A) --+ (-90:1) coordinate (B1);
			\path (O) --+(-2.5,\b/2) coordinate (C1);
			\path (C1) --+(.5,0) coordinate (a1);
			\path (C1) --+(1.8,0) coordinate (lambda1);
			\path (C1) --+(0.5+\a,0) coordinate (gamma);
			\path (O) --+(0,\b/2) coordinate (beta);
			\path (B) --+(2,-.5) coordinate (anchor);
			\path (B) --+(-2,-0.1) coordinate (anchor2);
			\path (B) --+(0.5,-3.2) coordinate (L1);
			\path (B) --+(-1.5,0.6) coordinate (L2);
			\path (B) --+(-3.5,-3.2) coordinate (L3);
			\path (B) --+(-1.5,-5) coordinate (L4);
			
			\draw[black,->, dashed] (B1) --+ (90:\b+2);
			\draw[red, thick] (O) to (A) to (B) to (C)  to (O);
			\draw[black,->, dashed] (C1) --+ (0:\a+3.5);
			
			\filldraw[blue] (a1) circle (.05);
			\filldraw[black] (lambda1) circle (.05);
			\filldraw[blue] (gamma) circle (.05);

			\draw[red, thick, ->] (O) --+ (0:\a/2);
			\draw[red, thick, ->] (A) --+ (90:\b/2);
			\draw[red, thick, ->] (B) --+ (180:\a/2);
			\draw[red, thick, ->] (C) --+ (-90:\b/2);
			
			\node[below, blue] at (a1) {$-a_1$};
			\node[below] at (lambda1) {$-\lambda_1$};
			\node[below] at (gamma) {$\blue{-\gamma}$};
			\node[above right] at (beta) {$-\mathfrak{a}$};
			\node[right] at (A) {$-R$};
			\node[right] at (B) {$R$};
			\node[below] at (anchor) {$-\mathfrak{a}\in (-\lambda_1, -\gamma)$};
			\node[below] at (anchor2) {$\mathcal{C}_R$};
			\node[below] at (L1) {$L_{1,R}$};
			\node[below] at (L2) {$L_{2,R}$};
			\node[below] at (L3) {$L_{3,R}$};
			\node[below] at (L4) {$L_{4,R}$};
		\end{tikzpicture}
		\caption{Contour integral $\mathcal{C}_R$.}
		\label{contourfig}
	\end{figure}
	For later purpose we observe that, looking at the contour integral $\CCC_R$, the constant $\mathfrak{a}\in (\g,\lambda_1)$ can be taken arbitrarily close to $\lambda_1$. An application of the Riemann-Lebesgue Lemma (as in \cite[p.237]{Doetsch}) implies
	\begin{align*}
		&\lim_{R\to \infty}\int_{L_{2,R}}e^{\xi \tau}\tilde \sigma(\xi)\, d\xi=0,\\
		&\lim_{R\to \infty}\int_{L_{4,R}}e^{\xi \tau}\tilde \sigma(\xi)\, d\xi=0. 
	\end{align*}
	Since 
	\begin{align*}
		\sigma(\tau)=\lim_{R\to \infty}\frac{1}{2\pi i}\int_{L_{1,R}}e^{\xi \tau}\tilde \sigma(\xi)\, d\xi
	\end{align*}
	we obtain
	\begin{align}\label{formulasigma(t)}
		\sigma(t)=\text{Res}\qty(e^{\xi t}\tilde \sigma(\xi),-\gamma)e^{-\g t}+\lim\limits_{R\to \infty}\frac{1}{2\pi i} \int_{-\mathfrak{a}-i R}^{-\mathfrak{a}+iR} e^{\xi t}\tilde \sigma(\xi)\, d\xi.
	\end{align}
	We easily compute
	\begin{align*}
		\text{Res}\qty(e^{\xi \tau}\tilde \sigma(\xi),-\gamma)
		&=\lim\limits_{\xi \to -\g} (\xi+\gamma)\frac{1}{(\xi+\gamma)\tilde{I}(\xi)}=:c_\infty.
	\end{align*}
	Now, we analyze the integral (\ref{formulasigma(t)}). We decompose 
	\begin{align*}
		\lim\limits_{R\to \infty}\int_{-\mathfrak{a}-i R}^{-\mathfrak{a}+iR} e^{\xi \tau}\tilde \sigma(\xi)\, d\xi	=i e^{-\mathfrak{a} \tau}\int_{-R}^{R} e^{iy \tau}\bigg[& \tilde \sigma(-\mathfrak{a}+iy) - \frac{d_{1,*}}{\sqrt{-\mathfrak{a}+iy}}-\frac{d_{2,*}}{(-\mathfrak{a}+iy)^{3/2}}\\\nonumber
		&	-	\frac{d_{3,*}}{(-\mathfrak{a}+iy)^2}	\bigg]\ddy
		\\\nonumber
		&+ie^{-\mathfrak{a}\tau}\int_{-iR}^{iR} e^{\xi t} \frac{d_{1,*}}{\sqrt{-\mathfrak{a}+iy}}\ddy\\\nonumber
		&+ie^{-\mathfrak{a}\tau}\int_{-i R}^{iR}\frac{d_{2,*}}{(-\mathfrak{a}+iy)^{3/2}} \ddy\\\nonumber
		&+ie^{-\mathfrak{a}\tau}\int_{-i R}^{iR}\frac{d_{3,*}}{(-\mathfrak{a}+iy)^{2}} \ddy
	\end{align*}
	It is easy to see (by means of another contour to avoid the standard branch) that, up to constants, the last three integral are respectively the inverse Laplace transform of $\xi^{-1/2},\xi^{-3/2},\xi^{-2}$.
	The integral
	\begin{align*}
		R(\tau){\coloneqq}\int_{-R}^{R} e^{iy \tau}\bigg[ \tilde \sigma(-\mathfrak{a}+iy) - \frac{d_{1,*}}{\sqrt{-\mathfrak{a}+iy}}-\frac{d_{2,*}}{(-\mathfrak{a}+iy)^{3/2}}	-	\frac{d_{3,*}}{(-\mathfrak{a}+iy)^2}	\bigg]\ddy
	\end{align*}
	is absolutely convergent thanks to the second order expansion of $\tilde{\sigma}(\xi)$. In fact, obtaining the absolute convergence of $R(\tau)$ (and $R'(\tau)$) is the main reason to use the sharp Varadhan's estimate on the heat kernel $p_t^{\Omega}$. 
	Thus, from (\ref{formulasigma(t)}) we obtain
	\begin{align*}
		\sigma(\tau)=c_\infty e^{-\g \tau}+e^{-\mathfrak{a} \tau} \qty[\frac{C_{1,*}}{\sqrt{\tau}}+C_{2,*}\sqrt{\tau}+C_{3,*}\tau +R(\tau)],
	\end{align*}
	for some constants $c_\infty, C_{i,*}$ and $i=1,2,3$, where $R(\tau)$ is bounded.
	This gives the asymptotic behavior
	\begin{align*}
		\sigma(\tau)=
		\begin{cases}
			c_\infty e^{-\g \tau} +O(e^{-\mathfrak{a} \tau}) &\text{for} \quad \tau \to \infty,\\
			\frac{C_{1,*}^{-1}}{\sqrt{\tau}}+c_\infty+O\qty(\sqrt{\tau}) &\text{for} \quad \tau \to 0^+,
		\end{cases}
	\end{align*}
	for any $\mathfrak{a} \in (\g,\lambda_1)$. 
	For later purposes, we observe that $\sigma(\tau)$ is differentiable. Indeed, differentiating $R(\tau)$, we still obtain an absolutely convergent integral thanks to the full expansion (\ref{expsigmatilde}), and an application of the dominated convergence theorem gives $\sigma \in C^1$ with 
	\begin{align*}
		\sigma'(\tau)=
		\begin{cases}
			-\g c_\infty e^{- \g\tau} +O(e^{-\mathfrak{a} \tau}) &\text{for} \quad \tau \to \infty,\\
			-(2C_{1,*})^{-1}\tau^{-3/2}(1+O(\tau)) &\text{for} \quad \tau \to 0^+,
		\end{cases}
	\end{align*}
	From (\ref{formulatildebeta}), taking the inverse Laplace transform of both sides, we get
	\begin{align*}
		\beta(s)e^{-\g s}= \beta(0)e^{-\g s} + \int_{0}^{s} e^{-\g (s-\tau)}h_0^*(s-\tau) \sigma(\tau)d\, \tau,
	\end{align*}
	that is
	\begin{align*}
		\beta(s)= \beta(0)+\int_{0}^{s}e^{\g \tau}\sigma(\tau)h_0^*(s-\tau)\, d\tau.
	\end{align*}
	\begin{proof}[Proof of (\ref{estinvLambda})]	
		We rewrite this formula as
		\begin{align*}
			\beta(s)=&\beta(0)+c_\infty \int_{0}^{s}h_0^*(\tau)\,d\tau+ \int_{0}^{s}h_0^*(\tau)\qty[e^{\g (s-\tau)}\sigma(s-\tau)-c_\infty]\,d\tau\\
			=&\qty[\beta(0)+c_\infty \int_0^\infty h_0^*(\tau)\, d\tau] -c_\infty \int_{s}^{\infty} h_0^*(\tau)\, d\tau\\
			&+ \int_{0}^{s}h_0^*(\tau)\qty[e^{\g (s-\tau)}\sigma(s-\tau)-c_\infty]\,d\tau.
		\end{align*}
		We choose $\beta(0)=-c_\infty \int_{0}^{\infty}h_0^*(\tau)d\tau$. It remains to estimate
		\begin{align*}
			&\beta_1(s){\coloneqq}-c_\infty \int_{s}^{\infty}h_0^*(\tau)\, d\tau,\\
			&\beta_2(s){\coloneqq}  \int_{0}^{s}h_0^*(\tau)\qty[e^{\g (s-\tau)}\sigma(s-\tau)-c_\infty]\,d\tau.
		\end{align*}
		We recall that the extension $h_0^*(s)$ has been selected so that (\ref{normh0*}) holds. Here and in what follows, without losing in generality we assume the same value $c=c_i$ for $i=1,2$. When we estimate the $L^\infty$ norm of $\beta$ we will only use the $L^\infty$ norm of $h_0^*$ and hence we get the same $L^\infty$-weight constant $c_1$. Instead, when we estimate the $C^{1/2+\ve}$ we need both the $L^\infty$ and $C^\ve$ norms of $h_0^*$, thus we will get the same $C^\ve$-weight constant $c_2=\min\{c_1,c_2\}$. Thus, conditionally to $c_i<(\lambda_1-\g)/(2\g)$, the weight constant $c_i$ with $i=1,2$ for $\beta$ and $h_0^*$ are respectively the same.
		We proceed with the $L^\infty$ estimate of $\beta$. We have
		\begin{align*}
			\abs{\beta_1(s)}&\lesssim \norm{h_0^*}_{\sharp,c,\ve}\int_{s}^{\infty}e^{-2\g c \tau}\, d\tau\\
			&\lesssim  \norm{h_0^*}_{\sharp,c,\ve} \mu_0(s)^{c},
		\end{align*}
		Using hypothesis (\ref{condc1c2lambda1}) and selecting $\mathfrak{a}$ close enough to $\lambda_1$ so that
		\begin{align}\label{condc,a,lambda_1}
			c<\mathfrak{a}<\frac{\lambda_1-\g}{2\g},
		\end{align} 
		we get
		\begin{align*}
			\abs{\beta_2(s)}&\lesssim \norm{h_0^*}_{\sharp,c,\ve} \int_{0}^{s} e^{-2 \g c\tau}e^{-\mathfrak{a} (s-\tau)}\dds \\
			&\lesssim  \norm{h_0^*}_{\sharp,c,\ve}e^{-\min\{2\g c,\mathfrak{a}\} s}\\
			&\lesssim \norm{h_0^*}_{\sharp,c,\ve}\mu_0(s)^{c}.
		\end{align*}
		Combining the bounds on $\beta_1$ and $\beta_2$ we obtain
		\begin{align}\label{betaLinfEst}
			\abs{\beta(s)}\lesssim \norm{h_0^*}_{\sharp, c,\ve}\mu_0(s)^c.
		\end{align}
		Now we estimate the $\qty(1/2+\ve)$-H\"older seminorm.
		In the following it is enough to assume $\eta \in (0,1)$. We have
		\begin{align}\label{HolderBoundbeta1}
			\abs{\beta_1(s)-\beta_1(s-\eta)}&\leq \abs{\int_{s-\eta}^s h_0^*(\tau)\,d \tau} \\ \nonumber
			&\leq \norm{h_0^*}_{\infty,c}\mu_0(s)^{c}\abs{\eta}\\	\nonumber
			&\leq \norm{h_0^*}_{\infty,c}\mu_0(s)^{c}\abs{\eta}^{\frac{1}{2}+\ve}
		\end{align} 
		Let 
		$$
		l(\tau){\coloneqq}e^{\g \tau}\sigma(\tau)-c_\infty.
		$$
		Following the classical fractional integral estimate of Hardy and Littlewood \cite[Theorem 14]{HardyLitt}, we decompose
		\begin{align*}
			\beta_2(s)-\beta_2(s-\eta)=&\int_0^s h_0^*(s-\tau)l(\tau) \dtau - \int_{0}^{s-\eta}h_0^*(s-\eta-\tau)l(\tau)\dtau\\
			=&h_0^*(s)\int_{0}^{s}l(\tau)\dtau - \int_{0}^{s}\qty[h_0^*(s)-h_0^*(s-\tau)]l(\tau)\dtau\\
			&-h_0^*(s)\int_0^{s-\eta}l(\tau)\dtau   -\int_{0}^{s-\eta}\qty[h_0^*(s-\eta-\tau)-h_0^*(s)]l(\tau)\dtau\\
			=&h_0^*(s)\int_{s-\eta}^{s}l(\tau)\dtau-\int_{0}^{\eta}\qty[h_0^*(s)-h_0^*(s-\tau)]l(\tau)\dtau \\&-\int_{\eta}^{s}\qty[h_0^*(s)-h_0^*(s-\tau)]\qty(l(\tau)-l(\tau-\eta))\dtau \\
			=:&A_1(s,\eta)+A_2(s,\eta)+A_3(s,\eta).
		\end{align*}
		For $s-\eta \in (\eta,1)$ we have
		\begin{align*}
			\abs{A_1}
			&\lesssim \abs{h_0^*(s)}\int_{s-\eta}^{s}\frac{1}{\sqrt{\tau}}\dtau \\
			&\lesssim \abs{h_0^*(s)}\qty(s^{1/2}-(s-\eta)^{1/2}) \\
			&\lesssim [h_0^*]_{0,\ve,[s,s+1]} s^{\ve-\frac{1}{2}}\eta \\
			&\lesssim \norm{h_0^*}_{\sharp,c,\ve} \mu(s)^c\eta^{\ve+\frac{1}{2}}.
		\end{align*}
		For $s-\eta\geq 1$ we get
		\begin{align*}
			\abs{A_1}&\leq \abs{h_0^*(s)}\int_{s-\eta}^{s} l(\tau)\,d\tau \\
			&\lesssim \abs{h_0^*(s)}\int_{s-\eta}^{s}e^{-\mathfrak{a} \tau}\,d\tau \\
			&\lesssim  \abs{h_0^*(s)} \eta\\
			&\lesssim  \norm{h_0^*}_{\sharp,c,\ve}\mu(s)^c \eta^{\frac{1}{2}+\ve}.
		\end{align*}
		For $s-\eta \in (0,\eta)$ we obtain
		\begin{align*}
			\abs{A_1}\lesssim& \abs{h_0^*(s)}\int_{s-\eta}^{s} \frac{1}{\sqrt{\tau}}\dtau \\
			\lesssim&  [h_0^*]_{0,\ve,[s-\eta,s-\eta+1]} \abs{s-\eta}^{\ve} \eta^{\frac{1}{2}}\\
			\lesssim& \norm{h_0^*}_{\sharp,c,\ve} \mu(s)^c\eta^{\frac{1}{2}+\ve}.
		\end{align*}
		Now we estimate $A_2$. We have
		\begin{align*}
			\abs{A_2}&\leq \norm{h_0^*}_{\sharp,c,\ve} \mu(s)^c \int_{0}^{\eta} \abs{\tau}^{\ve} \abs{l(\tau)} \, d\tau \\
			&\lesssim \norm{h_0^*}_{\sharp,c,\ve} \mu(s)^c \int_{0}^{\eta} \tau^{\ve} \frac{1}{\sqrt{\tau}} \dtau \\
			&\lesssim \norm{h_0^*}_{\sharp,c,\ve} \mu(s)^c \eta^{\frac{1}{2}+\ve}.
		\end{align*}
		Finally, we estimate $A_3$. Using the $L^\infty$ norm of $h_0^*$ for $\tau>1$ and $C^{\ve}$ seminorm for $\tau<1$ we obtain
		\begin{align*}
			\abs{A_3}
			\lesssim&\int_{\eta}^{s}\abs{h_0^*(s)-h_0^*(s-\tau)}\abs{l(\tau)-l(\tau-\eta)}\dtau\\
			\lesssim&\norm{h_0^*}_{\sharp,c,\ve}\int_{\eta}^{s} \abs{\tau}^{\ve} 		\abs{l(\tau)-l(\tau-\eta)}\dtau \\
			\lesssim&\norm{h_0^*}_{\sharp,c,\ve} \int_{\eta}^{s}\abs{\tau}^{\ve} [\tau^{-1/2}-(\tau-\eta)^{-1/2}] \dtau\\ 
			\lesssim&\norm{h_0^*}_{\sharp,c,\ve} \eta \int_{\eta}^{s}\abs{\tau}^{\ve} \tau^{-3/2} \dtau  \\
			\lesssim& \norm{h_0^*}_{\sharp,c,\ve} \eta^{\frac{1}{2}+\ve}  \\
			\lesssim& \norm{h_0^*}_{\sharp,c,\ve} \mu(s-1)^c \eta^{\frac{1}{2}+\ve}\\
			\lesssim& \norm{h_0^*}_{\sharp,c,\ve} \mu(s)^c \eta^{\frac{1}{2}+\ve},
		\end{align*}
		Combining the bounds on $A_1,A_2,A_3$ and  we obtain
		\begin{align}\label{betaCalphaEst}
			\abs{\beta_2(s)-\beta_2(s-\eta)}\lesssim \norm{h_0^*}_{\sharp,c,\ve}\mu(s)^c\abs{\eta}^{\frac{1}{2}+\ve}.
		\end{align}
		Finally, from (\ref{betaLinfEst}), (\ref{HolderBoundbeta1}) and (\ref{betaCalphaEst}) we obtain
		\begin{align*}
			\norm{\beta}_{\sharp,c,\frac{1}{2}+\ve}\lesssim \norm{h_0^*}_{\sharp,c,\ve} 
		\end{align*}
		Going back to the original variable $t$ using (\ref{covbetaLambda}), we obtain 
		\begin{align*}
			\norm{\Lambda}_{\sharp,c,\frac{1}{2}+\ve}\lesssim \norm{h_0^*}_{\sharp,c,\ve}, 
		\end{align*}
		and recalling (\ref{normh0*}) the proof of (\ref{estinvLambda}) is complete. 
	\end{proof}
	We proceed to prove the second part of Proposition \ref{Proposition:invj}: in case $h\in X_{\sharp,c,\frac{1}{2}+\ve}$, then $\Lambda$ is differentiable and $\dot \Lambda \in X_{\sharp,c,\ve}$.
	\begin{proof}[Proof of (\ref{LinEstdotLambda<h})]
		In the same notation of the previous lemma, we need to prove that $\beta_1(s),\beta_2(s)$ are differentiable and estimate the derivatives. Since
		\begin{align*}
			\beta_1(s){\coloneqq}-\int_{s}^{\infty}h_0^*(\tau)\dtau,
		\end{align*}
		we clearly have $\beta_1(s)\in C^{1}(0,\infty)$ and $\beta_1'(s)=c_\infty h(s)\in X_{\svvc}$ by hypothesis. To analyze $\beta_2$, following \cite[Theorem 19]{HardyLitt}, we introduce for any $\epsilon\geq 0$ the function
		\begin{align*}
			\beta_{2,\eps}(s)= \int_{0}^{s-\eps} h_0^*(\tau) l(s-\tau)\dtau,
		\end{align*}
		so that $\beta_{2,0}(s)=\beta_2(s)$. Since $\sigma(\tau)\in C^1$, we can differentiate $\beta_{2,\eps}(s)$ to obtain
		\begin{align*}
			\beta'_{2,\eps}(s)=&h_0^*(s-\eps)l(\eps)+\int_0^{s-\eps} h_0^*(\tau) l'(s-\tau)\, d\tau\\
			=&-[h_0^*(s)-h_0^*(s-\eps)]l(\eps)+l(s-\eps)h_0^*(s)\\
			&+\int_0^{s-\eps} [h_0^*(\tau)-h_0^*(s)] l'(s-\tau)\, d\tau.
		\end{align*}
		Observe that we can choose the extension $h_0^*$ such that $h_0^*(s)=o(s^{1/2})$ for $s\to 0$. Since $h_0^*\in X_\svvc$, when $\eps\to 0$ the right-hand side tends uniformly to
		\begin{align*}
			l(s)h_0^*(s)+g(s),
		\end{align*}
		where
		\begin{align*}
			g(s){\coloneqq}\int_0^{s} [h_0^*(\tau)-h_0^*(s)]l'(s-\tau)\dds.
		\end{align*}
		By hypothesis and the choice of the extension we have $l(s)h_0^*(s)\in X_{\sharp,c,\frac{1}{2}+\ve}$. Also, the function $g(s)$ is continuous since $h_0^*(s)\in C^{\frac{1}{2}+\ve}$. 
		\begin{align*}
			\beta_2(s_1)-\beta_2(s_2)&=\lim\limits_{\eps \to 0}\qty(\beta_{2,\eps}(s_1)-\beta_{2,\eps}(s_2))\\
			&=\lim\limits_{\eps \to 0}\int_{s_1}^{s_2} \beta_{2,\eps}'(\tau)\dtau \\
			&=\int_{s_1}^{s_2}l(\tau)h_0^*(\tau)+g(\tau)\dtau,
		\end{align*} 
		hence
		\begin{align*}
			l(s)h_0^*(s)+g(s)=\beta_2'(s).
		\end{align*}
		It remains to prove that $g(s)\in X_\svc$. 
		Using the asymptotic of $\sigma'(t)$ and the assumption (\ref{condc1c2lambda1}) with $\mathfrak{a}$ as in (\ref{condc,a,lambda_1}) we have
		\begin{align}\label{beta'estInf}
			\abs{g(s)}\lesssim & [h]_{0,\frac{1}{2}+\ve,[s-1,s]}\int_{s-1}^s l'(s-\tau) \abs{s-\tau}^{\frac{1}{2}+\ve} \dtau\\\nonumber
			&+\norm{h}_{\sharp,c,\frac{1}{2}+\ve}\int_0^{s-1}l'(s-\tau)\mu(\tau)^c \, d\tau\\\nonumber
			\lesssim &\norm{h}_{\sharp,c,\frac{1}{2}+\ve}	\qty[\mu(s)^c \int_{0}^1 \abs{w}^{-1+\ve} \, dw +\int_0^{s}e^{-2\g c \tau} e^{-\mathfrak{a}(s-\tau)} ]\, d\tau\\\nonumber
			\lesssim & \norm{h}_{\sharp,c,\frac{1}{2}+\ve}\mu(s)^c.
		\end{align}
		We write
		\begin{align*}
			g(s-\eta)-g(s)=&\int_0^s \qty[h(s)-h(\tau)]l'(s-\tau) \,d\tau - \int_{0}^{s-\eta} \qty[h(s-\eta)-h(\tau)]l'(s-\eta-\tau)\,d\tau\\
			=& \int_0^{s}\qty[h(s)-h(s-u)]l'(u)\ddu - \int_\eta^s [h(s-\eta)-h(s-u)]l'(u-\eta) \ddu \\
			=& -\int_{\eta}^s[h(s-\eta)-h(s-u)]\qty[l'(u-\eta)-l'(u)] \ddu \\
			&+\int_{\eta}^{s}[h(s)-h(s-\eta)]l'(u)\ddu + \int_{0}^\eta [h(s)-h(s-u)]l'(u)\ddu \\
			=:&B_1(s,\eta)+B_2(s,\eta)+B_3(s,\eta).
		\end{align*}
		Using again assumption (\ref{condc1c2lambda1}) we get
		\begin{align*}
			\abs{B_1}\lesssim & \norm{h_0^*}_{0,\frac{1}{2}+\ve,[s-1,s]} \int_{\eta}^1 \abs{u-\eta}^{\frac{1}{2}+\ve}\abs{(u-\eta)^{-3/2}-u^{-3/2}}\ddu\\
			& +\norm{h}_{\sharp,c,\frac{1}{2}+\ve} \int_{1}^s \mu(s-u)\eta \frac{e^{-\mathfrak{a}(u-\eta)}-e^{-\mathfrak{a}u}}{\eta}	\ \ddu \\
			&\lesssim \norm{h_0^*}_{\sharp,c,\frac{1}{2}+\ve}\mu(s)^c \eta^{\ve}.
		\end{align*}
		Also
		\begin{align*}
			\abs{B_2}&\lesssim  \abs{h_0^*(s)-h_0^*(s-\eta)}\eta^{-1/2}\\
			&\lesssim \norm{h_0^*}_{\sharp,c,\frac{1}{2}+\ve}\mu(s)^c\eta^{\eps},
		\end{align*}
		and
		\begin{align*}
			\abs{B_3}&\lesssim \norm{h_0^*}_{\sharp,c,\frac{1}{2}+\ve}\mu(s)^c\int_{0}^{\eta}u^{-1+\ve}\ddu\\
			&\lesssim \norm{h_0^*}_{\sharp,c,\frac{1}{2}+\ve}\mu(s)^c\eta^{\ve}.
		\end{align*}
		This proves
		\begin{align*}
			\abs{g(s)-g(s-\eta)}\lesssim \mu(s)^c\norm{h_0^*}_{\sharp,c,\frac{1}{2}+\ve} \abs{\eta}^{\ve}.
		\end{align*}
		Combining it with (\ref{beta'estInf}) we obtain
		\begin{align*}
			\norm{g}_{\sharp,c,\ve}\lesssim \norm{h_0^*}_{\sharp,c,\frac{1}{2}+\ve}.
		\end{align*}
		Summing up the estimates for $\beta_1'(s)$ and $\beta_2'(s)=l(s) h_0^*(s)+g(s)$ we obtain
		\begin{align*}
			\norm{\beta'(s)}_\svc \lesssim \norm{h_0^*}_\svvc.
		\end{align*}
		Finally, in the original variable $t$, using (\ref{covbetaLambda}) and (\ref{normh0*}), we obtain the bound (\ref{LinEstdotLambda<h}).
	\end{proof}
	
	\begin{remark}[the initial datum $J_1(x,t_0)$]
		From the proof of Proposition \ref{Proposition:invj} we have $\JJJ(t_0,x)=\int_0^1 h^*(s) I(x,\tau-s)\dds$ where $h_0^*$ is an arbitrary smooth function with $h_0^*(t)=o(t^{1/2})$ for $t\to 0$ and $h_0^*(1)=h(t_0)$, connecting to $h(t)$ at $t=t_0$ to maintain the $C^{\ve}$ regularity of $h$. We observe by estimate (\ref{EstJ1fromdotLambda}) that 
		$$
		\norm*{J_1(\cdot ,t_0)}_{L^\infty(\Om)}\lesssim \norm*{\JJJ[\dot \Lambda](\cdot ,t_0)}_{L^\infty(\Om)}\lesssim \abs*{\dot \Lambda(t_0)}\lesssim \mu_0(t_0)^{l_1}.
		$$ 
		Thus, our initial datum remains positive provided that $t_0$ is fixed sufficiently large.
	\end{remark}
	\appendix
	
	\section{Properties of the Robin function $H_\gamma(x,x)$.}\label{app:propH}
	In this appendix we prove some properties of the Robin function that we use in our construction. We recall that the Green function associated to the operator $-\Delta-\gamma$ satisfies
	\begin{align}\label{eqGgamma}
		&-\Delta_x G_\gamma(x,y)-\gamma G_{\gamma}(x,y)={4\pi  \alpha_3} \delta(x-y) \inn \Omega,\\\nonumber
		&G(\cdot,y)=0 \onn \partial\Omega.
	\end{align}
	As usual, we split $$G_\gamma(x,y)=\Gamma(x-y)-H_\gamma(x,y)\quad \text{where}\quad \Gamma(x)=\frac{\at}{\abs{x}},$$ 
	and the regular part $H_\gamma(x,y)$ satisfies
	\begin{align*}
		&-\Delta_x H_\gamma(x,y)-\gamma H_\gamma(x,y)=-\gamma \Gamma(x-y)\inn \Omega,\\
		&H_\gamma(\cdot,y)=\Gamma(\cdot-y)\onn \partial \Omega,
	\end{align*} 
	for any fixed $y\in \Omega$.
	We recall (from \cite{nearcrit} and reference therein) the following properties of $R_\g(x){\coloneqq}H_\gamma(x,x)$:
	\begin{enumerate}
		\item $R_\g(x)\in C^\infty(\Omega)$
		\item $\partial_\gamma R_\gamma(x)<0$ and belongs to $C^{\infty}(\Omega)$.
		\item for each $\gamma \in (0,\lambda_1)$ fixed, 
		$R_\gamma(x)\to +\infty$ as $x\to \partial \Omega$
	\end{enumerate}
	\begin{lemma}[Behavior near the first eigenvalue]\label{asycloselambda}
		The function $H_\gamma(x,y)$ satisfies
		\begin{align}\label{asygammatolambda1}
			H_\gamma(x,y)\sim -\frac{4\pi\alpha_3}{\lambda_1 - \gamma} \phi_1(y)\phi_1(x),\quad \ass \gamma \nnearrow \lambda_1.
		\end{align}
	\end{lemma}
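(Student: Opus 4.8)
The plan is to isolate the simple pole that $G_\g(\cdot,y)$ develops as $\g\to\lambda_1$, after first subtracting a fixed ($\g$-independent) function carrying the Newtonian singularity, and then to read off the leading term of $H_\g=\Gamma-G_\g$.

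First I would fix $y\in\Om$ and choose a cut-off $\zeta\in C_c^\infty(\RR^3)$ with $\zeta\equiv1$ near the origin and $\supp\zeta$ contained in a ball of radius $<\dist(y,\partial\Om)$, and set $\Phi_y(x){\coloneqq}\Gamma(x-y)\zeta(x-y)$. Then $\Phi_y\in L^2(\Om)$, $\Phi_y\equiv0$ near $\partial\Om$, and a direct computation gives
\begin{align*}
	(-\Delta-\g)\Phi_y=4\pi\at\,\delta_y+g_y-\g\Phi_y ,\qquad g_y{\coloneqq}-2\nabla\Gamma(\cdot-y)\cdot\nabla\zeta(\cdot-y)-\Gamma(\cdot-y)\,\Delta\zeta(\cdot-y)\in C_c^\infty(\Om),
\end{align*}
where $g_y$ is smooth because $\nabla\zeta$ and $\Delta\zeta$ vanish near the origin. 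Since $G_\g(\cdot,y)$ solves $(-\Delta-\g)G_\g(\cdot,y)=4\pi\at\,\delta_y$ with zero boundary data, the difference $w_\g{\coloneqq}G_\g(\cdot,y)-\Phi_y$ lies in $H_0^1(\Om)$ — the singularities cancel, since $G_\g(\cdot,y)-\Gamma(\cdot-y)=-H_\g(\cdot,y)\in C^{0,1}(\bar\Om)$ and the remaining boundary traces cancel — and solves $(-\Delta-\g)w_\g=F_{\g,y}$ with $F_{\g,y}{\coloneqq}\g\Phi_y-g_y\in L^2(\Om)$, whose norm is bounded uniformly for $\g$ near $\lambda_1$.

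Next I would invoke that $\lambda_1$ is simple and $\lambda_2>\lambda_1$, so near $\g=\lambda_1$ the resolvent admits the Laurent decomposition $(-\Delta-\g)^{-1}=\tfrac{1}{\lambda_1-\g}P_1+A_\g$, where $P_1f=\langle f,\phi_1\rangle\phi_1$ and $A_\g$ is holomorphic; by the spectral theorem $A_\g:L^2(\Om)\to H^2(\Om)\cap H_0^1(\Om)$ is bounded uniformly for $\g$ in a neighbourhood of $\lambda_1$, the distance from $\g$ to the remaining spectrum staying $\ge\lambda_2-\lambda_1>0$. Applying this to $F_{\g,y}$ and using $\langle F_{\g,y},\phi_1\rangle=4\pi\at\,\phi_1(y)-(\lambda_1-\g)\langle\Phi_y,\phi_1\rangle$ (an integration by parts exploiting $-\Delta\phi_1=\lambda_1\phi_1$ and the definition of $g_y$), I get
\begin{align*}
	G_\g(x,y)=\frac{4\pi\at\,\phi_1(x)\phi_1(y)}{\lambda_1-\g}+\Phi_y(x)-\langle\Phi_y,\phi_1\rangle\,\phi_1(x)+(A_\g F_{\g,y})(x).
\end{align*}
Because $\norm{A_\g F_{\g,y}}_{H^2(\Om)}\le C\norm{F_{\g,y}}_{L^2(\Om)}\le C$ uniformly and $H^2(\Om)\hookrightarrow L^\infty(\Om)$ in dimension three, the last term is bounded in $L^\infty$ uniformly as $\g\to\lambda_1$.

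Subtracting from $\Gamma(x-y)$ then yields
\begin{align*}
	H_\g(x,y)=-\frac{4\pi\at\,\phi_1(x)\phi_1(y)}{\lambda_1-\g}+\Gamma(x-y)\bigl(1-\zeta(x-y)\bigr)+\langle\Phi_y,\phi_1\rangle\,\phi_1(x)-(A_\g F_{\g,y})(x),
\end{align*}
where the second term vanishes near $x=y$ and is otherwise bounded, and the last two terms are bounded uniformly in $\g$ near $\lambda_1$. Hence $H_\g(x,y)+\tfrac{4\pi\at}{\lambda_1-\g}\phi_1(x)\phi_1(y)=O(1)$ as $\g\to\lambda_1^-$, for each fixed $x,y\in\Om$ and uniformly on compact subsets of $\Om\times\Om$; since $\phi_1>0$ in $\Om$ this is precisely \eqref{asygammatolambda1}, and it is consistent with $R_\g(0)\to-\infty$ used in Lemma~\ref{Lemma:gammaexists}. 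The one point requiring care is exactly this uniform $O(1)$ control of the non-polar part: a naive termwise use of the eigenfunction expansion $G_\g=4\pi\at\sum_k(\lambda_k-\g)^{-1}\phi_k(x)\phi_k(y)$ is not legitimate near the diagonal, where that series is not absolutely convergent — which is why the argument should be routed through the reduced resolvent $A_\g$ together with the $H^2$-estimate above rather than through the bare series.
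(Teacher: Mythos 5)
Your proof is correct and reaches the conclusion by a genuinely different route than the paper. The paper writes $H_\gamma(x,y)=\alpha(y)\phi_1(x)+H_0(x,y)+h_{\perp,\gamma}(x,y)$, where $H_0$ is the $\gamma=0$ harmonic regular part and $h_{\perp,\gamma}$ is chosen $L^2$-orthogonal to $\phi_1$; the pole is read off explicitly from $\alpha(y)=\int_\Omega g_1\phi_1 + \tfrac{4\pi\alpha_3\phi_1(y)}{\gamma-\lambda_1}$, and the boundedness of $h_{\perp,\gamma}$ is obtained by testing its PDE, invoking the improved Poincar\'e inequality $\|\nabla h_\perp\|_2^2\geq\lambda_2\|h_\perp\|_2^2$, and then upgrading to $L^\infty$ by elliptic estimates. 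You instead subtract a compactly supported cut-off $\Phi_y=\Gamma(\cdot-y)\zeta$ of the fundamental solution so that $w_\gamma=G_\gamma(\cdot,y)-\Phi_y\in H_0^1$ solves $(-\Delta-\gamma)w_\gamma=F_{\gamma,y}\in L^2$, and then you read off the simple pole from the Laurent decomposition $(-\Delta-\gamma)^{-1}=\tfrac{1}{\lambda_1-\gamma}P_1+A_\gamma$, using the integration-by-parts identity $\langle F_{\gamma,y},\phi_1\rangle=4\pi\alpha_3\phi_1(y)-(\lambda_1-\gamma)\langle\Phi_y,\phi_1\rangle$ to extract the residue, and the uniform $H^2$-bound on the reduced resolvent together with $H^2(\Omega)\hookrightarrow L^\infty(\Omega)$ for the remainder. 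Underneath, both arguments rest on the same fact — that the spectral gap $\lambda_2-\lambda_1>0$ makes the $\phi_1$-orthogonal part behave uniformly — but yours is packaged at the level of the abstract resolvent, which is a little cleaner, avoids introducing the intermediate harmonic function $H_0$, and gives the uniformity on compacta of $\Omega\times\Omega$ without further bookkeeping, whereas the paper's version keeps everything at the level of explicit PDE identities for the pieces $\alpha(y)$, $H_0$, $h_{\perp,\gamma}$, which makes the constant in front of the pole visibly computable. Your closing caveat about the termwise eigenfunction series of $G_\gamma$ not being absolutely convergent near the diagonal is accurate, though not strictly needed: the paper's proof, like yours, never uses that series.
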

	\begin{proof}
		We decompose $H_\gamma$ as
		\begin{align}\label{decH}
			H_\gamma(x,y)=\alpha(y) \phi_1(x)+H_0(x,y)+h_{\perp,\gamma}(x,y)
		\end{align}
		where
		\begin{align*}
			\alpha(y):= \int_{\Omega} \qty(H_\gamma(x,y)-H_0(x,y))\phi_1(x)\ddx,
		\end{align*}
		and $H_0$ satisfies
		\begin{align*}
			\Delta_x H_0(x,y)=0 \inn \Omega, \quad H_0(x,y)=\frac{\alpha_3}{\abs{x-y}} \onn \partial \Omega.
		\end{align*}
		Thus, for any fixed $y\in \Omega$, $h_{\perp,\gamma}(x,y)$ is the solution to
		\begin{align}\label{eqhperp}
			&\Delta_x h_{\perp,\gamma}+\gamma h_{\perp,\gamma}=\gamma G_0(x,y)+\alpha(y)\qty(\lambda_1-\gamma)\phi_1(x)\inn \Omega\\\nonumber
			&h_{\perp,\gamma}(x,y)=0\onn \partial \Omega.
		\end{align}
		By definition of $\alpha(y)$ we have
		\begin{align}\label{A4}
			\int_\Omega h_{\perp,\gamma}(x,y)\phi_1(x)\ddx &= \int_{\Omega}\qty(H_\gamma(x,y)-H_0(x,y))\phi_1(x)\ddx - \alpha(y)\norm{\phi_1}_2^2\\\nonumber
			&=0.
		\end{align}
		Testing \reff{eqGgamma} against $\phi_1$ we get
		\begin{align*}
			\int_{\Omega} G_\gamma(x,y)\phi_1(x)\ddx =\frac{4\pi \alpha_3   }{\lambda_1-\gamma}\phi_1(y).
		\end{align*}
		Also, testing (\ref{eqhperp}) against $\phi_1$ and using (\ref{A4}) we obtain
		\begin{align*}
			0=&(-\lambda_1+\gamma)\int_{\Omega} h_{\perp,\gamma}(x,y)\phi_1(x)\ddx\\
			=&\gamma \int_{\Omega} \phi_1(x)G_0(x,y)\ddx + \alpha(y)(\lambda_1-\gamma).
		\end{align*}
		Thus, we have
		\begin{align}\label{A5}
			\alpha(y)&=-\frac{\gamma}{\lambda_1-\gamma}\int_{\Omega}G_0(x,y)\phi_1(x)\ddx\\\nonumber
			&=-\frac{\gamma}{\lambda_1}\frac{4\pi \alpha_3 \phi_1(y)}{\lambda_1-\gamma},
		\end{align}
		and plugging (\ref{A5}) in (\ref{decH}) we obtain
		\begin{align}
			H_\gamma(x,y)=-\frac{\gamma}{\lambda_1}\frac{4\pi \alpha_3}{\lambda_1-\gamma}\phi_1(y)\phi_1(x)+H_0(x,y)+h_{\perp,\gamma}(x,y).
		\end{align}
		We notice that only the first and last term in the right-hand side depends on $\gamma$.
		Hence, to prove (\ref{asygammatolambda1}) we just need to prove that $h_{\perp,\gamma}(x,y)$ is bounded as $\gamma\to \lambda_1^{-}$. This is a consequence of the Poincar\'e inequality applied to functions in $H_0^1 $ which are orthogonal to $\phi_1$. Indeed, expanding $h_{\perp,\gamma}$ in the $L^2$-basis made of Laplacian eigenvalues we get
		\begin{align*}
			\norm{\nabla h_{\perp,\gamma}}_2^2&=\int_{\Omega} h_{\perp,\gamma}\qty(-\Delta h_{\perp,\gamma})\ddx    \\
			&=  \int_\Omega \qty(	\sum_{k\geq 2} \alpha_k \phi_k(x)	) \qty(  \sum_{k\geq 2} \alpha_k \phi_k(x) \lambda_k   ) \ddx \\
			&=\sum_{k\geq 2} \alpha_k^2 \lambda_k \\
			&\geq \lambda_2 \norm{h_{\perp,\gamma}}_2^2.
		\end{align*}
		Now, testing equation \reff{eqhperp} against $h_{\perp,\gamma}$, using (\ref{A4}) and Cauchy–Schwarz inequality we get
		\begin{align*}
			\qty(\lambda_2-\gamma) \norm{h_{\perp,\gamma}}_2^2
			&\leq \norm{\nabla h_{\perp,\gamma}}_2^2 - \gamma \norm{h_{\perp,\gamma}}_2^2\\ 
			&=\gamma \int_{\Omega} \qty(H_0(x,y)-\frac{\alpha_3}{\abs{x-y}}	)h_{\perp,\gamma}(x,y)\ddx \\
			&\leq \gamma \norm{H_0(\cdot,y)-\frac{\alpha_3}{\abs{\cdot -y}}}_2 \norm{h_{\perp,\gamma}}_2.
		\end{align*}
		We conclude that
		\begin{align*}
			\norm{h_{\perp,\gamma}}_2 &\leq \frac{\gamma}{\lambda_2-\gamma}\norm{H_0(\cdot,y)-\frac{\alpha_3}{\abs{\cdot -y}}}_2 \\&\leq \frac{\lambda_1}{\lambda_2 -\lambda_1}\norm{H_0(\cdot,y)-\frac{\alpha_3}{\abs{\cdot -y}}}_2,
		\end{align*}
		with the right-hand side independent of $\gamma$.
		By standard elliptic estimates we get 
		\begin{align*}
			\norm{h_{\perp,\gamma}(\cdot,y)}_\infty \leq K_\Om(y),
		\end{align*}
		with $K$ independent of $\gamma$. This concludes the proof.
	\end{proof}
	The following lemma gives the asymptotic behavior of $\gamma^*(x)$ as $x$ approaches the boundary $\pp \Omega$.
	\begin{lemma}\label{asymptgamma*}
		The unique number $\gamma^*(x)\in (0,\lambda_1)$ defined by the relation
		\begin{align*}
			H_{\gamma^*}(x,x)=0
		\end{align*}
		satisfies
		\begin{align}\label{asygamma}
			\gamma^*(x)\sim \lambda_1 - 8\pi \qty[\partial_\nu \phi_1(x')]^2 d(x,\partial \Omega)^3 \ass x\to x'\in \partial \Omega,
		\end{align}
		where $d(x,\partial \Omega)=\abs{x-x'}$.
	\end{lemma}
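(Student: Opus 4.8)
The plan is to substitute the explicit decomposition of $H_\g$ obtained in the proof of Lemma \ref{asycloselambda} into the defining relation $R_{\gamma^*(x)}(x)=0$ and then solve for $\gamma^*(x)$ by a \emph{dominant balance} as $x\to x'\in\pp\Om$. Setting $x=y$ in that decomposition gives, for every $\g\in(0,\lambda_1)$, the exact identity
\begin{align*}
	R_\g(x)=-\frac{c_3}{\lambda_1-\g}\,\phi_1(x)^2+\mathcal{A}(x)+h_{\perp,\g}(x,x),
\end{align*}
where $c_3=4\pi\alpha_3$, the quantity $\mathcal{A}(x):=R_0(x)+\phi_1(x)\big(\int_\Om\tfrac{\alpha_3\phi_1(z)}{\abs{z-x}}\ddz-\int_\Om H_0(z,x)\phi_1(z)\ddz\big)$ is independent of $\g$, and $h_{\perp,\g}(x,x)=O(1)$ uniformly as $\g\to\lambda_1^-$ (this uniformity follows from the proof of Lemma \ref{asycloselambda}: the $\dot H^1$ and $W^{2,p}$ bounds there are uniform in $y$ and $\g$ because $0\le H_0(z,y)\le\Gamma(z-y)$ and $\Gamma(\cdot-y)\in L^2(\Om)\cap L^p(\Om)$, $p<3$, uniformly in $y$). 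Only the first term is singular in $\g$, so $R_{\gamma^*(x)}(x)=0$ is equivalent to
\begin{align*}
	\lambda_1-\gamma^*(x)=\frac{c_3\,\phi_1(x)^2}{\mathcal{A}(x)+h_{\perp,\gamma^*(x)}(x,x)}.
\end{align*}

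The first step is to check that $\gamma^*(x)\to\lambda_1^-$ as $x\to\pp\Om$, so that the $\g\to\lambda_1^-$ expansions are applicable at $\g=\gamma^*(x)$. Fix $\g_0\in(0,\lambda_1)$; by property $(3)$ of the Robin function, $R_{\g_0}(x)\to+\infty$, hence $R_{\g_0}(x)>0$ for $x$ close to $\pp\Om$; since $\pp_\g R_\g(x)<0$ and $R_{\gamma^*(x)}(x)=0$, this forces $\gamma^*(x)>\g_0$ in a neighbourhood of $\pp\Om$, and as $\g_0<\lambda_1$ is arbitrary we conclude $\gamma^*(x)\to\lambda_1$.

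The second step is to insert the two classical boundary expansions. By the Hopf lemma and $C^2$-regularity of $\pp\Om$, $\phi_1(x)=\abs{\pp_\nu\phi_1(x')}\,d(x,\pp\Om)+o\big(d(x,\pp\Om)\big)$ as $x\to x'\in\pp\Om$ ($\nu$ the outer unit normal), so $\phi_1(x)^2=[\pp_\nu\phi_1(x')]^2 d(x,\pp\Om)^2(1+o(1))$. On the other hand, the regular part of the Green function of $-\Delta$ in our normalization satisfies $R_0(x)=\tfrac{\alpha_3}{2}\,d(x,\pp\Om)^{-1}(1+o(1))$ as $x\to\pp\Om$, the sharp form of property $(3)$. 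The remaining terms of $\mathcal{A}(x)$ are $O\big(d(x,\pp\Om)\big)$, since $\phi_1(x)=O(d(x,\pp\Om))$ while $\int_\Om\tfrac{\alpha_3\phi_1(z)}{\abs{z-x}}\ddz$ and $\int_\Om H_0(z,x)\phi_1(z)\ddz$ are bounded uniformly in $x\in\Om$ (using $0\le H_0(z,x)\le\Gamma(z-x)$ and $\Gamma(\cdot-x)\in L^1(\Om)$ uniformly); and $h_{\perp,\gamma^*(x)}(x,x)=O(1)$. Hence $\mathcal{A}(x)+h_{\perp,\gamma^*(x)}(x,x)=R_0(x)(1+o(1))=\tfrac{\alpha_3}{2}d(x,\pp\Om)^{-1}(1+o(1))$, and substituting into the last displayed identity,
\begin{align*}
	\lambda_1-\gamma^*(x)=\frac{c_3\,[\pp_\nu\phi_1(x')]^2\, d(x,\pp\Om)^3}{\alpha_3/2}(1+o(1))=8\pi\,[\pp_\nu\phi_1(x')]^2\,d(x,\pp\Om)^3(1+o(1)),
\end{align*}
which is exactly $(\ref{asygamma})$.

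The main obstacle is precisely the sharp boundary rate $R_0(x)\sim\tfrac{\alpha_3}{2}d(x,\pp\Om)^{-1}$: not merely $R_0\to+\infty$, but with the constant $\alpha_3/2$, since it is this constant (against $c_3=4\pi\alpha_3$) that produces the numerical factor $8\pi$. I would prove it by flattening $\pp\Om$ near $x'$ with a smooth diffeomorphism, decomposing $H_0(x,x)$ into the explicit half-space (method of images) contribution, which after the $c_3=\alpha_3\omega_3$ normalization equals $\tfrac{\alpha_3}{2}d(x,\pp\Om)^{-1}$, plus a remainder governed by the curvature of $\pp\Om$ that is bounded by $o(d(x,\pp\Om)^{-1})$ via Schauder or barrier estimates; alternatively one may quote this asymptotics from the literature on the Brezis--Nirenberg problem, cf. \cite{nearcrit} and the references therein. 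Everything else — the dominant balance, the uniform bound on $h_{\perp,\g}(x,x)$, and the Hopf-type expansion of $\phi_1$ — is routine.
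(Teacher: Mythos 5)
Your proof is correct, and the constant comes out for the same reason: $c_3=4\pi\alpha_3$ against the sharp rate $R_0(x)\sim\tfrac{\alpha_3}{2}d(x,\pp\Om)^{-1}$ and $\phi_1(x)^2\sim[\pp_\nu\phi_1(x')]^2 d(x,\pp\Om)^2$. But the route is genuinely different from the paper's. You reuse the $\gamma\to\lambda_1^-$ decomposition of Lemma \ref{asycloselambda}, isolate the $\phi_1$-resonance, and then appeal to the classical boundary asymptotics of $R_0$, which you correctly identify as the crux. The paper instead bypasses $R_0$ entirely: it introduces a new decomposition $H_\gamma(x,y)=\tfrac{\alpha_3}{\abs{x''-y}}+F(x,y)$ with $x''$ the reflection of $x$ across $\pp\Om$, splits $F$ along $\phi_1$ with a uniformly bounded remainder, and sets $y=x$; the leading term $\tfrac{\alpha_3}{2d(x,\pp\Om)}$ is then exact and no separate lemma on $R_0$ is needed. (A second step removes the geometric restriction that reflections land outside $\Om$ via a boundary cut-off; in your version this technicality is absorbed into whatever proof of the $R_0$ rate you supply, and your sketch --- flatten the boundary, method of images, curvature remainder --- is precisely the paper's Step 1 at $\gamma=0$.) In short, the two arguments are the same dominant balance, but the paper's is self-contained while yours delegates the sharp constant to an external fact about $R_0$. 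One subtlety you handled correctly: Lemma \ref{asycloselambda} as stated gives $\norm{h_{\perp,\gamma}(\cdot,y)}_\infty\leq K_\Om(y)$ with a $y$-dependent constant, so the uniformity in $x$ you need is not literally in the statement; it does hold, as you note, because the right-hand side of the $h_{\perp,\gamma}$-equation is $-\gamma G_0(\cdot,y)+\alpha(y)(\lambda_1-\gamma)\phi_1$ with $0<G_0(\cdot,y)<\Gamma(\cdot-y)$ giving a $y$-uniform $L^p$ bound ($p<3$) and $\alpha(y)(\lambda_1-\gamma)$ bounded since the resonant term cancels.
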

	\begin{proof}
		We divide the proof in two steps. Given $x\in \Omega$ let $D_x \subset  \partial \Omega$  the set of points $x'$ such that
		\begin{align*}
			\abs{x-x'}=d(x,\partial \Omega).
		\end{align*}
		If $D_x$ is not a singleton we choose the unique $x'=(x_1',x_2',x_3')$ with the property $x_i'\leq y_i'$ for all components $i=1,2,3$ and point $y'\in D_x$. This defines $x'{\coloneqq}x'(x)$ uniquely.

		\emph{Step 1}. Firstly we prove \reff{asygamma} for domains such that, for all $x\in \Omega$, the reflection point $x''(x){\coloneqq}2x'(x)-x$ satisfies
		\begin{align}\tag{P}\label{prP}
			x''\notin \Omega.
		\end{align}
		We decompose
		\begin{align}\label{dechgamma}
			H_\gamma(x,y)=\frac{\alpha_3}{\abs{x''-y}}+F(x,y),
		\end{align}
		where $F$ satisfies
		\begin{align}\label{eqF}
			&\Delta_x F+ \gamma F = \gamma \alpha_3 g_1(x,y)\inn \Omega,\\\nonumber
			&F(x,y)=0 \onn \partial \Omega,
		\end{align}
		and
		\begin{align*}
			g_1(x,y):= \frac{1}{\abs{x-y}}- \frac{1}{\abs{x''-y}}	
		\end{align*}
		We write
		\begin{align*}
			F(x,y)=\alpha(y)\phi_1(x)+w_{\perp}(x,y)
		\end{align*}
		and select $\alpha(y)$ so that $\int_{\Omega} w_\perp(x,y) \phi_1(x)\ddx =0$. By decomposition \reff{dechgamma} and \reff{gdec} we obtain
		\begin{align*}
			\alpha(y)&=\int_\Omega \qty(F(x,y) - w_\perp(x,y)) \phi_1(x)\ddx\\&=\int_\Omega \qty(H_\gamma(x,y) - \frac{\alpha_3}{\abs{x-y''}})\phi_1(x)\ddx \\&= \int_{\Omega} g_1(x,y) \phi_1(x)\ddx - \int_{\Omega} G_\gamma(x,y)\phi_1(x)\ddx \\&=
			\int_{\Omega} g_1(x,y) \phi_1(x)\ddx +\frac{4\pi \alpha_3 \phi_1(y)}{\gamma - \lambda_1}
		\end{align*}
		The equation for $w_\perp$ is
		\begin{align*}
			\Delta w_{\perp}+\g w_{\perp} =\alpha(y)(\lambda_1-\g)\phi_1+\g \alpha_3 g_1 (x)
		\end{align*}
		Multiplying this equation by $w_{\perp}$ and integrating by parts we get
		\begin{align*}
			\norm{\nabla w_{\perp}(\cdot y)}_2^2- \gamma \norm{w_{\perp}(\cdot,y)}_2^2 = -\gamma \alpha_3 \int_{\Omega}g_1(x,y)w(x,y)\ddx.
		\end{align*}
		Using the improved Poincar\'e inequality 
		\begin{align*}
			(\lambda_2-\g)\norm{w_{\perp}}_2^2\leq\norm{\nabla w_{\perp}(\cdot, y)}_2^2- \gamma \norm{w_{\perp}(\cdot,y)}_2^2
		\end{align*}
		and Cauchy–Schwarz we obtain
		\begin{align}\label{boundwg2}
			\norm{w_{\perp}(\cdot,y)}_2\leq \frac{\gamma}{\lambda_2-\gamma}\alpha_3\norm{g_1(\cdot,y)}_2< \frac{\lambda_1}{\lambda_2-\lambda_1}\alpha_3 \norm{g_1(\cdot, y)}_2.	
		\end{align}
		Now, we want to estimate uniformly in $y$ the right-hand side of
		\begin{align*}
			H_\gamma(x,y)=\frac{\alpha_3}{\abs{ x''-y}}+\phi_1(x)\int_\Omega g_1(z,y) \phi_1(z) \ddz - \frac{4\pi \alpha_3 \phi_1(y)\phi_1(x)}{\gamma - \lambda_1}+w_\perp(x,y).
		\end{align*}
		Without loss of generality, suppose $0\in \Omega$. Let $M{\coloneqq}2\text{diam}(\Omega)$ we have
		\begin{align*}
			0<\int_{\Omega} \frac{1}{\abs{x-y}^2}\ddx \leq \int_{B_{M(\Omega)}(y)}  \frac{1}{\abs{x-y}^2} \ddx \leq  C_\Omega.
		\end{align*}
		Let $\Omega''=\qty{	x'' \in \RR^3 : x''=x''(x) \text{ for some } x\in \Omega	}$. We have
		\begin{align*}
			0<\int_{\Omega} \frac{1}{\abs{x''-y}^2}\ddx\leq \int_{\Omega^{''}\cup \Omega}\frac{1}{\abs{x-y}^2}\leq \int_{B_{M_2}}\leq C_{\Omega},
		\end{align*}
		where $M_2=2\diam(\Omega^{''}\cup \Omega)$
		hence we get
		\begin{align*}
			\sup_{y\in \Omega}\norm{g_1(\cdot,y)}_2< C_{\Omega}.
		\end{align*}
		We combine this bound with \reff{boundwg2} to get
		\begin{align*}
			\norm{w_\perp(\cdot,y)}_2\leq K_\Omega,
		\end{align*}
		with $K_\Om$ independent of $y$ and by standard elliptic estimates we get 
		$$
		\sup_{y \in \Omega} \norm{w(\cdot,y)}_\infty \leq K_\Om,
		$$
		with a possibly larger constant $K_\Om$. We conclude that 
		\begin{align}\label{expHgammacloseboundary}
			H_\gamma(x,y)=\frac{\alpha_3}{\abs{x''-y}}+\frac{4\pi \alpha_3 \phi_1(y)\phi_1(x)}{\gamma-\lambda_1}+ \phi_1(x)B(y) + w_\perp(x,y),
		\end{align}
		where
		\begin{align*}
			B(y){\coloneqq}\int_{\Omega}g_1(z,y)\phi_1(z)\ddz,
		\end{align*}
		with $w_\perp(x,y)$ bounded in $\Omega \times \Omega$. Also we notice that
		\begin{align*}
			&0< \int_{\Omega} \phi_1(z) \frac{1}{\abs{z-y}}\ddz \leq \norm{\phi_1}_\infty \int_{B_{M(\Omega)}}  \frac{1}{\abs{z-y}}\ddz \leq C_{\Omega},
		\end{align*}
		and
		\begin{align*}
			0<\int_\Omega \frac{\phi_1(x)}{\abs{ x''(x)-y}}\ddx \leq \norm{\phi_1}_\infty \int_{B_{M''}}\frac{1}{\abs{x-y}} \ddx \leq C_\Omega.
		\end{align*}
		This proves the boundedness of $B(y)$. Now, the equation for $\gamma^*(x)$ reads as
		\begin{align*}
			0=\frac{\alpha_3}{d(x,\partial \Omega)}+ \frac{4\pi \alpha_3 \phi_1(x)^2}{\gamma^*(x)-\lambda_1} +\phi_1(x)B(x)+w_\perp(x,x).
		\end{align*}
		Let $c{\coloneqq}\abs{\partial_\nu \phi_1(x')}$. We expand $\phi_1(x)$ at $x'\in \partial \Omega$ to get
		\begin{align*}
			\frac{8\pi c^2 d(x,\partial \Omega)^3 }{\lambda_1 - \gamma^*(x)}=\qty[1+2c d(x,\partial \Omega)^2  B(x)+2 d(x,\partial \Omega) w(x,x)]\qty(1+O\qty(d(x,\partial \Omega))	)
		\end{align*}
		Since $B(x)$ and $w(x,x)$ are bounded, we conclude that
		\begin{align}\label{finalasygamma}
			\frac{8\pi c^2 d(x,\partial \Omega)^3}{\lambda_1-\gamma^*(x)}\sim 1 \quad \ass x\to x'\in \partial \Omega.
		\end{align}
		\emph{Step 2}. Now, we modify the method in Step 1 to obtain an expansion similar to \reff{expHgammacloseboundary} and conclude that \reff{asygamma} is true for general smooth bounded domains. Let $y\in \Omega_{\epsilon/4}$. Now we prove \reff{asygamma} for all smooth domains $\Omega$. Fix $\epsilon=\epsilon(\Omega)>0$ so small that the set $\Omega_\epsilon{\coloneqq}\{x\in \Omega : d(x,\partial \Omega)<\epsilon\}$ possesses the property \reff{prP} and let $\eta_\epsilon$ be a smooth cut-off function with $\supp(\eta_\epsilon)\subset \Omega_\epsilon$ and $\eta_\epsilon(x)\equiv 1$ for $x\in \Omega_{\epsilon/2}$. We write
		\begin{align*}
			H_\gamma(x,y)=&\eta_{\epsilon}(x)\eta_{\epsilon}(y)H_\gamma(x,y)+\qty(1-\eta_{\epsilon}(x)\eta_{\epsilon}(y))H_\gamma(x,y)\\=&
			\frac{\alpha_3}{\abs{x''-y}}\eta_{\epsilon}(x)\eta_{\epsilon}(y)+F_2(x,y)
		\end{align*}
		where
		\begin{align*}
			F_2(x,y)=\eta_{\epsilon}(x)\eta_{\epsilon}(y)F(x,y)+\qty(1-\eta_{\epsilon}(x)\eta_{\epsilon}(y))H_\gamma(x,y).
		\end{align*}
		We notice that $\eta_{\epsilon}(x)\eta_{\epsilon}(y)F(x,y)$, where $F$ satisfies (\ref{eqF}), is well-defined in $\Omega$ thanks to the cut-off functions. The problem for $F_2$ is
		\begin{align*}
			&\Delta_x F_2(x,y) + \gamma F_2(x,y) =\alpha_3 g_2(x,y)\inn \Omega,\\
			&F_2(x,y)=0\onn \partial \Omega,
		\end{align*}
		where
		\begin{align*}
			g_2(x,y){\coloneqq}g_{2,1}(x,y)+g_{2,2}(x,y)+g_{2,3}(x,y)+g_{2,4}(x,y),
		\end{align*}
		and
		\begin{align*}
			&g_{2,1}(x,y){\coloneqq}\frac{\gamma}{\abs{x-y}},\\& g_{2,2}(x,y){\coloneqq} -\gamma \frac{\eta_{\epsilon}(x)\eta_{\epsilon}(y)}{\abs{x''-y}},\\
			& g_{2,3}(x,y){\coloneqq}2 \eta_\epsilon(y)\frac{\ddiv \eta_\epsilon(x)}{\abs{x''-y}^3},\\
			& g_{2,4}(x,y){\coloneqq}-\eta_{\epsilon}(y) \frac{\Delta_x \eta(x)}{\abs{x''-y}}.
		\end{align*}
		We decompose
		\begin{align*}
			F_2(x,y)= \beta(y)\phi_1(x)+w_2(x,y),
		\end{align*}
		where $\beta$ is chosen such that $\int_\Om w_2(x,y)\phi_1(x)\ddx =0$, that gives
		\begin{align*}
			\beta(y)=\int_{\Omega}F_2(x,y)\phi(x)\ddx =& \int_{\Omega}\phi_1(x)\qty[	-G_{\gamma}(x,y)-\frac{\eta_\epsilon(x)\eta_{\epsilon}(y)}{\abs{x''-y}}
			+\frac{\alpha_3}{\abs{x-y}}	] \ddx \\=& \frac{4\pi \phi_1(y)}{-\lambda_1+\gamma}\int_{\Omega} \frac{\alpha_3 \phi_1(x)}{\abs{x-y}}\ddx - \eta_{\epsilon}(y)\int_{\Omega_\epsilon}\frac{\alpha_3 \eta_\epsilon(x)}{\abs{x''-y}}.
		\end{align*}
		Next we prove that $w_{2}(x,y)$ is uniformly bounded in $\Omega\times \Omega$. Using the improved Poincar\'e inequality and standard elliptic estimates as in Step 1, we reduce the problem to estimate the $L^2$-norm of $g(\cdot,y)$ uniformly in $y\in \Omega_{\epsilon/4}$.
		We have
		\begin{align*}
			&\norm{g_{2,1}}_2^2 = \gamma \int_{\Omega}\frac{1}{\abs{x-y}^2}\ddx \leq \gamma \int_{B_M} \frac{1}{\abs{x-y}^2} \ddx \leq C_{\Omega},\\
			&\norm{g_{2,2}}_2^2 \leq \gamma \int_{\Omega_\epsilon}\frac{1}{\abs{ x''-y}} \ddx \leq \gamma \int_{\Omega^{''}}\frac{1}{\abs{x-y}}\ddx \leq \gamma \int_{B_{M_2}} \frac{1}{\abs{ x-y}^2}\leq C_\Omega, \\
			&\norm{g_{2,4}}_2^2 \leq  \int_{\Omega_\epsilon\setminus \Omega_{\epsilon/2}} \frac{\abs{\Delta \eta_\epsilon(x)}}{\abs{ x''-y}^2}\leq C \epsilon^{-2} \norm{g_{2,2}}_2^2,\\
			&\norm{g_{2,3}}_2^2\leq C \int_{\Omega_\epsilon\setminus \Omega_{\epsilon/2}}  \frac{1}{\abs{ x''-y}^4}\leq C_\Omega \epsilon^{-4} \abs{ \Omega}.
		\end{align*}
		Since $\epsilon$ depends only on $\Omega$ we obtain
		\begin{align*}
			\norm{g_2(\cdot,y)}_2^2< C_{\Omega,\epsilon}.
		\end{align*}
		Now, we prove the boundedness of 
		\begin{align*}
			B_\epsilon(y):= \underbrace{\int_{\Omega} 	\frac{1}{\abs{z-y}} \phi_1(z)\ddz}_{:=B_{1}} -\underbrace{\int_{\Omega}\frac{\eta_{\epsilon(z)}\eta_{\epsilon}(y)}{\abs{ z''-y}}	\phi_1(z)\ddz}_{:=B_{2,\epsilon}}.
		\end{align*}
		Indeed, we have
		\begin{align*}
			&\abs{B_1}\leq\int_{\Omega}\frac{\phi_1(z)}{\abs{z-y}}\ddz \leq \norm{\phi_1}_\infty C_\Omega,
		\end{align*}
		and 
		\begin{align*}
			\abs{B_{2,\epsilon}}&\leq \eta_\epsilon(y)\int_{\Omega_\epsilon}\frac{\phi_1(z)}{\abs{z''-y}} \ddz \\&\leq \norm{\phi_1}_\infty \int_{B_{M_2}}\frac{1}{\abs{z-y}}\ddz \leq C_\Omega.
		\end{align*}
		Finally, the equation for $\gamma^*(x)$ is 
		\begin{align*}
			0=H_{\gamma^*}(x,x)=\frac{1}{2 d(x,\partial \Omega)}+\frac{4\pi \phi_1(x)^2}{\gamma^*(x)-\lambda_1}+B_\epsilon(x)\phi_1(x)+w_2(x,x),
		\end{align*}
		and by the boundedness of $B_\epsilon(x)$ and $w_2(x,x)$ we obtain \reff{finalasygamma}.
	\end{proof}

	{\bf Acknowledgements:}
	The authors acknowledge the support from the Royal Society Research Professorship RP-R1-180114, United Kingdom. G. Ageno acknowledges support from the ERC/UKRI 2208 Horizon Europe Grant SWAT EP/X030644/1 and M. del Pino from the ERC/UKRI Horizon Europe Grant ASYMEVOL EP/Z000394/1. The authors  express gratitude to the anonymous referee for their careful reading, which contributed to the improvement of the manuscript.
	\bibliographystyle{abbrv}

\end{document}